\def\titlerunning#1{\gdef\titrun{#1}}
\def\author#1{\gdef\autrun{\def\and{\unskip, }#1}\gdef\@author{#1}}
\def\address#1{{\def\and{\\\hspace*{18pt}}\renewcommand{\thefootnote}{}%
\footnote {#1}}%
\markboth{\autrun}{\titrun}} \makeatother
\def\email#1{e-mail: #1}
\def\subjclass#1{{\renewcommand{\thefootnote}{}%
\footnote{\emph{Mathematics Subject Classification (2010):} #1}}}
\newcommand{\N}{{\mathbb N}}
\newcommand{\R}{{\mathbb R}}
\newtheorem{theorem}{Theorem}[section]
\newtheorem{corollary}[theorem]{Corollary}
\newtheorem{remark}[theorem]{Remark}
\newtheorem{hypothesis}[theorem]{Hypothesis}
\newtheorem{lemma}[theorem]{Lemma}
\newtheorem{proposition}[theorem]{Proposition}
\newtheorem{claim}[theorem]{Claim}
\numberwithin{equation}{section}
\begin{document}

\titlerunning{Morse theory  for quasi-linear elliptic systems}

\title{Morse theory methods for a class of quasi-linear\\ elliptic systems of higher order
\thanks{Partially supported by the NNSF  11271044 of China.}}


\author{Guangcun Lu}

\date{June 5, 2019}

\maketitle

\address{F1. Lu: School of Mathematical Sciences, Beijing Normal University,
Laboratory of Mathematics  and Complex Systems,  Ministry of
  Education,    Beijing 100875, The People's Republic
 of China; \email{gclu@bnu.edu.cn}}

\subjclass{Primary~58E05, 49J52, 49J45}

\begin{abstract}
We develop the local Morse theory for a class of non-twice continuously differentiable functionals
on Hilbert spaces, including a new generalization of the Gromoll-Meyer's splitting theorem
 and a weaker Marino-Prodi perturbation type result. They are applicable to a wide range of multiple
 integrals with  quasi-linear elliptic Euler equations and systems of higher order.
   \end{abstract}

\tableofcontents

\section{Introduction }

Since Palais and Smale \cite{Pal, PaSm, Sma} generalized finite-dimensional Morse theory \cite{Mor, Mir}
to nondegenerate $C^2$ functionals on infinite dimensional Hilbert manifolds
and used it to study multiplicity of solutions for semilinear elliptic boundary value problems,
via many people's effort,  such a direction has very successful developments, see a few of nice books
\cite{BaSzWi, Ch, Ch1, MaWi, MoMoPa, PerAO, Skr1, ZouSc} and references therein for details. The Morse theory
for functionals on an infinite dimensional Hilbert manifold
has two main aspects:  Morse relations related critical groups to Betti numbers of
underlying spaces (global), computation of critical groups (local).
Combining use of both is the most effective in applications. The global aspect is well-developed,
for example, $C^1$-smoothness for functionals are sufficient.
The basic tools for the local aspect
mainly consist of Gromoll-Meyer's generalized Morse lemma (or splitting theorem)
 in \cite{GrM}  and the perturbation theorem of Marino and Prodi \cite{MP},
which are stated for  $C^2$ functionals on Hilbert spaces (cf.\cite{Ch, MaWi}).
It is for such reasons that  most of applications of the Morse theory to differential equations are
restricted to  semi-linear elliptic equations  and Hamiltonian systems \cite{Ch, MaWi, MoMoPa}.
Applications to quasi-linear elliptic equations and systems
require a suitable local Morse theory for either non-twice continuously differentiable functionals on Hilbert spaces
or twice continuously differentiable functionals on  Banach  spaces.
There exists significant progress for some special versions of quasi-linear elliptic equations and systems,
e.g. \cite{CaCiMaVa13, Ch83, CiDeVa15, CiDeVa18, De10, Skr1, Tr77, Uh72}, though no satisfactory
local Morse theory in these two cases is developed.

This work is motivated by  studies of quasi-linear elliptic equations and systems of higher order given by
the following multi-dimensional variational problem (\ref{e:1.3})
under {\textsf{Hypothesis} $\mathfrak{F}_{p,N,m,n}$} on the integrand $F$.
 Since in this situation the functional $\mathfrak{F}$
(\ref{e:1.3}) cannot, in general, be of class $C^2$ on its natural domain space
 $W^{m, p}(\Omega, \mathbb{R}^N)$ for $p=2$, the known local Morse theory is helpless.
This requires us to develop the local Morse theory for this class of non-twice continuously differentiable
 functionals on Hilbert spaces, for example, some generalization of the Gromoll-Meyer's splitting theorem
 and some weaker Marino-Prodi perturbation type result.

 Throughout this paper, unless stated otherwise,
we will use the following notations:  For normed linear spaces $X,Y$ we denote
 by $X^\ast$ the dual space of $X$, and by $\mathscr{L}(X,Y)$ the space of
linear bounded operators  from $X$ to $Y$.
We also abbreviate $\mathscr{L}(X):=\mathscr{L}(X,X)$.
Denote by  $B_X(y,r):=\{x\in X \ |\  \|x-y\|_X<r\}$
the open ball in $X$ with radius $r$ and centred at $y$, and by
$\bar{B}_X(y,r):=\{x\in X \ |\  \|x-y\|_X\le r\}$
  the corresponding closed ball.
The (norm)-closure of a set $S \subset X$ will be denoted by $\overline{S}$ or $Cl(S)$.
Let $m,n\ge 1$  be two  integers,  $\Omega\subset\R^n$  a bounded domain   with  boundary
$\partial\Omega$. Denote the general point of $\Omega$ by $x=(x_1,\cdots,x_n)\in\R^n$
and the element of Lebesgue $n$-measure on $\Omega$ by $dx$.
A {\it multi-index} is an $n$-tuple
$\alpha=(\alpha_1,\cdots,\alpha_n)\in (\mathbb{N}_0)^n$, where $\mathbb{N}_0=\mathbb{N}\cup\{0\}$.
 $|\alpha|:=\alpha_1+\cdots+\alpha_n$ is called the length of $\alpha$.
 Denote by $M(k)$ the number of such
$\alpha$ of length $|\alpha|\le k$, $M_0(k)=M(k)-M(k-1)$, $k=0,\cdots,m$,
where $M(-1)=\emptyset$. Then $M(0)=M_0(0)$ only consists of
${\bf 0}=(0,\cdots,0)\in (\mathbb{N}_0)^n$.

Let $p\in [2, \infty)$ be a real number,  and let $N\ge 1$, $n>1$ be integers. We make

\noindent{\textsf{Hypothesis} $\mathfrak{F}_{p,N,m,n}$}.\quad
 For each multi-index $\gamma$ as above, let
 \begin{eqnarray*}
 &&p_\gamma\in (2,\infty)\;\hbox{if}\;
 |\gamma|=m-n/p,\qquad p_\gamma=\frac{np}{n-(m-|\gamma|)p}
\;\hbox{if}\; m-n/p<|\gamma|\le m,\\
 &&q_\gamma=1\;\hbox{if}\;|\gamma|<m-n/p,\qquad
 q_\gamma=\frac{p_\gamma}{p_\gamma-1} \;\hbox{if}\;m-n/p\le |\gamma|\le m;
 \end{eqnarray*}
 and for each two multi-indexes $\alpha, \beta$ as above, let
 $p_{\alpha\beta}=p_{\beta\alpha}$ be defined by the conditions
\begin{eqnarray*}
&&p_{\alpha\beta}= 1-\frac{1}{p_\alpha}-\frac{1}{p_\beta}\quad
 \hbox{if}\;|\alpha|=|\beta|=m,\\
&&p_{\alpha\beta}=  1-\frac{1}{p_\alpha}\quad \hbox{if}\;m-n/p\le |\alpha|\le
 m,\; |\beta|<m-n/p,\\
&&p_{\alpha\beta}= 1 \quad \hbox{if}\; |\alpha|, |\beta|<m-n/p, \\
&& 0<p_{\alpha\beta}<1-\frac{1}{p_\alpha}-\frac{1}{p_\beta}\quad\hbox{if}\;|\alpha|,\;|\beta|\ge
 m-n/p,\;|\alpha|+|\beta|<2m.
\end{eqnarray*}
For $M_0(k)=M(k)-M(k-1)$, $k=0,1,\cdots,m$ as above,
we write $\xi\in \prod^m_{k=0}\mathbb{R}^{N\times M_0(k)}$ as
$\xi=(\xi^0,\cdots,\xi^m)$, where
 $ \xi^0=(\xi^1_{\bf 0},\cdots,\xi^N_{\bf 0})^T\in \mathbb{R}^{N}$ and for $k=1,\cdots,m$,
 $\xi^k=\left(\xi^i_\alpha\right)\in \mathbb{R}^{N\times M_0(k)}$, where $1\le i\le N$ and $|\alpha|=k$.
  Denote by  $\xi^k_\circ=\{\xi^k_\alpha\,:\,|\alpha|<m-n/p\}$ for $k=1,\cdots,N$. Let
$\overline\Omega\times\prod^m_{k=0}\mathbb{R}^{N\times M_0(k)}\ni (x,
\xi)\mapsto F(x,\xi)\in\R$
be twice continuously differentiable in $\xi$ for almost all $x$,
measurable in $x$ for all values of $\xi$, and $F(\cdot,\xi)\in L^1(\Omega)$ for $\xi=0$.
Suppose that derivatives of $F$ fulfill  the following properties:\\
{\bf (i)}  For $i=1,\cdots,N$ and $|\alpha|\le m$, functions
 $F^i_\alpha(x,\xi):= F_{\xi^i_\alpha}(x,\xi)$ for $\xi=0$
belong to $L^1(\Omega)$ if $|\alpha|<m-n/p$,
and to $L^{q_\alpha}(\Omega)$ if $m-n/p\le |\alpha|\le m$.\\
{\bf (ii)} There exists a continuous, positive, nondecreasing functions $\mathfrak{g}_1$ such that
for $i,j=1,\cdots,N$ and $|\alpha|, |\beta|\le m$  functions
$\overline\Omega\times\R^{M(m)}\to\R,\; (x, \xi)\mapsto
F^{ij}_{\alpha\beta}(x,\xi):=F_{\xi^i_\alpha\xi^j_\beta}(x,\xi)$
satisfy:
\begin{eqnarray}\label{e:1.1}
 |F^{ij}_{\alpha\beta}(x,\xi)|\le
\mathfrak{g}_1(\sum^N_{k=1}|\xi_\circ^k|)\left(1+
\sum^N_{k=1}\sum_{m-n/p\le |\gamma|\le
m}|\xi^k_\gamma|^{p_\gamma}\right)^{p_{\alpha\beta}}.
\end{eqnarray}
{\bf (iii)} There exists a continuous, positive, nondecreasing functions $\mathfrak{g}_2$ such that
\begin{eqnarray}\label{e:1.2}
\sum^N_{i,j=1}\sum_{|\alpha|=|\beta|=m}F^{ij}_{\alpha\beta}(x,\xi)\eta^i_\alpha\eta^j_\beta\ge
\mathfrak{g}_2(\sum^N_{k=1}|\xi^k_\circ|)\Biggl(1+ \sum^N_{k=1}\sum_{|\gamma|=m}|\xi^k_\gamma|\Biggr)^{p-2}
\sum^N_{i=1}\sum_{|\alpha|= m}(\eta^i_\alpha)^2
\end{eqnarray}
for any $\eta=(\eta^{i}_{\alpha})\in\R^{N\times M_0(m)}$.

{\it Note}: {\bf (a)} If $m\le n/p$ the functions  $\mathfrak{g}_1$ and $\mathfrak{g}_2$ should be understand
as positive constants.\\
 {\bf (b)} For $N=1$ the conditions in {\textsf{Hypothesis} $\mathfrak{F}_{p,N,m,n}$} were introduced in
\cite[\S3.1]{Skr1} (also see \cite[\S1.2]{Skr2} and \cite[p. 110,118]{Skr3});
but it was only required that $p_\gamma\in (0,\infty)$ if $|\gamma|=m-n/p$ there.
We modify it as ``$p_\gamma\in (2,\infty)$ if $|\gamma|=m-n/p$"
so as to coincide with the condition ``$0<p_{\alpha\beta}<1-\frac{1}{p_\alpha}-\frac{1}{p_\beta}$
if $|\alpha|=|\beta|=m-n/p$". This is only needed in case $mp\ge n$.\\
{\bf (c)} The controllable growth condition \cite[p. 40]{Gi}
(also called `common condition of Morrey' or `the natural assumption of Ladyzhenskaya and Ural'tseva'
\cite[p. 38,(I)]{Gi}) is stronger than {\textsf{Hypothesis} $\mathfrak{F}_{2,N,1,n}$}, see Proposition~\ref{prop:A.1};
the Lagrangian function  in De Giorgi's example (cf. \cite[p. 54]{Gi})
 satisfies {\textsf{Hypothesis} $\mathfrak{F}_{2,n,1,n}$}, but does not fulfill
the controllable growth condition on $\Omega=B^n_1(0)=\{x\in\mathbb{R}^n\,|\,|x|<1\}$,  $n\ge 3$.

 Let $\Omega\subset \R^n$  be a bounded domain  such that
 the Sobolev embeddings theorems for the spaces $W^{m, p}(\Omega)$ hold.
 For an element of $W^{m, p}(\Omega, \mathbb{R}^N)$,
$\vec{u}=(u^1,\cdots, u^N):\Omega\to\mathbb{R}^N$,
 we  denote by $D^k\vec{u}$ the set $\{D^\alpha u^i\,:\, |\alpha|=k,\; i=1,\cdots,N\}$
 for  $k=1,\cdots,m$,  and form
 the expression $F(x, \vec{u}(x),\cdots, D^m\vec{u}(x))$, in which
 $\vec{u}(x)$ and $D^\alpha u^i(x)$ take the place of $\xi^0$ and  $\xi^i_\alpha$, respectively.
 Let $V=\vec{w}+V_0\subset W^{m, p}(\Omega, \mathbb{R}^N)$, where $V_0$ is
 a closed subspace containing $W^{m,p}_0(\Omega, \mathbb{R}^N)$.
 Consider the variational integral
\begin{equation}\label{e:1.3}
\mathfrak{F}(\vec{u})=\int_\Omega F(x, \vec{u},\cdots, D^m\vec{u})dx,\quad \vec{u}\in V.
\end{equation}
Call critical points of $\mathfrak{F}$ {\it generalized solutions} of the boundary value problem
corresponding to  $V$:
\begin{equation}\label{e:1.4}
\sum_{|\alpha|\le m}(-1)^{|\alpha|}D^\alpha F^i_\alpha(x, \vec{u},\cdots, D^m\vec{u})=0,\quad
i=1,\cdots,N.
\end{equation}

When $N=1$,  we write  $\xi\in\R^{M(m)}$ as  $\xi=\{\xi_\alpha:\,|\alpha|\le m\}$,
  $\xi_\circ=\{\xi_\alpha:\,|\alpha|<m-n/p\}$ (this is empty if $mp\le n$), and $F_{\alpha\beta}(x,\xi)=:F_{\xi_\alpha\xi_\beta}(x,\xi)$.
As stated in  \cite[\S3.4,Lemma 16, \S5.2]{Skr1} and \cite[p. 118-119]{Skr3},
 under \textsf{Hypothesis} $\mathfrak{F}_{p,1,m,n}$  the functional $\mathfrak{F}$ in
(\ref{e:1.3}) is of class $C^1$; and the (derivative) mapping
$\mathfrak{F}':W^{m,p}_0(\Omega)\to [W^{m,p}_0(\Omega)]^\ast$ is
 Fr\'echet differentiable if $p>2$, but only G\^ateaux-differentiable if $p=2$.
 The latter is best possible.  In fact, it was shown on \cite[Chap.5, Sec. 5.1, Theorem~1]{Skr3}:
 {\it If $p=2$, $m=1$ and $F\in C^2(\overline{\Omega}\times\R^1\times\R^n)$
 has uniformly bounded mixed partial derivatives
 $F_{\xi_i\xi_j}$, $F_{\xi_i u}$ and $F_{uu}$
  (therefore $F$ satisfies \textsf{Hypothesis} $\mathfrak{F}_{2,1,m,n}$),
  then the functional $\mathfrak{F}$ on $W^{1,2}_0(\Omega)$  has Fr\'echet second derivative at zero
  if and only if
$F(x,0,\xi)=\sum^n_{i,j=1}a_{ij}(x)\xi_i\xi_j+ \sum^n_{i=1}b_i(x)\xi_i+ c(x)$.}
 So, generally speaking, under \textsf{Hypothesis} $\mathfrak{F}_{2,1,m,n}$
 the known Morse--Palais lemma cannot be used for  $\mathfrak{F}$. Even so, by improving Smale's method in \cite{Sma},
  Skrypnik \cite[Chapter 5]{Skr1}
  obtained Morse inequalities for $\mathfrak{F}$ on $W^{m,2}_0(\Omega)$
    provided that $\mathfrak{F}$ is coercive  and that each critical point $u$ of $\mathfrak{F}$ is nondegenerate
 in the sense that the G\^ateaux derivative of $\mathfrak{F}$ at $u$
 is an invertible bounded linear self-adjoint operator on $W^{m,2}_0(\Omega)$.
(If $p=\dim\Omega=2$ and $F\in C^{k,\alpha}$ for some $\alpha\in (0,1)$ and an integer $k\ge 3$,
it was proved in \cite[Chapter 7, Th.4.4]{Skr3} that  every critical point $u$ of $\mathfrak{F}$ on $W^{m,2}_0(\Omega)$ sits in $C^{k+m-1,\alpha}(\overline{\Omega})$; in fact $u$ is also analytic in $\Omega$ provided that $F$ is analytic in its arguments.)

For effectively using  Morse theory methods to study critical points
 of $\mathfrak{F}$ on $W^{m,2}(\Omega,\mathbb{R}^N)$,
 it is expected that there exists a corresponding  Gromoll-Meyer's splitting theorem
for this functional. Recently,  the author in \cite[Theorem~1.1]{Lu1} proved a generalization of
Gromoll-Meyer's splitting theorem in \cite{GrM} and used it to study  periodic solutions of Lagrangian systems
on compact manifolds which are strongly convex and has quadratic growth on the fibers. It includes
the case of $\dim\Omega=1$ (and similar one appeared  in some optimal control problems \cite{Va1}).
\cite[Theorem~1.1]{Lu1}  was also generalized to a class of continuously directional
differentiable functions on Hilbert spaces in \cite[Theorem~2.1]{Lu2}.
Our design of these splitting theorems is based on a key fact that
 the involved solutions  have higher smoothness, which
is  usually satisfied for many one-dimensional variational
problems. Such an assumption of regularity ensured that the implicit function theorem can be used
  in the proofs of \cite[Theorem~1.1]{Lu1} and \cite[Theorem~2.1]{Lu2}.
If $N=1$, $\dim\Omega=2$ and $F$ is smooth enough, we may prove
under \textsf{Hypothesis} $\mathfrak{F}_{2,1,m,2}$ that \cite[Theorem~2.1]{Lu2}
is applicable for the functional $\mathfrak{F}$ on $W^{m,2}_0(\Omega)$.
However, if $\dim\Omega>2$, for the variational problem
 (\ref{e:1.3}),  it seems helpless
 because of lack of the priori regularity of critical points; see Section~\ref{sec:compare}
 for details. Thus new ideas and methods are needed.
 We need establish an implicit function theorem for only G\^ateaux differentiable map $\mathcal{F}^\prime$.
 After carefully analyzing this map, we propose the following fundamental assumption and
 arrive at the expected goal.

 \begin{hypothesis}\label{hyp:1.1}
{\rm Let $H$ be a Hilbert space with inner product $(\cdot,\cdot)_H$
and the induced norm $\|\cdot\|$, and let $X$ be a dense linear subspace in $H$.
Let  $V$ be an open neighborhood of the origin $\theta\in H$,
and let $\mathcal{L}\in C^1(V,\mathbb{R})$ satisfy $\mathcal{L}'(\theta)=0$.
Assume that the gradient $\nabla\mathcal{L}$ has a G\^ateaux derivative $B(u)\in \mathscr{L}_s(H)$ at every point
$u\in V\cap X$, and that the map $B:V\cap X\to
\mathscr{L}_s(H)$  has a decomposition
$B=P+Q$, where for each $x\in V\cap X$,  $P(x)\in\mathscr{L}_s(H)$ is  positive definitive and
$Q(x)\in\mathscr{L}_s(H)$ is compact, and they also satisfy the following
properties:\\
{\bf (D1)}  All eigenfunctions of the operator $B(\theta)$ that correspond
to non-positive eigenvalues belong to $X$.\\
{\bf (D2)} For any sequence $(x_k)\subset
V\cap X$ with $\|x_k\|\to 0$, $\|P(x_k)u-P(\theta)u\|\to 0$ for any $u\in H$.\\
{\bf (D3)} The  map $Q:V\cap X\to \mathscr{L}(H)$ is continuous at $\theta$ with respect to the topology
on $H$.\\
{\bf (D4)} For any sequence $(x_k)\subset V\cap X$ with $\|x_k\|\to 0$, there exist
 constants $C_0>0$ and $k_0\in\N$ such that
$(P(x_k)u, u)_H\ge C_0\|u\|^2$ for all $u\in H$ and for all $k\ge k_0$.
}
\end{hypothesis}

The condition (D4) is equivalent to (D4*) in \cite{Lu2} by Lemma~\ref{lem:D*}. Lemma~\ref{lem:S.2.4}
shows that Hypothesis~\ref{hyp:1.1} with $X=H$ is hereditary on closed subspaces.

Under Hypothesis~\ref{hyp:1.1}, if $\theta$ is nondegenerate, i.e., ${\rm Ker}(B(\theta))=\{\theta\}$,
 we prove a new generalization of Morse-Palais Lemma, Theorem~\ref{th:S.1.1}. If Hypothesis~\ref{hyp:1.1}
 holds with $X=H$ we establish a new splitting lemma, Theorem~\ref{th:S.1.2}.
 Strategies of their proofs will be given at the end of Section~\ref{sec:S}.
 Actually, we prove a more general parameterized splitting theorem, Theorem~\ref{th:S.5.3},
which will be used to generalize  many bifurcation theorems for potential operators in \cite{Lu8}.
Comparing with splitting lemmas in \cite{Lu1, Lu2},  the new ones may  largely simplify the arguments for
Lagrangian systems in \cite{Lu1}. However, the former may, sometime, provide  more elaborate results,
for example,  as we have done modifying the proof ideas of them  may yield  the desired splitting
 lemma for the Finsler energy functional on the space of $H^1$-curves  in \cite{Lu5}.
 It is not clear how to complete this with the present one.  In accord with Hypothesis~\ref{hyp:1.1},
 a weaker Marino-Prodi perturbation type result, Theorem~\ref{th:MP.2}, is also presented in
Section~\ref{sec:MP}.

In Section~\ref{sec:Funct}, we first list some fundamental analytic properties of the functional
 $\mathfrak{F}$ under  Hypothesis~$\mathfrak{F}_{p,N,m,n}$. In particular, Corollary~\ref{cor:4.4}
   shows that Hypothesis~$\mathfrak{F}_{2,N,m,n}$ assures  $\mathfrak{F}$ to satisfy  Hypothesis~\ref{hyp:1.1}
on any closed subspace of $W^{m,2}(\Omega, \mathbb{R}^N)$ for a bounded Sobolev domain
$\Omega\subset\mathbb{R}^n$.   Their proofs  are not difficult, but cumbersome, and may  be completed by non-essentially changing that of \cite[Theorem~3.1]{Lu7}.
Then we  are only satisfied to give Morse inequalities and some corollaries.
Finally, we also make compares with previous work and explore applicability of them in Section~\ref{sec:compare}.

Further essential applications may be found in the sequel papers  \cite{Lu8,Lu10}.
We showed in \cite{Lu8} that Theorem~\ref{th:S.3.1} can be effectively
used to generalize some famous bifurcation theorems for potential operators, which leaded to
 many bifurcation results for quasi-linear elliptic Euler equations and systems of higher order.
Using the theory developed in this paper we can also generalize the results in \cite{Lu1}
to a class of Lagrangian systems of higher order with lower smoothness conditions for Lagrangians.\\

\noindent{\bf Acknowledgements}. The author is grateful to the anonymous referees for useful remarks.


\section{The splitting lemmas for a class of non-$C^2$ functionals}\label{sec:S}
\setcounter{equation}{0}

\subsection{Statements of main results}\label{sec:S.1}

We always assume that Hypothesis~\ref{hyp:1.1} holds
 without special statements. Then it implies that $\nabla\mathcal{L}$ is of class $(S)_+$ near $\theta$
as proved in \cite[p.2966-2967]{Lu2}. In particular, $\mathcal{L}$
satisfies the (PS) condition near $\theta$.

Let $H=H^+\oplus H^0\oplus H^-$ be the orthogonal decomposition
according to the positive definite, null and negative definite spaces of $B(\theta)$.
Denote by $P^\ast$ the orthogonal projections onto $H^\ast$, $\ast=+,0,-$.
By \cite[Proposition~B.2]{Lu2} Hypothesis~\ref{hyp:1.1}
implies  that there exists a constant $C_0>0$ such that
each $\lambda\in (-\infty, C_0)$ is either not in the spectrum $\sigma(B(\theta))$ or is an
isolated point of $\sigma(B(\theta))$ which is also an eigenvalue of finite multiplicity.
It follows that both $H^0$ and $H^-$ are finitely dimensional, and that
there exists a small $a_0>0$ such that $[-2a_0,
2a_0]\cap\sigma(B(\theta))$ at most contains a point $0$, and hence
\begin{equation}\label{e:S.1.1}
  (B(\theta)u, u)_H\ge 2a_0\|u\|^2\quad\forall u\in H^+,\quad
  (B(\theta)u, u)_H\le -2a_0\|u\|^2\quad\forall u\in H^-.
\end{equation}
Note that (D1) implies $H^-\oplus H^0\subset X$.
$\nu:=\dim H^0$ and $\mu:=\dim H^-$ are called the {\it Morse index} and
{\it nullity} of the critical point $\theta$.
In particular, if $\nu=0$ the critical point $\theta$ is said to be {\it nondegenerate}.
Without special statements, all nondegenerate critical points in this paper are in the sense
of this definition. Moreover, such a critical point must be isolated by (\ref{e:S.3.2}).

Our first result is the following Morse-Palais Lemma, a special case of Theorem~\ref{th:S.3.1}.
 Comparing with that of \cite[Remark~2.2(i)]{Lu2},  the smoothness of $\mathcal{L}$ is
 strengthened, but  other conditions  are suitably  weakened.

\begin{theorem}\label{th:S.1.1}
Under Hypothesis~\ref{hyp:1.1}, if  $\theta$
is nondegenerate, then it is an isolated critical point, and there exist a small $\epsilon>0$,
an open neighborhood $W$ of $\theta$ in
$H$ and an origin-preserving homeomorphism, $\phi: B_{H^+}(\theta,\epsilon) +
B_{H^-}(\theta,\epsilon)\to W$,
 such that
$$
\mathcal{L}\circ\phi(u^++ u^-)=\|u^+\|^2-\|u^-\|^2,\quad \forall (u^+, u^-)\in B_{H^+}(\theta,\epsilon)\times
B_{H^-}(\theta,\epsilon).
$$
Moreover, if $\hat{H}$ is a closed subspace containing $H^-$, and $\hat{H}^+$ is the orthogonal
complement of $H^-$ in $\hat{H}$, i.e., $\hat{H}^+=\hat{H}\cap H^+$, then
$\phi$ restricts to a homeomorphism $\hat{\phi}:(B_{\hat{H}^+}(\theta,\epsilon) + B_{H^-}(\theta,\epsilon))
\to\hat{W}:=W\cap\hat{H}$, and $\mathcal{L}\circ\hat{\phi}(u^++ u^-)=\|u^+\|^2-\|u^-\|^2$
for all $(u^+, u^-)\in B_{\hat{H}^+}(\theta,\epsilon)\times B_{H^-}(\theta,\epsilon)$.
\end{theorem}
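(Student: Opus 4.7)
The plan is to treat Theorem~\ref{th:S.1.1} as the nondegenerate ($H^0=\{\theta\}$) case of the general splitting Theorem~\ref{th:S.3.1}, while outlining a direct Palais-type construction that isolates what must be adapted to the Gâteaux-only regularity of $B$.

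I would first extract the spectral consequences of $\mathrm{Ker}(B(\theta))=\{\theta\}$. The discussion preceding (\ref{e:S.1.1}) already yields the orthogonal splitting $H=H^+\oplus H^-$ with $\dim H^-<\infty$, the gap estimate (\ref{e:S.1.1}), and hence that $B(\theta)$ is a topological isomorphism of $H$. Isolation of $\theta$ as a critical point would then follow by combining the $(S)_+$ property of $\nabla\mathcal{L}$ (recalled at the start of Section~\ref{sec:S.1}) with a lower bound $\|\nabla\mathcal{L}(u)\|\ge c\|u\|$ for $u\in V\cap X$ near $\theta$; the latter is obtained by integrating $B$ along $[0,u]$, using (D2)--(D4) to compare with $B(\theta)$, and extending to all of $V$ by density of $X$ in $H$.

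The heart of the proof is the quadratic representation
\begin{equation*}
\mathcal{L}(u)-\mathcal{L}(\theta)=\tfrac{1}{2}(A(u)u,u)_H,\qquad A(u):=2\int_0^1(1-t)\,B(tu)\,dt\in\mathscr{L}_s(H),
\end{equation*}
obtained by iterating the fundamental theorem of calculus along $[0,u]\subset V\cap X$. Conditions (D2)--(D3) guarantee $A(u)\to B(\theta)$ as $u\to\theta$, strongly on the $P$-summand and in norm on the $Q$-summand; continuity of $\mathcal{L}$ together with density of $X$ extend the identity from $V\cap X$ to a full neighborhood of $\theta$ in $H$. Setting $C(u):=B(\theta)^{-1}A(u)$ one has $C(\theta)=I$, so the self-adjoint square root $C(u)^{1/2}$ (with respect to the equivalent inner product induced by $|B(\theta)|$) exists for $u$ small, and the map $\psi(u):=C(u)^{1/2}u$ satisfies $\tfrac{1}{2}(B(\theta)\psi(u),\psi(u))_H=\mathcal{L}(u)-\mathcal{L}(\theta)$. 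Choosing equivalent inner products on $H^\pm$ that diagonalise $B(\theta)$ as $P^+-P^-$ turns the right-hand side into $\|u^+\|^2-\|u^-\|^2$, and $\phi:=\psi^{-1}$ on a small ball yields the desired normal form.

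The main obstacle I foresee is proving that $\psi$ is a local homeomorphism at $\theta$: since $A$ is merely continuous at $\theta$ (not Fréchet in $u$), the inverse function theorem is unavailable. I would write $\psi(u)=u+(C(u)^{1/2}-I)u$ as a continuous $o(\|u\|)$-perturbation of the identity and invert it by a Banach fixed-point argument in a small ball, or alternatively combine Brouwer invariance of domain on the finite-dimensional factor $H^-$ with coercivity estimates on $H^+$ coming from (D4) and the $(S)_+$ property. For the restriction property I would invoke the hereditary Lemma~\ref{lem:S.2.4}, whereby $\mathcal{L}|_{\hat H\cap V}$ satisfies Hypothesis~\ref{hyp:1.1} on $\hat H$ with the same negative subspace $H^-$ and positive subspace $\hat H^+$; the construction above can be carried out so as to commute with the orthogonal projection onto $\hat H$ (for instance by first block-diagonalising $A(u)$ with respect to the splitting $\hat H\oplus\hat H^\perp$ before extracting square roots), so that $\hat\phi:=\phi|_{B_{\hat H^+}(\theta,\epsilon)+B_{H^-}(\theta,\epsilon)}$ automatically yields the asserted homeomorphism onto $W\cap\hat H$.
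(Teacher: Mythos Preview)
Your approach has a genuine gap that the paper's method is specifically designed to avoid. The classical Palais construction via $A(u)=2\int_0^1(1-t)B(tu)\,dt$ and $\psi(u)=C(u)^{1/2}u$ requires $u\mapsto A(u)$ (hence $u\mapsto C(u)^{1/2}$) to be continuous on a full neighborhood of $\theta$, not merely at the single point $\theta$. But Hypothesis~\ref{hyp:1.1} gives no such neighborhood continuity: (D2) and (D3) only assert that $P(x_k)\to P(\theta)$ strongly and $Q(x_k)\to Q(\theta)$ in norm \emph{as $x_k\to\theta$}; away from $\theta$ the map $x\mapsto B(x)$ need not be continuous at all. Consequently you cannot conclude that $\psi$ is continuous (except at $\theta$), so neither the Banach fixed-point inversion nor any degree/invariance-of-domain argument gets off the ground. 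Moreover $B$ is only defined on $V\cap X$, so $A(u)$ as an operator is only defined for $u\in X$; density lets you extend the scalar identity $\mathcal{L}(u)-\mathcal{L}(\theta)=\tfrac12(A(u)u,u)_H$, but not the operator-valued map $A$ itself.

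The paper takes a completely different route: it does not build $\phi$ from a square root of $A$, but instead verifies the sign conditions
\[
D\mathcal{L}(u^++u^-_2)[u^-_2-u^-_1]-D\mathcal{L}(u^++u^-_1)[u^-_2-u^-_1]\le -a_0\|u^-_2-u^-_1\|^2,
\qquad
D\mathcal{L}(u^++u^-)[u^+-u^-]\ge a_1\|u^+\|^2+a_0\|u^-\|^2,
\]
which follow from Lemma~\ref{lem:S.2.2} and the mean value theorem, and then invokes \cite[Theorem~A.1]{Lu2} (a version of the Duc--Hung--Khai Morse--Palais lemma). That abstract result constructs the homeomorphism directly from these first-order inequalities and needs no continuity of $B$ beyond what is encoded in Lemma~\ref{lem:S.2.2}. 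For the restriction to $\hat H$, your appeal to Lemma~\ref{lem:S.2.4} is also misplaced: that lemma assumes $X=H$, and the paper explicitly remarks that the restriction claim ``seems unable to be directly derived from Lemma~\ref{lem:S.2.4}''; instead one observes that the two displayed inequalities above persist on $\hat H^+\times H^-$ and rechecks the construction inside \cite[Theorem~A.1]{Lu2}.
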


Under the assumptions of this theorem, if $X=H$ we can prove
that $\nabla\mathcal{L}$ is locally
 invertible near $\theta$ in Theorem~\ref{th:S.4.4}.
Theorem~\ref{th:S.1.1} is also key for us to prove  Theorem~\ref{th:S.5.3}, whose special case is:

\begin{theorem}[Splitting Theorem]\label{th:S.1.2}
Let Hypothesis~\ref{hyp:1.1}
hold with $X=H$. Suppose $\nu\ne 0$. Then
there exist small positive numbers
$\epsilon, r, s$, a unique continuous map
$\varphi:B_{H^0}(\theta,\epsilon)\to H^+\oplus H^-$ satisfying
\begin{equation}\label{e:S.1.2}
\varphi(\theta)=\theta\quad\hbox{and}\quad (I-P^0)\nabla\mathcal{L}(z+ \varphi(z))=0\quad\forall z\in B_{H^0}(\theta,\epsilon),
 \end{equation}
an open neighborhood $W$ of $\theta$ in $H$ and an origin-preserving homeomorphism
$$
\Phi: B_{H^0}(\theta,\epsilon)\times
\left(B_{H^+}(\theta, r) +
B_{H^-}(\theta, s)\right)\to W
$$
of form $\Phi(z, u^++ u^-)=z+ \varphi(z)+\phi_z(u^++ u^-)$ with
$\phi_z(u^++ u^-)\in H^+\oplus H^-$  such that
$$
\mathcal{ L}\circ\Phi(z, u^++ u^-)=\|u^+\|^2-\|u^-\|^2+ \mathcal{
L}(z+ \varphi(z))
$$
for all $(z, u^+ + u^-)\in B_{H^0}(\theta,\epsilon)\times
\left(B_{H^+}(\theta, r) +
B_{H^-}(\theta, s)\right)$.
Moreover, $\varphi$ is of class $C^{1-0}$, and
we have also:
\begin{enumerate}
\item[\bf (a)] For
each $z\in B_{H^0}(\theta,\epsilon)$, $\Phi(z, \theta)=z+ \varphi(z)$,
$\phi_z(u^++ u^-)\in H^-$ if and only if $u^+=\theta$;

\item[\bf (b)] The functional $B_{H^0}(\theta,\epsilon)\ni z\mapsto
\mathcal{L}^\circ(z):=\mathcal{ L}(z+ \varphi(z))$ is of class $C^1$ and
$D\mathcal{L}^\circ(z)[v]=D\mathcal{L}(z+\varphi(z))[v]$ for all $v\in H^0$.
If $\mathcal{L}$ is of class $C^{2-0}$, so is $\mathcal{L}^\circ$.
\end{enumerate}
\end{theorem}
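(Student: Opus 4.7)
The plan is to perform a Lyapunov--Schmidt reduction relative to the orthogonal decomposition $H=H^0\oplus(H^+\oplus H^-)$ and then to apply Theorem~\ref{th:S.1.1} fiberwise over $H^0$. First I would look for a continuous map $\varphi:B_{H^0}(\theta,\epsilon)\to H^+\oplus H^-$ solving the transverse equation (\ref{e:S.1.2}). To produce it, consider $\mathcal{G}(z,w):=(I-P^0)\nabla\mathcal{L}(z+w)$, whose G\^ateaux partial derivative in $w$ at $(\theta,\theta)$ is $(I-P^0)B(\theta)|_{H^+\oplus H^-}$, an isomorphism by (\ref{e:S.1.1}). Because $\nabla\mathcal{L}$ is only G\^ateaux differentiable, the classical $C^1$ implicit function theorem is unavailable; instead I would exploit the $(S)_+$ property of $\nabla\mathcal{L}$ noted at the opening of Section~\ref{sec:S} together with the decomposition $B=P+Q$ (positive definite plus compact) to run an implicit-function argument along the lines of Theorem~\ref{th:S.4.4}. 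Uniqueness of $\varphi$ follows from the invertibility of $(I-P^0)B(\theta)|_{H^+\oplus H^-}$, and its $C^{1-0}$ regularity from the local Lipschitz continuity of $w\mapsto(I-P^0)\nabla\mathcal{L}(z+w)$ granted by (D2)--(D4).

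Next I would introduce the reduced functional $\mathcal{L}^\circ(z):=\mathcal{L}(z+\varphi(z))$. For $v\in H^0$ the chain rule gives
\[
D\mathcal{L}^\circ(z)[v]=D\mathcal{L}(z+\varphi(z))[v]+D\mathcal{L}(z+\varphi(z))[D\varphi(z)v].
\]
Since $D\varphi(z)v\in H^+\oplus H^-$ and (\ref{e:S.1.2}) asserts $(I-P^0)\nabla\mathcal{L}(z+\varphi(z))=0$, the second term vanishes, yielding the identity in (b). The right-hand side is continuous in $z$, so $\mathcal{L}^\circ\in C^1$; a Lipschitz version of the same argument gives $\mathcal{L}^\circ\in C^{2-0}$ whenever $\mathcal{L}\in C^{2-0}$.

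The core step is the construction of $\phi_z$ as a parameterized application of Theorem~\ref{th:S.1.1}. For each $z\in B_{H^0}(\theta,\epsilon)$ I would consider the shifted functional $\mathcal{L}_z(u):=\mathcal{L}(z+\varphi(z)+u)-\mathcal{L}^\circ(z)$ on a neighborhood of $\theta$ in the closed subspace $H^+\oplus H^-\subset H$. By construction $\theta$ is a critical point of $\mathcal{L}_z$ with G\^ateaux Hessian a small perturbation of $(I-P^0)B(\theta)|_{H^+\oplus H^-}$, which is nondegenerate and has $H^-$ as its negative definite subspace. Hypothesis~\ref{hyp:1.1} restricts to the subspace $H^+\oplus H^-$ by Lemma~\ref{lem:S.2.4}, so Theorem~\ref{th:S.1.1} produces an origin-preserving homeomorphism $\phi_z:B_{H^+}(\theta,r)+B_{H^-}(\theta,s)\to\mathcal{O}_z\subset H^+\oplus H^-$ with $\mathcal{L}_z\circ\phi_z(u^++u^-)=\|u^+\|^2-\|u^-\|^2$. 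The closed-subspace clause of Theorem~\ref{th:S.1.1} (applied with $\hat{H}=H^-$) gives the characterization in (a), and setting $\Phi(z,u^++u^-):=z+\varphi(z)+\phi_z(u^++u^-)$ assembles the required homeomorphism with the announced functional equation.

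The principal obstacle is \emph{uniformity in the parameter} $z$: one must arrange the radii $r,s$ and the domain of $\phi_z$ to be independent of $z$ and secure joint continuity of $(z,u^++u^-)\mapsto\phi_z(u^++u^-)$. This forces one to redo the construction behind Theorem~\ref{th:S.1.1} with $z$ carried through as a parameter, which is exactly what the more general Theorem~\ref{th:S.5.3} establishes and from which Theorem~\ref{th:S.1.2} is obtained by specialization. The supporting routine-but-nontrivial verifications concern the stability of the $(S)_+$ property and of (D1)--(D4) under the shift $u\mapsto z+\varphi(z)+u$; these follow from the continuity of $P$ and $Q$ at $\theta$ recorded in (D2)--(D4) provided $\epsilon$ is chosen small enough.
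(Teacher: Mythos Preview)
Your overall architecture---Lyapunov--Schmidt reduction via a degree/$(S)_+$ argument to produce $\varphi$, followed by a parameterized application of Theorem~\ref{th:S.1.1} to the shifted functional $u\mapsto\mathcal{L}(z+\varphi(z)+u)$ on $H^+\oplus H^-$---is exactly the paper's route (Theorem~\ref{th:S.4.3} for $\varphi$, then Theorem~\ref{th:S.5.3}). You also correctly flag the uniformity-in-$z$ issue and defer it to the parameterized statement.

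There is, however, a genuine gap in your argument for (b). You invoke the chain rule,
\[
D\mathcal{L}^\circ(z)[v]=D\mathcal{L}(z+\varphi(z))[v]+D\mathcal{L}(z+\varphi(z))[D\varphi(z)v],
\]
and then kill the second term via (\ref{e:S.1.2}). But you have just established that $\varphi$ is only $C^{1-0}$ (locally Lipschitz), not $C^1$; the derivative $D\varphi(z)$ need not exist, and the paper explicitly remarks after Theorem~\ref{th:S.1.2} that $C^1$ regularity of $\varphi$ is only available ``in some cases''. The paper circumvents this by first proving (Proposition~\ref{prop:S.5.1}) that $\varphi(z)$ is a saddle: it minimizes $\mathcal{L}(z+\cdot)$ in the $H^+$-direction and maximizes in the $H^-$-direction. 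This min--max characterization yields two-sided difference-quotient bounds (\ref{e:S.5.5})--(\ref{e:S.5.10}), and the Sandwich Theorem then gives the G\^ateaux derivative $D\mathcal{L}^\circ(z)[v]=D\mathcal{L}(z+\varphi(z))[v]$ directly, without ever differentiating $\varphi$ (Proposition~\ref{prop:S.5.2}). A secondary point: your appeal to Lemma~\ref{lem:S.2.4} to say Hypothesis~\ref{hyp:1.1} restricts to $H^+\oplus H^-$ is about the \emph{unshifted} functional at the original $\theta$; for $\mathcal{L}_z$ the base point is $z+\varphi(z)\ne\theta$, so (D2)--(D4) are not literally inherited. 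The paper sidesteps this too, by working with the uniform estimates of Lemma~\ref{lem:S.2.2} rather than re-verifying Hypothesis~\ref{hyp:1.1} fiberwise.
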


Since the map $\varphi$ satisfying (\ref{e:S.1.2}) is unique,
as \cite{Lu1, Lu2} it is possible to prove in some cases that $\varphi$ and
 $\mathcal{L}^\circ$ are of class $C^1$ and $C^2$, respectively.

Theorems~\ref{th:S.1.1},\ref{th:S.1.2}
 cannot be derived from those of \cite{DHK1}.
In fact, according to the conditions (c) and (d) in \cite[Theorem~1.3]{DHK1}
the functional $\mathcal{L}$ in Theorem~\ref{th:S.1.1} should satisfy:
\begin{description}
\item[(${\bf c}^\prime$)] $\exists\;\eta>0, \delta>0$ such that
$|(B(u)(u+z)-B(\theta)(u+z), h)|<\eta\|u+z\|\cdot\|h\|$ for all $u\in B_{H}(\theta,\delta)$,
$z\in H^0$ and $h\in H\setminus\{\theta\}$;
\item[(${\bf d}^\prime$)] $\exists\; \delta>0$ such that
$\bigr(\nabla\mathcal{L}(z+u^+_1+u^-_1)-\nabla\mathcal{L}(z+u^+_2+u^-_2), (u^+_1-u^+_2)+
(u^-_1-u^-_2)\bigl)>0$
for all $(u^+_1, u^-_1), (u^+_2, u^-_2)\in B_{H^+}(\theta,\delta)\times B_{H^-}(\theta,\delta)$
with $u^+_1+u^-_1\ne u^+_2+u^-_2$.
\end{description}

The former implies
$\|B(u)(u+z)-B(\theta)(u+z)\|\le\eta\|u+z\|$ for all $u\in B_{H}(\theta,\delta)$,
$z\in H^0$; and the latter implies, for some $t\in (0,1)$,
$\bigl(B(z+u^+_2+u^-_2+ tu^++tu^-)(u^++u^-), u^++u^-\bigr)>0$
with $u^+=u^+_1-u^+_2$ and $u^-=u^-_1-u^-_2$.
From these it is not hard to see that under our assumptions the conditions
(${\bf c}^\prime$) and (${\bf d}^\prime$) cannot be satisfied in general.

Let ${\bf K}$ always denote an Abel group (without special statements), and
let $H_q(A,B;{\bf K})$ denote the $q$th relative singular homology group of
a pair $(A,B)$ of topological spaces with coefficients in ${\bf K}$.
For each $q\in\N\cup\{0\}$ {\it the $q$th critical group} (with coefficients in ${\bf K}$)
of $\mathcal{L}$ at $\theta$ is defined by
$C_q(\mathcal{L},\theta;{\bf K})=H_q(\mathcal{L}_c\cap U, \mathcal{L}_c\cap U\setminus\{\theta\};{\bf K})$,
where $c=\mathcal{L}(\theta)$, $\mathcal{L}_c=\{\mathcal{L}\le c\}$ and $U$ is a neighborhood of $\theta$ in $H$.
Under the assumptions of Theorem~\ref{th:S.1.1} we have
$C_q(\mathcal{L},\theta;{\bf K})=\delta_{q\mu}{\bf K}$ as usual.
For the degenerate case, though our $\mathcal{L}^{\circ}$ is only of class $C^1$,
 the proofs in \cite[Theorem~8.4]{MaWi} and \cite[Theorem~5.1.17]{Ch1} (or \cite[Theorem~I.5.4]{Ch})
 may be slightly  modify to get  the following shifting theorem, a special case of Theorem~\ref{th:S.5.4}.

\begin{theorem}[Shifting Theorem]\label{th:S.1.3}
Under the assumptions of Theorem~\ref{th:S.1.2},  if $\theta$ is an
isolated critical point  of $\mathcal{ L}$, then
$C_q(\mathcal{L}, \theta;{\bf K})\cong C_{q-\mu}(\mathcal{
L}^{\circ}, \theta; {\bf K})$ for all $q\in\mathbb{N}_0$. Consequently,
${\rm rank}C_q(\mathcal{L}, \theta;{\bf K})$ is finite for all $q\in\mathbb{N}_0$,
and $C_q(\mathcal{L}, \theta;{\bf K})=0$ if $q<\mu$ or $q>\mu+\nu$.
\end{theorem}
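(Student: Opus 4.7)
The plan is to reduce $C_q(\mathcal{L},\theta;{\bf K})$ to $C_{q-\mu}(\mathcal{L}^\circ,\theta;{\bf K})$ by transporting the computation across the splitting homeomorphism $\Phi$ of Theorem~\ref{th:S.1.2}, following the template of \cite[Theorem~8.4]{MaWi} and \cite[Theorem~I.5.4]{Ch}. Since critical groups depend only on the homotopy type of the sublevel-set pair $(\mathcal{L}_c\cap U,\mathcal{L}_c\cap U\setminus\{\theta\})$ and $\Phi$ is an origin-preserving homeomorphism, I may replace $\mathcal{L}$ near $\theta$ by its pullback
\[
\tilde{\mathcal L}(z,u^+,u^-)=\|u^+\|^2-\|u^-\|^2+\mathcal{L}^\circ(z)
\]
on the product $B_{H^0}(\theta,\epsilon)\times B_{H^+}(\theta,r)\times B_{H^-}(\theta,s)$. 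Because $\Phi$ carries $\theta$ to $\theta$ and sends $\mathcal{L}^\circ$-critical points of the $H^0$-factor to $\mathcal{L}$-critical points, isolatedness of $\theta$ transfers, so $\theta$ is an isolated critical point of $\mathcal{L}^\circ$ on a neighbourhood in $H^0$ as well.

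Next I would eliminate the $H^+$-variable by the fibrewise radial homotopy $\eta_t(z,u^+,u^-)=(z,(1-t)u^+,u^-)$. This homotopy leaves $z$ and $u^-$ fixed and decreases $\|u^+\|^2$, so it maps $\{\tilde{\mathcal L}\le c\}$ into itself and strongly deformation-retracts the pair onto its intersection with $H^0\oplus\{0\}\oplus H^-$. Consequently $C_q(\mathcal L,\theta;{\bf K})\cong C_q(g+h,(\theta,\theta);{\bf K})$, where $g(z):=\mathcal{L}^\circ(z)$ on $B_{H^0}(\theta,\epsilon)$ and $h(u^-):=-\|u^-\|^2$ on $B_{H^-}(\theta,s)$, each having an isolated critical point at the origin of its own factor. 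Working on a product neighbourhood and applying the relative K\"unneth isomorphism, together with $C_j(h,\theta;{\bf K})=H_j(\bar D^\mu,S^{\mu-1};{\bf K})=\delta_{j\mu}{\bf K}$, produces
\[
C_q(\mathcal L,\theta;{\bf K})\cong\bigoplus_{i+j=q}C_i(\mathcal{L}^\circ,\theta;{\bf K})\otimes C_j(h,\theta;{\bf K})\cong C_{q-\mu}(\mathcal{L}^\circ,\theta;{\bf K}),
\]
with no Tor contribution since one factor vanishes except in a single degree, where it equals the full coefficient group. The finiteness and vanishing range $[\mu,\mu+\nu]$ then follow at once: $\mathcal{L}^\circ$ is a $C^1$ function on the finite-dimensional space $H^0$ with $\dim H^0=\nu$ and isolated critical point $\theta$, so a small closed ball in $H^0$ gives finitely generated critical groups supported only in degrees $0,\ldots,\nu$.

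The main obstacle to flag is that \cite[Theorem~8.4]{MaWi} and \cite[Theorem~5.1.17]{Ch1} are stated for $C^2$ functionals, whereas here $\mathcal{L}^\circ$ is merely of class $C^{1}$ (or $C^{2-0}$). The crucial observation is that each step of the argument above—the radial deformation eliminating $u^+$, the excision setting up the K\"unneth reduction, and the product formula itself—is a purely topological manipulation of sublevel sets that requires only continuity of $\mathcal{L}^\circ$ and the product structure of $\tilde{\mathcal L}$ furnished by Theorem~\ref{th:S.1.2}. No second derivative enters, so the cited proofs transcribe with only notational changes. One still has to check that the chosen product neighbourhood is excisable inside $\{\tilde{\mathcal L}\le c\}$, which is standard once $\theta$ is verified to be an isolated critical point of both $\mathcal{L}^\circ$ and $h$.
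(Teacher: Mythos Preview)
Your proposal is correct and follows essentially the approach the paper indicates: the paper states that Theorem~\ref{th:S.1.3} is obtained by slightly modifying the classical proofs in \cite[Theorem~8.4]{MaWi} or \cite[Theorem~I.5.4]{Ch}, the key observation being that those arguments are purely topological manipulations of sublevel sets and hence go through with $\mathcal{L}^\circ$ only of class $C^1$. The only minor difference is that the paper's terse proof of the more general Theorem~\ref{th:S.5.4} phrases the argument via Gromoll--Meyer pairs from \cite{Ch1,ChGh} rather than the direct deformation-plus-K\"unneth route you outline, but the paper explicitly names both templates as valid.
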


As done for $C^2$ functionals in \cite{Ch,Ch1,MaWi,MoMoPa}
some critical point theorems can be derived from Theorem~\ref{th:S.1.3}.
For example, $C_q(\mathcal{L}, \theta;{\bf K})$
is equal to $\delta_{q\mu}{\bf K}$ (resp.
$\delta_{q(\mu+\nu)}{\bf K}$) if $\theta$ is a local minimizer
(resp. maximizer)  of $\mathcal{L}^{\circ}$, and
$C_q(\mathcal{L}, \theta;{\bf K})=0$ for $q\le\mu$ and $q\ge\mu+\nu$
if $\theta$ is neither a local minimizer nor local  maximizer of
$\mathcal{L}^{\circ}$. Similarly, the corresponding generalizations of  Theorems~2.1, 2.1', 2.2, 2.3
 and Corollary~1.3 in \cite[Chapter II]{Ch} can be obtained with
Theorems~\ref{th:S.1.1}, \ref{th:S.1.2} and their equivariant versions in Section~
\ref{sec:S.6}. In particular, as a generalization of
\cite[Theorem~II.1.6]{Ch} (or \cite[Theorem~5.1.20]{Ch1})
we have

\begin{theorem}\label{th:S.1.4}
Let Hypothesis~\ref{hyp:1.1}
hold with $X=H$, and let  $\theta$ be an isolated
critical point of mountain pass type, i.e.,  $C_1(\mathcal{L}, \theta;{\bf K})\ne 0$.
Suppose that $\nu>0$ and $\mu=0$ imply $\nu=1$. Then
$C_q(\mathcal{L}, \theta;{\bf K})=\delta_{q1}{\bf K}$.
\end{theorem}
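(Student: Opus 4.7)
The plan is to reduce the problem to a finite-dimensional analysis of the shifted functional $\mathcal{L}^{\circ}$ via the Shifting Theorem. Since Hypothesis~\ref{hyp:1.1} holds with $X=H$, Theorem~\ref{th:S.1.2} furnishes a $C^1$ functional $\mathcal{L}^{\circ}$ on $B_{H^0}(\theta,\epsilon)$, a $\nu$-dimensional ball in which $\theta$ is an isolated critical point. By Theorem~\ref{th:S.1.3},
\[
C_q(\mathcal{L},\theta;{\bf K}) \cong C_{q-\mu}(\mathcal{L}^{\circ}, \theta;{\bf K}) \qquad \forall\, q\in\mathbb{N}_0,
\]
and $C_k(\mathcal{L}^{\circ},\theta;{\bf K})=0$ for $k<0$ or $k>\nu$. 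Hence $C_1(\mathcal{L},\theta;{\bf K})\ne 0$ forces $0\le 1-\mu\le\nu$, and together with $\mu\ge 0$ this restricts us to the cases $\mu=1$ and $\mu=0$ (the latter requiring $\nu\ge 1$); the case $\mu\ge 2$ is excluded because it would make $1-\mu<0$.

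Next I would handle $\mu=1$. If $\nu=0$, then $\theta$ is nondegenerate and Theorem~\ref{th:S.1.1} directly gives $C_q(\mathcal{L},\theta;{\bf K})=\delta_{q1}{\bf K}$. If $\nu>0$, we have $C_0(\mathcal{L}^{\circ},\theta;{\bf K})\ne 0$. A standard reduced-homology argument on the pair $(\mathcal{L}^{\circ}_c\cap U, \mathcal{L}^{\circ}_c\cap U\setminus\{\theta\})$, with $c=\mathcal{L}^{\circ}(\theta)$, shows that $\{\theta\}$ must be an isolated path-component of the sublevel set, and therefore $\theta$ is a strict local minimizer of $\mathcal{L}^{\circ}$. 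By direct computation via excision this yields $C_q(\mathcal{L}^{\circ},\theta;{\bf K})=\delta_{q0}{\bf K}$, and pulling back through the shift gives $C_q(\mathcal{L},\theta;{\bf K})=\delta_{q1}{\bf K}$.

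For $\mu=0$ the standing hypothesis collapses $\nu$ to $1$, so $\mathcal{L}^{\circ}$ is a $C^1$ function on an open interval around $\theta$ with $\theta$ its isolated critical point. An elementary enumeration of one-dimensional isolated critical points leaves only three cases: local minimum, local maximum, or monotone crossing, with critical groups $\delta_{q0}{\bf K}$, $\delta_{q1}{\bf K}$, and $0$ respectively (the last computed from the fact that both the sublevel set and the sublevel set minus $\theta$ are contractible half-intervals). Since $C_1(\mathcal{L}^{\circ},\theta;{\bf K})\cong C_1(\mathcal{L},\theta;{\bf K})\ne 0$, $\theta$ must be a local maximum, so again $C_q(\mathcal{L},\theta;{\bf K})=\delta_{q1}{\bf K}$.

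The main technical concern is that $\mathcal{L}^{\circ}$ is only of class $C^1$, so the $C^2$ Morse lemma underlying the classical proof (cf.~\cite[Theorem~II.1.6]{Ch}) is unavailable; one cannot directly invoke a Morse normal form on $H^0$. Fortunately the two ingredients actually needed here—the implication ``$C_0\ne 0$ at an isolated critical point $\Rightarrow$ strict local minimum'' and the enumeration of isolated critical points in dimension one—rest only on homological/topological features of sublevel sets of a continuous function near an isolated critical point, and therefore carry over intact to the $C^1$ functional $\mathcal{L}^{\circ}$ produced by Theorem~\ref{th:S.1.2}.
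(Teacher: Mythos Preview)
Your case analysis via the Shifting Theorem is exactly the intended route, and the nondegenerate case, the one-dimensional enumeration for $\mu=0,\ \nu=1$, and the overall architecture match the paper's generalization of \cite[Theorem~II.1.6]{Ch}.

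The one step that needs tightening is the implication ``$C_0(\mathcal{L}^\circ,\theta;{\bf K})\neq 0 \Rightarrow \theta$ is a local minimizer of $\mathcal{L}^\circ$'' in the case $\mu=1$, $\nu>0$. Your reduced-homology observation correctly yields that $\{\theta\}$ is its own path-component of $\mathcal{L}^\circ_c\cap U$, but this alone does \emph{not} force $\theta$ to be a local minimizer: a priori there could be points $z_n\to\theta$ with $\mathcal{L}^\circ(z_n)<c$ lying in other path-components of the sublevel set. Ruling this out requires a deformation argument, so the implication does not ``rest only on homological/topological features of sublevel sets of a continuous function'' as you assert. The paper closes this gap (in the paragraph immediately following the statement) by modifying $\mathcal{L}^\circ$ outside a small ball to obtain a coercive $C^1$ functional on the finite-dimensional space $H^0$, which then satisfies the (PS) condition, and invoking \cite[Proposition~6.95]{MoMoPa} to conclude that $\theta$ is a local minimizer. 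Equivalently, since $\dim H^0<\infty$, the (PS) condition already holds locally for $\mathcal{L}^\circ$ and the second deformation lemma applies directly. With this patch your argument is complete.
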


When $\nu>0$ and $\mu=1$, $C_{0}(\mathcal{
L}^{\circ}, \theta; {\bf K})\ne 0$ by Theorem~\ref{th:S.1.3}.
We can change $\mathcal{L}^\circ$ outside a very small
neighborhood $\theta\in B_{H^0}(\theta,\epsilon)$ to get
a $C^1$ functional on $H^0$ which is coercive (and so satisfies the
(PS)-condition). Then it follows from $C_{0}(\mathcal{
L}^{\circ}, \theta; {\bf K})\ne 0$ and \cite[Proposition~6.95]{MoMoPa}
that $\theta$ is a local minimizer of $\mathcal{L}^\circ$.
As a generalization of Corollary~3.1 in
\cite[page 102]{Ch} we have also:
Under the assumptions of Theorem~\ref{th:S.1.4},
 if the smallest eigenvalue $\lambda_1$ of $B(\theta)=d^2\mathcal{L}(\theta)$
 is simple whenever $\lambda_1=0$, then $\lambda_1\le 0$, and
  ${\rm index}(\nabla \mathcal{L}, \theta)=-1$.
Theorem~5.1 and Corollary~5.1 in
\cite[page 121]{Ch} are also true if ``$f\in C^2(M,\mathbb{R})$"
and ``Fredholm operators $d^2f(x_i)$" are replaced by
``$f\in C^1(M,\mathbb{R})$ and $\nabla f$ is G\^ateaux differentiable"
and ``under some
chart around $p_i$ the functional $f$ has a representation
that satisfies Hypothesis~\ref{hyp:1.1}", respectively.
We can also generalize many critical point theorems in  \cite{Lu2, Lu6} to the setting above,
for example, combing with \cite{JM} a corresponding result to \cite[Theorem~2.10]{Lu2} may be proved
under suitable assumptions. They will be given in other places.\\

\noindent{\bf Strategies of the proof of Theorem~\ref{th:S.1.2} and arrangements in this section}.\quad
Under the assumptions of Theorem~\ref{th:S.1.2}, no known implicit function
theorems or contraction mapping principles can be used to get $\varphi$ in (\ref{e:S.1.2}), which
is rather different from the case in \cite{Lu1,Lu2}.
The methods in \cite{DHK1} provide a possible way to construct such a $\varphi$.
However, as shown below Theorem~\ref{th:S.1.2}, our assumptions cannot guarantee
the conditions (${\bf c}^\prime$) and (${\bf d}^\prime$) above. Fortunately, it is with Lemma~\ref{lem:S.4.1} and Theorem~\ref{th:S.1.1} that we can complete this construction.

In Section~\ref{sec:S.2} we list some lemmas, and prove a more general parameterized version of  Theorem~\ref{th:S.1.1}.
 It is necessary for a key implicit function theorem for a family of potential operators, Theorem~\ref{th:S.4.3},
 which is proved in Section~\ref{sec:S.4};   we also give an inverse function theorem,  Theorem~\ref{th:S.4.4}, there.
In Section~\ref{sec:S.5} we shall prove a parameterized splitting theorem, Theorem~\ref{th:S.5.3},
and a parameterized shifting theorem, Theorem~\ref{th:S.5.4};
 Theorems~\ref{th:S.1.2},~\ref{th:S.1.3} are special cases of
them, respectively. The equivariant case  is considered in Section~\ref{sec:S.6}.

\subsection{Lemmas and a parameterized version of Theorem~\ref{th:S.1.1}}\label{sec:S.2}

Under Hypothesis~\ref{hyp:1.1} we have
the following two lemmas as proved in \cite{Lu1,Lu2}.

\begin{lemma}\label{lem:S.2.1}
 There exists a function $\omega:V\cap X\to [0, \infty)$  such that $\omega(x)\to 0$ as $x\in V\cap X$ and $\|x\|\to
0$, and that for any $x\in V\cap X$,  $u\in H^0\oplus H^-$ and $v\in H$,
$$
|(B(x)u, v)_H- (B(\theta)u, v)_H |\le \omega(x) \|u\|\cdot\|v\|.
$$
\end{lemma}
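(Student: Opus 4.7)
The plan is to exploit the fact that $H^0\oplus H^-$ is finite dimensional, together with the decomposition $B=P+Q$ and the pointwise/uniform continuity statements in (D2) and (D3). Since $H^0$ and $H^-$ are both finite dimensional (as recalled just before the lemma from \cite[Proposition~B.2]{Lu2}), I set $d:=\dim(H^0\oplus H^-)=\mu+\nu$, and by (D1) this whole subspace lies in $X$, so everything we write below is meaningful. I split
$$
(B(x)u-B(\theta)u,v)_H=(P(x)u-P(\theta)u,v)_H+(Q(x)u-Q(\theta)u,v)_H
$$
and handle the two pieces separately.

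For the $Q$-piece, (D3) says that $x\mapsto Q(x)\in\mathscr{L}(H)$ is operator-norm continuous at $\theta$ (with respect to the $H$-topology on $V\cap X$). Hence $\omega_Q(x):=\|Q(x)-Q(\theta)\|_{\mathscr{L}(H)}$ tends to $0$ as $\|x\|\to 0$ with $x\in V\cap X$, and Cauchy--Schwarz gives
$$
|(Q(x)u-Q(\theta)u,v)_H|\le\omega_Q(x)\,\|u\|\cdot\|v\|
$$
uniformly in all $u,v\in H$.

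For the $P$-piece, (D2) only gives pointwise-in-$u$ convergence $\|P(x)u-P(\theta)u\|\to 0$ as $\|x\|\to 0$. This is where the finite dimensionality of $H^0\oplus H^-$ enters: I fix once and for all an orthonormal basis $e_1,\ldots,e_d$ of $H^0\oplus H^-$, and for any $u=\sum_{i=1}^d c_ie_i\in H^0\oplus H^-$ with coefficients satisfying $|c_i|\le\|u\|$, I estimate
$$
\|P(x)u-P(\theta)u\|\le \sum_{i=1}^d|c_i|\,\|P(x)e_i-P(\theta)e_i\|\le \|u\|\cdot\omega_P(x),
$$
where $\omega_P(x):=\sum_{i=1}^d\|P(x)e_i-P(\theta)e_i\|$. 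Applying (D2) separately to each of the $d$ fixed vectors $e_1,\ldots,e_d\in X$ shows $\omega_P(x)\to 0$ as $\|x\|\to 0$ in $V\cap X$.

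Setting $\omega(x):=\omega_P(x)+\omega_Q(x)$ and applying Cauchy--Schwarz once more yields the stated bound for all $u\in H^0\oplus H^-$ and $v\in H$. There is no real obstacle here: the only subtle point is the passage from the pointwise-in-$u$ hypothesis (D2) to a uniform bound on the unit ball of $H^0\oplus H^-$, and this is automatic once one uses that the unit ball of a finite-dimensional normed space is compact (or, as above, expands in a basis).
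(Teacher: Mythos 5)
Your proof is correct and uses essentially the argument the paper relies on (the lemma is quoted from \cite{Lu1,Lu2} rather than proved in-text): split $B=P+Q$, absorb the $Q$-part via the operator-norm continuity at $\theta$ in (D3), and get the $P$-part from the strong continuity in (D2) together with the finite dimensionality of $H^0\oplus H^-$. The only cosmetic difference is that you obtain uniformity over the unit ball of $H^0\oplus H^-$ by expanding in a fixed orthonormal basis, whereas the cited proof does this by a compactness argument on the unit sphere; both are valid and yield the same $\omega$.
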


\begin{lemma}\label{lem:S.2.2}
There exists a  small neighborhood $U\subset V$ of $\theta$ in $H$
and a number $a_1\in (0, 2a_0]$ such that for any $x\in U\cap X$,
\begin{enumerate}
\item[{\rm (i)}] $(B(x)u, u)_H\ge a_1\|u\|^2\;\forall u\in H^+$;
\item[{\rm (ii)}] $|(B(x)u,v)_H|\le\omega(x)\|u\|\cdot\|v\|\;\forall u\in H^+, \forall v\in
H^-\oplus H^0$;
\item[{\rm (iii)}] $(B(x)u,u)_H\le-a_0\|u\|^2\;\forall u\in H^-$.
\end{enumerate}
\end{lemma}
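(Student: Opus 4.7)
The plan is to split the three items according to where $u$ lives. Items (ii) and (iii) will reduce directly to Lemma~\ref{lem:S.2.1}, which controls $(B(x) - B(\theta))u$ precisely when $u$ lies in the finite-dimensional spectral subspace $H^0 \oplus H^-$. Item (i) is the substantive case, since $u \in H^+$ falls outside the scope of Lemma~\ref{lem:S.2.1}; for this I would invoke the full strength of Hypothesis~\ref{hyp:1.1}, in particular the splitting $B = P + Q$ together with (D2)--(D4), in a weak-convergence argument.

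For (iii), I would take $v = u \in H^-$ in Lemma~\ref{lem:S.2.1} to get $|(B(x)u, u)_H - (B(\theta)u, u)_H| \le \omega(x)\|u\|^2$; combined with (\ref{e:S.1.1}) and a shrinking of $V$ so that $\omega(x) \le a_0$, this yields $(B(x)u, u)_H \le -a_0\|u\|^2$. For (ii), for $u \in H^+$ and $v \in H^0 \oplus H^-$, self-adjointness gives $(B(x)u, v)_H = (B(x)v, u)_H$ and likewise for $B(\theta)$, while spectral orthogonality gives $(B(\theta)v, u)_H = 0$. Applying Lemma~\ref{lem:S.2.1} with its roles of $u$ and $v$ reversed then produces $|(B(x)u, v)_H| \le \omega(x)\|u\|\|v\|$.

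For (i), I would first upgrade (D4) to uniform coercivity: there exist $C_0 > 0$ and a neighborhood $U_0 \subset V$ with $(P(x)u, u)_H \ge C_0\|u\|^2$ for every $x \in U_0 \cap X$ and every $u \in H$ (if not, a diagonal sequence $x_k \to \theta$ with unit $u_k$ and $(P(x_k)u_k, u_k)_H \to 0$ would contradict (D4) applied to $(x_k)$). Set $a_1 := \tfrac12 \min(2a_0, C_0)$ and argue by contradiction: if (i) fails, pick $x_k \in V \cap X$ with $\|x_k\| \to 0$ and unit $u_k \in H^+$ with $(B(x_k)u_k, u_k)_H < a_1$, and extract $u_k \rightharpoonup u \in H^+$. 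By (D3) and compactness of $Q(\theta)$, $(Q(x_k)u_k, u_k)_H \to (Q(\theta)u, u)_H$. For the $P$-part I would expand
\[
(P(x_k)u_k, u_k)_H = (P(x_k)u, u)_H + 2(P(x_k)u, u_k - u)_H + (P(x_k)(u_k - u), u_k - u)_H,
\]
using (D2) to pass the first term to $(P(\theta)u, u)_H$ and to kill the cross term (strong convergence $P(x_k)u \to P(\theta)u$ paired against the bounded sequence $u_k - u \rightharpoonup 0$), and using uniform coercivity of $P$ to bound the last term from below by $C_0\|u_k - u\|^2$. Since $\liminf \|u_k - u\|^2 \ge 1 - \|u\|^2$, this gives
\[
\liminf_k (B(x_k)u_k, u_k)_H \ge (B(\theta)u, u)_H + C_0(1 - \|u\|^2) \ge 2a_0\|u\|^2 + C_0(1 - \|u\|^2) \ge 2a_1,
\]
contradicting $(B(x_k)u_k, u_k)_H < a_1$. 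The main obstacle is that (D2) provides only pointwise (strong) convergence $P(x_k)u \to P(\theta)u$ rather than equicontinuity of the quadratic form $v \mapsto (P(x)v, v)_H$ on the unit sphere of $H^+$; the quadratic expansion above is precisely what turns pointwise convergence into useful control along the varying sequence $u_k$.
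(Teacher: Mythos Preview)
Your proof is correct. The paper does not provide its own proof of this lemma but defers to \cite{Lu1,Lu2}; your argument---using Lemma~\ref{lem:S.2.1} together with self-adjointness and the invariance of $H^0\oplus H^-$ under $B(\theta)$ for (ii) and (iii), and a weak-convergence contradiction argument exploiting (D2)--(D4) (with (D4) upgraded to the uniform form (D4*) of Lemma~\ref{lem:D*}) plus the quadratic expansion of $(P(x_k)u_k,u_k)_H$ for (i)---is precisely the standard route and matches what one finds in those references.
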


\begin{lemma}\label{lem:D*}
 (D4) is equivalent to the condition (D4*) in \cite{Lu2}:
\begin{enumerate}
\item[\bf (D4*)] There exist positive constants $\eta_0>0$ and  $C'_0>0$ such that
$$
(P(x)u, u)\ge C'_0\|u\|^2\quad\forall u\in H,\;\forall x\in
B_H(\theta,\eta_0)\cap X.
$$
\end{enumerate}
\end{lemma}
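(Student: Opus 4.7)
The plan is to argue the two implications separately; one direction is essentially trivial, and the other proceeds by contradiction.

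First I would observe that (D4*) implies (D4) almost immediately. Given any sequence $(x_k)\subset V\cap X$ with $\|x_k\|\to 0$, choose $k_0$ so large that $x_k\in B_H(\theta,\eta_0)\cap X$ for all $k\ge k_0$ (which is possible since $\|x_k\|\to 0$). Then the uniform estimate in (D4*) with constant $C_0:=C'_0$ yields $(P(x_k)u,u)_H\ge C_0\|u\|^2$ for every $u\in H$ and every $k\ge k_0$, which is precisely (D4).

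For the reverse implication (D4) $\Rightarrow$ (D4*), I would argue by contradiction. Suppose (D4*) fails. Then for every pair of positive numbers $\eta,C$ there exist $x\in B_H(\theta,\eta)\cap X$ and a unit vector $u\in H$ with $(P(x)u,u)_H<C$. Taking $\eta=C=1/k$ for each $k\in\mathbb{N}$, we produce sequences $(x_k)\subset V\cap X$ with $\|x_k\|<1/k$ and unit vectors $(u_k)\subset H$ such that
\begin{equation*}
(P(x_k)u_k,u_k)_H<\tfrac{1}{k}.
\end{equation*}
Since $\|x_k\|\to 0$, hypothesis (D4) produces constants $C_0>0$ and $k_0\in\mathbb{N}$ with $(P(x_k)u,u)_H\ge C_0\|u\|^2$ for all $u\in H$ and $k\ge k_0$. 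Applying this to $u=u_k$ for $k\ge\max\{k_0,\lceil 2/C_0\rceil\}$ gives $C_0\le(P(x_k)u_k,u_k)_H<1/k\le C_0/2$, a contradiction. Hence (D4*) holds with, say, $C'_0:=C_0/2$ for a suitable $\eta_0>0$.

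There is no real obstacle here; the only subtle point is to make sure the quantifiers in the negation of (D4*) are handled correctly so that the sequence $(x_k)$ actually satisfies the hypothesis of (D4), and the symmetry of $P(x_k)$ (which lies in $\mathscr{L}_s(H)$) together with the freedom to choose the unit vectors $u_k$ at each step permits the diagonal extraction described above.
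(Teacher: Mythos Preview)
Your argument is correct. The implication (D4*)$\Rightarrow$(D4) is indeed trivial, and your contradiction argument for (D4)$\Rightarrow$(D4*) is clean and complete.

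The paper's own justification is much terser and phrased in spectral language: it simply observes that each $P(x)$, being positive definite and bounded self-adjoint, has spectrum a compact subset of $(0,\infty)$ (so $\inf\sigma(P(x))>0$), and then asserts that (D4) forces this infimum to stay bounded below near $\theta$. Your contradiction argument is exactly what makes that last assertion rigorous; the two approaches are the same idea, yours is just written out explicitly while the paper leaves the final step to the reader. One small remark: the sentence at the end about ``the symmetry of $P(x_k)$'' and ``diagonal extraction'' is unnecessary---no symmetry or subsequence is used beyond the fact that you may pick, for each $k$, some $x_k$ and $u_k$ witnessing the failure of the bound.
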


 Indeed, since each $P(x)$ is a positive definite bounded
linear operator, its spectral set is a bounded closed subset in $(0,\infty)$, and
$\sigma(\sqrt{P(x)})=\{\sqrt{\lambda}\,|\, \lambda\in\sigma(P(x))\}$.
It follows that (D4) implies (D4*).

The following result is easily verified, see \cite{Lu7}.

\begin{lemma}\label{lem:S.2.4}
Suppose that {\rm Hypothesis~\ref{hyp:1.1}} with $X=H$ is satisfied. Then for
any closed subspace  $\hat{H}\subset H$, $(\hat{H}, \hat{V}, \hat{\mathcal{L}})$
satisfies {\rm Hypothesis~\ref{hyp:1.1}} with $X=H$, where
 $\hat{V}:=V\cap\hat{H}$ and $\hat{\mathcal{L}}:=\mathcal{L}|_{\hat{V}}$.
\end{lemma}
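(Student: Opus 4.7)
The plan is a direct verification: transport every ingredient of Hypothesis~\ref{hyp:1.1} from $H$ to $\hat{H}$ through the orthogonal projection $\hat{\pi}:H\to\hat{H}$. Since the original hypothesis is assumed with $X=H$, on the subspace we take $\hat{X}:=\hat{H}$, so condition (D1) is vacuous and the density requirement is automatic.

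First I would identify the candidate data. For $x\in\hat{V}:=V\cap\hat{H}$ and $v\in\hat{H}$, $d\hat{\mathcal{L}}(x)[v]=d\mathcal{L}(x)[v]=(\nabla\mathcal{L}(x),v)_H=(\hat{\pi}\nabla\mathcal{L}(x),v)_H$, so the $\hat{H}$-gradient of $\hat{\mathcal{L}}$ at $x$ is $\hat{\pi}\nabla\mathcal{L}(x)$. A direct difference-quotient argument using the G\^ateaux differentiability of $\nabla\mathcal{L}$ and the continuity of $\hat{\pi}$ then identifies the G\^ateaux derivative of $\nabla\hat{\mathcal{L}}$ at $x$ with $\hat{B}(x):=\hat{\pi}\,B(x)|_{\hat{H}}\in\mathscr{L}_s(\hat{H})$, self-adjointness being inherited from $B(x)$. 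I would then set $\hat{P}(x):=\hat{\pi}\,P(x)|_{\hat{H}}$ and $\hat{Q}(x):=\hat{\pi}\,Q(x)|_{\hat{H}}$, so that $\hat{B}=\hat{P}+\hat{Q}$. The key identity $(\hat{\pi}Su,u)_H=(Su,u)_H$, valid for any $S\in\mathscr{L}(H)$ and any $u\in\hat{H}$, immediately gives $(\hat{P}(x)u,u)_H=(P(x)u,u)_H>0$ for $u\in\hat{H}\setminus\{\theta\}$; compactness of $\hat{Q}(x)$ follows because it is the composition of a compact with bounded operators.

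It remains to check (D2)--(D4), each of which descends with the same constants. For a sequence $(x_k)\subset\hat{V}$ with $\|x_k\|\to 0$ and $u\in\hat{H}$, the estimate $\|\hat{P}(x_k)u-\hat{P}(\theta)u\|\le\|P(x_k)u-P(\theta)u\|\to 0$ yields (D2), and similarly $\|\hat{Q}(x)-\hat{Q}(\theta)\|_{\mathscr{L}(\hat{H})}\le\|Q(x)-Q(\theta)\|_{\mathscr{L}(H)}\to 0$ yields (D3). For (D4) I would invoke Lemma~\ref{lem:D*} to pass to the equivalent (D4*): there exist $\eta_0,C_0'>0$ with $(P(x)u,u)_H\ge C_0'\|u\|^2$ for all $u\in H$ and all $x\in B_H(\theta,\eta_0)$; by the quadratic-form identity above, $(\hat{P}(x)u,u)_H\ge C_0'\|u\|^2$ for $u\in\hat{H}$ and $x\in B_H(\theta,\eta_0)\cap\hat{H}$, which is (D4*) for $\hat{P}$. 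There is no real obstacle; the whole argument rests on the elementary fact that orthogonal projection onto $\hat{H}$ intertwines with pairings against vectors in $\hat{H}$, so positive definiteness, operator-norm continuity, and pointwise strong convergence of the relevant operator-valued maps all pass to the subspace verbatim.
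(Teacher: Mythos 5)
Your proof is correct and is exactly the routine verification the paper has in mind (the paper omits the argument, calling it "easily verified" and citing \cite{Lu7}): restrict $\mathcal{L}$, take $\hat{B}=\hat\pi B|_{\hat H}$, $\hat P=\hat\pi P|_{\hat H}$, $\hat Q=\hat\pi Q|_{\hat H}$, and use the identity $(\hat\pi S u,u)_H=(Su,u)_H$ for $u\in\hat H$ together with $\|\hat\pi\|\le 1$ to pass (D2)--(D4) (and positive definiteness, with the same constants) to the subspace. One small remark: Lemma~\ref{lem:D*} is not really needed, since (D4) itself transfers directly because any sequence in $\hat V$ tending to $\theta$ is a sequence in $V$ and one may simply restrict the resulting inequality to $u\in\hat H$; also note that the quadratic-form identity gives the full lower bound $(\hat P(x)u,u)_H\ge c_x\|u\|^2$, i.e.\ positive definiteness in the strong sense the paper uses, not merely positivity.
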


For later applications in \cite{Lu8}, we shall prove the following more general version of Theorem~\ref{th:S.1.1}.

\begin{theorem}\label{th:S.3.1}
Under Hypothesis~\ref{hyp:1.1}, let  $\mathcal{G}\in C^1(V,\mathbb{R})$
satisfy: {\rm i)} $\mathcal{G}'(\theta)=\theta$,
{\rm ii)} the gradient $\nabla\mathcal{G}$ has
G\^ateaux derivative $\mathcal{G}''(u)\in \mathscr{L}_s(H)$ at any $u\in V$, and $\mathcal{G}'': V\to \mathscr{L}_s(H)$
 are continuous at $\theta$.
Suppose that the critical point  $\theta$  of $\mathcal{L}$ is a nondegenerate.
 Then there exist $\rho>0$, $\epsilon>0$, a family of open neighborhoods of $\theta$ in
$H$, $\{W_\lambda\,|\, |\lambda|\le\rho\}$
and a family of origin-preserving homeomorphisms, $\phi_\lambda: B_{H^+}(\theta,\epsilon) +
B_{H^-}(\theta,\epsilon)\to W_\lambda$, $|\lambda|\le\rho$,
 such that
$$
(\mathcal{L}+\lambda\mathcal{G})\circ\phi_\lambda(u^++ u^-)=\|u^+\|^2-\|u^-\|^2,
\quad\forall (u^+, u^-)\in B_{H^+}(\theta,\epsilon)\times
B_{H^-}(\theta,\epsilon).
$$
Moreover, $[-\rho,\rho]\times (B_{H^+}(\theta,\epsilon) +
B_{H^-}(\theta,\epsilon))\ni (\lambda, u)\mapsto \phi_\lambda(u)\in H$
is continuous, and $\theta$ is an isolated critical point of each $\mathcal{L}+\lambda\mathcal{G}$. Finally, if $\hat{H}$ is a closed subspace containing $H^-$, and $\hat{H}^+$ is the orthogonal
complement of $H^-$ in $\hat{H}$, i.e., $\hat{H}^+=\hat{H}\cap H^+$, then each
$\phi_\lambda$ restricts to a homeomorphism $\hat{\phi}_\lambda:(B_{\hat{H}^+}(\theta,\epsilon) + B_{H^-}(\theta,\epsilon))
\to\hat{W}_\lambda:=W_\lambda\cap\hat{H}$, and
$(\mathcal{L}+\lambda\mathcal{G})\circ\hat{\phi}_\lambda(u^++ u^-)=\|u^+\|^2-\|u^-\|^2$
for all $(u^+, u^-)\in B_{\hat{H}^+}(\theta,\epsilon)\times B_{H^-}(\theta,\epsilon)$.
\end{theorem}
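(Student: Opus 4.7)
The plan is to follow the strategy of Theorem~\ref{th:S.1.1} (the case $\lambda=0$) while tracking the parameter $\lambda$ throughout. For $|\lambda|\le\rho$ sufficiently small, set $\mathcal{L}_\lambda:=\mathcal{L}+\lambda\mathcal{G}$; its gradient is G\^ateaux differentiable on $V\cap X$ with derivative $B_\lambda(u):=B(u)+\lambda\mathcal{G}''(u)$. By the hypothesized continuity of $\mathcal{G}''$ at $\theta$ together with the invertibility of $B(\theta)$ (nondegeneracy), for $|\lambda|$ small $B_\lambda(\theta)$ is invertible, its positive and negative spectral subspaces $H^{\pm}_\lambda$ are small graphs over $H^{\pm}$, and the Morse index is preserved. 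Lemmas~\ref{lem:S.2.1} and \ref{lem:S.2.2} continue to hold for $B_\lambda$ uniformly in small $\lambda$, which is the only place where the parameter enters the basic estimates.

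First I would upgrade Theorem~\ref{th:S.4.4} to a parameterized inverse function theorem: there exist $\rho_0,\delta>0$ such that for $|\lambda|\le\rho_0$ the map $\nabla\mathcal{L}_\lambda:B_H(\theta,\delta)\to H$ is a homeomorphism onto an open neighborhood of $\theta$, with inverse jointly continuous in $(\lambda,y)$. Next I would employ a Palais-type normalization. By integrating the G\^ateaux derivative along segments from $\theta$ one obtains, first for $u\in V\cap X$ and then, by (D2)--(D3) and density of $X$ in $H$, for $u$ in a small $H$-ball around $\theta$,
\[
\mathcal{L}_\lambda(u)-\mathcal{L}_\lambda(\theta)=\tfrac{1}{2}\bigl(\tilde{A}_\lambda(u)u,u\bigr)_H,\qquad \tilde{A}_\lambda(u):=2\int_0^1(1-t)B_\lambda(tu)\,dt,
\]
with $\tilde{A}_\lambda(\theta)=B_\lambda(\theta)$. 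I would then construct $T_\lambda(u)\in\mathscr{L}(H)$ with $T_\lambda(\theta)=I$, depending continuously on $(\lambda,u)$ in the operator norm, satisfying $T_\lambda(u)^\ast B_\lambda(\theta)T_\lambda(u)=\tilde{A}_\lambda(u)$, the adjoint being taken in an inner product adapted to the splitting. Such a $T_\lambda(u)$ is obtained as the operator square root of $B_\lambda(\theta)^{-1}\tilde{A}_\lambda(u)$, which lies in a neighborhood of $I$ for small $(\lambda,u)$ by Lemma~\ref{lem:S.2.1} and (D4). Letting $\phi_\lambda$ be the local inverse of $v\mapsto T_\lambda(v)v$ and post-composing with a linear normalization $S_\lambda$ that converts $\tfrac{1}{2}(B_\lambda(\theta)\cdot,\cdot)_H$ into the normal form $\|u^+\|^2-\|u^-\|^2$ relative to the fixed splitting $H=H^+\oplus H^-$ yields the required chart.

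Joint continuity of $(\lambda,u)\mapsto\phi_\lambda(u)$ follows from the corresponding continuity of $T_\lambda$ and $S_\lambda$; the assertion that $\theta$ is an isolated critical point of each $\mathcal{L}_\lambda$ is immediate from the Morse normal form. The restriction property for closed subspaces $\hat{H}\supset H^-$ follows because $\tilde{A}_\lambda(u)$ preserves $\hat{H}$ whenever $u\in\hat{H}$ (via Lemma~\ref{lem:S.2.4} when $X=H$, and by a parallel direct verification otherwise), hence $T_\lambda(u)$, $S_\lambda$, and $\phi_\lambda(u)$ also preserve $\hat{H}$, giving the claimed $\hat{\phi}_\lambda$.

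The main obstacle will be making sense of $T_\lambda(u)$ under the weak regularity of $B$. Since $B_\lambda$ is only G\^ateaux differentiable and a priori defined only on $V\cap X$, the integral $\tilde{A}_\lambda(u)$ must be interpreted weakly and then extended continuously from $V\cap X$ to $V$. The decomposition $B=P+Q$, the strong continuity (D2), the compactness-based continuity (D3), and the uniform positivity (D4) are precisely what is needed both to carry out this extension in $\mathscr{L}_s(H)$ and to ensure that $B_\lambda(\theta)^{-1}\tilde{A}_\lambda(u)$ stays in a neighborhood of $I$ on which the analytic square-root functional calculus is well-defined and jointly continuous in $(\lambda,u)$.
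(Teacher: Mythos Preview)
Your proposal follows the classical Palais square-root normalization, which is genuinely different from the paper's route, but under Hypothesis~\ref{hyp:1.1} it has a real gap at the step where you claim $B_\lambda(\theta)^{-1}\tilde A_\lambda(u)$ lies in an operator-norm neighborhood of $I$.

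The difficulty is that condition (D2) gives only \emph{strong} convergence $P(x_k)v\to P(\theta)v$ for each fixed $v$ as $\|x_k\|\to 0$, not convergence of $P(x_k)$ to $P(\theta)$ in $\mathscr{L}(H)$. Consequently $\tilde A_\lambda(u)-B_\lambda(\theta)=2\int_0^1(1-t)\bigl[P(tu)-P(\theta)\bigr]\,dt + (\text{norm-small terms})$ need not be small in operator norm, and Lemma~\ref{lem:S.2.1} does not help: it controls $B(x)-B(\theta)$ only on the \emph{finite-dimensional} subspace $H^0\oplus H^-$, not on $H^+$. Without operator-norm closeness to $I$ you cannot invoke the holomorphic functional calculus to define the square root, nor obtain the continuity of $u\mapsto T_\lambda(u)$ needed to invert $v\mapsto T_\lambda(v)v$. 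Condition (D4) gives uniform positive definiteness of $P(x)$, but that does not bound $\|P(x)-P(\theta)\|$. There is a further issue when $X\ne H$: $B_\lambda(tu)$ is only defined for $tu\in V\cap X$, and since (D2)--(D3) are stated only at $\theta$, there is no continuity statement that would let you extend $\tilde A_\lambda$ from $V\cap X$ to $V$ in $\mathscr{L}_s(H)$.

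The paper avoids all of this by not attempting a second-order normalization. It works entirely at the level of first derivatives: from Lemma~\ref{lem:S.2.2} and the mean value theorem one gets, uniformly for $|\lambda|\le\rho$, the concavity estimate $D\mathcal{L}_\lambda(u^++u^-_2)[u^-_2-u^-_1]-D\mathcal{L}_\lambda(u^++u^-_1)[u^-_2-u^-_1]\le -\tfrac{a_0}{2}\|u^-_2-u^-_1\|^2$ and the coercivity estimate $D\mathcal{L}_\lambda(u^++u^-)[u^+-u^-]\ge \tfrac{a_1}{2}\|u^+\|^2+\tfrac{a_0}{2}\|u^-\|^2$. These are exactly the hypotheses of \cite[Theorem~A.1]{Lu2} (the parameterized Duc--Hung--Khai Morse lemma), which produces $\phi_\lambda$ directly, with joint continuity in $(\lambda,u)$ and the restriction property to $\hat H$ following by inspection of that construction. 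The isolated-critical-point claim is read off from the second inequality. This route needs nothing beyond the strong/compact continuity in (D2)--(D3), which is precisely the point of the theorem.
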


\begin{proof}
Take a small $\epsilon>0$ so that $\bar
B_{H^+}(\theta,\epsilon)\oplus \bar
B_{H^-}(\theta,\epsilon)$ is contained in the open
neighborhood $U$ in Lemma~\ref{lem:S.2.2}. As in Step 3 of the proof of \cite[Theorem~1.1]{Lu1}
(or the proof of \cite[Lemma~3.5]{Lu2}), it follows from
the mean value theorem and Lemma~\ref{lem:S.2.2} that
\begin{eqnarray}\label{e:S.3.1}
&&D\mathcal{L}(u^++u^-_2)[u^-_2-u^-_1]
- D\mathcal{L}(u^++u^-_1)[u^-_2-u^-_1]\le -a_0\|u^-_2-u^-_1\|^2,\\
&&D\mathcal{L}(u^++ u^-)[u^+-u^-]
\ge  a_1\|u^+\|^2+ a_0\|u^-\|^2\label{e:S.3.2}
\end{eqnarray}
for all $u^+\in \bar B_{H^+}(\theta,\epsilon)$ and $u^-_i\in\bar
B_{H^-}(\theta,\epsilon)$, $i=1, 2$. (See \cite{Lu7} for details).

 Since $\mathcal{G}'': V\to \mathscr{L}_s(H)$ are
continuous at $\theta$, as in the proofs
of (\ref{e:S.3.1}) and (\ref{e:S.3.2}) in \cite{Lu7}
we may shrink $\epsilon>0$ and find $\rho>0$ such that
\begin{eqnarray*}
&&|\lambda|\cdot|D\mathcal{G}(u^++u^-_2)[u^-_2-u^-_1]
- D\mathcal{G}(u^++u^-_1)[u^-_2-u^-_1]| \le
\frac{a_0}{2}\|u^-_2-u^-_1\|^2,\\
&&|\lambda D\mathcal{G}(u^++ u^-)[u^+-u^-]|
\le  \frac{a_1}{2}\|u^+\|^2+ \frac{a_0}{2}\|u^-\|^2
\end{eqnarray*}
for all $\lambda\in [-\rho,\rho]$, $u^+\in \bar
B_{H^+}(\theta,\epsilon)$ and $u^-, u^-_i\in\bar
B_{H^-}(\theta,\epsilon)$, $i=1, 2$.
The first inequality and (\ref{e:S.3.1})  lead to
\begin{eqnarray}\label{e:S.3.3}
&&D(\mathcal{L}+\lambda\mathcal{G})(u^++u^-_2)[u^-_2-u^-_1]
- D(\mathcal{L}+\lambda\mathcal{G})(u^++u^-_1)[u^-_2-u^-_1]\nonumber\\
&& \le -\frac{a_0}{2}\|u^-_2-u^-_1\|^2,\quad\forall (\lambda, u^+, u^-)\in[-\rho,\rho]\times\bar B_{H^+}(\theta,\epsilon)\times\bar
B_{H^-}(\theta,\epsilon).
\end{eqnarray}
The latter and (\ref{e:S.3.2})  yield for all $(\lambda, u^+, u^-)\in[-\rho,\rho]\times\bar B_{H^+}(\theta,\epsilon)\times\bar
B_{H^-}(\theta,\epsilon)$,
\begin{eqnarray}\label{e:S.3.4}
D(\mathcal{L}+\lambda\mathcal{G})(u^++ u^-)[u^+-u^-]
\ge  \frac{a_1}{2}\|u^+\|^2+ \frac{a_0}{2}\|u^-\|^2.
\end{eqnarray}
In particular, this implies that $\theta$ is an isolated critical point of each $\mathcal{L}+\lambda\mathcal{G}$ and that
$$
D(\mathcal{L}+\lambda\mathcal{G})(u^+)[u^+]
\ge  \frac{a_1}{2}\|u^+\|^2> p(\|u^+\|),\quad\forall (\lambda, u^+)\in [-\rho,\rho]\times\bar
B_{H^+}(\theta,\epsilon)\setminus\{\theta\},
$$
where $p:(0, \varepsilon]\to (0, \infty)$ is a non-decreasing
function given by $p(t)=\frac{a_1}{4}t^2$.
 This, (\ref{e:S.3.3}) and (\ref{e:S.3.4})
show that the conditions of \cite[Theorem~A.1]{Lu2} are satisfied.
The first two conclusions follow immediately.

For the final claim, note that (\ref{e:S.3.3}) and (\ref{e:S.3.4}) naturally hold
for all $u^+\in\bar{B}_{\hat{H}^+}(\theta, \epsilon)$ and $u^-, u^-_i\in \bar{B}_{H^-}(\theta, \epsilon)$,
$i=1,2$. Carefully checking the proof of \cite[Theorem~A.1]{Lu2} the conclusion is easily obtained.
(Note that this claim seems unable to be directly derived from Lemma~\ref{lem:S.2.4}.)
\end{proof}

\subsection{An implicit function theorem for a family of potential operators}\label{sec:S.4}

Under Hypothesis~\ref{hyp:1.1}, we shall prove an implicit function theorem, Theorem~\ref{th:S.4.3},
which implies the first claim in Theorem~\ref{th:S.1.2}, and  an inverse function theorem, Theorem~\ref{th:S.4.4}.

Take $\epsilon>0$, $r>0$ and $s>0$ so small that the closures of both
$$
\mathcal{Q}_{r,s}:=B_{H^+}(\theta,r)\oplus B_{H^-}(\theta,s)\quad\hbox{and}\quad
B_{H^0}(\theta,\epsilon)\oplus\mathcal{Q}_{r,s}
$$
are contained in the neighborhood $U$ in Lemma~\ref{lem:S.2.2}.
Since $H^0\subset X$,  $X\cap \mathcal{Q}_{r,s}$ is also dense in $\mathcal{Q}_{r,s}$.
Let $P^\bot=I-P^0=P^++P^-$. By Lemma~\ref{lem:S.2.2} we obtain
$a_0'>0, a_1'>0$ such that
\begin{eqnarray}
(P^\bot\nabla\mathcal{L}(z+u), u^+)_H=(\nabla\mathcal{L}(u), u^+)_H
\ge a_1'\|u^+\|^2-a_0'[\omega(z+u)]^2\|u^-\|^2,\label{e:S.4.1}\\
(P^\bot\nabla\mathcal{L}(z+u), u^-)_H=(\nabla\mathcal{L}(u), u^-)_H
\le -a_1'\|u^-\|^2+a_0'[\omega(z+u)]^2\|u^+\|^2\label{e:S.4.2}
\end{eqnarray}
for all $u\in \overline{\mathcal{Q}_{r,s}}$ and $z\in\bar B_{H^0}(\theta,\epsilon)$.
 Since $\omega(z+u)\to 0$ as $\|z+u\|\to 0$, by shrinking
$r>0,s>0$ and $\epsilon>0$ we can require
that $[\omega(z+u)]^2<\frac{a_1'}{2a_0'}$ for all $(z,u)\in \bar B_{H^0}(\theta,\epsilon)\times\overline{\mathcal{Q}_{r,s}}$.
This, (\ref{e:S.4.1}) and (\ref{e:S.4.2}) lead to, respectively,
\begin{eqnarray*}
&&(P^\bot\nabla\mathcal{L}(z+u), u^+)_H\ge a_1'\|u^+\|^2-\frac{a_1'}{2}\|u^-\|^2\quad\forall(u,z)\in \overline{\mathcal{Q}_{r,s}}\times\bar B_{H^0}(\theta,\epsilon),\\
&&(P^\bot\nabla\mathcal{L}(z+u), u^-)_H\le -a_1'\|u^-\|^2+\frac{a_1'}{2}\|u^+\|^2\quad
\forall (u,z)\in \overline{\mathcal{Q}_{r,s}}\times\bar B_{H^0}(\theta,\epsilon),
\end{eqnarray*}
 and hence for all $u\in \overline{\mathcal{Q}_{r,s}}$, $z_j\in\bar B_{H^0}(\theta,\epsilon)$, $j=1,2$,
and $t\in [0,1]$,
\begin{eqnarray}
&&\bigl(tP^\bot\nabla\mathcal{L}(z_1+u)+ (1-t)P^\bot\nabla\mathcal{L}(z_2+u), u^+\bigr)_H
\ge a_1'\|u^+\|^2-\frac{a_1'}{2}\|u^-\|^2,\quad\label{e:S.4.6}\\
&&\bigl(tP^\bot\nabla\mathcal{L}(z_1+u)+ (1-t)P^\bot\nabla\mathcal{L}(z_2+u), u^-\bigr)_H
\le -a_1'\|u^-\|^2+\frac{a_1'}{2}\|u^+\|^2.\quad\label{e:S.4.7}
\end{eqnarray}

\begin{lemma}\label{lem:S.4.1}
Let $\Omega=[0,1]\times \bar B_{H^0}(\theta,\epsilon)\times \bar B_{H^0}(\theta,\epsilon)\times
\partial\overline{\mathcal{Q}_{r,s}}$. Then
$$
\inf\{\|tP^\bot\nabla\mathcal{L}(z_1+u)+ (1-t)P^\bot\nabla\mathcal{L}(z_2+u)\|\,|\,
 (t,z_1,z_2,u)\in\Omega \}>0.
$$
\end{lemma}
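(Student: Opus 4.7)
The plan is to combine the two inequalities (\ref{e:S.4.6}) and (\ref{e:S.4.7}) to get a uniform positive lower bound on the pairing of $tP^\bot\nabla\mathcal{L}(z_1+u)+(1-t)P^\bot\nabla\mathcal{L}(z_2+u)$ with the explicit test vector $u^+-u^-$, then use Cauchy--Schwarz together with the fact that points of $\partial\overline{\mathcal{Q}_{r,s}}$ are bounded away from $\theta$. No compactness, continuity, or contradiction argument is needed; the estimates in (\ref{e:S.4.6})--(\ref{e:S.4.7}) already supply the bound pointwise and uniformly.

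Concretely, denote $G(t,z_1,z_2,u):=tP^\bot\nabla\mathcal{L}(z_1+u)+(1-t)P^\bot\nabla\mathcal{L}(z_2+u)$. Subtracting (\ref{e:S.4.7}) from (\ref{e:S.4.6}) gives, for every $(t,z_1,z_2,u)\in\Omega$,
\begin{equation*}
\bigl(G(t,z_1,z_2,u),\,u^+-u^-\bigr)_H \;\ge\; a_1'\|u^+\|^2 - \tfrac{a_1'}{2}\|u^-\|^2 + a_1'\|u^-\|^2 - \tfrac{a_1'}{2}\|u^+\|^2 \;=\; \tfrac{a_1'}{2}\|u\|^2,
\end{equation*}
since $\|u\|^2=\|u^+\|^2+\|u^-\|^2$ by orthogonality of $H^+$ and $H^-$. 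The same orthogonality gives $\|u^+-u^-\|=\|u\|$, so Cauchy--Schwarz yields
\begin{equation*}
\|G(t,z_1,z_2,u)\|\;\ge\;\tfrac{a_1'}{2}\|u\|.
\end{equation*}

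Finally, on $\partial\overline{\mathcal{Q}_{r,s}}$ one has either $\|u^+\|=r$ or $\|u^-\|=s$, so $\|u\|\ge\min\{r,s\}$. Therefore
\begin{equation*}
\inf_{\Omega}\|G(t,z_1,z_2,u)\|\;\ge\;\tfrac{a_1'}{2}\min\{r,s\}\;>\;0,
\end{equation*}
which is the desired conclusion. The only subtle point is verifying that the earlier shrinking of $r,s,\epsilon$ (which made $[\omega(z+u)]^2<a_1'/(2a_0')$ on $\bar B_{H^0}(\theta,\epsilon)\times\overline{\mathcal{Q}_{r,s}}$ and thereby validated (\ref{e:S.4.6})--(\ref{e:S.4.7})) is still in force; this has been arranged in the paragraph preceding the lemma, so no further adjustment of constants is required and the proof reduces to the two-line arithmetic above.
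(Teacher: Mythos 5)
Your proof is correct, and it takes a genuinely different and more economical route than the paper. You test the convex combination $G(t,z_1,z_2,u)$ against the single vector $u^+-u^-$, adding (\ref{e:S.4.6}) and the negative of (\ref{e:S.4.7}) to get $\bigl(G,u^+-u^-\bigr)_H\ge \tfrac{a_1'}{2}\bigl(\|u^+\|^2+\|u^-\|^2\bigr)=\tfrac{a_1'}{2}\|u\|^2$, and then Cauchy--Schwarz with $\|u^+-u^-\|=\|u\|$ (orthogonality of $H^+$ and $H^-$) gives the pointwise bound $\|G\|\ge\tfrac{a_1'}{2}\|u\|\ge\tfrac{a_1'}{2}\min\{r,s\}$ on $\partial\overline{\mathcal{Q}_{r,s}}$, where the boundary is the relative one in $H^+\oplus H^-$ exactly as decomposed in the paper, so every boundary point has $\|u^+\|=r$ or $\|u^-\|=s$. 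The paper instead argues by contradiction: it splits $\partial\overline{\mathcal{Q}_{r,s}}$ into the two faces $\Lambda_1,\Lambda_2$, assumes a minimizing sequence with $\|G\|\to0$, and pairs $G$ separately with $u_n^+$ and $u_n^-$ to derive the incompatible estimates $r^2/\|u_n^-\|^2\le 2/3$ and $r^2/\|u_n^-\|^2\ge 4/3$ (and symmetrically with $s$ on $\Lambda_2$), using the same two inequalities (\ref{e:S.4.6})--(\ref{e:S.4.7}). Your argument buys a shorter proof and an explicit quantitative lower bound $\tfrac{a_1'}{2}\min\{r,s\}$ for the infimum, rather than mere positivity; the paper's case-by-case contradiction scheme is the one it reuses later (e.g.\ in Step 3 of Theorem~\ref{th:S.4.3}), which may explain its choice, but for this lemma nothing beyond your two-line computation is needed, and your remark that the earlier shrinking of $r,s,\epsilon$ validating (\ref{e:S.4.6})--(\ref{e:S.4.7}) must remain in force is the only hypothesis to keep track of.
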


\begin{proof}
Since $\partial\overline{\mathcal{Q}_{r,s}}=[(\partial B_{H^+}(\theta,r))\oplus \bar B_{H^-}(\theta,s)]\cup
[ \bar B_{H^+}(\theta,r)\oplus (\partial B_{H^-}(\theta,s))]$, we have
 $\Omega=\Lambda_1\cup\Lambda_2$, where
$\Lambda_1=[0,1]\times \bar B_{H^0}(\theta,\epsilon)\times \bar B_{H^0}(\theta,\epsilon)\times
(\partial B_{H^+}(\theta,r))\oplus \bar B_{H^-}(\theta,s)$ and
$\Lambda_2=[0,1]\times \bar B_{H^0}(\theta,\epsilon)\times \bar B_{H^0}(\theta,\epsilon)\times
B_{H^+}(\theta,r)\oplus (\partial\bar B_{H^-}(\theta,s))$. Firstly, let us prove
\begin{equation}\label{e:S.4.8}
\inf\{\|tP^\bot\nabla\mathcal{L}(z_1+u)+ (1-t)P^\bot\nabla\mathcal{L}(z_2+u)\|\,|\,
 (t,z_1,z_2,u)\in\Lambda_1 \}>0.
\end{equation}
By a contradiction, suppose that there exist sequences $(t_n)\subset [0,1]$ and
$$
(z_n),\,(z_n')\subset \bar B_{H^0}(\theta,\epsilon),\quad (u_n)\subset
(\partial B_{H^+}(\theta,r))\oplus \bar B_{H^-}(\theta,s)
$$
 such that $\|t_nP^\bot\nabla\mathcal{L}(z_n+u_n)+(1-t_n)P^\bot\nabla\mathcal{L}(z_n'+u_n)\|\to 0$.
We can assume
\begin{eqnarray}
&&(t_nP^\bot\nabla\mathcal{L}(z_n+u_n)+(1-t_n)P^\bot\nabla\mathcal{L}(z_n'+u_n), u^+_n)_H
\le \frac{a_1'r^2}{4},\quad\forall n\in\mathbb{N},\quad\label{e:S.4.10}\\
&&(t_nP^\bot\nabla\mathcal{L}(z_n+u_n)+(1-t_n)P^\bot\nabla\mathcal{L}(z_n'+u_n), u^-_n)_H
\ge -\frac{a_1'r^2}{4},\quad\forall n\in\mathbb{N}.\quad\label{e:S.4.11}
\end{eqnarray}
Note that $u^+_n\in \partial B_{H^+}(\theta,r))$ and $u^-_n\in\bar B_{H^-}(\theta,s)$.
So (\ref{e:S.4.10}) and (\ref{e:S.4.6}) lead to
\begin{eqnarray*}
\frac{a_1'}{4}r^2\ge (t_nP^\bot\nabla\mathcal{L}(z_n+u_n)+(1-t_n)P^\bot\nabla\mathcal{L}(z_n'+u_n), u^+_n)_H
\ge  a_1'r^2-\frac{a_1'}{2}\|u^-_n\|^2
\end{eqnarray*}
and therefore
\begin{equation}\label{e:S.4.12}
\frac{r^2}{\|u_n^-\|^2}\le\frac{2}{3},\quad\forall n\in\mathbb{N}.
\end{equation}
Moreover, from (\ref{e:S.4.7}) and (\ref{e:S.4.11})  we conclude that
\begin{eqnarray*}
-\frac{a_1'r^2}{4}\le
(t_nP^\bot\nabla\mathcal{L}(z_n+u_n)+(1-t_n)P^\bot\nabla\mathcal{L}(z_n'+u_n), u^-_n)_H
\le -a_1'\|u^-_n\|^2+\frac{a_1'r^2}{2}
\end{eqnarray*}
and hence
$\frac{r^2}{\|u_n^-\|^2}\ge\frac{4}{3},\;\forall n\in\mathbb{N}$,
which contradicts  (\ref{e:S.4.12}). (\ref{e:S.4.8}) is proved.

 Next, we only need to prove
$$
\inf\{\|tP^\bot\nabla\mathcal{L}(z_1+u)+ (1-t)P^\bot\nabla\mathcal{L}(z_2+u)\|\,|\,
 (t,z_1,z_2,u)\in\Lambda_2 \}>0
$$
again. As above, suppose that  there  exist sequences $(t_n)\subset [0,1]$ and
$$
 (z_n),\,(z_n')\subset \bar B_{H^0}(\theta,\epsilon),\quad
(v_n)\subset
B_{H^+}(\theta,r)\oplus (\partial B_{H^-}(\theta,s))
$$
 such that $\|t_nP^\bot\nabla\mathcal{L}(z_n+v_n)+(1-t_n)P^\bot\nabla\mathcal{L}(z_n'+v_n)\|\to 0$.
 As above we can assume
\begin{eqnarray}
&&(t_nP^\bot\nabla\mathcal{L}(z_n+v_n)+(1-t_n)P^\bot\nabla\mathcal{L}(z_n'+v_n), v^+_n)_H
\le \frac{a_1's^2}{4},\quad\forall n\in\mathbb{N},\quad\label{e:S.4.14}\\
&&(t_nP^\bot\nabla\mathcal{L}(z_n+v_n)+(1-t_n)P^\bot\nabla\mathcal{L}(z_n'+v_n), v^-_n)_H
\ge -\frac{a_1's^2}{4},\quad\forall n\in\mathbb{N}.\quad\label{e:S.4.15}
\end{eqnarray}
Note that $v_n^+\in B_{H^+}(\theta,r)$ and $v_n^-\in\partial B_{H^-}(\theta,s)$ for all $n\in\mathbb{N}$.
Then (\ref{e:S.4.7}) and (\ref{e:S.4.15}) imply
\begin{eqnarray*}
-\frac{a_1's^2}{4}&\le& (t_nP^\bot\nabla\mathcal{L}(z_n+v_n)+(1-t_n)P^\bot\nabla\mathcal{L}(z_n'+v_n), v_n^-)_H
\le -a_1's^2+\frac{a_1'}{2}\|v_n^+\|^2
\end{eqnarray*}
and so
\begin{eqnarray}\label{e:S.4.16}
\frac{s^2}{\|v_n^+\|^2}\le\frac{2}{3},\quad\forall n\in\mathbb{N}.
\end{eqnarray}
With the same methods, (\ref{e:S.4.6}) and (\ref{e:S.4.14}) yield
\begin{eqnarray*}
\frac{a_1's^2}{4}\ge (t_nP^\bot\nabla\mathcal{L}(z_n+v_n)+(1-t_n)P^\bot\nabla\mathcal{L}(z_n'+v_n), v_n^+)_H
\ge a_1'\|v_n^+\|^2-\frac{a_1'}{2}s^2
\end{eqnarray*}
and so
$\frac{s^2}{\|v_n^+\|^2}\ge\frac{4}{3},\;\forall n\in\mathbb{N}$.
This contradicts  (\ref{e:S.4.16}). The desired claim is proved.
\end{proof}

Since (D4) is equivalent to (D4*) by Lemma~\ref{lem:D*}, it was proved in \cite[p. 2966--2967]{Lu2} that
$\nabla\mathcal{L}$ is of class $(S)_+$ under the conditions (S), (F), (C) and (D) in
\cite{Lu2}. In particular, this is also true under the assumptions of Theorem~\ref{th:S.1.1}
(without requirement $H^0=\{\theta\}$).

In the following we always assume that
 $r>0,s>0$ and $\epsilon>0$ are as in Lemma~\ref{lem:S.4.1}.

\begin{lemma}\label{lem:S.4.2}
For each $z\in B_{H^0}(\theta, \epsilon)$, the map
$$
f_z:\overline{\mathcal{Q}_{r,s}}\to H^+\oplus H^-,\; u\mapsto P^\bot\nabla\mathcal{L}(z+u),
$$
is of class $(S)_+$. Moreover, for any two points $z_0, z_1\in B_{H^0}(\theta, \epsilon)$ the map
$$
\mathscr{H}:[0,1]\times \overline{\mathcal{Q}_{r,s}}\to H^+\oplus H^-,\;(t,u)\mapsto
 (1-t)P^\bot\nabla\mathcal{L}(z_0+u)+ tP^\bot\nabla\mathcal{L}(z_1+u)
  $$
  is a homotopy of class $(S)_+$ (cf. \cite[Definition~4.40]{MoMoPa}).
\end{lemma}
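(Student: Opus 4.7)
My plan is to reduce both statements to the $(S)_+$ property of $\nabla\mathcal{L}$ near $\theta$, which was recalled at the beginning of Section~\ref{sec:S}. With $r,s,\epsilon$ chosen as in Lemma~\ref{lem:S.4.1}, the set $z+\overline{\mathcal{Q}_{r,s}}$ lies inside the neighborhood where this property is valid, for every $z\in B_{H^0}(\theta,\epsilon)$. The basic algebraic observation I will use throughout is that $\overline{\mathcal{Q}_{r,s}}\subset H^+\oplus H^-$, so whenever $u_n,u\in\overline{\mathcal{Q}_{r,s}}$ their difference already lies in $H^+\oplus H^-$, and hence
\[
\bigl(P^\bot\nabla\mathcal{L}(z+u_n),\,u_n-u\bigr)_H=\bigl(\nabla\mathcal{L}(z+u_n),\,u_n-u\bigr)_H.
\]

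For the first assertion, suppose $u_n\rightharpoonup u$ in $\overline{\mathcal{Q}_{r,s}}$ with $\limsup_n(f_z(u_n),u_n-u)_H\le 0$. Setting $v_n:=z+u_n$ and $v:=z+u$ gives $v_n\rightharpoonup v$ and $v_n-v=u_n-u$, so the displayed identity yields $\limsup_n(\nabla\mathcal{L}(v_n),v_n-v)_H\le 0$. The $(S)_+$ property of $\nabla\mathcal{L}$ gives $v_n\to v$, and therefore $u_n\to u$. This proves $f_z$ is of class $(S)_+$.

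For the homotopy claim, I would argue by contradiction: assume $t_n\to t\in[0,1]$, $u_n\rightharpoonup u$ in $\overline{\mathcal{Q}_{r,s}}$, and $\limsup_n(\mathscr{H}(t_n,u_n),u_n-u)_H\le 0$, but $u_n\not\to u$. After passing to a subsequence I may require $\|u_n-u\|\ge\delta>0$. Since $\overline{\mathcal{Q}_{r,s}}$ is bounded and $\nabla\mathcal{L}$ is locally bounded, the scalar sequences $\alpha_n:=(\nabla\mathcal{L}(z_0+u_n),u_n-u)_H$ and $\beta_n:=(\nabla\mathcal{L}(z_1+u_n),u_n-u)_H$ are bounded, so after a further extraction I may assume $\alpha_n\to\alpha$ and $\beta_n\to\beta$; the hypothesis then becomes $(1-t)\alpha+t\beta\le 0$. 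Now set $v_n^j:=z_j+u_n$ and $v^j:=z_j+u$ for $j=0,1$; then $v_n^j\rightharpoonup v^j$ and $\|v_n^j-v^j\|=\|u_n-u\|\ge\delta$, so $v_n^j\not\to v^j$. If $\alpha\le 0$, then $\limsup_n(\nabla\mathcal{L}(v_n^0),v_n^0-v^0)_H\le 0$ and the $(S)_+$ property of $\nabla\mathcal{L}$ forces $v_n^0\to v^0$, a contradiction; hence $\alpha>0$, and symmetrically $\beta>0$. But then $(1-t)\alpha+t\beta>0$ for every $t\in[0,1]$, contradicting the inequality derived above.

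The main obstacle is the homotopy claim, since $\mathscr{H}(t,\cdot)$ is a convex combination of two operators evaluated at distinct base points $z_0+\cdot$ and $z_1+\cdot$, so one cannot directly invoke $(S)_+$ for a single operator on $\overline{\mathcal{Q}_{r,s}}$. The trick is to pass to a subsequence along which both inner-product sequences $\alpha_n,\beta_n$ individually converge and then to apply the contrapositive form of $(S)_+$ (weak-but-not-strong convergence precludes a nonpositive limit of $(\nabla\mathcal{L}(v_n),v_n-v)_H$) separately for the two base points, exploiting the fact that the displacement $v_n^j-v^j$ coincides with $u_n-u$ for both $j=0,1$.
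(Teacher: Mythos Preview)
Your argument for the first claim is identical to the paper's: both exploit that $u_n-u\in H^+\oplus H^-$ to drop $P^\bot$ and then invoke the $(S)_+$ property of $\nabla\mathcal{L}$ near~$\theta$.

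For the homotopy claim the two proofs diverge. The paper simply cites \cite[Proposition~4.41]{MoMoPa}, which asserts that an affine homotopy between two $(S)_+$ maps is automatically a homotopy of class $(S)_+$; once the first claim is established for $f_{z_0}$ and $f_{z_1}$, nothing more is needed. Your route is a direct contradiction argument: extract subsequences along which the two inner-product sequences $\alpha_n,\beta_n$ converge, then apply the contrapositive of $(S)_+$ separately at the base points $z_0$ and $z_1$ to force $\alpha>0$ and $\beta>0$, contradicting $(1-t)\alpha+t\beta\le 0$. This is correct and is essentially a self-contained proof of the cited proposition in this special setting. The paper's approach is shorter and more transparent if one accepts the reference; yours has the merit of being self-contained and of making explicit where the boundedness of $\nabla\mathcal{L}$ on the relevant set (needed to extract convergent subsequences of $\alpha_n,\beta_n$) enters.
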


\begin{proof}
By \cite[Proposition~4.41]{MoMoPa} we only need to prove
 the first claim.
Let a sequence $(u_j)\subset \overline{\mathcal{Q}_{r,s}}$
weakly converge to $u$ in $H^+\oplus H^-$, and satisfy
$\overline{\lim}(P^\bot\nabla\mathcal{L}(z+u_j), u_j-u)_H\le 0$.
It suffices to prove $u_j\to u$ in $H^+\oplus H^-$.
Note that $u_j\rightharpoonup u$ in $H$ because $\overline{\mathcal{Q}_{r,s}}\subset H^+\oplus H^-$.
So is $z+u_j\rightharpoonup z+u$ in $H$.
Moreover, $u_j-u\in H^+\oplus H^-$ implies
\begin{eqnarray*}
(P^\bot\nabla\mathcal{L}(z+u_j), u_j-u)_H
=(\nabla\mathcal{L}(z+u_j), u_j-u)_H
=(\nabla\mathcal{L}(z+u_j), (z+u_j)-(z+u))_H.
\end{eqnarray*}
It follows that $\overline{\lim}(\nabla\mathcal{L}(z+u_j), (z+u_j)-(z+u))_H\le 0$.
But $\nabla\mathcal{L}$ is of class $(S)_+$ near $\theta\in H$, we have
$z+u_j\to z+u$ and so $u_j\to u$.
\end{proof}

Let $\deg$ denote the Browder-Skrypnik degree for  demicontinuous $(S)_+$-maps
(\cite{Bro0, Bro}, \cite{Skr1, Skr2, Skr3}), see \cite[\S4.3]{MoMoPa} for a nice exposition.
By Lemma~\ref{lem:S.4.1} $\deg(f_0, \mathcal{Q}_{r,s}, \theta)$ is well-defined and
using the Poincar\'e-Hopf theorem (cf. \cite[Theorem~1.2]{CiDe}) we have
 \begin{equation}\label{e:S.4.17}
\deg(f_0, \mathcal{Q}_{r,s}, \theta)=\sum^\infty_{q=0}(-1)^q{\rm rank}C_q(f_0,\theta;G).
 \end{equation}
Note that $\mathcal{L}|_{\mathcal{Q}_{r,s}}$ satisfies the conditions of
Theorem~\ref{th:S.1.1} at $\theta\in H^+\oplus H^-$. It follows that $C_q(f_0,\theta;G)=\delta_{\mu q}G$,
where $\mu=\dim H^-$. Hence (\ref{e:S.4.17}) becomes
 \begin{equation}\label{e:S.4.18}
\deg(f_0, \mathcal{Q}_{r,s}, \theta)=(-1)^{\mu}.
 \end{equation}
For each $z\in B_{H^0}(\theta,\epsilon)$,
we derive from Lemma~\ref{lem:S.4.1} that
\begin{eqnarray*}
\inf\{\|f_z(u)\|\,|\, u\in\partial\overline{\mathcal{Q}_{r,s}}\}>0,\quad
\inf\{\|tf_z(u)+(1-t)f_0(u)\|\,|\,t\in [0,1],\; u\in\partial\overline{\mathcal{Q}_{r,s}}\}>0.
\end{eqnarray*}
The former implies  that $\deg(f_z, \mathcal{Q}_{r,s}, \theta)$ is well-defined,
 the latter and  Lemma~\ref{lem:S.4.2} lead to
 \begin{equation}\label{e:S.4.19}
 \deg(f_z, \mathcal{Q}_{r,s}, \theta)=\deg(f_0, \mathcal{Q}_{r,s}, \theta)=(-1)^{\mu}.
 \end{equation}
  So there exists a point $u_z\in  \mathcal{Q}_{r,s}$ such that
  \begin{equation}\label{e:S.4.20}
 P^\bot\nabla\mathcal{L}(z+ u_z)=f_z(u_z)=\theta.
 \end{equation}

\begin{theorem}[Parameterized Implicit Function Theorem]\label{th:S.4.3}
Under the assumptions of Theorem~\ref{th:S.1.2},
suppose further that  $\mathcal{G}_1,\cdots,  \mathcal{G}_n\in C^1(V,\mathbb{R})$
satisfy
\begin{description}
\item[(i)] $\mathcal{G}'_j(\theta)=\theta$, $j=1,\cdots,n$;
\item[(ii)] for each $j=1,\cdots,n$, the gradient $\nabla\mathcal{G}_j$ has  G\^ateaux derivative $\mathcal{G}''_j(u)\in \mathscr{L}_s(H)$ at any $u\in V$, and $\mathcal{G}''_j: V\to \mathscr{L}_s(H)$ is continuous at $\theta$.
\end{description}
Then by shrinking $r>0,s>0$ and $\epsilon>0$ in Lemma~\ref{lem:S.4.1} (if necessary) we have $\delta>0$
and a unique continuous map
 \begin{equation}\label{e:S.4.21}
\psi:[-\delta, \delta]^n\times B_H(\theta,\epsilon)\cap H^0\to \mathcal{Q}_{r,s}\subset
(H^0)^\bot
 \end{equation}
 such that for all $(\vec{\lambda}, z)\in [-\delta, \delta]^n\times B_{H}(\theta,\epsilon)\cap H^0$
 with $\vec{\lambda}=(\lambda_1,\cdots,\lambda_n)$,
$\psi(\vec{\lambda},\theta)=\theta$ and
\begin{equation}\label{e:S.4.22}
 P^\bot\nabla\mathcal{L}(z+ \psi(\vec{\lambda}, z))+
 \sum^n_{j=1}\lambda_j P^\bot\nabla\mathcal{G}_j(z+ \psi(\vec{\lambda}, z))=\theta,
 \end{equation}
 where  $P^\bot$ is  as in (\ref{e:S.4.20}). This $\psi$ also satisfies
 \begin{equation}\label{e:S.4.22.1}
 \|\psi(\vec{\lambda}, z_1)-\psi(\vec{\lambda}, z_2)\|\le 3\|z_1-z_2\|,\quad
 \forall (\vec{\lambda},z)\in [-\delta, \delta]^n\times B_H(\theta,\epsilon)\cap H^0.
 \end{equation}
 Moreover, if $G$ is a compact Lie group acting on $H$ orthogonally,
$V$, $\mathcal{L}$ and all $\mathcal{G}_j$ are $G$-invariant (and hence $H^0$, $(H^0)^\bot$
are $G$-invariant subspaces, and $\nabla\mathcal{L}$, $\nabla\mathcal{G}_j$ are $G$-equivariant),
then  $\psi$ is equivariant with respect to $z$, i.e.,
$\psi(\vec{\lambda}, g\cdot z)=g\cdot\psi(\vec{\lambda},z)$ for $(\vec{\lambda},z)\in
[-\delta, \delta]^n\times B_H(\theta,\epsilon)\cap H^0$ and $g\in G$.
\end{theorem}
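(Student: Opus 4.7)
The approach is to combine Browder--Skrypnik topological degree for existence with a G\^ateaux-line-integral monotonicity device for uniqueness and the Lipschitz estimate; continuity and equivariance then fall out of uniqueness. First, introduce the perturbed operator
\begin{equation*}
f_{\vec\lambda,z}(u):= P^\bot\nabla\mathcal{L}(z+u) + \sum_{j=1}^n \lambda_j P^\bot\nabla\mathcal{G}_j(z+u),\qquad u\in\overline{\mathcal{Q}_{r,s}}.
\end{equation*}
Using $\mathcal{G}_j'(\theta)=\theta$ and continuity of each $\mathcal{G}_j''$ at $\theta$, a verbatim copy of the argument producing (\ref{e:S.4.6})--(\ref{e:S.4.7}) (compare the proof of Theorem~\ref{th:S.3.1}) shows that, after shrinking $r,s,\epsilon$ and choosing $\delta>0$ small, those estimates persist for $f_{\vec\lambda,z}$ uniformly in $(\vec\lambda,z)\in[-\delta,\delta]^n\times\bar B_{H^0}(\theta,\epsilon)$. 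Repeating the proof of Lemma~\ref{lem:S.4.1} almost verbatim then yields a positive uniform lower bound on $\|tf_{\vec\lambda,z_1}(u)+(1-t)f_{\vec\lambda,z_2}(u)\|$ for $u\in\partial\overline{\mathcal{Q}_{r,s}}$ over all $(t,\vec\lambda,z_1,z_2)\in[0,1]\times[-\delta,\delta]^n\times\bar B_{H^0}(\theta,\epsilon)^2$.

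Existence comes from checking (as in Lemma~\ref{lem:S.4.2}) that $f_{\vec\lambda,z}$ and its natural $(\vec\lambda,z)$-homotopies remain of class $(S)_+$, each term $u\mapsto P^\bot\nabla\mathcal{G}_j(z+u)$ being absorbed into the compact part. Homotopy invariance of the Browder--Skrypnik degree and the uniform lower bound above give
\begin{equation*}
\deg(f_{\vec\lambda,z},\mathcal{Q}_{r,s},\theta)=\deg(f_0,\mathcal{Q}_{r,s},\theta)=(-1)^\mu\neq 0,
\end{equation*}
so at least one $\psi(\vec\lambda,z)\in\mathcal{Q}_{r,s}$ exists.

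Both uniqueness and (\ref{e:S.4.22.1}) are obtained from the same monotonicity identity. If $u_1,u_2\in\mathcal{Q}_{r,s}$ both solve $f_{\vec\lambda,z}(u)=\theta$, set $v=u_1-u_2=v^++v^-$. Since $X=H$ under Theorem~\ref{th:S.1.2}, the G\^ateaux derivative $B_{\vec\lambda}:=B+\sum_j\lambda_j\mathcal{G}_j''$ of $\nabla\mathcal{L}+\sum_j\lambda_j\nabla\mathcal{G}_j$ exists along the whole segment $[u_2,u_1]$, and a one-variable fundamental theorem of calculus applied to $t\mapsto(f_{\vec\lambda,z}(u_2+tv),v^+-v^-)_H$ produces
\begin{equation*}
0=\int_0^1\bigl(B_{\vec\lambda}(z+u_2+tv)\,v,\,v^+-v^-\bigr)_H\,dt.
\end{equation*}
Self-adjointness collapses $(Bv,v^+-v^-)=(Bv^+,v^+)-(Bv^-,v^-)$, so Lemma~\ref{lem:S.2.2} bounds the $B$-part from below by $a_1\|v^+\|^2+a_0\|v^-\|^2$; the $\lambda_j\mathcal{G}_j''$ part is a negligible correction after shrinking, forcing $v=\theta$. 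For (\ref{e:S.4.22.1}) the same identity is applied to $u_i=\psi(\vec\lambda,z_i)$ and $h=z_1-z_2$: the line integral on $[u_2,u_1]$ contributes $\geq c\|v\|^2$, while the one on $[z_2,z_1]$ is bounded by $O(\omega(\cdot)+|\vec\lambda|)\|h\|\|v\|$ because $B(\theta)h=\theta$ for $h\in H^0$ (together with Lemma~\ref{lem:S.2.1} and continuity of $\mathcal{G}_j''$ at $\theta$); the ratio $M/c$ is made $\leq 3$ by further shrinking $\omega$ and $|\vec\lambda|$. Continuity of $\psi$ follows from uniqueness and the $(S)_+$ property (weak cluster points of $(\psi(\vec\lambda_k,z_k))$ are promoted to strong limits solving the equation), and equivariance from uniqueness combined with $G$-invariance of $H^0,(H^0)^\bot$ and $G$-equivariance of $\nabla\mathcal{L},\nabla\mathcal{G}_j$.

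The main obstacle is to make the G\^ateaux line-integral identity rigorous when $B$ is only G\^ateaux-differentiable: measurability of $t\mapsto(B(z+u_2+tv)v,v^+-v^-)_H$ follows from the difference-quotient definition of the G\^ateaux derivative, and integrability from a uniform bound on $\|B(x)\|$ over a small neighborhood of $\theta$ (a Banach--Steinhaus consequence of (D2) and (D3)). A secondary delicate point is monitoring the constants to guarantee the explicit factor $3$ in (\ref{e:S.4.22.1}); this is arranged by simultaneously shrinking $r,s,\epsilon$ and $\delta$ so that $\omega(\cdot)$ and $|\vec\lambda|$ remain small relative to $\min(a_0,a_1)$.
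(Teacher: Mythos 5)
Your overall architecture coincides with the paper's: uniform boundary estimates for the perturbed family (as in Lemma~\ref{lem:S.4.1}), the Browder--Skrypnik degree with homotopy invariance and $\deg(f_0,\mathcal{Q}_{r,s},\theta)=(-1)^{\mu}$ from (\ref{e:S.4.18}) for existence, mean-value/monotonicity estimates based on Lemma~\ref{lem:S.2.2} plus smallness of $|\lambda_j|$ for uniqueness and for (\ref{e:S.4.22.1}), and then continuity and equivariance from uniqueness together with the $(S)_+$ property. Your implementation of uniqueness (pairing the increment with $v^+-v^-$ and letting self-adjointness cancel the cross terms) is in fact cleaner than the paper's three-case analysis, and your broken-path estimate for (\ref{e:S.4.22.1}), exploiting $B(\theta)h=\theta$ for $h\in H^0$ together with Lemma~\ref{lem:S.2.1}, is a legitimate variant of the paper's single-segment argument (which pairs with $\Xi^+$ and $\Xi^-$ separately and arrives at the constant $\sqrt{6}\le 3$); your constant bookkeeping indeed gives a factor well below $3$. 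Do record explicitly that $\psi(\vec{\lambda},\theta)=\theta$: it follows from uniqueness because $\mathcal{L}'(\theta)=\theta$ and $\mathcal{G}_j'(\theta)=\theta$ make $u=\theta$ a solution.

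The one genuine flaw is your justification of the $(S)_+$ property for the perturbed maps and their homotopies: you claim each term $u\mapsto P^\bot\nabla\mathcal{G}_j(z+u)$ can be ``absorbed into the compact part''. This is false in general: hypotheses (i) and (ii) only give that $\mathcal{G}_j''$ exists in $\mathscr{L}_s(H)$ and is continuous at $\theta$; nothing makes $\mathcal{G}_j''(u)$ or $\nabla\mathcal{G}_j$ compact (take $\mathcal{G}_j(u)=\tfrac12\|u\|^2$, which satisfies (i) and (ii) but whose gradient is the identity). Since the $(S)_+$ property of the whole family is what legitimizes the degree, its homotopy invariance, and your continuity argument (promoting weak cluster points of $\psi(\vec{\lambda}_k,z_k)$ to strong limits), this step needs a correct justification. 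The right mechanism --- which is what the paper's Step 1 does via (\ref{e:S.4.24}) and (\ref{e:S.4.24.1}), and which you yourself use in the uniqueness and Lipschitz estimates --- is smallness rather than compactness: bound $\|\mathcal{G}_j''\|$ by a constant on a small ball around $\theta$ and shrink $\delta$ so that $\sum_j|\lambda_j|\,\|\mathcal{G}_j''(\cdot)\|$ is dominated by the uniform positivity constant of the $P$-part; compactness is invoked only for $Q(\theta)$, while the remaining terms such as $(\nabla\mathcal{L}(u_0),u_k-u_0)_H$ and $(\nabla\mathcal{G}_j(u_0),u_k-u_0)_H$ vanish along weakly convergent sequences. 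With this substitution the rest of your proof goes through.
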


\begin{proof} {\bf Step 1}.\quad {\it
 There exist numbers $\rho_1, \delta\in (0,1)$ satisfying: (i) $B_H(\theta, 2\rho_1)\subset V$,
 (ii) if  $\vec{\lambda}_k=(\lambda_{k,1},\cdots,\lambda_{k,n})\in [-\delta,\delta]^n$
 converges to  $\vec{\lambda}_0=(\lambda_{0,1},\cdots,\lambda_{0,n})\in [-\delta,\delta]^n$,
 $u_k\in B_H(\theta, 2\rho_1)$ weakly converges to
  $u_0\in B_H(\theta, 2\rho_1)$,  and it also holds that
\begin{eqnarray}\label{e:S.4.23}
\overline{\lim}(\nabla\mathcal{L}(u_k)+\sum^n_{j=1}\lambda_j\nabla\mathcal{G}_j(u_k)  , u_k-u_0)_H\le 0,
\end{eqnarray}
then $u_k\to u_0$. In particular, for each $\vec{\lambda}\in[-\delta, \delta]^n$, the map
$$
{B}_H(\theta, 2\rho_1)\times [0,1]\to H^+\oplus H^-,\;
(t, u)\mapsto P^\bot\nabla\mathcal{L}(u)+ \sum^n_{j=1}t\lambda_j P^\bot\nabla\mathcal{G}_j(u)
  $$
  is a homotopy of class $(S)_+$ (cf. \cite[Definition~4.40]{MoMoPa}).}

In fact, by \cite[(5.8)]{Lu2} we have $\rho_1>0$ and $C_0'>0$ such that $B_H(\theta, 2\rho_1)\subset V$
and
\begin{eqnarray}\label{e:S.4.24}
(\nabla\mathcal{L}(u), u-u')_H&\ge&\frac{C_0'}{2}\|u-u'\|^2+ (\nabla\mathcal{L}(u'), u-u')_H\nonumber\\
&+&(Q(\theta)(u-u'), u-u')_H,\quad\forall u,u'\in B_H(\theta, 2\rho_1).
 \end{eqnarray}
Similarly, for each fixed $j\in\{1,\cdots,n\}$, we have $\tau=\tau(u,u')\in (0,1)$ such that
\begin{eqnarray*}
&&(\nabla\mathcal{G}_j(u), u-u')_H=(\nabla\mathcal{G}_j(u)-\nabla\mathcal{G}_j(u'), u-u')_H+ (\nabla\mathcal{G}_j(u'), u-u')_H\\
&=&(\mathcal{G}''_j(\tau u+ (1-\tau)u')(u-u'), u-u')_H+ (\nabla\mathcal{G}_j(u'), u-u')_H\\
&=&([\mathcal{G}''_j(\tau u+ (1-\tau)u')-\mathcal{G}''_j(\theta)](u-u'), u-u')_H+ (\nabla\mathcal{G}_j(u'), u-u')_H\\
&&+ (\mathcal{G}''_j(\theta)(u-u'), u-u')_H,\quad\forall u, u'\in B_H(\theta, 2\rho_1).
 \end{eqnarray*}
Since $V\ni v\mapsto \mathcal{G}''_j(v)\in\mathscr{L}_s(H)$ is continuous at $\theta$,
we may shrink $\rho_1>0$ so that
\begin{eqnarray}\label{e:S.4.24.1}
\|\mathcal{G}''_j(v)-\mathcal{G}''_j(\theta)\|\le\frac{C_0'}{8n},
\quad\forall v\in B_H(\theta, 2\rho_1),\;j=1,\cdots,n.
\end{eqnarray}
It follows that for all $u, u'\in B_H(\theta, 2\rho_1)$ and $j=1,\cdots,n$,
\begin{eqnarray*}
|(\nabla\mathcal{G}_j(u), u-u')_H|\le \frac{C_0'}{8n}\|u-u'\|^2 + |(\nabla\mathcal{G}_j(u'), u-u')_H|
+ |(\mathcal{G}_j''(\theta)(u-u'), u-u')_H|.
 \end{eqnarray*}
Take $\delta\in (0, 1)$ so that
$\delta\sum^n_{j=1}\|\mathcal{G}_j''(\theta)\|<C_0'/8$.
These and (\ref{e:S.4.24}) imply that
\begin{eqnarray*}
&&(\nabla\mathcal{L}(u), u-u')_H+ \sum^n_{j=1}\lambda_j(\nabla\mathcal{G}_j(u), u-u')_H\nonumber\\
&\ge&\frac{C_0'}{4}\|u-u'\|^2+ (\nabla\mathcal{L}(u'), u-u')_H+ (Q(\theta)(u-u'), u-u')_H\nonumber\\
&-& \sum^n_{j=1}|(\nabla\mathcal{G}_j(u'), u-u')_H|,\quad\forall
\vec{\lambda}=(\lambda_1,\cdots,\lambda_n)\in [-\delta, \delta]^n.
 \end{eqnarray*}
Replacing $u, u'$ and $\lambda_j$ by $u_k, u_0$ and $\lambda_{k,j}$ in the inequality, we derive from
(\ref{e:S.4.23}) that $u_k\to u_0$
since  (D3)  implies that
$(\nabla\mathcal{L}(u_0), u_k-u_0)_H\to 0$, $(Q(\theta)(u_k-u_0), u_k-u_0)_H\to 0$ and
$(\nabla\mathcal{G}_j(u_0), u_k-u_0)_H\to 0$.\\

  {\it Note}: The above proof shows that
  the family $\{\mathcal{L}_{\vec{\lambda}}:=\mathcal{L}+ \sum^n_{j=1}\lambda_j\mathcal{G}_j\,|\, \vec{\lambda}\in [-\delta,\delta]^n\}$
  satisfies the (PS) condition on $\bar{B}_H(\theta,\varepsilon)$ for any $\varepsilon<2\rho_1$, that is,
  if  $\vec{\lambda}_k\in [-\delta,\delta]^n$ converges to
  $\vec{\lambda}_0\in [-\delta,\delta]^n$,
 and $u_k\in \bar{B}_H(\theta,\varepsilon)$ satisfies
 $\nabla\mathcal{L}_{\vec{\lambda}_k}(u_k)\to\theta$ and $\sup_k|\mathcal{L}_{\vec{\lambda}_k}(u_k)|<\infty$,
 then $(u_k)$ has a converging subsequence
 $u_{k_i}\to u_0\in \bar{B}_H(\theta,\varepsilon)$ with
 $\nabla\mathcal{L}_{\vec{\lambda}_0}(u_0)=\theta$.

\noindent{\bf Step 2}.\quad
Since $\nabla\mathcal{L}$ and $\nabla\mathcal{G}_1,\cdots,\nabla\mathcal{G}_n$ are all locally bounded,
for $r>0,s>0$ and $\epsilon>0$ in Lemma~\ref{lem:S.4.1}, by shrinking them we can assume that
$\bar B_{H^0}(\theta,\epsilon)\times\overline{\mathcal{Q}_{r,s}}
\subset{B}_H(\theta, 2\rho_1)$ and
 \begin{eqnarray}\label{e:S.4.24.2}
 \sup\{\|\nabla\mathcal{L}_{\vec{\lambda}}(z,u)\|\,|\,
 (\vec{\lambda},z,u)\in [-1, 1]^n\times \bar{B}_{H^0}(\theta,\epsilon)\oplus\overline{\mathcal{Q}_{r,s}}\}<\infty.
 \end{eqnarray}
 Then by Lemma~\ref{lem:S.4.1} we may shrink  $\delta\in (0, 1)$ so that
$$
\inf\|tP^\bot(\nabla\mathcal{L}+ \sum^n_{j=1}\lambda_j\nabla\mathcal{G}_j)(z_1+u)+ (1-t)P^\bot(\nabla\mathcal{L}+ \sum^n_{j=1}\lambda_j\nabla\mathcal{G}_j)(z_2+u)\|>0,
 $$
where the infimum is taken for all $(t,z_1,z_2,u)\in[0,1]\times \bar B_{H^0}(\theta,\epsilon)\times \bar B_{H^0}(\theta,\epsilon)\times
\partial\overline{\mathcal{Q}_{r,s}}$ and $(\lambda_1,\cdots,\lambda_n)\in [-\delta,\delta]^n$.
 This implies that
for each $(\vec{\lambda}, z)\in [-\delta,\delta]^n\times B_{H^0}(\theta, \epsilon)$, the map
$$
f_{\vec{\lambda}, z}:\overline{\mathcal{Q}_{r,s}}\ni u\mapsto P^\bot\nabla\mathcal{L}(z+u)+
\sum^n_{j=1}\lambda_j P^\bot\nabla\mathcal{G}_j(z+u)\in H^+\oplus H^-
$$
has a well-defined Browder-Skrypnik degree ${\rm deg}(f_{\vec{\lambda}, z}, \mathcal{Q}_{r,s},\theta)$ and
\begin{eqnarray}\label{e:S.4.25}
{\rm deg}(f_{\vec{\lambda}, z}, \mathcal{Q}_{r,s},\theta)={\rm deg}(f_{\vec{0}, 0}, \mathcal{Q}_{r,s},\theta)
={\rm deg}(f_{0}, \mathcal{Q}_{r,s},\theta)=(-1)^{\mu},
 \end{eqnarray}
where $f_0$ is as in (\ref{e:S.4.18}). Hence for each $(\vec{\lambda}, z)\in [-\delta,\delta]^n\times B_{H^0}(\theta, \epsilon)$
there exists a point $u_{\vec{\lambda},z}\in  \mathcal{Q}_{r,s}$ such that
  \begin{equation}\label{e:S.4.26}
 P^\bot\nabla\mathcal{L}(z+ u_{\vec{\lambda},z})+ \sum^n_{j=1}\lambda_j P^\bot\nabla\mathcal{G}_j(z+ u_{\vec{\lambda},z})=f_{\vec{\lambda},z}(u_{\lambda,z})=\theta.
 \end{equation}
By shrinking the above  $\epsilon>0, r>0$ and $s>0$
 (if necessary), $\omega$ and $a_0, a_1$ in Lemma~\ref{lem:S.2.2} can satisfy
 \begin{equation}\label{e:S.4.27}
\omega(z+u)<\min\{a_0,a_1\}/2,\quad\forall
(z,u)\in\bar B_{H^0}(\theta, \epsilon)\times \overline{\mathcal{Q}_{r,s}}.
\end{equation}

 \noindent{\bf Step 3}.\quad {\it
If $\delta\in (0,1)$ is sufficiently small, then  $u_{\vec{\lambda},z}$ is a unique zero point of $f_{\vec{\lambda},z}$ in $\mathcal{Q}_{r,s}$.}
 In fact, suppose that there exists another different $u_{\vec{\lambda},z}'\in \mathcal{Q}_{r,s}$
  satisfying (\ref{e:S.4.26}). Consider the decomposition
 $u_{\vec{\lambda},z}-u_{\vec{\lambda},z}'=(u_{\vec{\lambda},z}-u_{\vec{\lambda},z}')^++ (u_{\vec{\lambda},z}-
 u_{\vec{\lambda},z}')^-$. We may prove the conclusion in three cases:\\
$\bullet$ $\|(u_{\vec{\lambda},z}-u_{\vec{\lambda},z}')^+\|>\|(u_{\vec{\lambda},z}-u_{\vec{\lambda},z}')^-\|$,\\
$\bullet$ $\|(u_{\vec{\lambda},z}-u_{\vec{\lambda},z}')^+\|=\|(u_{\vec{\lambda},z}-u_{\vec{\lambda},z}')^-\|$,\\
$\bullet$ $\|(u_{\vec{\lambda},z}-u_{\vec{\lambda},z}')^+\|<\|(u_{\vec{\lambda},z}-u_{\vec{\lambda},z}')^-\|$.

Let us write $\mathcal{L}_{\vec{\lambda}}=\mathcal{L}+\sum^n_{j=1}\lambda_j\mathcal{G}_j$ for conveniences.
Then (\ref{e:S.4.26}) implies
\begin{eqnarray}\label{e:S.4.28}
0&=&(P^\bot\nabla\mathcal{L}_{\vec{\lambda}}(z+u_{\vec{\lambda},z})-P^\bot\nabla\mathcal{L}_{\vec{\lambda}}(z+
u_{\vec{\lambda},z}'), (u_{\vec{\lambda},z}-u_{\vec{\lambda},z}')^+)_H\nonumber\\
&=&(P^\bot\nabla\mathcal{L}(z+u_{\vec{\lambda},z})-P^\bot\nabla\mathcal{L}(z+u_{\vec{\lambda},z}'), (u_{\vec{\lambda},z}-u_{\vec{\lambda},z}')^+)_H
\nonumber\\
&&+\sum^n_{j=1}\lambda_j(P^\bot\nabla\mathcal{G}_j(z+u_{\vec{\lambda},z})-P^\bot\nabla\mathcal{G}_j(z+u_{\vec{\lambda},z}'), (u_{\vec{\lambda},z}-u_{\vec{\lambda},z}')^+)_H.
\end{eqnarray}
For simplicity we write $u_{\vec{\lambda},z}$ and $u_{\vec{\lambda},z}'$ as $u_z$ and $u_z'$, respectively.
In the first two cases,  we may use the mean value theorem to get
 $\tau\in (0,1)$ such that
 \begin{eqnarray}\label{e:S.4.29}
&&(P^\bot\nabla\mathcal{L}(z+u_z)-P^\bot\nabla\mathcal{L}(z+u_z'), (u_z-u_z')^+)_H\nonumber\\
&=&(\nabla\mathcal{L}(z+u_z)-\nabla\mathcal{L}(z+u_z'), (u_z-u_z')^+)_H\nonumber\\
&=&(B(z+ \tau u_z+(1-\tau )u_z')(u_z-u_z'), (u_z-u_z')^+)_H\nonumber\\
&=&(B(z+ \tau u_z+(1-\tau )u_z')(u_z-u_z')^+, (u_z-u_z')^+)_H\nonumber\\
&+&(B(z+ \tau u_z+(1-\tau)u_z')(u_z-u_z')^-, (u_z-u_z')^+)_H\nonumber\\
&\ge& a_1\|(u_z-u_z')^+\|^2-\omega(z+ \tau u_z+(1-t)u_z')\|(u_z-u_z')^-\|\cdot\|(u_z-u_z')^+\|\nonumber\\
&\ge& a_1\|(u_z-u_z')^+\|^2-\frac{a_1}{4}[\|(u_z-u_z')^-\|^2+\|(u_z-u_z')^+\|^2]\nonumber\\
&\ge& a_1\|(u_z-u_z')^+\|^2-\frac{a_1}{2}\|(u_z-u_z')^+\|^2\nonumber\\
&=&\frac{a_1}{2}\|(u_z-u_z')^+\|^2,
\end{eqnarray}
where the first inequality comes from  Lemma~\ref{lem:S.2.2}(i)-(ii),
the second is derived from (\ref{e:S.4.27}) and the inequality
$2|ab|\le |a|^2+|b|^2$, and
the third  is because
$\|(u_z-u_z')^-\|\le \|(u_z-u_z')^+\|$.
It follows from  (\ref{e:S.4.28})--(\ref{e:S.4.29}) that
\begin{eqnarray}\label{e:S.4.30}
0&=&(P^\bot\nabla\mathcal{L}_{\vec{\lambda}}(z+u_{\vec{\lambda},z})-P^\bot\nabla\mathcal{L}_{\vec{\lambda}}(z+
u_{\vec{\lambda},z}'), (u_{\vec{\lambda},z}-u_{\vec{\lambda},z}')^+)_H\nonumber\\
&\ge&\sum^n_{j=1}\lambda_j(\mathcal{G}''_j(z+ \tau u_{\vec{\lambda},z}+(1-\tau)u_{\vec{\lambda},z}')(u_{\vec{\lambda},z}-u_{\vec{\lambda},z}'),
(u_{\vec{\lambda},z}-u_{\vec{\lambda},z}')^+)_H\nonumber\\      &&+\frac{a_1}{2}\|(u_{\vec{\lambda},z}-u_{\vec{\lambda},z}')^+\|^2.
\end{eqnarray}
By (\ref{e:S.4.24.1}) we have a constant $M>0$ such that
\begin{eqnarray}\label{e:S.4.30.1}
\sup\{\|\mathcal{G}''_j(z+w)\|\,|\,(z,w)\in \bar B_{H^0}(\theta,\epsilon)\times\overline{\mathcal{Q}_{r,s}},\,j=1,\cdots,n\}<M.
\end{eqnarray}
 From this and the inequality $\|(u_z-u_z')^-\|\le \|(u_z-u_z')^+\|$ we deduce
\begin{eqnarray}\label{e:S.4.31}
&&\sum^n_{j=1}|\lambda_j(\mathcal{G}''_j(z+ \tau u_{\vec{\lambda},z}+(1-\tau)u_{\vec{\lambda},z}')(u_{\vec{\lambda},z}-u_{\vec{\lambda},z}'), (u_{\vec{\lambda},z}-u_{\vec{\lambda},z}')^+)_H|\nonumber\\
&\le& n\delta M\|(u_{\vec{\lambda},z}-u_{\vec{\lambda},z}')\|\cdot\|(u_{\vec{\lambda},z}-u_{\vec{\lambda},z}')^+\|\nonumber\\
&\le&n\delta M[\|(u_{\vec{\lambda},z}-u_{\vec{\lambda},z}')^+\|^2+
\|(u_{\vec{\lambda},z}-u_{\vec{\lambda},z}')^-\|\cdot\|(u_{\vec{\lambda},z}-u_{\vec{\lambda},z}')^+\|]\nonumber\\
&\le &2n\delta M\|(u_{\vec{\lambda},z}-u_{\vec{\lambda},z}')^+\|^2.
\end{eqnarray}
Let us shrink $\delta>0$ in Step 2 so that $\delta<\frac{a_1}{8nM}$. Then (\ref{e:S.4.30}) and (\ref{e:S.4.31}) lead to
\begin{eqnarray*}
0=(P^\bot\nabla\mathcal{L}_{\vec{\lambda}}(z+u_{\vec{\lambda},z})-P^\bot\nabla\mathcal{L}_{\vec{\lambda}}(z+u_{\vec{\lambda},z}'), (u_{\vec{\lambda},z}-u_{\vec{\lambda},z}')^+)_H
\ge\frac{a_1}{4}\|(u_{\vec{\lambda},z}-u_{\vec{\lambda},z}')^+\|^2.
\end{eqnarray*}
This contradicts  $(u_{\vec{\lambda},z}-u_{\vec{\lambda},z}')^+\ne\theta$.

Similarly, for the third case,
as in (\ref{e:S.4.29}) we may use Lemma~\ref{lem:S.2.2}(ii)-(iii) to obtain
\begin{eqnarray*}
0&=&(P^\bot\nabla\mathcal{L}(z+u_z)-P^\bot\nabla\mathcal{L}(z+u_z'), (u_z-u_z')^-)_H\\
&=&(\nabla\mathcal{L}(z+u_z)-\nabla\mathcal{L}(z+u_z'), (u_z-u_z')^-)_H\\
&=&(B(z+ tu_z+(1-t)u_z')(u_z-u_z'), (u_z-u_z')^-)_H\\
&=&(B(z+ tu_z+(1-t)u_z')(u_z-u_z')^-, (u_z-u_z')^-)_H\\
&+&(B(z+ tu_z+(1-t)u_z')(u_z-u_z')^+, (u_z-u_z')^-)_H\\
&\le& -a_0\|(u_z-u_z')^-\|^2+\omega(z+ tu_z+(1-t)u_z')\|(u_z-u_z')^-\|\cdot\|(u_z-u_z')^+\|\\
&\le& -a_0\|(u_z-u_z')^-\|^2+\frac{a_0}{4}[\|(u_z-u_z')^-\|^2+\|(u_z-u_z')^+\|^2]\\
&\le& -a_0\|(u_z-u_z')^-\|^2+\frac{a_0}{2}\|(u_z-u_z')^-\|^2\\
&=&-\frac{a_0}{2}\|(u_z-u_z')^-\|^2,
\end{eqnarray*}
and hence
\begin{eqnarray}\label{e:S.4.32}
0&=&(P^\bot\nabla\mathcal{L}_{\vec{\lambda}}(z+u_{\vec{\lambda},z})-P^\bot\nabla\mathcal{L}_{\vec{\lambda}}(z+
u_{\vec{\lambda},z}'), (u_{\vec{\lambda},z}-u_{\vec{\lambda},z}')^-)_H\nonumber\\
&\le&\sum^n_{j=1}\lambda_j(\mathcal{G}''_j(z+ \tau u_{\vec{\lambda},z}+(1-\tau)u_{\vec{\lambda},z}')(u_{\vec{\lambda},z}-u_{\vec{\lambda},z}'),
(u_{\vec{\lambda},z}-u_{\vec{\lambda},z}')^-)_H\nonumber\\
&&-\frac{a_0}{2}\|(u_{\vec{\lambda},z}-u_{\vec{\lambda},z}')^+\|^2.
\end{eqnarray}
As in (\ref{e:S.4.31}) we may deduce
\begin{eqnarray*}
&&\sum^n_{j=1}|\lambda_j(\mathcal{G}''_j(z+ \tau u_{\vec{\lambda},z}+(1-\tau)u_{\vec{\lambda},z}')(u_{\vec{\lambda},z}-u_{\vec{\lambda},z}'), (u_{\vec{\lambda},z}-u_{\vec{\lambda},z}')^-)_H|\nonumber\\
&\le& n\delta M\|(u_{\vec{\lambda},z}-u_{\vec{\lambda},z}')\|\cdot\|(u_{\vec{\lambda},z}-u_{\vec{\lambda},z}')^-\|\nonumber\\
&\le&n\delta M[\|(u_{\vec{\lambda},z}-u_{\vec{\lambda},z}')^-\|^2+
\|(u_{\vec{\lambda},z}-u_{\vec{\lambda},z}')^-\|\cdot\|(u_{\vec{\lambda},z}-u_{\vec{\lambda},z}')^+\|]\nonumber\\
&\le &2n\delta M\|(u_{\vec{\lambda},z}-u_{\vec{\lambda},z}')^-\|^2.
\end{eqnarray*}
So if the above $\delta>0$ is also shrunk  so that $\delta<\frac{a_0}{8nM}$,  we may derive from this and (\ref{e:S.4.32}) that
\begin{eqnarray*}
0=(P^\bot\nabla\mathcal{L}_{\vec{\lambda}}(z+u_{\vec{\lambda},z})-P^\bot\nabla\mathcal{L}_{\vec{\lambda}}(z+u_{\vec{\lambda},z}'), (u_{\vec{\lambda},z}-u_{\vec{\lambda},z}')^-)_H
\le-\frac{a_0}{4}\|(u_{\vec{\lambda},z}-u_{\vec{\lambda},z}')^-\|^2,
\end{eqnarray*}
which also leads to a contradiction.
As a consequence, we have a well-defined  map
\begin{eqnarray}\label{e:S.4.33}
\psi: [-\delta,\delta]^n\times B_{H^0}(\theta,\epsilon)\to \mathcal{Q}_{r,s},\;(\lambda, z)\mapsto u_{\vec{\lambda},z}.
\end{eqnarray}

\noindent{\bf Step 4}.\quad{\it  $\psi$ is continuous.}
Let sequences $(\vec{\lambda}_k)\in [-\delta,\delta]^n$ and $(z_k)\subset B_{H^0}(\theta,\epsilon)$
 converge to
$\vec{\lambda}_0\in [-\delta,\delta]^n$ and
$z_0\in B_{H^0}(\theta,\epsilon)$, respectively.
We want to prove that $\psi(\vec{\lambda}_k, z_k)\to\psi(\vec{\lambda}_0, z_0)$.
Since $\psi(\vec{\lambda}_k, z_k)\in \mathcal{Q}_{r,s}$, $k=1,2,\cdots$,
we can suppose $\psi(\vec{\lambda}_k, z_k)\rightharpoonup u_0\in\overline{\mathcal{Q}_{r,s}}$ in $H$.
Noting  $\psi(\vec{\lambda}_k, z_k)-u_0\in H^+\oplus H^-$, by (\ref{e:S.4.26}) we have
\begin{eqnarray*}
\bigr(\nabla\mathcal{L}_{\vec{\lambda}_k}(z_k+\psi(\vec{\lambda}_k, z_k)), \psi(\vec{\lambda}_k,
z_k)-u_0\bigr)
=\bigr(P^\bot\nabla\mathcal{L}_{\vec{\lambda}_k}(z_k+\psi(\vec{\lambda}_k, z_k)), \psi(\vec{\lambda}_k,
z_k)-u_0\bigr)=0.
\end{eqnarray*}
It follows from this and (\ref{e:S.4.24.2}) that
\begin{eqnarray*}
&&|\bigr(\nabla\mathcal{L}_{\vec{\lambda}_k}(z_k+\psi(\vec{\lambda}_k, z_k)), (z_k+\psi(\vec{\lambda}_k,
z_k))-(z_0+u_0)\bigr)\|\\
&=&|\bigl(\nabla\mathcal{L}_{\vec{\lambda}_k}(z_n+\psi(\vec{\lambda}_k, z_k)), z_k-z_0\bigr)|
\le\|\nabla\mathcal{L}_{\vec{\lambda}_k}(z_k+\psi(\vec{\lambda}_k, z_k))\|\cdot\|z_k-z_0\|\to 0.
\end{eqnarray*}
As in the proof of Step~1, we may derive from this
that $z_k+\psi(\vec{\lambda}_k,
z_k)\to z_0+u_0$ and so $\psi(\vec{\lambda}_k, z_k)\to u_0$.
Obverse that (\ref{e:S.4.26}) implies
 $P^\bot\nabla\mathcal{L}_{\vec{\lambda}_k}(z_k+\psi(\vec{\lambda}_k, z_k))=0$, $k=1,2,\cdots$.
 The $C^1$-smoothness of  $\mathcal{L}$ and all $\mathcal{G}_j$ leads to
 $P^\bot\nabla\mathcal{L}_{\vec{\lambda}_0}(z_0+u_0)=0$.
  By Step~3 we arrive at
  $\psi(\vec{\lambda}_0, z_0)=u_0$ and hence $\psi$ is continuous at $(\vec{\lambda}_0, z_0)$.

\noindent{\bf Step 5}.\quad For any $(\vec{\lambda}, z_i)\in [-\delta, \delta]^n\times B_{H}(\theta,\epsilon)\cap H^0$, $i=1,2$,
by the definition of $\psi$, we have
\begin{eqnarray*}
 P^\bot\nabla\mathcal{L}(z_i+ \psi(\vec{\lambda}, z_i))+
 \sum^n_{j=1}\lambda_j P^\bot\nabla\mathcal{G}_j(z_i+ \psi(\vec{\lambda}, z_i))=\theta,\quad i=1,2,
 \end{eqnarray*}
and hence for $\Xi=z_1-z_2+ \psi(\vec{\lambda}, z_1)-\psi(\vec{\lambda}, z_2)=\Xi^0+\Xi^++\Xi^-$ we derive
\begin{eqnarray}\label{e:S.4.34}
0&=&(P^\bot\nabla\mathcal{L}(z_1+ \psi(\vec{\lambda}, z_1))-P^\bot\nabla\mathcal{L}(z_2+ \psi(\vec{\lambda}, z_2)),\Xi^+)_H
\nonumber\\
&+& \sum^n_{j=1}\lambda_j (P^\bot\nabla\mathcal{G}_j(z_1+ \psi(\vec{\lambda}, z_1))-
P^\bot\nabla\mathcal{G}_j(z_2+ \psi(\vec{\lambda}, z_2)),\Xi^+)_H.
 \end{eqnarray}
As in the proof of (\ref{e:S.4.29}) we obtain  $\tau\in (0,1)$ such that
 \begin{eqnarray}\label{e:S.4.35}
&&(P^\bot\nabla\mathcal{L}(z_1+ \psi(\vec{\lambda}, z_1))-P^\bot\nabla\mathcal{L}(z_2+ \psi(\vec{\lambda}, z_2)),\Xi^+)_H\nonumber\\
&=&(B(\tau z_1+ \tau\psi(\vec{\lambda}, z_1)+ (1-\tau)z_2+ (1-\tau)\psi(\vec{\lambda}, z_2))\Xi^+,\Xi^+)_H\nonumber\\
&&+(B(\tau z_1+ \tau\psi(\vec{\lambda}, z_1)+ (1-\tau)z_2+ (1-\tau)\psi(\vec{\lambda}, z_2))(\Xi^0+\Xi^-),\Xi^+)_H\nonumber\\
&\ge& a_1\|\Xi^+\|^2-\frac{a_1}{4}[\|\Xi^-+\Xi^0\|^2+\|\Xi^+\|^2]\nonumber\\
&=&\frac{3a_1}{4}\|\Xi^+\|^2-\frac{a_1}{4}\|\Xi^0\|^2-\frac{a_1}{4}\|\Xi^-\|^2.
 \end{eqnarray}
 Let us further shrink $\delta>0$ in Step~3 so that $\delta<\frac{\min\{a_0,a_1\}}{16nM}$.
As in (\ref{e:S.4.31}) we may deduce
\begin{eqnarray}\label{e:S.4.36}
&&\sum^n_{j=1}\lambda_j (P^\bot\nabla\mathcal{G}_j(z_1+ \psi(\vec{\lambda}, z_1))-
P^\bot\nabla\mathcal{G}_j(z_2+ \psi(\vec{\lambda}, z_2)),\Xi^+)_H\\
&\le&\sum^n_{j=1}|\lambda_j(\mathcal{G}''_j(\tau z_1+(1-\tau)z_2+ \tau\psi(\vec{\lambda}, z_1)+(1-\tau)\psi(\vec{\lambda}, z_2))\Xi, \Xi^+)_H|\nonumber\\
&\le& n\delta M\|\Xi\|\cdot\|\Xi^+\|\le 2n\delta M[\|\Xi\|^2+\|\Xi^+\|^2]\nonumber\\
&\le&\frac{a_1}{8}[\|\Xi^-\|^2+\|\Xi^0\|^2+2\|\Xi^+\|^2].
\end{eqnarray}
This, (\ref{e:S.4.34}) and (\ref{e:S.4.35}) lead to
$$
0\ge \frac{3a_1}{4}\|\Xi^+\|^2-\frac{a_1}{4}\|\Xi^0\|^2-
\frac{a_1}{4}\|\Xi^-\|^2-\frac{a_1}{8}[\|\Xi^-\|^2+\|\Xi^0\|^2+2\|\Xi^+\|^2]
$$
and so
\begin{eqnarray}\label{e:S.4.37}
0\ge 4\|\Xi^+\|^2-3\|\Xi^0\|^2-3\|\Xi^-\|^2.
\end{eqnarray}

Similarly, replacing $\Xi^+$ by $\Xi^-$ in (\ref{e:S.4.35}) and (\ref{e:S.4.36}) we derive
 \begin{eqnarray*}
&&(P^\bot\nabla\mathcal{L}(z_1+ \psi(\vec{\lambda}, z_1))-P^\bot\nabla\mathcal{L}(z_2+ \psi(\vec{\lambda}, z_2)),\Xi^-)_H\\
&\le& -\frac{3a_0}{4}\|\Xi^-\|^2+\frac{a_0}{4}\|\Xi^0\|^2+ \frac{a_0}{4}\|\Xi^+\|^2,\\
 &&\sum^n_{j=1}\lambda_j (P^\bot\nabla\mathcal{G}_j(z_1+ \psi(\vec{\lambda}, z_1))-
P^\bot\nabla\mathcal{G}_j(z_2+ \psi(\vec{\lambda}, z_2)),\Xi^-)_H\nonumber\\
&\le&\frac{a_0}{8}[\|\Xi^+\|^2+\|\Xi^0\|^2+2\|\Xi^-\|^2].
\end{eqnarray*}
As above these two inequalities and the equality
\begin{eqnarray*}
0&=&(P^\bot\nabla\mathcal{L}(z_1+ \psi(\vec{\lambda}, z_1))-P^\bot\nabla\mathcal{L}(z_2+ \psi(\vec{\lambda}, z_2)),\Xi^-)_H
\nonumber\\
&+& \sum^n_{j=1}\lambda_j (P^\bot\nabla\mathcal{G}_j(z_1+ \psi(\vec{\lambda}, z_1))-
P^\bot\nabla\mathcal{G}_j(z_2+ \psi(\vec{\lambda}, z_2)),\Xi^-)_H
 \end{eqnarray*}
yield: $0\ge 4\|\Xi^-\|^2-3\|\Xi^0\|^2-3\|\Xi^+\|^2$. Combing with (\ref{e:S.4.37}) we obtain
\begin{eqnarray*}
\|\Xi^++\Xi^-\|^2=\|\Xi^+\|^2+\|\Xi^-\|^2\le 6\|\Xi^0\|^2.
\end{eqnarray*}
Note that $\Xi^0=z_1-z_2$ and $\Xi^++\Xi^-=\psi(\vec{\lambda}, z_1)-\psi(\vec{\lambda}, z_2)$.
The desired claim is proved.

{\bf Step 6}.\quad The uniqueness of $\psi$ implies that it is equivariant with respect to $z$.
\end{proof}

As a by-product we have also the following result though it is not used in this paper.

\begin{theorem}[Inverse Function Theorem]\label{th:S.4.4}
If the assumptions of Theorem~\ref{th:S.1.1} hold with $X=H$,
then $\nabla\mathcal{L}$ is a homeomorphism near $\theta$.
\end{theorem}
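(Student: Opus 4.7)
Since the assumptions of Theorem~\ref{th:S.1.1} hold and $\theta$ is nondegenerate, we have $H^0=\{\theta\}$, so $H=H^+\oplus H^-$ and $P^\bot=I$. My plan is to prove that $\nabla\mathcal{L}$ is injective on a small ball around $\theta$, surjective from that ball onto a neighborhood of $\theta\in H$, and has continuous inverse, by specializing the arguments already developed for Lemma~\ref{lem:S.4.1}, Lemma~\ref{lem:S.4.2} and Theorem~\ref{th:S.4.3} to the degenerate-free case.

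\textbf{Step 1 (local injectivity).} Choose $r,s>0$ so small that $\overline{\mathcal{Q}_{r,s}}=\overline{B_{H^+}(\theta,r)\oplus B_{H^-}(\theta,s)}\subset U$ (the neighborhood from Lemma~\ref{lem:S.2.2}) and $\omega(u)<\min\{a_0,a_1\}/2$ on $\overline{\mathcal{Q}_{r,s}}$. Suppose $u_1,u_2\in\mathcal{Q}_{r,s}$ satisfy $\nabla\mathcal{L}(u_1)=\nabla\mathcal{L}(u_2)$; set $v=u_1-u_2=v^++v^-$. Since $X=H$ and $\nabla\mathcal{L}$ is Gâteaux differentiable at every point of $V$, the mean value theorem gives $\tau\in(0,1)$ with $\nabla\mathcal{L}(u_1)-\nabla\mathcal{L}(u_2)=B(\tau u_1+(1-\tau)u_2)v$. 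Testing this identity against $v^+$ when $\|v^+\|\ge\|v^-\|$, and against $v^-$ otherwise, and invoking Lemma~\ref{lem:S.2.2}(i)--(ii) in the first case and (ii)--(iii) in the second exactly as in the derivation of (\ref{e:S.4.29}), yields $v^+=\theta$ (resp.\ $v^-=\theta$), whence $v=\theta$.

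\textbf{Step 2 (local surjectivity via degree).} By Lemma~\ref{lem:S.4.2} (with $z=\theta$) the map $\nabla\mathcal{L}:\overline{\mathcal{Q}_{r,s}}\to H$ is demicontinuous of class $(S)_+$, and by (\ref{e:S.4.18}) it satisfies $\deg(\nabla\mathcal{L},\mathcal{Q}_{r,s},\theta)=(-1)^\mu$. Lemma~\ref{lem:S.4.1} (in the special case $H^0=\{\theta\}$, so that $z_1=z_2=\theta$) gives
$$
m:=\inf\{\|\nabla\mathcal{L}(u)\|:u\in\partial\overline{\mathcal{Q}_{r,s}}\}>0.
$$
For any $y\in B_H(\theta,m)$, the straight-line homotopy $u\mapsto\nabla\mathcal{L}(u)-ty$, $t\in[0,1]$, remains $(S)_+$ (a compact-plus-positive perturbation preserves $(S)_+$) and does not vanish on $\partial\overline{\mathcal{Q}_{r,s}}$. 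Homotopy invariance therefore gives $\deg(\nabla\mathcal{L}-y,\mathcal{Q}_{r,s},\theta)=(-1)^\mu\ne 0$, so there exists $u\in\mathcal{Q}_{r,s}$ with $\nabla\mathcal{L}(u)=y$. Combined with Step~1, the map $\Psi:=\nabla\mathcal{L}|_{\Psi^{-1}(B_H(\theta,m))\cap\mathcal{Q}_{r,s}}$ is a bijection onto $B_H(\theta,m)$.

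\textbf{Step 3 (continuity of $\Psi^{-1}$).} Let $y_k\to y_0$ in $B_H(\theta,m)$ and set $u_k:=\Psi^{-1}(y_k)$. Since $(u_k)\subset\mathcal{Q}_{r,s}$ is bounded, a subsequence satisfies $u_{k_j}\rightharpoonup u_\ast\in\overline{\mathcal{Q}_{r,s}}$. Then
$$
(\nabla\mathcal{L}(u_{k_j}),u_{k_j}-u_\ast)_H=(y_{k_j},u_{k_j}-u_\ast)_H\to 0,
$$
so the $(S)_+$ property yields $u_{k_j}\to u_\ast$; by demicontinuity $\nabla\mathcal{L}(u_\ast)=y_0$, and by injectivity $u_\ast=\Psi^{-1}(y_0)$. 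Since every subsequence has a further subsequence converging to the same limit, the full sequence converges, so $\Psi^{-1}$ is continuous. Shrinking $r,s$ if necessary, the resulting set $\Psi^{-1}(B_H(\theta,m))$ is an open neighborhood of $\theta$ mapped homeomorphically by $\nabla\mathcal{L}$ onto $B_H(\theta,m)$.

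The main technical point is Step~2: verifying that the homotopy $\nabla\mathcal{L}-ty$ stays nonvanishing on $\partial\overline{\mathcal{Q}_{r,s}}$ and remains of class $(S)_+$, so that Browder--Skrypnik degree theory applies. Both ingredients are, however, immediate consequences of Lemma~\ref{lem:S.4.1} (with $H^0=\{\theta\}$) and Lemma~\ref{lem:S.4.2}, so no new estimates beyond those already in Section~\ref{sec:S.4} are required.
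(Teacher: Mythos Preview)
Your proof is correct and follows essentially the same route as the paper's: degree theory (via Lemmas~\ref{lem:S.4.1},~\ref{lem:S.4.2} and (\ref{e:S.4.18})) for surjectivity, the $(S)_+$ property for continuity of the inverse, and the argument of Step~3 in the proof of Theorem~\ref{th:S.4.3} for injectivity. One small phrasing slip: in Step~1 the mean value theorem should be applied to the scalar function $t\mapsto(\nabla\mathcal{L}(u_2+t(u_1-u_2)),w)_H$ for a fixed test vector $w$ (so $\tau$ may depend on $w$), not to the vector-valued map $\nabla\mathcal{L}$ directly---but since you only use the identity after pairing with $v^+$ or $v^-$, this does not affect the argument.
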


\begin{proof}
We can assume that $\nabla\mathcal{L}$ is of class $(S)_+$ in
 $\overline{\mathcal{Q}_{r,s}}$.
Since $H^0=\{\theta\}$ and $\nabla\mathcal{L}=f_0$,
\begin{eqnarray}\label{e:S.4.38}
\deg(\nabla\mathcal{L}, \mathcal{Q}_{r,s}, \theta)=(-1)^{\mu}
\end{eqnarray}
by (\ref{e:S.4.18}).
Moreover, $\varrho:=\inf\{\|\nabla\mathcal{L}(u)\|\,|\, u\in\partial\overline{\mathcal{Q}_{r,s}}\}>0$
 by Lemma~\ref{lem:S.4.1}. For any given $v\in B_H(\theta,\varrho)$, let us define
 $\mathscr{H}:[0,1]\times\overline{\mathcal{Q}_{r,s}}\to H,\;(t,u)\mapsto \nabla\mathcal{L}(u)-tv$.
 Then
 $$\|\mathscr{H}(t,u)\|=\|\nabla\mathcal{L}(u)-tv\|\ge \|\nabla\mathcal{L}(u)\|-\|v\|\ge\varrho-\|v\|>0,\;
\forall (t,u)\in [0,1]\times\partial\overline{\mathcal{Q}_{r,s}}.
$$
Let  $t_n\to t$ in $[0,1]$, $(u_n)\subset\mathcal{Q}_{r,s}$ converge weakly to $u$ in $H$,
and $\lim\sup_{n\to\infty}(\mathscr{H}(t_n,u_n), u_n-u)_H\le 0$.
Then
$(\nabla\mathcal{L}(u_n),u_n-u)_H=(\mathscr{H}(t_n,u_n),u_n-u)_H+ t_n(v, u_n-u)_H$
leads to
$$
\lim\sup_{n\to\infty}(\nabla\mathcal{L}(u_n),u_n-u)_H\le 0.
$$
It follows that $u_n\to u$ in $H$
because $\nabla\mathcal{L}$ is of class $(S)_+$ in $\overline{\mathcal{Q}_{r,s}}$.
Hence  $\mathscr{H}$ is a homotopy of class $(S)_+$, and thus (\ref{e:S.4.38}) gives
$\deg(\nabla\mathcal{L}-v, \mathcal{Q}_{r,s}, \theta)
=\deg(\nabla\mathcal{L}, \mathcal{Q}_{r,s}, \theta)=(-1)^{\mu}$.
This implies $\nabla\mathcal{L}(\xi_v)=v$ for some $\xi_v\in \mathcal{Q}_{r,s}$.
By Step 3 in the proof of Theorem~\ref{th:S.4.3} (taking $\vec{\lambda}=\vec{0}$)
it is easily seen that the equation $\nabla\mathcal{L}(u)=v$ has a unique solution in $\overline{\mathcal{Q}_{r,s}}$,
and hence $\xi_v$ is unique. Then
we get a map $B_H(\theta,\varrho)\ni v \mapsto\xi_v\in \mathcal{Q}_{r,s}$ to satisfy
$\nabla\mathcal{L}(\xi_v)=v$ for all $v\in B_H(\theta,\varrho)$.
We claim that this map is continuous. Arguing by contradiction,
assume that there exists a sequence $v_n\to v$ in $B_H (\theta,\varrho)$, such that
$\xi_{v_n}\rightharpoonup \xi^\ast$ in $H$ and $\|\xi_{v_n}-\xi_v\|\ge\epsilon_0$
for some $\epsilon_0>0$ and all $n=1,2,\cdots$. Note that
\begin{eqnarray*}
(\nabla\mathcal{L}(\xi_{v_n}), \xi_{v_n}-\xi^\ast)_H&=&(v_n,\xi_{v_n}-\xi^\ast)_H
=(v_n-v,\xi_{v_n}-\xi^\ast)_H+(v,\xi_{v_n}-\xi^\ast)_H\to 0.
\end{eqnarray*}
We derive that $\xi_{v_n}\to\xi^\ast$ in $H$, and so
$\nabla\mathcal{L}(\xi_{v_n})=v_n$ can lead to $\nabla\mathcal{L}(\xi^\ast)=v$.
The uniqueness of solutions implies $\xi^\ast=\xi_v$.
This prove the claim. Hence $\nabla\mathcal{L}$
is a homeomorphism from an open neighborhood $\{\xi_v\,|\,v\in B_H(\theta,\varrho)\}$
of $\theta$ in $\mathcal{Q}_{r,s}$ onto $B_H(\theta,\varrho)$.
\end{proof}

Theorem~\ref{th:S.4.4} cannot be derived from the invariance of domain theorem
(5.4.1) of Berger \cite{Ber} or \cite[Theorem~2.5]{Fe}. Recently, Ekeland proved
an weaker inverse function theorem, \cite[Theorem~2]{Ek}.
Since we cannot insure that $B(u)$ has a right-inverse
$L(u)$ which is uniformly bounded in a neighborhood of $\theta$,
 Theorem~\ref{th:S.4.4} cannot be derived from \cite[Theorem~2]{Ek}
 either.

\subsection{Parameterized splitting and shifting theorems}\label{sec:S.5}

 To shorten the proof of the main theorem, we shall write parts of it into two propositions.

\begin{proposition}\label{prop:S.5.1}
Under the assumptions of Theorem~\ref{th:S.4.3},
 for each $(\vec{\lambda},z)\in [-\delta,\delta]^n\times B_{H^0}(\theta,\epsilon)$,
let $\psi_{\vec{\lambda}}(z)=\psi(\vec{\lambda}, z)$ be given by (\ref{e:S.4.21}).
Then it satisfies
 \begin{eqnarray*}
\mathcal{L}_{\vec{\lambda}}(z+\psi_\lambda(z))&=&\min\{\mathcal{L}_{\vec{\lambda}}(z+ u)\,|\, u\in B_H(\theta, r)\cap H^+\}
\quad\hbox{if}\;H^-=\{\theta\},\\
\mathcal{L}_{\vec{\lambda}}(z+\psi_{\vec{\lambda}}(z))&=&\min\{\mathcal{L}_{\vec{\lambda}}(z+ u+ P^-\psi_{\vec{\lambda}}(z))\,|\, u\in B_H(\theta, r)\cap H^+\}\\
&=&\max\{\mathcal{L}_{\vec{\lambda}}(z+ P^+\psi_{\vec{\lambda}}(z)+ v)\,|\, v\in B_H(\theta, s)\cap H^-\}
\quad\hbox{if}\;H^-\ne\{\theta\}.
\end{eqnarray*}
\end{proposition}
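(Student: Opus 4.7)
The plan is to establish, for each fixed $(\vec{\lambda},z) \in [-\delta,\delta]^n \times B_{H^0}(\theta,\epsilon)$, that the map $u^+ \mapsto \mathcal{L}_{\vec{\lambda}}(z + u^+ + P^-\psi_{\vec{\lambda}}(z))$ on $B_H(\theta,r) \cap H^+$ is strictly convex with its unique critical point (and hence minimizer) at $P^+\psi_{\vec{\lambda}}(z)$, and symmetrically that $v \mapsto \mathcal{L}_{\vec{\lambda}}(z + P^+\psi_{\vec{\lambda}}(z) + v)$ on $B_H(\theta,s) \cap H^-$ is strictly concave with maximizer at $P^-\psi_{\vec{\lambda}}(z)$. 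When $H^- = \{\theta\}$ the second clause is vacuous and the first collapses to the displayed formula.

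The main computation is a one-dimensional Taylor expansion along the segments through $\psi_{\vec{\lambda}}(z)$. Abbreviating $w_0 := \psi_{\vec{\lambda}}(z)$ and $B_{\vec{\lambda}}(x) := B(x) + \sum_{j=1}^n \lambda_j\, \mathcal{G}''_j(x)$, I would fix $u^+ \in B_H(\theta,r) \cap H^+$, set $w := u^+ - P^+w_0 \in H^+$, and define
\begin{equation*}
\varphi(t) := \mathcal{L}_{\vec{\lambda}}(z + P^+w_0 + tw + P^-w_0) \qquad (t \in [0,1]).
\end{equation*}
By convexity of $B_{H^+}(\theta,r)$ the segment stays in $\mathcal{Q}_{r,s}$, so $z + w_0 + tw \in U \subset V$. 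The $C^1$-smoothness of $\mathcal{L}_{\vec{\lambda}}$ gives $\varphi'(t) = (\nabla\mathcal{L}_{\vec{\lambda}}(z + w_0 + tw), w)_H$, and the G\^ateaux-differentiability of $\nabla\mathcal{L}_{\vec{\lambda}}$ at every point of $V$ (available because Hypothesis~\ref{hyp:1.1} is assumed with $X = H$) upgrades $\varphi'$ to a pointwise-differentiable function with $\varphi''(t) = (B_{\vec{\lambda}}(z+w_0+tw)w, w)_H$. Since $w \in H^+$, the defining relation (\ref{e:S.4.22}) gives $\varphi'(0) = (P^+\nabla\mathcal{L}_{\vec{\lambda}}(z+w_0), w)_H = 0$, and Taylor's theorem with Lagrange remainder yields $\xi \in (0,1)$ with
\begin{equation*}
\mathcal{L}_{\vec{\lambda}}(z + u^+ + P^-w_0) - \mathcal{L}_{\vec{\lambda}}(z + w_0) \;=\; \tfrac{1}{2}(B_{\vec{\lambda}}(z+w_0+\xi w)w, w)_H.
\end{equation*}

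To conclude, I would bound $\varphi''(\xi)$ from below. Lemma~\ref{lem:S.2.2}(i) gives $(B(z+w_0+\xi w)w,w)_H \ge a_1\|w\|^2$, while (\ref{e:S.4.24.1}) together with (\ref{e:S.4.30.1}) and the choice $\delta < a_1/(8nM)$ made in Step~3 of the proof of Theorem~\ref{th:S.4.3} yield $\bigl|\sum_j \lambda_j(\mathcal{G}''_j(\cdot)w,w)_H\bigr| \le n\delta M\|w\|^2 \le \tfrac{a_1}{8}\|w\|^2$. Hence $\varphi''(\xi) \ge \tfrac{7a_1}{8}\|w\|^2$, so the increment above is strictly positive whenever $w \ne \theta$, proving the minimum claim. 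The maximum claim for the $H^-$-direction is strictly dual: define $\widetilde\varphi(t) := \mathcal{L}_{\vec{\lambda}}(z + P^+w_0 + P^-w_0 + t v')$ with $v' := v - P^-w_0 \in H^-$, and apply the same Taylor identity, now using Lemma~\ref{lem:S.2.2}(iii) to get $\widetilde\varphi''(\xi) \le -\tfrac{7a_0}{8}\|v'\|^2$.

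The chief obstacle is not the bookkeeping but the legitimacy of the second-order Taylor step outside a $C^2$ framework. The argument succeeds precisely because $\varphi'$ is differentiable at \emph{every} point of $[0,1]$ (pointwise differentiability is all that the Lagrange form of Taylor's theorem requires for a scalar function), and this in turn needs $B_{\vec{\lambda}}(x)$ to be defined at every $x \in V$, which is exactly the content of the $X = H$ hypothesis inherited from Theorem~\ref{th:S.1.2}; without it the expansion would only be available along $X$-directions, which is insufficient here.
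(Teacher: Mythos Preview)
Your proof is correct and follows essentially the same strategy as the paper: expand $\mathcal{L}_{\vec{\lambda}}$ to second order along the $H^+$- (resp.\ $H^-$-) segment through $\psi_{\vec{\lambda}}(z)$, use (\ref{e:S.4.22}) to kill the first-order term, and then invoke Lemma~\ref{lem:S.2.2}(i) (resp.\ (iii)) together with the smallness $\delta < \min\{a_0,a_1\}/(8nM)$ to make the second-order term strictly positive (resp.\ negative). The only cosmetic difference is that the paper writes the remainder in integral form---a double integral of $B_{\vec{\lambda}}$ over $[0,1]^2$---whereas you use the Lagrange form at a single intermediate point $\xi$; your justification that $\varphi\in C^1$ with $\varphi'$ pointwise differentiable (thanks to $X=H$) is exactly what the Lagrange form needs, so both versions are equally legitimate here.
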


\begin{proof}\quad
{\bf Case} $H^-=\{\theta\}$.\quad
Then (\ref{e:S.4.22}) becomes
$P^+\nabla\mathcal{L}_{\vec{\lambda}}(z+\psi_{\vec{\lambda}}(z))=0\;\forall z\in B_{H^0}(\theta,\epsilon)$
since $\mathcal{Q}_{r,s}=B_H(\theta, r)\cap H^+$.
This and the integral mean value theorem give for each $u\in \mathcal{Q}_{r,s}$,
\begin{eqnarray*}
&&\mathcal{L}_{\vec{\lambda}}(z+ u)-\mathcal{L}_{\vec{\lambda}}(z+\psi_{\vec{\lambda}}(z))\\
&=&\int^1_0(\nabla\mathcal{L}_{\vec{\lambda}}(z+\psi_{\vec{\lambda}}(z)+ \tau(u-\psi_{\vec{\lambda}}(z))), u-\psi_{\vec{\lambda}}(z))_Hd\tau\\
&=&\int^1_0(P^+\nabla\mathcal{L}_{\vec{\lambda}}(z+\psi_{\vec{\lambda}}(z)+ \tau(u-\psi_{\vec{\lambda}}(z))), u-\psi_{\vec{\lambda}}(z))_Hd\tau\\
&=&\int^1_0(P^+\nabla\mathcal{L}_{\vec{\lambda}}(z+\psi_{\vec{\lambda}}(z)+ \tau(u-\psi_{\vec{\lambda}}(z)))-P^+\nabla\mathcal{L}_{\vec{\lambda}}(z+\psi_{\vec{\lambda}}(z)), u-\psi_{\vec{\lambda}}(z))_Hd\tau\\
&=&\int^1_0(\nabla\mathcal{L}_{\vec{\lambda}}(z+\psi_{\vec{\lambda}}(z)+ \tau(u-\psi_{\vec{\lambda}}(z)))-\nabla\mathcal{L}_{\vec{\lambda}}(z+\psi_{\vec{\lambda}}(z)), u-\psi_{\vec{\lambda}}(z))_Hd\tau\\
&=&\int^1_0\tau d\tau\int^1_0\big(B(z+\psi_{\vec{\lambda}}(z)+ \rho\tau(u-\psi_{\vec{\lambda}}(z)))(u-\psi_{\vec{\lambda}}(z)), u-\psi_{\vec{\lambda}}(z)\bigr)_Hd\rho\\
&&+\sum^n_{j=1}\lambda_j\int^1_0\tau d\tau\int^1_0\big(\mathcal{G}''_j(z+\psi_{\vec{\lambda}}(z)+ \rho\tau(u-\psi_{\vec{\lambda}}(z)))(u-\psi_{\vec{\lambda}}(z)), u-\psi_{\vec{\lambda}}(z)\bigr)_Hd\rho\\
&\ge&\frac{a_1}{2}\|u-\psi_{\vec{\lambda}}(z)\|^2\\
&&+\sum^n_{j=1}\lambda_j\int^1_0\tau d\tau\int^1_0\big(\mathcal{G}''_j(z+\psi_{\vec{\lambda}}(z)+ \rho\tau(u-\psi_{\vec{\lambda}}(z)))(u-\psi_{\vec{\lambda}}(z)), u-\psi_{\vec{\lambda}}(z)\bigr)_Hd\rho,
\end{eqnarray*}
where the final inequality comes from Lemma~\ref{lem:S.2.2}(i).
For the final sum, as in (\ref{e:S.4.31}) we have
 \begin{eqnarray*}
&&\left|\sum^n_{j=1}\lambda_j\int^1_0\tau d\tau\int^1_0\big(\mathcal{G}''_j(z+\psi_{\vec{\lambda}}(z)+ \rho\tau(u-\psi_{\vec{\lambda}}(z)))(u-\psi_{\vec{\lambda}}(z)), u-\psi_{\vec{\lambda}}(z)\bigr)_Hd\rho\right|\\
&\le& 2n\delta M\|u-\psi_{\vec{\lambda}}(z))\|^2\le \frac{a_1}{4}\|u-\psi_{\vec{\lambda}}(z))\|^2.
\end{eqnarray*}
These lead to
\begin{eqnarray}\label{e:S.5.1}
\mathcal{L}_{\vec{\lambda}}(z+ u)-\mathcal{L}_{\vec{\lambda}}(z+\psi_{\vec{\lambda}}(z))
\ge\frac{a_1}{4}\|u-\psi_{\vec{\lambda}}(z)\|^2,
\end{eqnarray}
which implies the desired conclusion.

 {\bf Case} $H^-\ne\{\theta\}$.\quad
  For each $u\in B_H(\theta, r)\cap H^+$ we have $u+P^-\psi_{\vec{\lambda}}(z)\in \mathcal{Q}_{r,s}$.
  As above we can use (\ref{e:S.4.22}) to derive
  \begin{eqnarray}\label{e:S.5.2}
\mathcal{L}_{\vec{\lambda}}(z+ u+ P^-\psi_{\vec{\lambda}}(z))-\mathcal{L}_{\vec{\lambda}}(z+ \psi_{\vec{\lambda}}(z))
\ge\frac{a_1}{4}\|u-P^+\psi_{\vec{\lambda}}(z)\|^2,
\end{eqnarray}
and therefore  the second equality. Similarly,  for
each $v\in B_H(\theta, r)\cap H^-$ we have $P^+\psi_{\vec{\lambda}}(z)+v\in \mathcal{Q}_{r,s}$,
and use (\ref{e:S.4.22}) and  Lemma~\ref{lem:S.2.2}(ii)-(iii) to deduce
\begin{eqnarray*}
&&\mathcal{L}_{\vec{\lambda}}(z+ P^+\psi_{\vec{\lambda}}(z)+v)-\mathcal{L}_{\vec{\lambda}}(z+\psi_{\vec{\lambda}}(z))\\
&=&\int^1_0(\nabla\mathcal{L}_{\vec{\lambda}}(z+ \psi_{\vec{\lambda}}(z)+ t(u-P^+\psi_{\vec{\lambda}}(z))), v-P^-\psi_{\vec{\lambda}}(z))_Hdt\\
&=&\int^1_0(\nabla\mathcal{L}_{\vec{\lambda}}(z+ \psi_{\vec{\lambda}}(z)+ t(v-P^-\psi_{\vec{\lambda}}(z)))- \nabla\mathcal{L}_{\vec{\lambda}}(z+ \psi_{\vec{\lambda}}(z)), v-P^-\psi_{\vec{\lambda}}(z))_Hdt\\
&=&\int^1_0t\int^1_0(B(z+ \psi_{\vec{\lambda}}(z)+ \tau t(v-P^-\psi_{\vec{\lambda}}(z)))(v-P^-\psi_{\vec{\lambda}}(z)), v-P^-\psi_{\vec{\lambda}}(z))_Hdtd\tau\\
&+&\sum^n_{j=1}\lambda_j\int^1_0t dt\int^1_0\big(\mathcal{G}''_j(z+ \psi_{\vec{\lambda}}(z)+ \tau t(v-P^-\psi_{\vec{\lambda}}(z)))(v-P^-\psi_{\vec{\lambda}}(z)), v-P^-\psi_{\vec{\lambda}}(z)\bigr)_Hd\tau\\
&\le&-\frac{a_0}{2}\|v-P^-\psi_{\vec{\lambda}}(z)\|^2\\
&+&\sum^n_{j=1}\lambda_j\int^1_0t dt\int^1_0\big(\mathcal{G}''_j(z+ \psi_{\vec{\lambda}}(z)+ \tau t(v-P^-\psi_{\vec{\lambda}}(z)))(v-P^-\psi_{\vec{\lambda}}(z)), v-P^-\psi_{\vec{\lambda}}(z)\bigr)_Hd\tau\\
&\le&-\frac{a_0}{4}\|v-P^-\psi_{\vec{\lambda}}(z)\|^2,
\end{eqnarray*}
 and hence  the third equality.
\end{proof}

\begin{proposition}\label{prop:S.5.2}
Under the assumptions of Theorem~\ref{th:S.4.3},
 for each $(\vec{\lambda},z)\in [-\delta,\delta]^n\times B_{H^0}(\theta,\epsilon)$,
let $\psi_{\vec{\lambda}}(z)=\psi(\vec{\lambda}, z)$ be given by (\ref{e:S.4.21}).
 Then
 \begin{equation}\label{e:S.5.3}
 \mathcal{L}^\circ_{\vec{\lambda}}(z):=\mathcal{L}_{\vec{\lambda}}(z+\psi_{\vec{\lambda}}(z))
 =\mathcal{L}(z+\psi(\vec{\lambda}, z))+ \sum^n_{j=1}\lambda_j\mathcal{G}_j(z+\psi(\vec{\lambda}, z))
  \end{equation}
  defines a $C^1$ functional on $B_H(\theta, \epsilon)\cap H^0$, and its differential is given by
\begin{eqnarray}\label{e:S.5.4}
D\mathcal{L}_{\vec{\lambda}}^\circ(z)[h]=D\mathcal{L}(z+\psi(\vec{\lambda},z))[h]+
\sum^n_{j=1}\lambda_j D\mathcal{G}_j(z+\psi(\vec{\lambda},z))[h],\quad\forall h\in H^0.
\end{eqnarray}
(Clearly, this implies that $[-\delta,\delta]^n\ni\vec{\lambda}\mapsto\mathcal{L}^\circ_{\vec{\lambda}}\in C^1(\bar{B}_H(\theta, \epsilon)\cap H^0)$
is continuous by shrinking $\epsilon>0$ since $\dim H^0<\infty$).
\end{proposition}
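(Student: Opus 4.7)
The plan is to differentiate $\mathcal{L}^\circ_{\vec{\lambda}}$ along a direction $h\in H^0$ via the integral mean value theorem, exploit the defining equation (\ref{e:S.4.22}) that says $P^\bot\nabla\mathcal{L}_{\vec{\lambda}}(z+\psi_{\vec{\lambda}}(z))=\theta$ to kill the contribution coming from the implicit part $\psi_{\vec{\lambda}}$, and then push the Lipschitz bound (\ref{e:S.4.22.1}) through to handle the remaining error terms.

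More concretely, for fixed $\vec{\lambda}\in[-\delta,\delta]^n$ and $z,z+h\in B_{H^0}(\theta,\epsilon)$, set $\Delta\psi:=\psi_{\vec{\lambda}}(z+h)-\psi_{\vec{\lambda}}(z)\in H^+\oplus H^-$ and $\gamma(t):=z+\psi_{\vec{\lambda}}(z)+t(h+\Delta\psi)$. By the integral mean value theorem,
\begin{equation*}
\mathcal{L}^\circ_{\vec{\lambda}}(z+h)-\mathcal{L}^\circ_{\vec{\lambda}}(z)=\int_0^1\bigl(\nabla\mathcal{L}_{\vec{\lambda}}(\gamma(t)),\,h+\Delta\psi\bigr)_H\,dt.
\end{equation*}
Using (\ref{e:S.4.22}), the identity $(\nabla\mathcal{L}_{\vec{\lambda}}(z+\psi_{\vec{\lambda}}(z)),\Delta\psi)_H=(P^\bot\nabla\mathcal{L}_{\vec{\lambda}}(z+\psi_{\vec{\lambda}}(z)),\Delta\psi)_H=0$ holds, so I may subtract this $0$ from the integrand and rewrite the increment as
\begin{equation*}
\bigl(\nabla\mathcal{L}_{\vec{\lambda}}(z+\psi_{\vec{\lambda}}(z)),h\bigr)_H+\int_0^1\!\!\bigl(\nabla\mathcal{L}_{\vec{\lambda}}(\gamma(t))-\nabla\mathcal{L}_{\vec{\lambda}}(z+\psi_{\vec{\lambda}}(z)),\,h+\Delta\psi\bigr)_H\,dt.
\end{equation*}
The first term is exactly the proposed $D\mathcal{L}^\circ_{\vec{\lambda}}(z)[h]$ in (\ref{e:S.5.4}).

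The main obstacle is to show the integral error is $o(\|h\|)$. Here I would use the Lipschitz bound (\ref{e:S.4.22.1}), which gives $\|\Delta\psi\|\le 3\|h\|$ and hence $\|h+\Delta\psi\|\le 4\|h\|$ and $\|\gamma(t)-(z+\psi_{\vec{\lambda}}(z))\|\le 4\|h\|$ for all $t\in[0,1]$. Since $\nabla\mathcal{L}$ and each $\nabla\mathcal{G}_j$ are continuous on $V$ (being $C^1$-gradients of $C^1$ functionals), $\nabla\mathcal{L}_{\vec{\lambda}}$ is continuous, so
\begin{equation*}
\eta(h):=\sup_{t\in[0,1]}\|\nabla\mathcal{L}_{\vec{\lambda}}(\gamma(t))-\nabla\mathcal{L}_{\vec{\lambda}}(z+\psi_{\vec{\lambda}}(z))\|\longrightarrow 0\quad\text{as}\quad\|h\|\to 0.
\end{equation*}
Cauchy--Schwarz then bounds the error integral by $\eta(h)\cdot 4\|h\|=o(\|h\|)$. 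This establishes Fr\'echet differentiability of $\mathcal{L}^\circ_{\vec{\lambda}}$ on $B_{H^0}(\theta,\epsilon)$ with derivative (\ref{e:S.5.4}).

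It remains to verify $C^1$-dependence on $z$ and joint continuity in $(\vec{\lambda},z)$. Since $z\mapsto z+\psi_{\vec{\lambda}}(z)$ is continuous by Theorem~\ref{th:S.4.3} (uniformly in $\vec{\lambda}$ through the Lipschitz estimate), and since $\nabla\mathcal{L},\nabla\mathcal{G}_j:V\to H$ are continuous, the map $z\mapsto\nabla\mathcal{L}(z+\psi_{\vec{\lambda}}(z))+\sum_j\lambda_j\nabla\mathcal{G}_j(z+\psi_{\vec{\lambda}}(z))$ is continuous from $B_{H^0}(\theta,\epsilon)$ into $H$. Since $\dim H^0<\infty$, restriction to $H^0$ yields a continuous map into $(H^0)^\ast$, so $\mathcal{L}^\circ_{\vec{\lambda}}\in C^1(B_{H^0}(\theta,\epsilon))$. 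The joint continuity $\vec{\lambda}\mapsto\mathcal{L}^\circ_{\vec{\lambda}}\in C^1(\bar B_{H^0}(\theta,\epsilon))$ on a slightly shrunken ball follows from continuity of $\psi$ in $(\vec{\lambda},z)$ together with uniform continuity of $\nabla\mathcal{L}_{\vec{\lambda}}$ on the compact set $\bar B_{H^0}(\theta,\epsilon)+\overline{\mathcal{Q}_{r,s}}$ (using finite-dimensionality of $H^0$ to pass to uniform norms).
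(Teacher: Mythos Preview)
Your proof is correct but takes a genuinely different route from the paper. The paper's argument relies on Proposition~\ref{prop:S.5.1}, which gives a min--max (saddle) characterization of $\psi_{\vec{\lambda}}(z)$: in the case $H^-\ne\{\theta\}$ it sandwiches the difference quotient
\[
\frac{\mathcal{L}_{\vec{\lambda}}(z+th+\psi_{\vec{\lambda}}(z+th))-\mathcal{L}_{\vec{\lambda}}(z+\psi_{\vec{\lambda}}(z))}{t}
\]
between two auxiliary quotients obtained by freezing either $P^+\psi_{\vec{\lambda}}$ or $P^-\psi_{\vec{\lambda}}$, computes both limits via dominated convergence (using only continuity of $\psi_{\vec{\lambda}}$), and applies the squeeze theorem to get the G\^ateaux derivative; $C^1$ then follows from continuity of $D\mathcal{L}_{\vec{\lambda}}$ and $\psi_{\vec{\lambda}}$. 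Your approach bypasses Proposition~\ref{prop:S.5.1} entirely: you use the defining equation (\ref{e:S.4.22}) to kill the $\Delta\psi$-contribution at the base point and then invoke the Lipschitz bound (\ref{e:S.4.22.1}) to control $\|\Delta\psi\|\le 3\|h\|$, which together with continuity of $\nabla\mathcal{L}_{\vec{\lambda}}$ forces the remainder to be $o(\|h\|)$. This is shorter and yields Fr\'echet differentiability directly, at the cost of requiring the quantitative estimate (\ref{e:S.4.22.1}) rather than mere continuity of $\psi$. The paper's sandwich method is more robust in settings where such a Lipschitz bound is unavailable, and Proposition~\ref{prop:S.5.1} is in any case needed later for the splitting theorem; but for the present statement your argument is a clean alternative.
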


 \begin{proof} {\bf Case} $H^-\ne\{\theta\}$.\quad
  For fixed $z\in B_H(\theta, \epsilon)\cap H^0$, $h\in H^0$, and
$t\in\mathbb{R}$ with sufficiently small $|t|$, the last two equalities in
Proposition~\ref{prop:S.5.1} imply
 \begin{eqnarray}\label{e:S.5.5}
&&\mathcal{L}_{\vec{\lambda}}(z+th+ P^+\psi_{\vec{\lambda}}(z+th)+P^-\psi_{\vec{\lambda}}(z))-\mathcal{L}_{\vec{\lambda}}(z+P^+\psi_{\vec{\lambda}}(z+th)+ P^-\psi_{\vec{\lambda}}(z))\nonumber\\
&\le& \mathcal{L}_{\vec{\lambda}}(z+th+\psi_{\vec{\lambda}}(z+th))-\mathcal{L}_{\vec{\lambda}}(z+\psi_{\vec{\lambda}}(z))\nonumber\\
&\le&\mathcal{L}_{\vec{\lambda}}(z+th+ P^+\psi_{\vec{\lambda}}(z)+ P^-\psi_{\vec{\lambda}}(z+th))-
\mathcal{L}_{\vec{\lambda}}(z+ P^+\psi_{\vec{\lambda}}(z)+ P^-\psi_{\vec{\lambda}}(z+th)).\nonumber\\
\end{eqnarray}
Since $\mathcal{L}_{\vec{\lambda}}$ is $C^1$  and  $\psi_{\vec{\lambda}}$ is continuous we deduce,
\begin{eqnarray}\label{e:S.5.6}
&&\lim_{t\to 0}\frac{\mathcal{L}_{\vec{\lambda}}(z+th+P^+\psi_{\vec{\lambda}}(z+th)+P^-\psi_{\vec{\lambda}}(z))-
\mathcal{L}_{\vec{\lambda}}(z+
P^+\psi_{\vec{\lambda}}(z+th)+P^-\psi_{\vec{\lambda}}(z))}{t}\nonumber\\
&=& \lim_{t\to 0}\int^1_0D\mathcal{L}_{\vec{\lambda}}(z+sth+P^+\psi_{\vec{\lambda}}(z+th)+P^-\psi_{\vec{\lambda}}(z))[h] ds\nonumber\\
&=&D\mathcal{L}_{\vec{\lambda}}(z+\psi_{\vec{\lambda}}(z))[h].
\end{eqnarray}
Here the last equality follows from the Lebesgue's Dominated Convergence Theorem
since
$$
\{D\mathcal{L}_{\vec{\lambda}}(z+sth+P^+\psi_{\vec{\lambda}}(z+th)+P^-\psi_{\vec{\lambda}}(z))[h]\,|\, 0\le s\le 1,\;|t|\le 1\}
$$
is bounded by the compactness of
 $\{z+ sth+P^+\psi_{\vec{\lambda}}(z+th)+P^-\psi_{\vec{\lambda}}(z)\,|\, 0\le s\le 1,\; |t|\le 1\}$.

Similarly, we have
\begin{eqnarray}\label{e:S.5.7}
&&\lim_{t\to 0}\frac{\mathcal{L}_{\vec{\lambda}}(z+th+ P^+\psi_{\vec{\lambda}}(z)+P^-\psi_{\vec{\lambda}}(z+th))-\mathcal{L}_{\vec{\lambda}}(z+P^+\psi_{\vec{\lambda}}(z)+
P^-\psi_{\vec{\lambda}}(z+th))}{t}\nonumber\\
&=&D\mathcal{L}_{\vec{\lambda}}(z+\psi_{\vec{\lambda}}(z))[h].
\end{eqnarray}
Using the Sandwich Theorem we conclude from (\ref{e:S.5.5}), (\ref{e:S.5.6}) and (\ref{e:S.5.7}) that
 \begin{eqnarray*}
\lim_{t\to 0}\frac{\mathcal{L}_{\vec{\lambda}}(z+th+\psi_{\vec{\lambda}}(z+th))-\mathcal{L}_{\vec{\lambda}}(z+\psi_{\vec{\lambda}}(z))}{t}=
D\mathcal{L}_{\vec{\lambda}}(z+\psi_{\vec{\lambda}}(z))[h],\quad\forall h\in H^0.
\end{eqnarray*}
 That is, $\mathcal{L}_{\vec{\lambda}}^\circ$ is G\^ateaux differentiable  and $D\mathcal{L}_{\vec{\lambda}}^\circ(z)=D\mathcal{L}_{\vec{\lambda}}(z+\psi_{\vec{\lambda}}(z))|_{H^0}$.
 The latter implies that  $\mathcal{L}_{\vec{\lambda}}^\circ$ is of class $C^1$
 because both $D\mathcal{L}_{\vec{\lambda}}$ and $\psi_{\vec{\lambda}}$ are continuous.

 {\bf Case} $H^-=\{\theta\}$.\quad
For fixed $z\in B_H(\theta, \epsilon)\cap H^0$ and $h\in H^0$, and
$t\in\mathbb{R}$ with sufficiently small $|t|$, the first equality in Proposition~\ref{prop:S.5.1} implies
\begin{eqnarray}\label{e:S.5.8}
&&\mathcal{L}_{\vec{\lambda}}(z+th+\psi_{\vec{\lambda}}(z+th))-\mathcal{L}_{\vec{\lambda}}(z+\psi_{\vec{\lambda}}(z+th))\nonumber\\
&\le& \mathcal{L}_{\vec{\lambda}}(z+th+\psi_{\vec{\lambda}}(z+th))-\mathcal{L}_{\vec{\lambda}}(z+\psi_{\vec{\lambda}}(z))\nonumber\\
&\le&\mathcal{L}_{\vec{\lambda}}(z+th+\psi_{\vec{\lambda}}(z))-\mathcal{L}_{\vec{\lambda}}(z+\psi_{\vec{\lambda}}(z)).
\end{eqnarray}
By  the continuity of  $\nabla\mathcal{L}_{\vec{\lambda}}$ and
$\psi_{\vec{\lambda}}$ we obtain
\begin{eqnarray}\label{e:S.5.9}
&&\lim_{t\to 0}\frac{\mathcal{L}_{\vec{\lambda}}(z+th+\psi_{\vec{\lambda}}(z+th))-
\mathcal{L}_{\vec{\lambda}}(z+\psi_{\vec{\lambda}}(z+th))}{t}\nonumber\\
&=& \lim_{t\to 0}\int^1_0D\mathcal{L}_{\vec{\lambda}}(z+sth+\psi_{\vec{\lambda}}(z+th))[h]ds\nonumber\\
&=&D\mathcal{L}_{\vec{\lambda}}(z+\psi_{\vec{\lambda}}(z))[h].
\end{eqnarray}
(As above this follows from the Lebesgue's Dominated Convergence Theorem because
$\{z+ sth+\psi_{\vec{\lambda}}(z+th)\,|\, 0\le s\le 1,\; 0\le t\le 1\}$
is compact
and thus $\{D\mathcal{L}_{\vec{\lambda}}(z+sth+\psi_{\vec{\lambda}}(z+th))[h]\,|\, 0\le s\le 1,\;|t|\le 1\}$
is bounded). Similarly, we may prove
\begin{eqnarray}\label{e:S.5.10}
\lim_{t\to 0}\frac{\mathcal{L}_{\vec{\lambda}}(z+th+\psi_{\vec{\lambda}}(z))-
\mathcal{L}_{\vec{\lambda}}(z+\psi_{\vec{\lambda}}(z))}{t}=
D\mathcal{L}_{\vec{\lambda}}(z+\psi_{\vec{\lambda}}(z))[h],
\end{eqnarray}
and thus
\begin{eqnarray*}
\lim_{t\to 0}\frac{\mathcal{L}_{\vec{\lambda}}(z+th+\psi_{\vec{\lambda}}(z+th))-
\mathcal{L}_{\vec{\lambda}}(z+\psi_{\vec{\lambda}}(z))}{t}=
D\mathcal{L}_{\vec{\lambda}}(z+\psi_{\vec{\lambda}}(z))[h]
\end{eqnarray*}
by (\ref{e:S.5.8}), (\ref{e:S.5.8}) and (\ref{e:S.5.10}).
The desired claim follows immediately.
 \end{proof}

\begin{theorem}[Parameterized Splitting Theorem]\label{th:S.5.3}
Under the assumptions of Theorem~\ref{th:S.4.3},
by shrinking $\delta>0$, $\epsilon>0$ and $r>0, s>0$, we obtain
an open neighborhood $W$ of $\theta$ in $H$ and an origin-preserving
homeomorphism
\begin{eqnarray}\label{e:S.5.11}
&&[-\delta, \delta]^n\times B_{H^0}(\theta,\epsilon)\times
\left(B_{H^+}(\theta, r) +
B_{H^-}(\theta, s)\right)\to [-\delta, \delta]^n\times W,\nonumber\\
&&(\vec{\lambda}, z, u^++u^-)\mapsto (\vec{\lambda},\Phi_{\vec{\lambda}}(z, u^++u^-))
\end{eqnarray}
such that
\begin{equation}\label{e:S.5.12}
\mathcal{ L}_{\vec{\lambda}}\circ\Phi_{\vec{\lambda}}(z, u^++ u^-)=\|u^+\|^2-\|u^-\|^2+ \mathcal{
L}_{\vec{\lambda}}(z+ \psi(\vec{\lambda}, z))
\end{equation}
for all $(\vec{\lambda}, z, u^+ + u^-)\in [-\delta,\delta]^n\times B_{H^0}(\theta,\epsilon)\times
\left(B_{H^+}(\theta, r) +
B_{H^-}(\theta, s)\right)$, where $\psi$ is given by (\ref{e:S.4.21}).
 The functional $\mathcal{L}_{\vec{\lambda}}^\circ: B_H(\theta, \epsilon)\cap H^0\to \mathbb{R}$
   given by (\ref{e:S.5.3}) is of class $C^1$, and its differential is given by (\ref{e:S.5.4}).
Moreover, (i) if $\mathcal{L}$ and $\mathcal{G}_j$, $j=1,\cdots,n$, are of class $C^{2-0}$,
then so is $\mathcal{L}_{\vec{\lambda}}^\circ$ for each $\vec{\lambda}\in [-\delta, \delta]^n$;
(ii) if a compact Lie group $G$  acts on $H$ orthogonally, and
$V$, $\mathcal{L}$ and $\mathcal{G}$ are $G$-invariant (and hence $H^0$, $(H^0)^\bot$
are $G$-invariant subspaces), then for each $\vec{\lambda}\in [-\delta, \delta]^n$, $\psi(\vec{\lambda}, \cdot)$
 and $\Phi_{\vec{\lambda}}(\cdot,\cdot)$  are $G$-equivariant, and $\mathcal{L}^\circ_{\vec{\lambda}}(z)=\mathcal{
L}_{\vec{\lambda}}(z+ \psi(\vec{\lambda}, z))$ is $G$-invariant.
\end{theorem}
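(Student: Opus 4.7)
The plan is to combine Theorem~\ref{th:S.4.3} (which already produces $\psi$ and establishes its continuity and Lipschitz bound) and Propositions~\ref{prop:S.5.1}--\ref{prop:S.5.2} (which give the min/max characterization and the $C^1$-smoothness of $\mathcal{L}_{\vec{\lambda}}^\circ$ with the differential formula (\ref{e:S.5.4})) with a parameterized Morse--Palais construction on the transversal slice $H^+\oplus H^-$, carried out uniformly in $(\vec{\lambda},z)\in [-\delta,\delta]^n\times B_{H^0}(\theta,\epsilon)$. For each such $(\vec{\lambda},z)$ I would set
\[
F_{\vec{\lambda}, z}(u) := \mathcal{L}_{\vec{\lambda}}(z + \psi(\vec{\lambda}, z) + u) - \mathcal{L}_{\vec{\lambda}}^\circ(z), \qquad u \in \overline{\mathcal{Q}_{r,s}}.
\]
By (\ref{e:S.4.22}) the origin is a critical point of $F_{\vec{\lambda},z}$ along $H^+\oplus H^-$, and by Proposition~\ref{prop:S.5.1} it is a nondegenerate saddle, strictly minimizing on $H^+$ and strictly maximizing on $H^-$, so it is the natural candidate for a Morse--Palais reduction.

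Next I would construct a jointly continuous family of origin-preserving homeomorphisms
\[
\phi_{\vec{\lambda},z}\colon B_{H^+}(\theta,r) + B_{H^-}(\theta,s) \longrightarrow W_{\vec{\lambda},z}\subset H^+\oplus H^-
\]
satisfying $F_{\vec{\lambda},z}\circ\phi_{\vec{\lambda},z}(u^++u^-) = \|u^+\|^2 - \|u^-\|^2$. The crucial inputs needed to apply \cite[Theorem~A.1]{Lu2} are strict saddle inequalities analogous to (\ref{e:S.3.3})--(\ref{e:S.3.4}) for $F_{\vec{\lambda},z}$; these follow from (\ref{e:S.4.6})--(\ref{e:S.4.7}) combined with the perturbation bounds on $\nabla\mathcal{G}_j$ already assembled in Step~2 of the proof of Theorem~\ref{th:S.4.3}, after shrinking $r,s,\epsilon,\delta$ so that (\ref{e:S.4.27}) and (\ref{e:S.4.30.1}) remain in force. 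The estimates used in \cite[Theorem~A.1]{Lu2} depend on the shifted origin $z+\psi(\vec{\lambda},z)$ only through quantities that are continuous in $(\vec{\lambda},z)$, so the resulting homeomorphism will inherit joint continuity in the parameter.

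Setting $\Phi_{\vec{\lambda}}(z, u^++u^-) := z + \psi(\vec{\lambda}, z) + \phi_{\vec{\lambda},z}(u^++u^-)$ then yields (\ref{e:S.5.12}) at once. That (\ref{e:S.5.11}) is an origin-preserving homeomorphism onto $[-\delta,\delta]^n\times W$ for some open neighborhood $W$ of $\theta$ follows because its inverse is given explicitly by
\[
w \longmapsto \bigl(P^0 w,\; \phi^{-1}_{\vec{\lambda},P^0 w}\bigl(w - P^0 w - \psi(\vec{\lambda}, P^0 w)\bigr)\bigr),
\]
whose continuity is inherited from the joint continuity of $\psi$ and of $\phi^{-1}_{\vec{\lambda},z}$. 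The $C^1$-regularity and (\ref{e:S.5.4}) are exactly Proposition~\ref{prop:S.5.2}. For the $C^{2-0}$ claim, note that $D\mathcal{L}^\circ_{\vec{\lambda}}(z)[h] = D\mathcal{L}_{\vec{\lambda}}(z+\psi(\vec{\lambda},z))[h]$; if $\mathcal{L}$ and the $\mathcal{G}_j$ are of class $C^{2-0}$, then $D\mathcal{L}_{\vec{\lambda}}$ is locally Lipschitz, and combining this with the Lipschitz estimate (\ref{e:S.4.22.1}) for $\psi$ shows that $z\mapsto D\mathcal{L}^\circ_{\vec{\lambda}}(z)$ is locally Lipschitz. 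Equivariance under a compact Lie group $G$ is automatic: $\psi$ is $G$-equivariant in $z$ by the uniqueness assertion in Theorem~\ref{th:S.4.3}, and $\phi_{\vec{\lambda},z}$, being built canonically from the gradient flow of $F_{\vec{\lambda},z}$ in the proof of \cite[Theorem~A.1]{Lu2}, intertwines the $G$-action because the data do.

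The main obstacle I anticipate is verifying that the Morse--Palais construction of \cite[Theorem~A.1]{Lu2} genuinely admits the parameter $(\vec{\lambda},z)$ with \emph{joint} continuity of $\phi_{\vec{\lambda},z}$ in all variables, rather than only continuity in $u$ for each fixed parameter. This requires carrying the parameter through the entire construction (a gradient-like deformation argument), checking that the associated ODE has uniformly Lipschitz right-hand side on $[-\delta,\delta]^n\times \bar{B}_{H^0}(\theta,\epsilon)\times\overline{\mathcal{Q}_{r,s}}$, and that the resulting flow, stopping times, and level-set retractions are all jointly continuous. Once this is done, the $C^{2-0}$ assertion and the equivariance are routine consequences of the estimates and uniqueness already recorded in Steps~1--6 of Theorem~\ref{th:S.4.3}.
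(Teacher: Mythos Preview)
Your proposal is correct and follows essentially the same route as the paper: define the shifted functional $F_{\vec{\lambda}}(z,u)=\mathcal{L}_{\vec{\lambda}}(z+\psi(\vec{\lambda},z)+u)-\mathcal{L}_{\vec{\lambda}}^\circ(z)$, verify the strict saddle inequalities (conditions (ii)--(iv) of \cite[Theorem~A.1]{Lu2}) via the mean value theorem together with Lemma~\ref{lem:S.2.2} and the bound $\delta<\min\{a_0,a_1\}/(8nM)$ on the $\mathcal{G}_j$-perturbations, and then invoke the parameterized Morse--Palais construction. The paper carries out exactly these two verification steps and then appeals to \cite[Theorem~A.1]{Lu2} in its parameterized form, so the joint continuity issue you flag is handled there rather than re-derived; your explicit inverse formula and the $C^{2-0}$/equivariance arguments match the paper's Step~3.
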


If the corresponding conditions with \cite[Theorem~1.1]{Lu1} or \cite[Remark~3.2]{Lu2} are also satisfied,
  we can prove: $\psi(\vec{\lambda}, \cdot)$ is of class $C^1$,
  $\mathcal{L}^\circ_{\vec{\lambda}}$ is of class $C^2$, and
   \begin{eqnarray}
   &&D\mathcal{L}^\circ_{\vec{\lambda}}(z)[u]=(\nabla\mathcal{L}_{\vec{\lambda}}(z+ \psi(\vec{\lambda}, z)), u)_H,\label{e:S.5.12.1}\\
   &&d^2\mathcal{L}^\circ_{\vec{\lambda}}(z)[u,v]=\bigl(\mathcal{L}''_{\vec{\lambda}}(z+ \psi(\vec{\lambda}, z))
   (u+ D_z\psi(\vec{\lambda}, z)[u]),v\bigr)_H\label{e:S.5.12.2}
  \end{eqnarray}
  for all $z\in B_H(\theta, \epsilon)\cap H^0$ and $u,v\in H^0$. Note that $\psi(\vec{\lambda}, \theta)=\theta$.
  We have
 \begin{eqnarray}\label{e:S.5.12.3}
  d^2\mathcal{L}^\circ_{\vec{\lambda}}(\theta)[z_1,z_2]&=&(\mathcal{L}''_{\vec{\lambda}}(\theta)(z_1+D_z\psi(\vec{\lambda}, \theta)[z_1]),z_2)_H\nonumber\\
  &=&-\sum^n_{j=1}\lambda_j(\mathcal{G}''_j(\theta)(z_1+D_z\psi(\vec{\lambda}, \theta)[z_1]),z_2)_H,\quad\forall z_1, z_2\in H^0,
  \end{eqnarray}
and $d^2\mathcal{L}^\circ_{\vec{0}}(\theta)=0$
  by $D_z\psi(\vec{0}, \theta)=\theta$. Moreover, if $\mathcal{L}''(\theta)\mathcal{G}''_j(\theta)=\mathcal{G}''_j(\theta)\mathcal{L}''(\theta)$
 for $j=1,\cdots,n$, then
\begin{eqnarray}\label{e:S.5.12.4}
  d^2\mathcal{L}^\circ_{\vec{\lambda}}(\theta)[z_1,z_2]
 =-\sum^n_{j=1}\lambda_j(\mathcal{G}''_j(\theta)z_1, z_2)_H,\quad\forall z_1, z_2\in H^0.
  \end{eqnarray}

  \begin{claim}\label{cl:S.5.3*}
 In this situation, if $\theta\in H$ is a nondegenerate critical point of $\mathcal{L}_{\vec{\lambda}}$
 then $\theta\in H^0$ is such a critical point of $\mathcal{L}^\circ_{\vec{\lambda}}$ too.
\end{claim}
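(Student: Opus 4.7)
The plan is to verify directly, via the explicit Hessian formula (\ref{e:S.5.12.4}), that the bilinear form $d^2\mathcal{L}^\circ_{\vec{\lambda}}(\theta)$ on $H^0$ is nondegenerate. First I would check that $\theta$ is indeed a critical point of $\mathcal{L}^\circ_{\vec{\lambda}}$: by (\ref{e:S.5.12.1}) together with $\psi(\vec{\lambda},\theta)=\theta$, one has $D\mathcal{L}^\circ_{\vec{\lambda}}(\theta)[h]=(\nabla\mathcal{L}_{\vec{\lambda}}(\theta),h)_H$, which vanishes since $\nabla\mathcal{L}(\theta)=\theta$ and $\nabla\mathcal{G}_j(\theta)=\theta$ for every $j$.

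The heart of the argument exploits the commuting hypothesis $\mathcal{L}''(\theta)\mathcal{G}''_j(\theta)=\mathcal{G}''_j(\theta)\mathcal{L}''(\theta)$. Because $B(\theta)=\mathcal{L}''(\theta)$ is self-adjoint and $H^0=\ker B(\theta)$ is its eigenspace for the eigenvalue $0$, each $\mathcal{G}''_j(\theta)$ leaves $H^0$ invariant. Consequently $A_{\vec{\lambda}}:=\sum_{j=1}^n\lambda_j\mathcal{G}''_j(\theta)|_{H^0}$ is a self-adjoint operator on the finite-dimensional space $H^0$, and (\ref{e:S.5.12.4}) gives
\begin{equation*}
d^2\mathcal{L}^\circ_{\vec{\lambda}}(\theta)[z_1,z_2]=-(A_{\vec{\lambda}} z_1,z_2)_H,\qquad \forall z_1,z_2\in H^0.
\end{equation*}

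The key step is to show $\ker A_{\vec{\lambda}}=\{\theta\}$. Suppose $z\in H^0$ satisfies $A_{\vec{\lambda}}z=\theta$. Since $z\in H^0=\ker B(\theta)$, we also have $B(\theta)z=\theta$, so
\begin{equation*}
\mathcal{L}''_{\vec{\lambda}}(\theta)z=B(\theta)z+\sum_{j=1}^n\lambda_j\mathcal{G}''_j(\theta)z=\theta.
\end{equation*}
The nondegeneracy hypothesis on $\theta$ as a critical point of $\mathcal{L}_{\vec{\lambda}}$, i.e.\ $\ker\mathcal{L}''_{\vec{\lambda}}(\theta)=\{\theta\}$, then forces $z=\theta$. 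Hence $A_{\vec{\lambda}}$ is an injective self-adjoint endomorphism of the finite-dimensional Hilbert space $H^0$, hence an isomorphism, and the bilinear form $d^2\mathcal{L}^\circ_{\vec{\lambda}}(\theta)$ is nondegenerate on $H^0$, which is exactly the assertion that $\theta\in H^0$ is a nondegenerate critical point of $\mathcal{L}^\circ_{\vec{\lambda}}$.

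There is no serious obstacle here: once the commuting hypothesis is in place the problem reduces to linear algebra on the finite-dimensional space $H^0$. The only subtle point is that this short route depends crucially on (\ref{e:S.5.12.4}); without the commuting assumption one would be left only with (\ref{e:S.5.12.3}) and would have to extract additional information on $D_z\psi(\vec{\lambda},\theta)$ in order to conclude.
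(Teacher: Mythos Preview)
Your argument is correct under the commuting hypothesis, but it proves strictly less than the paper does. The phrase ``in this situation'' refers to the whole paragraph preceding the claim---namely the $C^2$ setting in which (\ref{e:S.5.12.1})--(\ref{e:S.5.12.3}) hold---and \emph{not} only to the commuting case that yields (\ref{e:S.5.12.4}). Indeed the paper's proof never invokes (\ref{e:S.5.12.4}); it works directly from (\ref{e:S.5.12.3}). Starting from $d^2\mathcal{L}^\circ_{\vec{\lambda}}(\theta)[z_1,z_2]=0$ for all $z_2\in H^0$, the paper reads off $P^0\mathcal{L}''_{\vec{\lambda}}(\theta)\bigl(z_1+D_z\psi(\vec{\lambda},\theta)[z_1]\bigr)=\theta$, and then supplies the missing $(I-P^0)$-component by differentiating the implicit relation $(I-P^0)\nabla\mathcal{L}_{\vec{\lambda}}(z+\psi(\vec{\lambda},z))=\theta$ at $z=\theta$. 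This gives $\mathcal{L}''_{\vec{\lambda}}(\theta)\bigl(z_1+D_z\psi(\vec{\lambda},\theta)[z_1]\bigr)=\theta$, hence $z_1+D_z\psi(\vec{\lambda},\theta)[z_1]=\theta$ by nondegeneracy, and finally $z_1=\theta$ since the two summands lie in $H^0$ and $(H^0)^\bot$ respectively.

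So the comparison is this: your route is shorter and purely finite-dimensional, but it needs the commuting hypothesis to force $\mathcal{G}''_j(\theta)(H^0)\subset H^0$ and thereby kill the $D_z\psi$ term via (\ref{e:S.5.12.4}). The paper's route handles the $D_z\psi$ term head-on by differentiating the defining equation of $\psi$, which is exactly the ``additional information on $D_z\psi(\vec{\lambda},\theta)$'' you anticipated needing, and it costs nothing extra---just one line. Your final paragraph correctly diagnosed the gap; the paper's proof shows how to close it without any commutation assumption.
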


In fact, suppose that  $z_1\in H^0$ satisfies $d^2\mathcal{L}^\circ_{\vec{\lambda}}(\theta)[z_1,z_2]=0\;
\forall z_2\in H^0$. Then (\ref{e:S.5.12.3}) implies
$$
(P^0\mathcal{L}''_{\vec{\lambda}}(\theta)(z_1+ D_z\psi(\vec{\lambda}, \theta)[z_1]),u)_H=(P^0\mathcal{L}''_{\vec{\lambda}}(\theta)(z_1+ D_z\psi(\vec{\lambda},\theta)[z_1]), P^0u)_H
=0\quad\forall u\in H.
$$
Hence $P^0\mathcal{L}''_{\vec{\lambda}}(\theta)(z_1+ D_z\psi(\vec{\lambda}, \theta)[z_1])=\theta$.
Moreover, since $(I-P^0)\nabla\mathcal{L}_{\vec{\lambda}}(z+ \psi(\vec{\lambda}, z))=\theta$
for all $z\in B_H(\theta, \epsilon)\cap H^0$. Differentiating this equality  at $z=\theta$ we get
$(I-P^0)\mathcal{L}''_{\vec{\lambda}}(\theta)(z_1+ D_z\psi(\vec{\lambda}, \theta)[z_1])=\theta$
for all $z_1\in H^0$. It follows that $\mathcal{L}''_{\vec{\lambda}}(\theta)(z_1+ D_z\psi(\vec{\lambda}, \theta)[z_1])=\theta$
and hence $z_1+ D_z\psi(\vec{\lambda}, \theta)[z_1]=\theta$. Note that $z_1\in H^0$ and
$D_z\psi(\vec{\lambda}, \theta)[z_1]\in (H^0)^\bot$. We arrive at $z_1=\theta$.\\

\noindent{\it Proof of Theorem~\ref{th:S.5.3}}. \quad
Let $N=H^0$, and for each $\vec{\lambda}\in [-\delta,\delta]^n$
 we define a  map
 \begin{equation}\label{e:S.5.13}
 F_{\vec{\lambda}}:B_{N}(\theta, \epsilon)\times \mathcal{Q}_{r,s}\to\R,\;(z, u)\mapsto\mathcal{L}_{\vec{\lambda}}(z+ \psi(\vec{\lambda},z)+ u)-\mathcal{L}_{\vec{\lambda}}(z+ \psi(\vec{\lambda},z)).
\end{equation}
 Then $D_2F_{\vec{\lambda}}(z,u)[v]=(P^\bot\nabla{\mathcal L}_\lambda(z+ \psi(\vec{\lambda},z)+ u), v)_H$
for  $z\in \bar B_{N}(\theta, \epsilon)$,  $u\in \mathcal{Q}_{r,s}$ and $v\in N^\bot$.
 Moreover it holds that
\begin{eqnarray}\label{e:S.5.14}
F_{\vec{\lambda}}(z, \theta)=0\quad\hbox{and}\quad D_2F_{\vec{\lambda}}(z,
\theta)[v]=0\quad\;\forall v\in N^\bot.
\end{eqnarray}
Since  $B_{N}(\theta,\epsilon)\oplus\mathcal{Q}_{r,s}$ has the closure
contained in the neighborhood $U$ in Lemma~\ref{lem:S.2.2}, and
 $\psi(\vec{\lambda},\theta)=\theta$, we can shrink $\nu>0$,
$\epsilon>0$, $r>0$ and $s>0$ so small that
 \begin{equation}\label{e:S.5.15}
 z+ \psi(\vec{\lambda},z)+
u^++ u^-\in U,\quad\forall (\vec{\lambda}, z, u^+ + u^-)\in [-\delta,\delta]^n\times\bar B_{N}(\theta,\epsilon)\times \overline{\mathcal{Q}_{r,s}}.
\end{equation}

Let us verify that each $F_{\vec{\lambda}}$ satisfies conditions (ii)-(iv) in \cite[Theorem~A.1]{Lu2}.

\noindent{\bf Step 1}. For $\vec{\lambda}\in [-\delta,\delta]^n$, $z\in\bar B_{N}(\theta,\epsilon)$, $u^+\in
\bar B_{H^+}(\theta, r)$ and $u^-_1, u^-_2\in\bar
B_{H^-}(\theta,\epsilon)$,  we have
\begin{eqnarray}\label{e:S.5.16}
&&D_2F_{\vec{\lambda}}(z, u^+ + u^-_2)[u^-_2-u^-_1]-D_2F_{\vec{\lambda}}(z, u^++ u^-_1)[u^-_2-u^-_1]\nonumber\\
&=&(\nabla\mathcal{L}_{\vec{\lambda}}(z+ \psi_{\vec{\lambda}}(z)+ u^++u^-_2), u^-_2-u^-_1)_H\nonumber\\
&&- (\nabla\mathcal{L}_{\vec{\lambda}}(z+ \psi_{\vec{\lambda}}(z)+ u^++u^-_1), u^-_2-u^-_1)_H.
\end{eqnarray}
Since  the function
$u\mapsto (\nabla\mathcal{L}_{\vec{\lambda}}(z+ \psi_{\vec{\lambda}}(z)+ u^++u), u^-_2-u^-_1)_H$
is G\^ateaux differentiable,  the mean value theorem yields $t\in (0, 1)$ such that
\begin{eqnarray}\label{e:S.5.17}
&&(\nabla\mathcal{L}_{\vec{\lambda}}(z+ \psi_{\vec{\lambda}}(z)+ u^++u^-_2), u^-_2-u^-_1)_H
- (\nabla\mathcal{L}_{\vec{\lambda}}(z+ \psi_{\vec{\lambda}}(z)+ u^++u^-_1), u^-_2-u^-_1)_H\nonumber\\
&=&\left(B(z+ \psi_{\vec{\lambda}}(z)+ u^++ u^-_1+ t(u^-_2-u^-_1))(u^-_2-u^-_1),
u^-_2-u^-_1\right)_H\nonumber\\
&&+\sum^n_{j=1}\lambda_j\left(\mathcal{G}''_j(z+ \psi_{\vec{\lambda}}(z)+ u^++ u^-_1+ t(u^-_2-u^-_1))(u^-_2-u^-_1),
u^-_2-u^-_1\right)_H\nonumber\\
&\le& \sum^n_{j=1}\lambda_j\left(\mathcal{G}''_j(z+ \psi_{\vec{\lambda}}(z)+ u^++ u^-_1+ t(u^-_2-u^-_1))(u^-_2-u^-_1),
u^-_2-u^-_1\right)_H\nonumber\\
&&\qquad -a_0\|u^-_2-u^-_1\|^2
\end{eqnarray}
because of Lemma~\ref{lem:S.2.2}(iii).   Recall that we have assumed $\delta<\frac{\min\{a_0,a_1\}}{8nM}$
in Step~3 of the proof of Theorem~\ref{th:S.4.3}. From this and
 (\ref{e:S.4.30.1}) it follows that
\begin{eqnarray*}
&&\sum^n_{j=1}|\lambda_j\left(\mathcal{G}''_j(z+ \psi_{\vec{\lambda}}(z)+ u^++ u^-_1+ t(u^-_2-u^-_1))(u^-_2-u^-_1),
u^-_2-u^-_1\right)_H|\\
&&\le n\delta M\|u^-_2-u^-_1\|^2\le\frac{a_0}{8}\|u^-_2-u^-_1\|^2.
\end{eqnarray*}
This, (\ref{e:S.5.16}) and (\ref{e:S.5.17}) lead to
\begin{eqnarray*}
 (D_2F_{\vec{\lambda}}(z, u^+ + u^-_2)-D_2F_{\vec{\lambda}}(z, u^++ u^-_1))[u^-_2-u^-_1]\le
-\frac{a_0}{2}\|u^-_2-u^-_1\|^2.
\end{eqnarray*}
 This implies the condition (ii) of \cite[theorem~A.1]{Lu2}.

\noindent{\bf Step 2}. For $\vec{\lambda}\in [-\delta,\delta]^n$, $z\in\bar B_{N}(\theta,\epsilon)$, $u^+\in\bar B_{H^+}(\theta, r)$ and $u^-\in\bar
B_{H^-}(\theta, s)$, by (\ref{e:S.5.14}) and the mean value
theorem, for some $t\in (0, 1)$ we have
\begin{eqnarray}\label{e:S.5.18}
&&D_2F_{\vec{\lambda}}(z, u^++u^-)[u^+-u^-]\nonumber\\
&=&D_2F_{\vec{\lambda}}(z, u^++u^-)[u^+-u^-]- D_2F_{\vec{\lambda}}(z, \theta)[u^+-u^-]\nonumber\\
&=&(\nabla\mathcal{L}_{\vec{\lambda}}(z+ \psi_{\vec{\lambda}}(z)+ u^++u^-), u^+-u^-)_H-(\nabla\mathcal{L}_{\vec{\lambda}}(z+ \psi_{\vec{\lambda}}(z)), u^+-u^-)_H\nonumber\\
&=&\left(B(z+ \psi_{\vec{\lambda}}(z)+ t(u^++u^-))(u^++u^-), u^+-u^-\right)_H\nonumber\\
&+&\sum^n_{j=1}\lambda_j\left(\mathcal{G}''_j(z+ \psi_{\vec{\lambda}}(z)+ t(u^++u^-))(u^++u^-), u^+-u^-\right)_H\nonumber\\
&=&\left(B(z+ \psi_{\vec{\lambda}}(z)+ t(u^++u^-))u^+, u^+\right)_H
-\left(B(z+ \psi_{\vec{\lambda}}(z)+
t(u^++u^-))u^-, u^-\right)_H\nonumber\\
&+&\sum^n_{j=1}\lambda_j\left(\mathcal{G}''_j(z+ \psi_{\vec{\lambda}}(z)+ t(u^++u^-))(u^++u^-), u^+-u^-\right)_H\nonumber\\
&\ge & a_1\|u^+\|^2+ a_0\|u^-\|^2
+\sum^n_{j=1}\lambda_j\left(\mathcal{G}''_j(z+ \psi_{\vec{\lambda}}(z)+ t(u^++u^-))(u^++u^-), u^+-u^-\right)_H\nonumber\\
\end{eqnarray}
because of Lemma~\ref{lem:S.2.2}(i) and (iii).  As above we have
\begin{eqnarray*}
&&\sum^n_{j=1}|\lambda_j\left(\mathcal{G}''_j(z+ \psi_{\vec{\lambda}}(z)+ t(u^++u^-))(u^++u^-), u^+-u^-\right)_H|\\
&&\le n\delta M\|u^++u^-\|\cdot\|u^+-u^-\|\\
&&\le\frac{\min\{a_0,a_1\}}{4}(\|u^+\|^2+\|u^-\|^2)\\
&&\le\frac{a_1}{4}\|u^+\|^2+ \frac{a_0}{4}\|u^-\|^2.
\end{eqnarray*}
This and (\ref{e:S.5.18}) give
\begin{equation}\label{e:S.5.19}
D_2F_{\vec{\lambda}}(z, u^++u^-)[u^+-u^-]\ge  \frac{a_1}{2}\|u^+\|^2+ \frac{a_0}{2}\|u^-\|^2.
\end{equation}
Thus the condition (iii) of \cite[Theorem~A.1]{Lu2} is satisfied.
In particular, (\ref{e:S.5.19}) also implies
 $$
D_2F_{\vec{\lambda}}(z, u^+)[u^+] \ge \frac{a_1}{2}\|u^+\|^2> p(\|u^+\|),\quad\forall u^+\in \bar
B_{H^+}(\theta,s)\setminus\{\theta\},
$$
where $p:(0, \varepsilon]\to (0, \infty)$ is a non-decreasing
function given by $p(t)=\frac{a_1}{4}t^2$. Namely, $F_{\vec{\lambda}}$ satisfies the condition
(iv) of \cite[Theorem~A.1]{Lu2} (the parameterized version of \cite[Theoren~1.1]{DHK}).

The other arguments are as before.

\noindent{\bf Step 3}. The claim (i) in the part of ``Moreover" follows from
(\ref{e:S.4.22.1}) directly. For the second one,
since $\psi(\lambda, \cdot)$ is $G$-equivariant,
and $\mathcal{L}_\lambda$ is $G$-invariant, we derive from (\ref{e:S.5.13}) that
$F_{\vec{\lambda}}$ is $G$-invariant. By the construction of
$\Phi_{\vec{\lambda}}(\cdot,\cdot)$ (cf. \cite{DHK} and \cite[Theorem~A.1]{Lu1}),
it is expressed by $F_{\vec{\lambda}}(z, \cdot)$, one easily sees that
$\Phi_{\vec{\lambda}}(\cdot,\cdot)$ is $G$-equivariant.
\hfill$\Box$\vspace{2mm}

\begin{theorem}[Parameterized Shifting Theorem]\label{th:S.5.4}
Suppose for some $\vec{\lambda}\in [-\delta, \delta]^n$ that $\theta\in H$ is an isolated critical point of $\mathcal{L}_{\vec{\lambda}}$ (thus $\theta\in H^0$ is that of $\mathcal{L}_{\vec{\lambda}}^\circ$). Then
\begin{equation}\label{e:S.5.20}
C_q(\mathcal{L}_{\vec{\lambda}},\theta;{\bf K})=C_{q-\mu}(\mathcal{L}^\circ_{\vec{\lambda}},\theta;{\bf K})\quad\forall
q\in\mathbb{N}\cup\{0\},
\end{equation}
where $\mathcal{L}^\circ_{\vec{\lambda}}(z)=\mathcal{
L}_{\vec{\lambda}}(z+ \psi(\vec{\lambda}, z))= \mathcal{L}(z+\psi(\vec{\lambda}, z))+
\sum^n_{j=1}\lambda_j\mathcal{G}_j(z+\psi(\vec{\lambda}, z))$ is
 as in (\ref{e:S.5.3}).
\end{theorem}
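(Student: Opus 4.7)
The strategy is to combine the parameterized splitting theorem with the classical Gromoll--Meyer shifting argument, verifying that the weaker regularity of the reduced functional $\mathcal{L}^\circ_{\vec{\lambda}}$ (only $C^1$, not $C^2$) does not obstruct the topological computation. First, apply Theorem~\ref{th:S.5.3} with the given $\vec{\lambda}\in [-\delta,\delta]^n$ fixed; this produces an origin-preserving homeomorphism $\Phi_{\vec{\lambda}}$ from $B_{H^0}(\theta,\epsilon)\times (B_{H^+}(\theta,r)+B_{H^-}(\theta,s))$ onto an open neighborhood of $\theta$ that conjugates $\mathcal{L}_{\vec{\lambda}}$ to the model functional
\begin{equation*}
\tilde{\mathcal{L}}(z,u^+,u^-) = \|u^+\|^2 - \|u^-\|^2 + \mathcal{L}^\circ_{\vec{\lambda}}(z).
\end{equation*}
Since singular homology is a topological invariant, this identification gives $C_q(\mathcal{L}_{\vec{\lambda}},\theta;{\bf K}) \cong C_q(\tilde{\mathcal{L}},0;{\bf K})$ for every $q\in\mathbb{N}_0$.

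I would then follow the classical shifting argument of \cite[Theorem~8.4]{MaWi}, \cite[Theorem~5.1.17]{Ch1}, or \cite[Theorem~I.5.4]{Ch}. The functional $\tilde{\mathcal{L}}$ splits as a direct sum of three pieces on the product $H^+\times H^-\times H^0$: the positive-definite quadratic form $u^+\mapsto\|u^+\|^2$, the negative-definite quadratic form $u^-\mapsto -\|u^-\|^2$ on the $\mu$-dimensional space $H^-$, and $\mathcal{L}^\circ_{\vec{\lambda}}$ on the $\nu$-dimensional space $H^0$. Applying the K\"unneth formula for critical groups of a direct sum of functionals (equivalently, an explicit deformation retraction of the sublevel-set pair in the $H^+$-direction combined with excision of a small neighborhood of the origin), together with the standard computations $C_i(\|\cdot\|^2, 0;{\bf K}) = \delta_{i0}{\bf K}$ and $C_j(-\|\cdot\|^2, 0;{\bf K}) = \delta_{j\mu}{\bf K}$ for the two nondegenerate quadratic pieces, yields
\begin{equation*}
C_q(\tilde{\mathcal{L}}, 0; {\bf K}) \cong C_{q-\mu}(\mathcal{L}^\circ_{\vec{\lambda}}, \theta; {\bf K}),
\end{equation*}
which is the desired identity.

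The main obstacle, and where a ``slight modification'' of the cited classical proofs is required, lies in the regularity: those statements assume the reduced functional to be of class $C^2$ so that a linear-algebraic analysis of its Hessian is available, whereas Theorem~\ref{th:S.5.3}(b) only guarantees $\mathcal{L}^\circ_{\vec{\lambda}}\in C^1$. However, since $\dim H^0=\nu<\infty$ and $\theta$ is isolated as a critical point of $\mathcal{L}^\circ_{\vec{\lambda}}$ (by the isolatedness assumption together with the bijection between critical points of $\mathcal{L}_{\vec{\lambda}}$ and those of $\mathcal{L}^\circ_{\vec{\lambda}}$ near $\theta$ that is implicit in Theorem~\ref{th:S.5.3}), a locally Lipschitz pseudo-gradient vector field for $\mathcal{L}^\circ_{\vec{\lambda}}$ exists on a punctured neighborhood of $\theta$. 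Consequently the negative pseudo-gradient flow, and with it all the deformation retractions, excisions, and product decompositions used in the classical proofs, remains available and transfers to the present $C^1$ setting verbatim.
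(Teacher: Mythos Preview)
Your proposal is correct and follows essentially the same approach as the paper: apply the parameterized splitting theorem to reduce to the model functional $\|u^+\|^2-\|u^-\|^2+\mathcal{L}^\circ_{\vec{\lambda}}(z)$, then invoke the classical shifting argument of \cite[Theorem~8.4]{MaWi} and \cite[Theorem~I.5.4]{Ch} (equivalently \cite[Theorem~5.1.17]{Ch1}), noting that the $C^1$ regularity of $\mathcal{L}^\circ_{\vec{\lambda}}$ suffices. The paper phrases the $C^1$ point in terms of the Gromoll--Meyer pair construction (citing \cite{ChGh}) rather than pseudo-gradient flows, and also mentions an alternative reduction to the $C^2$ case via a stability theorem for critical groups, but these are variations on the same underlying mechanism.
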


\begin{proof}
Though $\mathcal{L}_{\vec{\lambda}}$ and $\mathcal{L}_{\vec{\lambda}}^\circ$
are only of class $C^1$, the construction of the Gromoll-Meyer pair
on the pages 49-51 of \cite{Ch1} is also effective for them
(see \cite{ChGh}). Hence the result can be obtained by repeating
the proof of \cite[Theorem~I.5.4]{Ch1}.
Of course,  with a stability theorem of critical groups the present case can also be reduced to
that of \cite[Theorem~I.5.4]{Ch1}. See \cite{Lu7} for a detailed proof.
\end{proof}

\subsection{Splitting and shifting theorems around critical orbits}\label{sec:S.6}

 We shall list main results and related corollaries for convenience of later applications as
 in Section~\ref{sec:Funct} and \cite{Lu8}. Outlines for their proofs are also given because
 our methods are completely different from those in the literature.
 Let $H$ be a Hilbert space with inner product $(\cdot,\cdot)_H$ and
let $({\cal H}, (\!(\cdot, \cdot)\!))$ be a $C^3$ Hilbert-Riemannian  manifold modeled on $H$.
Let $\mathcal{ O}\subset{\cal H}$
be a compact $C^3$ submanifold without boundary, and let $\pi:
N\mathcal{ O}\to \mathcal{ O}$ denote the normal bundle of it. The
bundle is a $C^2$-Hilbert vector bundle over $\mathcal{ O}$, and can
be considered as a subbundle of $T_\mathcal{ O}{\cal H}$ via the
Riemannian metric $(\!(\cdot, \cdot)\!)$. The metric $(\!(\cdot, \cdot)\!)$
induces a natural $C^2$ orthogonal bundle
projection ${\bf \Pi}:T_{\mathcal{O}}\mathcal{H}\to N\mathcal{O}$. For $\varepsilon>0$,
the so-called normal disk bundle of radius $\varepsilon$ is denoted by
$N\mathcal{ O}(\varepsilon):=\{(x,v)\in N\mathcal{O}\,|\,\|v\|_{x}<\varepsilon\}$.
 If $\varepsilon>0$ is small enough  the exponential map $\exp$ gives a $C^2$-diffeomorphism
 $\digamma$ from  $N\mathcal{ O}(\varepsilon)$ onto an open
neighborhood of $\mathcal{ O}$ in ${\cal H}$, $\mathcal{
N}(\mathcal{ O},\varepsilon)$.
For $x\in\mathcal{ O}$, let  $\mathscr{L}_s(N\mathcal{O}_x)$ denote the space
of those operators $S\in \mathscr{L}(N\mathcal{ O}_x)$ which are self-adjoint
with respect to the inner product $(\!(\cdot, \cdot)\!)_x$, i.e.
$(\!(S_xu, v)\!)_x=(\!(u, S_xv)\!)_x$ for all $u, v\in N\mathcal{
O}_x$. Then we have a $C^1$ vector bundle $\mathscr{L}_s(N\mathcal{ O})\to
\mathcal{O}$ whose fiber at $x\in\mathcal{ O}$ is given by
$\mathscr{L}_s(N\mathcal{ O}_x)$.

 Let  $\mathcal{L}:{\cal H}\to\mathbb{R}$ be a $C^1$  functional.
  A connected $C^3$ submanifold $\mathcal{O}\subset
{\cal H}$ is called a {\it critical manifold} of $\mathcal{L}$ if
$\mathcal{L}|_\mathcal{O}={\rm const}$ and $D\mathcal{L}(x)[v]=0$
for any $x\in\mathcal{ O}$ and $v\in T_x{\cal H}$. If there exists a
neighborhood ${\cal V}$ of $\mathcal{O}$ such that ${\cal V}\setminus \mathcal{O}$
contains no critical points of $\mathcal{L}$ we say  $\mathcal{O}$  to be {\it isolated}.
We make:

\begin{hypothesis}\label{hyp:S.6.1}
{\rm The gradient field
 $\nabla\mathcal{L}:\mathcal{H}\to T\mathcal{H}$ is G\^ateaux differentiable
 and thus  we there exists an operator $d^2\mathcal{L}(x)\in \mathscr{L}_s(T_x\mathcal{H})$
 for each $x\in\mathcal{O}$; moreover,
  $\mathcal{O}\ni x\mapsto d^2\mathcal{L}(x)$ is a continuous section of
$\mathscr{L}_s(T\mathcal{H})\to\mathcal{O}$,
$\dim{\rm Ker}(d^2\mathcal{L}(x))={\rm const}\;\forall
x\in\mathcal{O}$, and there exists $a_0>0$ such that
$\sigma(d^2\mathcal{L}(x))\cap([-2a_0,
2a_0]\setminus\{0\})=\emptyset,\;\forall x\in\mathcal{ O}$.}
\end{hypothesis}

This implies that
$\mathcal{O}\ni x\mapsto\mathcal{B}_x(\theta_x):=
{\bf \Pi}_x\circ d^2\mathcal{L}(x)|_{N{\cal O}_x}=d^2(\mathcal{L}\circ\exp_x|_{N{\cal O}_x})(\theta_x)$
        is a continuous section of  $\mathscr{L}_s(N\mathcal{O})\to\mathcal{O}$,
$\dim{\rm Ker}({\cal B}_x(\theta_x))={\rm const}\;\forall
x\in\mathcal{O}$, and
$\sigma({\cal B}_x(\theta_x))\cap([-2a_0, 2a_0]\setminus\{0\})=\emptyset$ for all $x\in\mathcal{O}$.
Let $\chi_\ast$ ($\ast=+, -, 0$) be the characteristic function of
the intervals $[2a_0, +\infty)$, $(-2a_0, a_0)$ and $(-\infty,
-2a_0]$, respectively. Then we have the orthogonal bundle
projections on the normal bundle $N\mathcal{ O}$, $P^\ast$ (defined
by $P^\ast_x(v)=\chi_\ast({\cal B}_x(\theta_x))v$), $\ast=+, -, 0$.
Denote by $N^\ast\mathcal{ O}=P^\ast N\mathcal{O}$, $\ast=+, -, 0$.
(Clearly,  ${\cal B}_x(\theta_x)(N^\ast\mathcal{ O}_x)\subset
N^\ast\mathcal{O}_x$ for any $x\in\mathcal{ O}$ and $\ast=+, -, 0$).
By \cite[Lem.7.4]{Ch}, we have
$N\mathcal{ O}=N^+\mathcal{ O}\oplus N^-\mathcal{ O}\oplus
N^0\mathcal{O}$. If ${\rm rank}N^0\mathcal{O}=0$, the critical orbit $\mathcal{O}$ is called {\it nondegenerate}.

In the following we only consider the case $\mathcal{O}$ is a critical orbit of a compact Lie group.
The general case can be treated  as in \cite{Lu4}.
The following assumption implies naturally Hypothesis~\ref{hyp:S.6.1} in this case.

\begin{hypothesis}\label{hyp:S.6.2}
{\rm {\bf (i)} Let $G$ be a compact
Lie group, and let  ${\cal H}$ be a $C^3$ Hilbert-Riemannian $G$-space
(that is, ${\cal H}$ is a $C^3$ $G$-Hilbert manifold with a Riemannian
metric $(\!(\cdot,\cdot)\!)$ such that $T{\cal H}$ is a $C^2$ Riemannian $G$-vector bundle, see \cite{Was}).\\
 {\bf (ii)} The $C^1$ functional $\mathcal{ L}:\mathcal{H}\to\mathbb{R}$ is $G$-invariant,
 $\nabla\mathcal{L}:\mathcal{H}\to T\mathcal{H}$ is G\^ateaux differentiable
 (i.e., under any $C^3$ local chart the functional $\mathcal{L}$
 has a G\^ateaux differentiable gradient map), and $\mathcal{ O}$ is an isolated
 critical orbit which is a $C^3$ critical submanifold with  Morse index $\mu_\mathcal{O}$.}
\end{hypothesis}

Since $\exp_{g\cdot x}(g\cdot v)=g\cdot\exp_x(v)$ for any $g\in G$
and $(x,v)\in T{\cal H}$, we have
$\mathcal{L}\circ\exp(g\cdot x,g\cdot v)=\mathcal{ L}(\exp(g\cdot x,
g\cdot v))=\mathcal{ L}(g\cdot\exp(x,v))=\mathcal{ L}(\exp(x,v))$.
 It follows  that $g^{-1}\cdot\nabla\mathcal{L}(g\cdot x)=  \nabla\mathcal{L}(x)$ and
\begin{equation}\label{e:S.6.2}
\nabla\left(\mathcal{L}\circ\exp|_{N\mathcal{O}(\varepsilon)_{gx}}\right)(g\cdot v)= g\cdot \nabla\left(\mathcal{L}\circ\exp|_{N\mathcal{O}(\varepsilon)_x}\right)(v)
\end{equation}
for any $g\in G$ and $(x,v)\in N\mathcal{O}(\varepsilon)_x$, which leads to
\begin{equation}\label{e:S.6.3}
d^2\left(\mathcal{L}\circ\exp|_{N\mathcal{O}(\varepsilon)_{gx}}\right)(g\cdot v)\cdot g= g\cdot d^2\left(\mathcal{L}\circ\exp|_{N\mathcal{O}(\varepsilon)_x}\right)(v)
\end{equation}
as bounded linear operators from $N\mathcal{O}_x$ onto $N\mathcal{O}_{gx}$.

Corresponding to Theorems~\ref{th:S.3.1},\ref{th:S.5.3} we have the following two theorems.

\begin{theorem}[Parameterized Morse-Palais lemma around critical orbits]\label{th:S.6.0}
Under   Hypothesis~\ref{hyp:S.6.2},
let for some $x_0\in\mathcal{ O}$ the pair $\bigl(\mathcal{L}\circ\exp_{x_0},
B_{T_{x_0}\mathcal{H}}(\theta,\varepsilon)\bigr)$
(and so the pair $(\mathcal{L}\circ\exp|_{N\mathcal{O}(\varepsilon)_{x_0}},  N\mathcal{O}(\varepsilon)_{x_0})$
by Lemma~\ref{lem:S.2.4}) satisfies the corresponding conditions in Hypothesis~\ref{hyp:1.1} with $X=H$.
Let $G$-invariant functionals $\mathcal{G}_j\in C^1(\mathcal{H},\mathbb{R})$,
$j=1,\cdots,n$, have value zero and vanishing derivative at each point of $\mathcal{O}$, and also
fulfill:
\begin{description}
\item[(i)] gradients $\nabla\mathcal{G}_j$ have  G\^ateaux derivatives
$\mathcal{G}''_j(u)$ at each point $u$ near $\mathcal{O}$,
\item[(ii)] $\mathcal{G}''_j(u)$ are continuous at each point $u\in\mathcal{O}$ (and hence
each $\mathcal{G}_j$ is of class $C^{2-0}$ near $\mathcal{O}$).
\end{description}
Suppose that the critical orbit $\mathcal{O}$ is nondegenerate. Then there exist $\delta>0$, $\epsilon>0$
and  a continuous map
$\Phi: [-\delta,\delta]^n\times N^0\mathcal{ O}(\epsilon)\oplus N^+\mathcal{
O}(\epsilon)\oplus N^-\mathcal{ O}(\epsilon)\to N\mathcal{O}$
such that each $\Phi(\vec{\lambda},\cdot): N^+\mathcal{O}(\epsilon)\oplus N^-\mathcal{ O}(\epsilon)\to N\mathcal{O}$ is
 a  $G$-equivariant homeomorphism onto an open neighborhood of
the zero section preserving fibers, and that  $\mathcal{L}_{\vec{\lambda}}:=\mathcal{L}+ \sum^n_{j=1}\mathcal{G}_j$ satisfies
\begin{eqnarray}\label{e:S.6.4}
\mathcal{L}_{\vec{\lambda}}\circ\exp\circ\Phi(\vec{\lambda}, x,  u^++
u^-)=\|u^+\|^2_x-\|u^-\|^2_x+ \mathcal{L}_{\vec{\lambda}}|_{\mathcal{O}}
\end{eqnarray}
for any $\vec{\lambda}\in [-\delta,\delta]^n$, $x\in\mathcal{O}$ and  $(u^+, u^-)\in  N^+\mathcal{ O}(\epsilon)_x\times N^-\mathcal{O}(\epsilon)_x$.
\end{theorem}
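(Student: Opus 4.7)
The plan is to reduce Theorem~\ref{th:S.6.0} to an equivariant version of Theorem~\ref{th:S.3.1} applied at one fiber, and then spread the resulting homeomorphism over the orbit by the group action. Since $\mathcal{O}$ is a single $G$-orbit, I would fix $x_0\in\mathcal{O}$ with isotropy subgroup $G_{x_0}$ and identify $N\mathcal{O}\cong G\times_{G_{x_0}}N\mathcal{O}_{x_0}$. By (\ref{e:S.6.3}) applied to $g\in G_{x_0}$, the Hessian operator $\mathcal{B}_{x_0}(\theta_{x_0})$ commutes with the orthogonal $G_{x_0}$-action on $N\mathcal{O}_{x_0}$, so the spectral subspaces $N^{\pm}\mathcal{O}_{x_0}$ and $N^0\mathcal{O}_{x_0}$ are $G_{x_0}$-invariant; by nondegeneracy $N^0\mathcal{O}_{x_0}=\{\theta\}$.

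Next, on the fixed fiber I would apply the equivariant version of Theorem~\ref{th:S.3.1} (whose proof, as in Theorem~\ref{th:S.5.3}(ii), upgrades to the equivariant setting because the construction in \cite[Theorem~A.1]{Lu2} preserves group invariance) to the $G_{x_0}$-invariant functional $\mathcal{L}\circ\exp|_{N\mathcal{O}(\varepsilon)_{x_0}}$ on the Hilbert space $N\mathcal{O}_{x_0}$ with perturbations $\mathcal{G}_j\circ\exp|_{N\mathcal{O}(\varepsilon)_{x_0}}$. Hypothesis (i)--(ii) on $\mathcal{G}_j$, combined with the standing assumption that Hypothesis~\ref{hyp:1.1} holds for $\mathcal{L}\circ\exp_{x_0}$ restricted to $N\mathcal{O}(\varepsilon)_{x_0}$, yield constants $\delta_0>0$ and $\epsilon>0$ and a continuous family of $G_{x_0}$-equivariant, origin-preserving homeomorphisms
\[
\phi^{x_0}_{\vec{\lambda}}:B_{N^+\mathcal{O}_{x_0}}(\theta,\epsilon)\oplus B_{N^-\mathcal{O}_{x_0}}(\theta,\epsilon)\longrightarrow W_{\vec{\lambda}}^{x_0}\subset N\mathcal{O}_{x_0}
\]
with $(\mathcal{L}_{\vec{\lambda}}\circ\exp_{x_0})\circ\phi^{x_0}_{\vec{\lambda}}(u^++u^-)=\|u^+\|^2-\|u^-\|^2+\mathcal{L}_{\vec{\lambda}}|_{\mathcal{O}}$ for all $\vec{\lambda}\in[-\delta_0,\delta_0]^n$.

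Then I would extend over $\mathcal{O}$ by the orbit action: for any $x=g\cdot x_0\in\mathcal{O}$ and $v\in N\mathcal{O}_x$ define
\[
\phi^{x}_{\vec{\lambda}}(v):=g\cdot\phi^{x_0}_{\vec{\lambda}}(g^{-1}\cdot v),
\]
which is well-defined since two choices $g_1,g_2$ with $g_1x_0=g_2x_0$ differ by an element of $G_{x_0}$ and $\phi^{x_0}_{\vec{\lambda}}$ is $G_{x_0}$-equivariant. Setting $\Phi(\vec{\lambda},x,u^++u^-):=(x,\phi^{x}_{\vec{\lambda}}(u^++u^-))$ in $N\mathcal{O}$, the identity (\ref{e:S.6.4}) is then verified as follows: by the $G$-invariance of $\mathcal{L}_{\vec{\lambda}}$ and the relation $\exp(gx_0,g\cdot w)=g\cdot\exp(x_0,w)$,
\[
\mathcal{L}_{\vec{\lambda}}\bigl(\exp(x,\phi^{x}_{\vec{\lambda}}(u^++u^-))\bigr)=\mathcal{L}_{\vec{\lambda}}\bigl(\exp(x_0,\phi^{x_0}_{\vec{\lambda}}(g^{-1}u^++g^{-1}u^-))\bigr)=\|g^{-1}u^+\|^2-\|g^{-1}u^-\|^2+\mathcal{L}_{\vec{\lambda}}|_{\mathcal{O}},
\]
which equals $\|u^+\|^2-\|u^-\|^2+\mathcal{L}_{\vec{\lambda}}|_{\mathcal{O}}$ because $G$ acts by isometries on the bundle. $G$-equivariance of $\Phi(\vec{\lambda},\cdot,\cdot)$ is immediate from the construction, and the fiber-preserving property is built in.

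The principal obstacle is the joint continuity of $\Phi$ in $(\vec{\lambda},x,u^\pm)$ rather than just in each fiber: the fiberwise homeomorphisms were produced independently at $x_0$ and transported by $g\in G$, so global continuity must be extracted from the compactness of $G$, the continuity of the $G$-action, and the uniformity of all estimates (in particular the choice of $\epsilon$ in Lemma~\ref{lem:S.2.2} and the $\omega$-modulus) along the orbit, which is guaranteed by the uniformity built into Hypothesis~\ref{hyp:S.6.1}/\ref{hyp:S.6.2} and the continuity of $x\mapsto\mathcal{B}_x(\theta_x)$. A convenient way to make this rigorous is to work in a local slice for the $G$-action near $x_0$: on such a slice $\Phi$ equals the slice-value of $\phi^{x_0}_{\vec{\lambda}}$ composed with the continuous identifications of nearby fibers with $N\mathcal{O}_{x_0}$ provided by the local triviality of the $G$-bundle $G\to\mathcal{O}$, and continuity at general points then follows by equivariance and a finite cover of $\mathcal{O}$ by such slices.
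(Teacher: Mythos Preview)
Your argument is correct, but the route differs from the paper's. The paper does \emph{not} build the homeomorphism at a single fiber and then transport it by the $G$-action. Instead it verifies that the fiberwise functionals $\mathcal{F}_x(u^++u^-)=\mathcal{L}\circ\exp_x(u^++u^-)$ satisfy, \emph{uniformly in $x\in\mathcal{O}$}, the hypotheses of the bundle-parameterized Morse--Palais lemma \cite[Theorem~A.2]{Lu2} (this is the content of Lemmas~\ref{lem:S.6.2} and~\ref{lem:S.6.3} specialized to $N^0\mathcal{O}_x=\{\theta_x\}$), and then applies that theorem once with parameter space $\Lambda=\mathcal{O}$; joint continuity of $\Phi$ in $(x,u^\pm)$ is then an output of the construction rather than something to be checked afterwards. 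Your approach instead exploits crucially that $\mathcal{O}$ is a single $G$-orbit: you apply Theorem~\ref{th:S.3.1} once at $x_0$ (with $G_{x_0}$-equivariance) and spread it by the group. What you gain is that you only need the single-fiber Theorem~\ref{th:S.3.1} and basic Lie-group bundle facts (local sections of $G\to G/G_{x_0}$); what you lose is that your method does not extend to more general critical manifolds that are not single orbits, whereas the paper's bundle-parameterized route does. Interestingly, the paper itself uses exactly your ``define at $x_0$, transport by $g$, prove continuity by a compactness argument'' strategy for the map $\mathfrak{h}$ in the degenerate case (Theorem~\ref{th:S.6.1}), so both techniques are in the paper's toolbox; it simply chose the other one for $\Phi$.
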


This theorem will be proved after the proof of the following theorem.

\begin{theorem}[Parameterized Splitting Theorem around critical orbits]\label{th:S.6.1}
Suppose that the critical orbit $\mathcal{O}$ in Theorem~\ref{th:S.6.0} is degenerate,
i.e., ${\rm rank}N^0\mathcal{O}>0$. Then  for sufficiently small $\epsilon>0$, $\delta>0$,
the following hold:\\
\noindent{\bf (I)}\;  There exists a unique continuous map
 $$
\mathfrak{h}:[-\delta,\delta]^n\times N^0\mathcal{O}(3\epsilon)\to N^+\mathcal{O}\oplus N^-\mathcal{O},\;
(\vec{\lambda}, x,v)\mapsto \mathfrak{h}_x(\vec{\lambda}, v),
$$
such that for each $\vec{\lambda}\in[-\delta,\delta]^n$, $\mathfrak{h}(\vec{\lambda},\cdot):N^0\mathcal{O}(3\epsilon)\to N^+\mathcal{O}\oplus N^-\mathcal{O}$ is a $G$-equivariant topological  bundle
morphism that preserves the zero section and satisfies
\begin{eqnarray*}
(P^+_x+P^-_x)\circ{\bf \Pi}_x\nabla(\mathcal{L}_{\vec{\lambda}}\circ\exp_x)(v+ \mathfrak{h}_x(\vec{\lambda}, v))=0\quad\forall
(x,v^0)\in N^0\mathcal{O}(\epsilon).
\end{eqnarray*}
\noindent{\bf (II)}\;  There exists a continuous map
$\Phi: [-\delta,\delta]^n\times N^0\mathcal{ O}(\epsilon)\oplus N^+\mathcal{
O}(\epsilon)\oplus N^-\mathcal{ O}(\epsilon)\to N\mathcal{ O}$
such that for each $\vec{\lambda}\in[-\delta,\delta]^n$, $\Phi(\vec{\lambda},\cdot):N^0\mathcal{ O}(\epsilon)\oplus N^+\mathcal{
O}(\epsilon)\oplus N^-\mathcal{ O}(\epsilon)\to N\mathcal{ O}$ is
 a  $G$-equivariant homeomorphism onto an open neighborhood of
the zero section preserving fibers, and such that
\begin{eqnarray*}
\mathcal{L}_{\vec{\lambda}}\circ\exp\circ\Phi(\vec{\lambda}, x, v, u^++
u^-)=\|u^+\|^2_x-\|u^-\|^2_x+ \mathcal{ L}_{\vec{\lambda}}\circ\exp_x(v+
\mathfrak{h}_x(\vec{\lambda}, v))
\end{eqnarray*}
for any $\vec{\lambda}\in [-\delta,\delta]^n$, $x\in\mathcal{O}$ and  $(v, u^+, u^-)\in N^0\mathcal{
O}(\epsilon)_x\times N^+\mathcal{ O}(\epsilon)_x\times N^-\mathcal{
O}(\epsilon)_x$. \\
\noindent{\bf (III)}\; For each $(\vec{\lambda}, x)\in [-\delta,\delta]^n\times\mathcal{O}$ the functional
\begin{eqnarray*}
N^0\mathcal{O}(\epsilon)_x\to\R,\;v\mapsto \mathcal{L}_{\vec{\lambda},x}^\circ(v):=
\mathcal{L}_{\vec{\lambda}}\circ\exp_x(v+ \mathfrak{h}_x(\vec{\lambda},v))
\end{eqnarray*}
is $G_x$-invariant, of class $C^{1}$,  and has differential given by
$$
D\mathcal{L}_{\vec{\lambda}, x}^\circ(v)[v']=
D(\mathcal{L}_{\vec{\lambda}}\circ\exp_x)(v+ \mathfrak{h}_x(\vec{\lambda},v))[v'],\quad\;\forall v'\in N^0\mathcal{O}_x.
$$
Moreover, each $\mathfrak{h}_x(\vec{\lambda},\cdot)$ is of class $C^{1-0}$, and  if $\mathcal{L}$ is of class $C^{2-0}$
so is $\mathcal{L}_{\vec{\lambda}, x}^\circ$.
\end{theorem}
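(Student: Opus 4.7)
The plan is to reduce Theorem~\ref{th:S.6.1} to a fiberwise application of Theorem~\ref{th:S.5.3} on $N\mathcal{O}_{x_0}$ for a single base point $x_0\in\mathcal{O}$, and then to transport the construction around the orbit by the $G$-action. Since $\mathcal{O}=G\cdot x_0\cong G/G_{x_0}$ is compact and the hypotheses of Theorem~\ref{th:S.6.0} are formulated for $x_0$ only, the $G$-equivariance will automatically make all choices of radii $\delta,\epsilon,r,s$ uniform on $\mathcal{O}$, without having to track how the constants in Lemma~\ref{lem:S.2.2} and Lemma~\ref{lem:S.4.1} vary with the base point.

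First I would fix $x_0\in\mathcal{O}$, set $H:=N\mathcal{O}_{x_0}$ with its $G_{x_0}$-invariant orthogonal decomposition $H=H^+\oplus H^0\oplus H^-=N^+\mathcal{O}_{x_0}\oplus N^0\mathcal{O}_{x_0}\oplus N^-\mathcal{O}_{x_0}$ furnished by Hypothesis~\ref{hyp:S.6.1}, and form the pullbacks
\[
\widetilde{\mathcal{L}}:=\mathcal{L}\circ\exp|_{N\mathcal{O}(\varepsilon)_{x_0}},\qquad
\widetilde{\mathcal{G}}_j:=\mathcal{G}_j\circ\exp|_{N\mathcal{O}(\varepsilon)_{x_0}}.
\]
By the assumption of Theorem~\ref{th:S.6.0} together with Lemma~\ref{lem:S.2.4}, the pair $(\widetilde{\mathcal{L}}, N\mathcal{O}(\varepsilon)_{x_0})$ satisfies Hypothesis~\ref{hyp:1.1} with $X=H$; the conditions (i)-(ii) on $\mathcal{G}_j$ in Theorem~\ref{th:S.6.0} translate into the corresponding conditions (i)-(ii) of Theorem~\ref{th:S.4.3} for the $\widetilde{\mathcal{G}}_j$; and the $G$-invariance of $\mathcal{L}, \mathcal{G}_j$ together with the $G$-equivariance of $\exp$ implies that $\widetilde{\mathcal{L}}$ and $\widetilde{\mathcal{G}}_j$ are $G_{x_0}$-invariant. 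Applying Theorem~\ref{th:S.4.3} (with $G=G_{x_0}$) produces a $G_{x_0}$-equivariant continuous map $\psi_{x_0}:[-\delta,\delta]^n\times B_{H^0}(\theta,3\epsilon)\to H^+\oplus H^-$ satisfying the fiberwise version of (\ref{e:S.4.22}); Theorem~\ref{th:S.5.3} then yields a $G_{x_0}$-equivariant family of fiber-preserving homeomorphisms $\Phi_{x_0}(\vec{\lambda},\cdot)$ obeying (\ref{e:S.5.12}); and Proposition~\ref{prop:S.5.2} gives the $C^1$ regularity of $z\mapsto\widetilde{\mathcal{L}}_{\vec{\lambda}}(z+\psi_{x_0}(\vec{\lambda},z))$ together with the differentiation formula.

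To extend over the whole orbit I would exploit (\ref{e:S.6.2})--(\ref{e:S.6.3}) and the $G$-invariance of each $\mathcal{G}_j$, and define for $g\in G$
\[
\mathfrak{h}_{g\cdot x_0}(\vec{\lambda},\, g\cdot v):=g\cdot\psi_{x_0}(\vec{\lambda},v),\qquad
\Phi_{g\cdot x_0}(\vec{\lambda},\, g\cdot v,\, g\cdot u^++g\cdot u^-):=g\cdot\Phi_{x_0}(\vec{\lambda},v,u^++u^-).
\]
Well-definedness on $\mathcal{O}\cong G/G_{x_0}$ follows from $G_{x_0}$-equivariance of $\psi_{x_0}$ and $\Phi_{x_0}$, joint continuity in $(g,\vec{\lambda},v)$ gives continuity of the resulting $\mathfrak{h}$ and $\Phi$, and the splitting identity in (II) follows from its fiberwise counterpart at $x_0$ composed with the $G$-action, using that $\mathcal{L}_{\vec{\lambda}}$ is $G$-invariant and that $\|g\cdot u^{\pm}\|_{g\cdot x_0}=\|u^{\pm}\|_{x_0}$ since the Riemannian metric is $G$-invariant. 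Uniqueness of $\mathfrak{h}$ on each fiber (hence globally) comes from the uniqueness part of Theorem~\ref{th:S.4.3}. Finally, (III) is obtained by transporting the conclusion of Proposition~\ref{prop:S.5.2} for $x_0$ along the $G$-action, and noting that $G_x$-invariance of $\mathcal{L}^\circ_{\vec{\lambda},x}$ is just the restriction of the $G$-equivariant construction to the isotropy group at $x$.

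The main obstacle, in my view, lies in checking well-definedness and continuity of the transported maps simultaneously with the choice of uniform radii on all of $\mathcal{O}$: if one were to attempt a direct fiberwise use of Theorem~\ref{th:S.5.3} at each $x\in\mathcal{O}$ separately, one would need continuous dependence of the constants $a_0,a_1$, the gauge function $\omega$ of Lemma~\ref{lem:S.2.2}, and the Browder--Skrypnik degree underlying Lemma~\ref{lem:S.4.1}, on the base point $x$, which requires additional bundle-level smoothness assumptions. The equivariant transport circumvents this completely because all quantities at $g\cdot x_0$ are literal translates of those at $x_0$; this is the reason the hypothesis of Theorem~\ref{th:S.6.0} is required at only one base point. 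A secondary, more technical point is verifying that the ``bundle morphism that preserves the zero section'' property for $\mathfrak{h}$ holds, but this is immediate from $\psi_{x_0}(\vec{\lambda},\theta)=\theta$ and the fact that $G$-equivariance preserves the zero section of $N\mathcal{O}$.
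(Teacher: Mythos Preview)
Your plan is correct and, for parts (I) and (III), coincides with the paper's argument: fix $x_0$, apply the parameterized implicit function/splitting machinery on the single fiber $N\mathcal{O}_{x_0}$, then transport around the orbit by the $G$-action, using $G_{x_0}$-equivariance of the fiberwise maps to make the transport well-defined on $G/G_{x_0}$.

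For part (II) you take a genuinely different route. You transport the \emph{full} fiberwise homeomorphism $\Phi_{x_0}$ produced by Theorem~\ref{th:S.5.3} along the $G$-action. The paper instead transports only $\mathfrak{h}_{x_0}$, then assembles the global auxiliary functional $\mathcal{F}(x,v,u^++u^-)=\mathcal{L}\circ\exp_x(v+\mathfrak{h}_x(v)+u^++u^-)-\mathcal{L}\circ\exp_x(v+\mathfrak{h}_x(v))$, proves uniform bundle-level estimates (Lemmas~\ref{lem:S.6.2}--\ref{lem:S.6.3}) for $\mathcal{F}$ over all of $\mathcal{O}$, and then invokes \cite[Theorem~A.2]{Lu2}, the bundle-parameterized version of the Duc--Hung--Khai Morse lemma, to obtain $\Phi$ directly at the bundle level. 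Your shortcut avoids re-proving these lemmas and the appeal to \cite[Theorem~A.2]{Lu2}; the paper's detour, on the other hand, yields the uniform quantitative estimates of Lemma~\ref{lem:S.6.2} explicitly, and these are reused verbatim in the proof of Theorem~\ref{th:S.6.0}.

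One point you pass over too quickly is the continuity of the transported maps. Your phrase ``joint continuity in $(g,\vec{\lambda},v)$'' hides the fact that $g$ is determined by $x$ only up to right multiplication by $G_{x_0}$, so there is no global continuous choice $x\mapsto g$. The paper handles this explicitly: given $(x_j,v_j)\to(\bar x,\bar v)$ and any choice of $g_j$ with $g_j\cdot x_0=x_j$, one passes to a subsequence with $g_j\to\hat g$ by compactness of $G$, and then uses continuity of $\mathfrak{h}_{x_0}$ together with $G_{x_0}$-equivariance to conclude. Your argument needs either this compactness step or the observation that $G\to G/G_{x_0}$ admits local continuous sections; in either case the $G_{x_0}$-equivariance you already established is what makes the local definitions agree on overlaps.
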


\begin{proof}\quad
We only outline main procedures in case $\vec{\lambda}={\bf 0}$, i.e., $\mathcal{L}_{\vec{\lambda}}=\mathcal{L}$. By the assumption and  (\ref{e:S.6.3})
we deduce that each pair
$(\mathcal{L}\circ\exp|_{N\mathcal{O}(\varepsilon)_{x}},  N\mathcal{O}(\varepsilon)_{x})$
satisfies the corresponding conditions with Hypothesis~\ref{hyp:1.1} with $X=H$ too,
and that there exists $a_0>0$ such that
\begin{eqnarray}\label{e:S.6.10}
\sigma\left(d^2\left(\mathcal{L}\circ\exp|_{N\mathcal{O}(\varepsilon)_x}\right)(\theta_x)\right)\cap([-2a_0,
2a_0]\setminus\{0\})=\emptyset,\quad\forall x\in\mathcal{O}.
\end{eqnarray}
By Theorem~\ref{th:S.1.2} we have  $\epsilon\in (0, \varepsilon/3)$ and a continuous map
$\mathfrak{h}_{x_0}: N^0\mathcal{ O}(3\epsilon)_{x_0}\to
 N^\pm\mathcal{ O}(\varepsilon/2)_{x_0}$,
 such that $\mathfrak{h}_{x_0}(g\cdot v)=g\cdot \mathfrak{h}_{x_0}(v)$,
 $\mathfrak{h}_{x_0}(\theta_{x_0})=\theta_{x_0}$ and
$$
(P^+_{x_0}+ P^-_{x_0})\nabla\left(\mathcal{L}\circ\exp|_{N\mathcal{O}(\varepsilon)_{x_0}}\right)(v+
\mathfrak{h}_{x_0}(v))=0,\quad\forall v\in N^0\mathcal{
O}(3\epsilon)_{x_0}.
$$
Furthermore, the function
$\mathcal{ L}^\circ_{x_0}: N^0\mathcal{O}(\epsilon)_{x_0}\to
\R,\;v\mapsto \mathcal{ L}\circ\exp_{x_0}(v+ \mathfrak{h}_{x_0}(v))$
is of class $C^{1}$, and
$D\mathcal{L}^\circ_{x_0}(v)[u]=D(\mathcal{L}\circ\exp|_{N\mathcal{O}(\varepsilon)_{x_0}})(v+
\mathfrak{h}_{x_0}(v))[u]$. Define
\begin{eqnarray*}
\mathfrak{h}: N^0\mathcal{ O}(3\epsilon)\to T{\cal H},\quad
(x,v)\mapsto g\cdot\mathfrak{h}_{x_0}(g^{-1}\cdot v),
\end{eqnarray*}
where $g\cdot x_0=x$. We claim: {\it $\mathfrak{h}$ is continuous}. Otherwise, there exists a sequence
$(x_j, v_j)\subset  N^0\mathcal{ O}(3\epsilon)$ converging  to
a point $(\bar{x},\bar{v})\in  N^0\mathcal{O}(3\epsilon)$, such that
 $(\mathfrak{h}(x_j, v_j))$ has no intersection with an open neighborhood ${\bf U}$ of $\mathfrak{h}(\bar{x},\bar{v})$
 in $T{\cal H}$. Let $\bar{g}, g_j\in G$ be such that
 $\bar{g}\cdot x_0=\bar{x}$ and $g_j\cdot x_0=x_j$, $j=1,2,\cdots$.
 Then $\mathfrak{h}(\bar{x},\bar{v})=\bar{g}\cdot\mathfrak{h}_{x_0}(\bar{g}^{-1}\cdot\bar{v})$
 and $\mathfrak{h}(x_j, v_j)=g_j\cdot\mathfrak{h}_{x_0}(g_j^{-1}\cdot{v}_j)$
 for each $j\in\mathbb{N}$. Note that $\bar{g}^{-1}\cdot{\bf U}$ is an open neighborhood of
 $\mathfrak{h}_{x_0}(\bar{g}^{-1}\cdot\bar{v})=\bar{g}^{-1}\cdot\mathfrak{h}(\bar{x},\bar{v})$
 and that the sequences $\bar{g}^{-1}\cdot\mathfrak{h}(x_j, v_j)=\bar{g}^{-1}\cdot g_j\cdot\mathfrak{h}_{x_0}(g_j^{-1}\cdot{v}_j)$
 have no intersection with $\bar{g}^{-1}\cdot{\bf U}$.
  Since $G$ is compact,  we may assume $\bar{g}^{-1}\cdot g_j\to \hat{g}\in G$
  and so $g_j^{-1}\to (\bar{g}\hat{g})^{-1}\in G$  after passing to a
  subsequence (if necessary). Then
 $\bar{g}^{-1}\cdot\mathfrak{h}(x_j, v_j)=\bar{g}^{-1}\cdot g_j\cdot\mathfrak{h}_{x_0}(g_j^{-1}\cdot{v}_j)
 \to \hat{g}\cdot\mathfrak{h}_{x_0}((\bar{g}\hat{g})^{-1}\cdot\bar{v})=\mathfrak{h}_{x_0}(\bar{g}^{-1}\cdot\bar{v})$.
It follows that $\mathfrak{h}_{x_0}(\bar{g}^{-1}\cdot\bar{v})$ does not belong to
$\bar{g}^{-1}\cdot{\bf U}$. This contradicts the fact that $\bar{g}^{-1}\cdot{\bf U}$ is an open neighborhood of
 $\mathfrak{h}_{x_0}(\bar{g}^{-1}\cdot\bar{v})$.

By the definition of $\mathfrak{h}$, it is clearly $G$-equivariant and
satisfies
\begin{equation}\label{e:S.6.11}
(P^+_{x}+ P^-_{x})\nabla\left(\mathcal{L}\circ\exp|_{N\mathcal{O}_{x}(\varepsilon)}\right)(v+
\mathfrak{h}_{x}(v))=0,\quad\forall (x,v)\in N^0\mathcal{
O}(3\epsilon).
\end{equation}
 Moreover, the map $\mathcal{F}: N^0\mathcal{O}(\epsilon)\oplus N^+\mathcal{O}(\epsilon)\oplus N^-\mathcal{O}(\epsilon)\to\R$ defined by
\begin{eqnarray}\label{e:S.6.12}
\mathcal{ F}(x, v, u^++u^-)&=&\mathcal{F}_x(v, u^++u^-)\nonumber\\
&=&\mathcal{ L}\circ\exp_x(v+\mathfrak{h}_x(v)+ u^++u^-)-\mathcal{ L}\circ\exp_x(v+
\mathfrak{h}_x(v)),\quad
\end{eqnarray}
is $G$-invariant, and satisfies for any $(x,v)\in N^0\mathcal{O}(\epsilon)$ and $u\in N^+\mathcal{ O}_x\oplus
N^-\mathcal{O}_x$,
\begin{eqnarray}\label{e:S.6.13}
\mathcal{F}_x(v, \theta_x)=0\quad\hbox{and}\quad D_2\mathcal{F}_x(v, \theta_x)[u]=0.
\end{eqnarray}
By (\ref{e:S.6.2}), (\ref{e:S.6.3}) and Lemmas~\ref{lem:S.2.1},~\ref{lem:S.2.2}  we can
immediately obtain:

\begin{lemma}\label{lem:S.6.2}
There exist positive numbers $\varepsilon_1\in (0, \varepsilon)$ and
$a_1\in (0, 2a_0)$, and a function $\Omega:N\mathcal{
O}(\varepsilon_1)\to [0, \infty)$ with the property that
$\Omega(x,v)\to 0$ as $\|v\|_x\to 0$,
 such that for any $(x,v)\in N\mathcal{ O}(\varepsilon_1)$ the
 following conclusions hold with ${\cal B}_x=d^2\left(\mathcal{L}\circ\exp|_{N\mathcal{O}_x(\varepsilon)}\right)$:
\begin{description}
\item[(i)]  $|(\!({\cal B}_x(v)u, w)\!)_x- (\!({\cal B}_x(\theta_x)u, w)\!)_x |\le \Omega(x,v)
\|u\|_x\cdot\|w\|_x$ for any $u\in N^0\mathcal{ O}_x\oplus
N^-\mathcal{ O}_x$ and $w\in N\mathcal{ O}_x$;
\item[(ii)] $(\!({\cal B}_x(v)u, u)\!)_x\ge a_1\|u\|^2_x$ for all $u\in N^+\mathcal{ O}_x$;
\item[(iii)] $|(\!({\cal B}_x(v)u,w)_x\!)|\le\Omega(x,v)\|u\|_x\cdot\|w\|_x$
for all $u^+\in N^+\mathcal{ O}_x,  w\in N^-\mathcal{
O}_x\oplus N^0\mathcal{ O}_x$;
\item[(iv)] $(\!({\cal B}_x(v)u,u)_x\le-a_0\|u\|^2$ for all $u\in N^-\mathcal{ O}_x$.
\end{description}
\end{lemma}

Let us choose $\varepsilon_2\in (0,\epsilon/2)$ so small that
$(x, v^0+ \mathfrak{h}_x(v^0)+ u^++ u^-)\in  N\mathcal{O}(\varepsilon_1)$
for $(x, v^0)\in N^0\mathcal{ O}(2\varepsilon_2)$ and $(x, u^\ast)\in
N^\ast\mathcal{ O}(2\varepsilon_2)$, $\ast=+, -$.
As in the proof of \cite[Lemma~3.5]{Lu2}, we may use
 \cite[Lemma~2.4]{Lu2}
to derive

\begin{lemma}\label{lem:S.6.3}
Let the constants $a_1$ and $a_0$  be given by
Lemma~\ref{lem:S.6.2}(ii),(iv).
For the above $\varepsilon_2>0$ and each $x\in\mathcal{O}$ the
restriction of the functional $\mathcal{ F}_x$  to
$\overline{N^0\mathcal{
O}(2\varepsilon_2)_x}\oplus[\overline{N^+\mathcal{
O}(2\varepsilon_2)_x}\oplus\overline{N^-\mathcal{
O}(2\varepsilon_2)_x}]$
 satisfies:
\begin{description}
\item[(i)] $D_2\mathcal{ F}_x(v^0, u^++ u^-_2)[u^-_2-u^-_1]- D_2\mathcal{ F}_x(v^0, u^++ u^-_1)[u^-_2-u^-_1] \le -a_1\|u^-_2-u^-_1\|^2_x$ for any $(x, v^0)\in\overline{N^0\mathcal{ O}(2\varepsilon_2)}$,
$(x, u^+)\in \overline{N^+\mathcal{ O}(2\varepsilon_2)}$
and $(x, u^-_j)\in \overline{N^-\mathcal{ O}(2\varepsilon_2)}$, $j=1,2$;

\item[(ii)] $D_2\mathcal{ F}_x(v^0, u^++ u^-)[u^+-u^-]\ge a_1\|u^+\|^2_x+
a_0\|u^-\|^2_x$ for any $(x, v^0)\in\overline{N^0\mathcal{ O}(2\varepsilon_2)}$ and
$(x, u^\ast)\in \overline{N^\ast\mathcal{ O}(2\varepsilon_2)}$, $\ast=+,-$;

\item[(iii)] $D_2\mathcal{ F}_x(v^0, u^+)[u^+] \ge a_1\|u^+\|^2_x$
for any $(x, v^0)\in\overline{N^0\mathcal{ O}(2\varepsilon_2)}$ and
$(x, u^+)\in \overline{N^+\mathcal{ O}(2\varepsilon_2)}$.
\end{description}
\end{lemma}

Denote by bundle projections $\Pi_0:\overline{N^0\mathcal{ O}(\varepsilon_2)}\to\mathcal{ O}$ and
$\Pi_{\pm}:N^+\mathcal{O}\oplus N^-\mathcal{O}\to\mathcal{O}$,
$\Pi_{\ast}:N^\ast\mathcal{O}\to\mathcal{O}$, $\ast=+,-$.
Let $\Lambda=\overline{N^0\mathcal{ O}(2\varepsilon_2)}$, $p:\mathcal{E}\to\Lambda$
and $p_\ast:\mathcal{E}^\ast\to\Lambda$ be the pullbacks of
$N^+\mathcal{O}\oplus N^-\mathcal{O}$ and
$N^\ast\mathcal{O}$ via $\Pi_0$, $\ast=+,-$. Then $\mathcal{E}=\mathcal{E}^+\oplus\mathcal{E}^-$,
and for $\lambda=(x,v^0)\in\Lambda$ we have $\mathcal{E}_\lambda=
N^+\mathcal{O}_x\oplus N^-\mathcal{O}_x$ and
$\mathcal{E}^\ast_\lambda=N^\ast\mathcal{O}_x$, $\ast=+,-$. Moreover, for each $\eta>0$ we write
\begin{eqnarray*}
&&B_\eta(\mathcal{E})=\left\{(\lambda, w)\,|\, \lambda=(x,v^0)\in\Lambda\;\&\;w\in
(N^+\mathcal{O}\oplus N^-\mathcal{O})_x(\eta)\right\},\\
&&\bar{B}_\eta(\mathcal{E})=\left\{(\lambda, w)\,|\, \lambda=(x,v^0)\in\Lambda\;\&\;w\in
\overline{(N^+\mathcal{O}\oplus N^-\mathcal{O})_x(\eta)}\right\}.
\end{eqnarray*}
Similarly,  $B_\eta(\mathcal{E}^\ast)$ and $\bar{B}_\eta(\mathcal{E}^\ast)$ ($\ast=+,-$)
are defined. Let  $\mathcal{J}: B_{2\varepsilon_2}(\mathcal{E})\to\R$ be given by
\begin{eqnarray}\label{e:S.6.14}
\mathcal{J}(\lambda,  v^\pm)=\mathcal{J}_\lambda(v^\pm) =\mathcal{F}(x, v^0, v^\pm),
\quad\forall\lambda=(x, v^0)\in\Lambda\;\&\;\forall v^\pm\in B_{2\varepsilon_2}(\mathcal{E})_\lambda.
\end{eqnarray}
It is continuous, and of class $C^1$ with respect to $v^\pm$. From (\ref{e:S.6.13}) and Lemma~\ref{lem:S.6.3}
we directly obtain:

\begin{lemma}\label{lem:S.6.4}
The functional $\mathcal{J}_\lambda$ satisfies the conditions in
Theorem~A.2 of \cite{Lu2} (the bundle parameterized version of \cite[Theoren~1.1]{DHK}), that is,
\begin{description}
\item[(i)] $\mathcal{J}_\lambda(\theta_\lambda)=0$ and $D\mathcal{J}_\lambda(\theta_\lambda)=0$;
\item[(ii)] $D\mathcal{J}_\lambda(u^++ u^-_2)[u^-_2-u^-_1]- D\mathcal{J}_\lambda(u^++ u^-_1)[u^-_2-u^-_1] \le -a_1\|u^-_2-u^-_1\|^2_x$ for any
$\lambda=(x,v^0)\in\Lambda$, $u^+\in \bar{B}_{\varepsilon_2}(\mathcal{E}^+)_\lambda$ and
$u^-_j\in \bar{B}_{\varepsilon_2}(\mathcal{E}^-)_\lambda$, $j=1,2$;

\item[(iii)] $D\mathcal{J}_\lambda(\lambda, u^++ u^-)[u^+-u^-] \ge a_1\|u^+\|^2_x+
a_0\|u^-\|^2_x$ for any $\lambda=(x,v^0)\in\Lambda$ and
$u^\ast\in \bar{B}_{\varepsilon_2}(\mathcal{E}^\ast)_\lambda$, $\ast=+,-$;

\item[(iv)] $D\mathcal{J}_\lambda(u^+)[u^+] \ge a_1\|u^+\|^2_x$
for any $\lambda=(x, v^0)\in\Lambda$ and
$u^+\in \bar{B}_{\varepsilon_2}(\mathcal{E}^+)_\lambda$.
\end{description}
\end{lemma}

By this  we can use Theorem~A.2 of \cite{Lu2}
to get $\epsilon\in (0, \varepsilon_2)$, an open neighborhood
$U$ of the zero section $0_\mathcal{ E}$ of $\mathcal{ E}$ in $B_{2\varepsilon_2}(\mathcal{E})$ and a
homeomorphism
\begin{eqnarray}\label{e:S.6.15}
\phi: B_{\epsilon}(\mathcal{ E}^+) \oplus
B_{\epsilon}(\mathcal{ E}^-)\to U,\;(\lambda, u^++ u^-)\mapsto (\lambda, \phi_\lambda(u^++u^-))
\end{eqnarray}
such that for all $(\lambda, u^++ u^-)\in  B_{\epsilon}(\mathcal{
E}^+) \oplus B_{\epsilon}(\mathcal{ E}^-)$ with $\lambda=(x,v^0)\in\Lambda$,
\begin{eqnarray}\label{e:S.6.16}
J(\phi(\lambda, u^++
u^-))=\|u^+\|^2_{x}-\|u^-\|^2_{x}.
\end{eqnarray}
 Moreover,
for each $\lambda\in\Lambda$,
$\phi_\lambda(\theta_\lambda)=\theta_\lambda$, $\phi_\lambda(x+y)\in
\mathcal{E}^-_\lambda$ if and only if $x=\theta_\lambda$, and
$\phi$ is a homoeomorphism from $B_{\epsilon}(\mathcal{
E}^-)$ onto $U\cap \mathcal{ E}^-$.

Note that $B_{\epsilon}(\mathcal{ E}^+) \oplus
B_{\epsilon}(\mathcal{ E}^-)= \overline{N^0\mathcal{
O}(2\varepsilon_2)}\oplus N^+\mathcal{O}(\epsilon)\oplus N^-\mathcal{
O}(\epsilon)$ and
$U=\overline{N^0\mathcal{O}(2\varepsilon_2)}\oplus\widehat{U}$,
where $\widehat{U}$ is an open neighborhood
 of the zero section of $N^+\mathcal{O}\oplus N^-\mathcal{
O}$ in $N^+\mathcal{O}(2\varepsilon_2)\oplus N^-\mathcal{
O}(\varepsilon_2)$. Let $\mathcal{W}=N^0\mathcal{O}(\epsilon)\oplus\widehat{U}$,
which is an open neighborhood
 of the zero section of $N\mathcal{O}$ in
$N^0\mathcal{O}(2\varepsilon_2)\oplus N^+\mathcal{O}(2\varepsilon_2)\oplus N^-\mathcal{
O}(\varepsilon_2)$. By (\ref{e:S.6.15}) we get a
 homeomorphism
\begin{eqnarray*}
\phi: N^0\mathcal{O}(\epsilon)\oplus N^+\mathcal{O}(\epsilon)\oplus N^-\mathcal{
O}(\epsilon)\to \mathcal{W},\;
(x,v, u^++ u^-)\mapsto (x,v, \phi_{(x,v)}(u^++u^-)),
\end{eqnarray*}
and therefore a topological embedding bundle morphism that preserves the
zero section,
\begin{eqnarray*}
\Phi: N^0\mathcal{ O}(\epsilon)\oplus N^+\mathcal{
O}(\epsilon)\oplus N^-\mathcal{ O}(\epsilon)\to N\mathcal{ O},\;
(x,v, u^++ u^-)\mapsto (x,v+ \mathfrak{h}_{x}(v), \phi_{(x,v)}(u^++u^-)).
\end{eqnarray*}
 From (\ref{e:S.6.12}), (\ref{e:S.6.14}) and (\ref{e:S.6.16}) it follows that $\Phi$ and $\phi$ satisfy
 \begin{eqnarray*}
 \mathcal{ L}\circ\exp\circ\Phi(x,v+u^++u^-)&=&
\mathcal{ L}\circ\exp_x(v+ \mathfrak{h}_{x}(v)+ \phi_{(x,v)}(u^++u^-))\\
&=&\|u^+\|^2_x-\|u^-\|_x^2+\mathcal{ L}\circ\exp_x(v+
\mathfrak{h}_{x}(v))
\end{eqnarray*}
for all $(x,v+u^+,u^-)\in N^0\mathcal{O}(\epsilon)\oplus N^+\mathcal{O}(\epsilon)\oplus N^-\mathcal{
O}(\epsilon)$. The other conclusions  easily follow from the above arguments.
Theorem~\ref{th:S.6.1} is proved.
\end{proof}

\noindent{\it Proof of Theorem~\ref{th:S.6.0}}.\quad
We also consider the case $\vec{\lambda}={\bf 0}$ merely.
In the present case Lemma~\ref{lem:S.6.2} also holds with $N^0\mathcal{O}_x=\{\theta_x\}\;\forall x\in\mathcal{O}$.
 But we need to replace the map $\mathcal{F}$ in (\ref{e:S.6.12}) by
$$
\mathcal{F}(x,  u^++u^-): N^+\mathcal{O}(\epsilon)\oplus N^-\mathcal{
O}(\epsilon)\to\R,\; (x, u^++u^-)\mapsto\mathcal{ L}\circ\exp_x( u^++u^-).
$$
For any $x\in\mathcal{O}_x$, let $\mathcal{F}_x$ be the restriction of $\mathcal{F}$ to
$N^+\mathcal{O}(\epsilon)_x\oplus N^-\mathcal{O}(\epsilon)_x$. As in the
proof of Theorem~\ref{th:S.1.1}, Lemma~\ref{lem:S.6.3}
is still true with $\overline{N^0\mathcal{
O}(2\varepsilon_2)_x}=\{\theta_x\}$.
Then the desired conclusions can be obtained
by  applying \cite[Theorem~A.2]{Lu2} to $\Lambda=\mathcal{O}$
and $J_\lambda=\mathcal{F}_x$ with $\lambda=x\in\mathcal{O}$.
\hfill$\Box$\vspace{2mm}

 As in \cite{Bot,Ch, MaWi, Wa, Was}, from Theorems~\ref{th:S.6.0}, \ref{th:S.6.1}
 we may, respectively, deduce

\begin{corollary}\label{cor:S.6.5}
 Under the assumptions of Theorem~\ref{th:S.6.0}, let
$\theta^-$ be the orientation bundle (or sheaf) of $N^-\mathcal{O}$ and
 ${\bf K}$ any commutative ring. Then
it holds that
\begin{eqnarray}\label{e:S.6.17}
C_\ast(\mathcal{L}_{\vec{\lambda}}, \mathcal{O};{\bf K})\cong
H_{\ast-\mu_\mathcal{O}}(\mathcal{O};\theta^-\otimes{\bf K})\quad\hbox{and}\quad
C_G^\ast(\mathcal{L}_{\vec{\lambda}}, \mathcal{O};{\bf K})\cong
H_G^{\ast-\mu_\mathcal{O}}(\mathcal{O};\theta^-\otimes{\bf K}),
\end{eqnarray}
where for $q\in\mathbb{N}_0$, $C^q_G(\mathcal{L}_{\vec{\lambda}},\mathcal{O};{\bf K})=H^q(E\times_G((\mathcal{L}_{\vec{\lambda}})_c\cap U),
E\times_G(((\mathcal{L}_{\vec{\lambda}})_c\setminus\mathcal{O})\cap U);{\bf K})$
is the so-called the $q^{\rm th}$ $G$ critical group of $\mathcal{O}$ defined
with a universal smooth principal $G$-bundle $E\to B_G$, a $G$-invariant neighborhood $U$ of $\mathcal{O}$
and $c=\mathcal{L}_{\vec{\lambda}}(\mathcal{O})$. In particular, for ${\bf K}=\mathbb{Z}_2$ there hold
\begin{eqnarray}\label{e:S.6.18}
C_\ast(\mathcal{L}_{\vec{\lambda}}, \mathcal{O};\mathbb{Z}_2)\cong
H_{\ast-\mu_\mathcal{O}}(\mathcal{O};\mathbb{Z}_2)\quad\hbox{and}\quad
C_{G}^\ast(\mathcal{L}_{\vec{\lambda}}, \mathcal{O};\mathbb{Z}_2)\cong
H_G^{\ast-\mu_\mathcal{O}}(\mathcal{O};\mathbb{Z}_2).
\end{eqnarray}
\end{corollary}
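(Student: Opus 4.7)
The plan is to combine the $G$-equivariant quadratic normal form of Theorem~\ref{th:S.6.0} with the Thom isomorphism for the negative normal bundle $N^-\mathcal{O}\to\mathcal{O}$. Set $c:=\mathcal{L}_{\vec{\lambda}}|_\mathcal{O}$ and, with $\Phi$ as in (\ref{e:S.6.4}), put $U:=\exp\bigl(\Phi(\vec{\lambda},\,N^+\mathcal{O}(\epsilon)\oplus N^-\mathcal{O}(\epsilon))\bigr)$, which is a $G$-invariant open neighborhood of $\mathcal{O}$. Because $\exp\circ\Phi(\vec{\lambda},\cdot)$ is a $G$-equivariant fiber-preserving homeomorphism and (\ref{e:S.6.4}) holds, the pair
$$
\bigl((\mathcal{L}_{\vec{\lambda}})_c\cap U,\;((\mathcal{L}_{\vec{\lambda}})_c\setminus\mathcal{O})\cap U\bigr)
$$
is $G$-homeomorphic to the pair $\mathcal{P}$ of subsets of $N^+\mathcal{O}(\epsilon)\oplus N^-\mathcal{O}(\epsilon)$ cut out by $\|u^+\|_x^2\le\|u^-\|_x^2$, with the added condition $u^-\ne\theta_x$ in the second member.

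Next, I would perform the classical $G$-equivariant fiberwise strong deformation retraction that contracts $u^+$ linearly to $\theta_x$ and radially rescales $u^-$ inside $\overline{N^-\mathcal{O}(\epsilon)}_x$; this identifies $\mathcal{P}$ with the disk-sphere pair $\bigl(\overline{N^-\mathcal{O}(\epsilon)},\,\overline{N^-\mathcal{O}(\epsilon)}\setminus\mathcal{O}\bigr)$ (compare \cite[Ch.~I]{Ch} and \cite{Was}). The Thom isomorphism for the real rank-$\mu_\mathcal{O}$ vector bundle $N^-\mathcal{O}\to\mathcal{O}$ with orientation sheaf $\theta^-$ then delivers
$$
H_\ast\bigl(\overline{N^-\mathcal{O}(\epsilon)},\,\overline{N^-\mathcal{O}(\epsilon)}\setminus\mathcal{O};\mathbf{K}\bigr)\cong H_{\ast-\mu_\mathcal{O}}(\mathcal{O};\theta^-\otimes\mathbf{K}),
$$
which, combined with the previous identification, yields the first isomorphism in (\ref{e:S.6.17}).

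For the $G$-equivariant critical groups I would apply the Borel construction $E\times_G(\cdot)$ throughout: the homeomorphism $\exp\circ\Phi(\vec{\lambda},\cdot)$ and the deformation retraction above are $G$-equivariant, hence they pass to $E\times_G U$ and $E\times_G\overline{N^-\mathcal{O}(\epsilon)}$, and the equivariant Thom isomorphism for $E\times_G N^-\mathcal{O}\to E\times_G\mathcal{O}$ gives the cohomological half of (\ref{e:S.6.17}). Formula (\ref{e:S.6.18}) is immediate: $\theta^-\otimes\mathbb{Z}_2$ is canonically trivial because every real vector bundle is $\mathbb{Z}_2$-orientable. The main subtlety to monitor — and essentially the only obstacle — is that $\Phi$ is merely a topological homeomorphism rather than a smooth bundle isomorphism; this however causes no difficulty, since orientation sheaves, Thom isomorphisms, and their Borel-equivariant analogues are purely topological invariants of the underlying continuous vector bundle, and $N^-\mathcal{O}$ is actually a $C^1$ subbundle of $N\mathcal{O}$.
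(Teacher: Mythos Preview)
Your proposal is correct and follows exactly the approach the paper intends: the paper does not give a detailed proof of this corollary but simply states that it is deduced ``as in \cite{Bot,Ch,MaWi,Wa,Was}'' from Theorem~\ref{th:S.6.0}, which is precisely the normal-form/deformation-retraction/Thom-isomorphism argument you have written out. Your treatment of the equivariant case via the Borel construction and the $\mathbb{Z}_2$ specialization are the standard ones from those references as well.
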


\begin{corollary}[Shifting Theorem]\label{cor:S.6.6}
 Under the assumptions of Theorem~\ref{th:S.6.1}, if $\mathcal{ O}$ has
trivial normal bundle then
$C_q(\mathcal{L}_{\vec{\lambda}}, \mathcal{ O};{\bf K})\cong
\oplus^q_{j=0}C_{q-j-\mu_\mathcal{O}}((\mathcal{L}_{\vec{\lambda}})^{\circ}_x,
\theta_x; {\bf K})\otimes H_j(\mathcal{ O};{\bf K})\;\forall q\in\mathbb{N}_0$
for any commutative group ${\bf K}$ and $x\in\mathcal{O}$.
  \end{corollary}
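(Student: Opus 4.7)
The plan is to combine the normal form provided by Theorem~\ref{th:S.6.1} with a Künneth argument, exploiting the product structure made available by the triviality of the normal bundle. The overall shape is: the splitting reduces $C_*(\mathcal{L}_{\vec{\lambda}},\mathcal{O};{\bf K})$ to the critical groups of a model ``quadratic plus reduced'' functional, the quadratic factor accounts for the shift by $\mu_\mathcal{O}$, and Künneth distributes the remaining homology over the factor $\mathcal{O}$.

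Concretely, I would first invoke Theorem~\ref{th:S.6.1} to replace $\mathcal{L}_{\vec{\lambda}}$ near $\mathcal{O}$ by the model
$$F(x,v,u^+,u^-)=\|u^+\|_x^2-\|u^-\|_x^2+(\mathcal{L}_{\vec{\lambda}})_x^\circ(v),$$
and then use a chosen trivialization $N\mathcal{O}\cong\mathcal{O}\times(V^0\oplus V^+\oplus V^-)$ to view $F$ on an honest product. An elementary deformation retract in the positive-definite $u^+$-direction, combined with the classical Morse computation for the negative-definite quadratic $-\|u^-\|^2$ of dimension $\mu_\mathcal{O}$, gives
$$C_q(\mathcal{L}_{\vec{\lambda}},\mathcal{O};{\bf K})\cong C_{q-\mu_\mathcal{O}}(F_0,\mathcal{O}\times\{\theta\};{\bf K}),$$
where $F_0(x,v)=(\mathcal{L}_{\vec{\lambda}})_x^\circ(v)$ on $\mathcal{O}\times V^0$.

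Next, I would compute $C_*(F_0,\mathcal{O}\times\{\theta\};{\bf K})$ by the Künneth formula. Since $G$ acts transitively on $\mathcal{O}$ and $\mathfrak{h}$ is $G$-equivariant, the fiberwise pair
$$\bigl(\{(\mathcal{L}_{\vec{\lambda}})_x^\circ\le c\}\cap W,\;\{(\mathcal{L}_{\vec{\lambda}})_x^\circ\le c\}\cap W\setminus\{\theta_x\}\bigr)$$
is, up to homeomorphism, independent of $x$; together with the trivialization this lets one build an admissible pair for $(F_0,\mathcal{O}\times\{\theta\})$ of product form $\mathcal{O}\times(\text{fiber pair})$. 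Künneth then yields
$$C_q(F_0,\mathcal{O}\times\{\theta\};{\bf K})\cong\bigoplus_{j=0}^{q}H_j(\mathcal{O};{\bf K})\otimes C_{q-j}\bigl((\mathcal{L}_{\vec{\lambda}})_x^\circ,\theta_x;{\bf K}\bigr),$$
and composing with the previous shift by $\mu_\mathcal{O}$ gives the corollary. This is essentially the route taken in the corresponding classical results \cite{Bot,Ch,MaWi,Wa,Was} cited just above, namely an application of the Leray--Hirsch theorem to a topologically trivial bundle of pairs over $\mathcal{O}$.

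The main obstacle lies in converting the continuous family $\{(\mathcal{L}_{\vec{\lambda}})_x^\circ\}_{x\in\mathcal{O}}$ into a genuine topological product so that Künneth applies. The subtleties are that $(\mathcal{L}_{\vec{\lambda}})_x^\circ$ is in general only $C^1$ and depends only continuously on $x$, that the $G$-action may have nontrivial isotropy, and that the splitting $N\mathcal{O}=N^0\mathcal{O}\oplus N^+\mathcal{O}\oplus N^-\mathcal{O}$ is only topological; one must construct a Gromoll--Meyer style pair simultaneously compatible with all these structures, which is precisely where the trivial normal bundle hypothesis becomes essential.
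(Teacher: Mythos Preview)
Your proposal is correct and matches the paper's approach: the paper does not give an explicit proof of Corollary~\ref{cor:S.6.6} but simply states that it is deduced from Theorem~\ref{th:S.6.1} ``as in \cite{Bot,Ch,MaWi,Wa,Was}'', i.e., via precisely the splitting-plus-K\"unneth argument you outline. Your identification of the delicate point---building a product-type admissible pair from the merely continuous, $C^1$-in-fiber family $\{(\mathcal{L}_{\vec{\lambda}})^\circ_x\}_{x\in\mathcal{O}}$ using the $G$-equivariance and the trivial normal bundle---is exactly where the cited references do the work.
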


\section{A generalization of Marino--Prodi's perturbation theorem}\label{sec:MP}
\setcounter{equation}{0}

 Marino and Prodi  \cite{MP} studied local Morse function approximations for $C^2$
  functionals on Hilbert spaces.  We shall generalize their result to a class of functionals
    satisfying the following stronger assumption than Hypothesis~\ref{hyp:1.1}.

\begin{hypothesis}\label{hyp:MP.1}
{\rm Let  $V$ be an open set of a Hilbert space $H$ with inner product $(\cdot,\cdot)_H$,
and $\mathcal{L}\in C^1(V,\mathbb{R})$.
Assume that the gradient $\nabla\mathcal{L}$ has a G\^ateaux derivative $B(u)\in \mathscr{L}_s(H)$
 at every point $u\in V$, and that the map $B:V\to \mathscr{L}_s(H)$  has a decomposition
$B=P+Q$, where for each $u\in V$,  $P(u)\in\mathscr{L}_s(H)$ is positive definitive,
$Q(u)\in\mathscr{L}_s(H)$ is compact, and they also satisfy the following
properties:\\
{\bf (i)} For any $u\in H$, the map $V\ni x\mapsto P(x)u\in H$
is continuous;\\
{\bf (ii)} The  map $Q:V\to\mathscr{L}(H)$ is continuous;\\
{\bf (iii)} $P$ is local positive definite uniformly, i.e., each $u_0\in V$
 has a neighborhood $\mathscr{U}(u_0)$ such that for some
 constants $C_0>0$,
$(P(u)v, v)_H\ge C_0\|v\|^2$, $\forall v\in H$, $\forall u\in \mathscr{U}(u_0)$.
}
\end{hypothesis}

 As in the proofs of Theorems~\ref{th:4.1},~\ref{th:4.2} under Hypothesis~$\mathfrak{F}_{2,N,m,n}$,
we can check that the functional $\mathfrak{F}$ in (\ref{e:1.3}) satisfies this hypothesis.
By improving methods in \cite{MP, Ch, CiVa} we may prove

\begin{theorem}\label{th:MP.2}
Under Hypothesis~\ref{hyp:MP.1}, suppose: {\bf (a)} $u_0\in V$ is
a unique critical point of $\mathcal{L}$, {\bf (b)} the corresponding maps $\varphi$
and $\mathcal{L}^\circ$ as in Theorem~\ref{th:S.1.2} near $u_0$ are of classes $C^1$ and $C^2$,
respectively, {\bf (c)} $\mathcal{L}$
satisfies the (PS) condition. Then
for any $\epsilon>0$ and $r>0$ such that
$\bar{B}_H(u_0, r)\subset V$ and $\sup\{|\mathcal{L}(u)|\,|\,u\in \bar{B}_H(u_0, r)\}<\infty$,
there exists a functional $\tilde{\mathcal{L}}\in C^1(V,\mathbb{R})$ with the following properties:\\
{\bf (i)} $\tilde{\mathcal{L}}$ satisfies Hypothesis~\ref{hyp:MP.1} and the (PS) condition;\\
{\bf (ii)} $\sup_{u\in V}\|\mathcal{L}(u)-\tilde{\mathcal{L}}(u)\|<\epsilon$,
$\sup_{u\in V}\|\mathcal{L}'(u)-\tilde{\mathcal{L}}'(u)\|<\epsilon$ and
$\sup_{u\in V}\|\mathcal{L}''(u)-\tilde{\mathcal{L}}''(u)\|<\epsilon$,
where $\mathcal{L}''(u)$ and $\tilde{\mathcal{L}}''(u)$ are  G\^ateaux derivatives of
$\mathcal{L}'(u)$ and $\tilde{\mathcal{L}}'(u)$, respectively;\\
{\bf (iii)} $\mathcal{L}(x)=\tilde{\mathcal{L}}(x)$ if $x\in V$ and $\|u-u_0\|\ge r$;\\
{\bf (iv)} the critical points of $\tilde{\mathcal{L}}$, if any, are in ${B}_H(u_0, r)$ and nondegenerate
(so finitely many by the arguments below \ref{e:S.1.1}); moreover the Morse indexes of these critical
points sit in $[m^-, m^-+n^0]$, where $m^-$ and $n^0$ are the Morse index and nullity of $u_0$,
respectively.
\end{theorem}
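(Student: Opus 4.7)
The plan is to combine the splitting theorem (Theorem~\ref{th:S.1.2}) with Sard's theorem on the finite-dimensional kernel, and then transfer the resulting perturbation back to $H$ through a cutoff, following the classical Marino--Prodi strategy. Assume $u_0 = \theta$. Hypothesis~\ref{hyp:MP.1} implies Hypothesis~\ref{hyp:1.1} with $X=H$, so Theorem~\ref{th:S.1.2} will apply at $\theta$ and produce, when $\nu := n^0 > 0$, the reduction $\varphi: B_{H^0}(\theta,\epsilon_0)\to (H^0)^\bot$ together with $\mathcal{L}^\circ(z) = \mathcal{L}(z+\varphi(z))$; the case $\nu=0$ is trivial with $\tilde{\mathcal{L}}=\mathcal{L}$ by Theorem~\ref{th:S.1.1}. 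Under assumption~(b), $\varphi$ is $C^1$ and $\mathcal{L}^\circ$ is $C^2$, so Sard's theorem applied to the $C^1$ map $\nabla\mathcal{L}^\circ : B_{H^0}(\theta,\epsilon_0)\to H^0$ between finite-dimensional spaces will yield a regular value $a\in H^0$ of arbitrarily small norm, making every solution of $\nabla\mathcal{L}^\circ(z)=a$ a nondegenerate critical point of $\tilde{\mathcal{L}}^\circ(z) := \mathcal{L}^\circ(z) - (a,z)_H$.

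I will then fix such $a$ and a smooth cutoff $\chi:[0,\infty)\to[0,1]$ with $\chi = 1$ on $[0,(r/3)^2]$ and $\chi = 0$ on $[(2r/3)^2,\infty)$, and set
$$
\tilde{\mathcal{L}}(u) := \mathcal{L}(u) - \chi(\|u\|^2)(a, P^0 u)_H.
$$
Property (iii) is immediate since $\tilde{\mathcal{L}}\equiv\mathcal{L}$ outside $B_H(\theta,2r/3)$, and direct bounds on the perturbation and its first two derivatives will give (ii) once $\|a\|$ is sufficiently small. For (i), the Hessian correction splits, by the product rule applied to the linear factor $(a, P^0 u)_H$, into a scalar multiple of the identity plus finite-rank pieces; the scalar term will be absorbed into a perturbed factor $\tilde P$ that remains uniformly positive definite for small $\|a\|$, while the finite-rank pieces will join the compact factor $\tilde Q$. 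The (PS) condition transfers to $\tilde{\mathcal{L}}$ because the perturbation has compact support in $H$.

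Critical points of $\tilde{\mathcal{L}}$ will live in three zones. Outside $B_H(\theta,2r/3)$ there are none by (a). On the annulus $r/3 \le \|u\| \le r$, assumptions (a) and (c) give $\delta_0 := \inf\|\nabla\mathcal{L}\| > 0$ by a standard (PS)-based contradiction argument (using the uniform bound on $\mathcal{L}$ over $\bar{B}_H(u_0,r)$), and shrinking $\|a\|$ will keep $\nabla\tilde{\mathcal{L}}$ bounded away from $0$ there as well. Inside $B_H(\theta, r/3)$ the equation $\nabla\tilde{\mathcal{L}}(u)=0$ becomes $\nabla\mathcal{L}(u) = a \in H^0$; its $(H^0)^\bot$-component, together with the uniqueness part of Theorem~\ref{th:S.1.2}, will force $u = z + \varphi(z)$, while the $H^0$-component will reduce to $\nabla\mathcal{L}^\circ(z) = a$, whose solutions $z_*$ are the nondegenerate critical points of $\tilde{\mathcal{L}}^\circ$ supplied by the Sard step. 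To upgrade this to nondegeneracy in all of $H$, I will use that $\tilde{\mathcal{L}}''(u_*) = B(u_*)$ at $u_* := z_* + \varphi(z_*)$ (the perturbation is linear on the inner ball), and that Lemma~\ref{lem:S.2.2} makes $P^\bot B(u_*)\vert_{(H^0)^\bot}$ invertible for $u_*$ close to $\theta$. If $B(u_*) w = 0$ with $w = w^0 + w^\bot$, then differentiating $P^\bot\nabla\mathcal{L}(z+\varphi(z))=0$ at $z=z_*$ yields $w^\bot = D\varphi(z_*) w^0$, and the $H^0$-component of $B(u_*) w = 0$ reduces to $(\mathcal{L}^\circ)''(z_*)[w^0] = 0$, forcing $w^0 = 0$. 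Hence $B(u_*)$ is injective and, being Fredholm of index $0$, invertible. A Schur-complement computation in the block form of $B(u_*)$ on $H^\bot \oplus H^0$ (with the $H^\bot$-block invertible and of Morse index $m^-$ by Lemma~\ref{lem:S.2.2}) identifies the effective $H^0$-operator as $(\mathcal{L}^\circ)''(z_*)$, so Sylvester's law of inertia gives Morse index $m^- +$ Morse index of $(\mathcal{L}^\circ)''(z_*) \in [m^-, m^- + n^0]$.

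The hard part is that no infinite-dimensional inverse function theorem is available for $\nabla\mathcal{L}$, so the nondegeneracy of $u_*$ in $H$ cannot be read off from any Morse normal form and must be assembled by hand from the Fredholm plus sign structure of $B(u_*)$ furnished by Lemma~\ref{lem:S.2.2}, together with the $C^1$-regularity of $\varphi$ (to define $D\varphi(z_*)$) and the $C^2$-regularity of $\mathcal{L}^\circ$ (to run Sard and make $(\mathcal{L}^\circ)''(z_*)$ meaningful); both are supplied precisely by assumption~(b), without which the approach collapses. A secondary subtlety is that the cutoff contributes a bounded-but-not-compact multiple-of-identity term to the Hessian correction, which must be absorbed into the positive-definite factor $\tilde P$ rather than into the compact factor $\tilde Q$; preserving Hypothesis~\ref{hyp:MP.1} under the perturbation therefore requires a non-standard splitting of the Hessian correction.
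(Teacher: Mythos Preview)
Your overall strategy is the same as the paper's: Sard on $\nabla\mathcal{L}^\circ$, a linear perturbation in the $H^0$-direction localized by a cutoff, and then nondegeneracy and Morse-index control via the block structure of $B(u_*)$ from Lemma~\ref{lem:S.2.2}. Your Schur-complement identification of $(\mathcal{L}^\circ)''(z_*)$ with the effective $H^0$-block is correct and in fact a bit cleaner than the paper's direct sign-estimate argument for the index bounds.

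The gap is in your choice of scales. The splitting data $(\varphi,\mathcal{L}^\circ)$ from Theorem~\ref{th:S.1.2} live only on some small $B_{H^0}(\theta,\epsilon_0)$, and ``the uniqueness part of Theorem~\ref{th:S.1.2}'' asserts uniqueness of the \emph{map} $\varphi$, not that every $u$ with $P^\bot\nabla\mathcal{L}(u)=0$ lies on its graph. Since $r$ is prescribed and may be large, your inner region $B_H(\theta,r/3)$ can far exceed the domain on which either fact holds, so the step ``$\nabla\mathcal{L}(u)=a$ forces $u=z+\varphi(z)$'' does not go through as written. The paper fixes this by first shrinking to an inner scale $\delta<\min\{r,1\}$ on which both the splitting and the graph-uniqueness (coming from Step~3 of Theorem~\ref{th:S.4.3}) are valid, running the (PS)+uniqueness lower bound on the larger annulus $\delta\le\|u\|\le r$, and cutting off with a product $\beta(u)\rho(\|P^0u\|)$ that separates the two scales. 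A smaller point: ``compact support'' alone does not transfer (PS); one still needs the $(S)_+$-type mean-value estimate from the decomposition $B=P+Q$ (as in the paper's Step~4) to upgrade weak to strong convergence of a bounded (PS) sequence inside the inner ball.
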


As showed, the functionals in \cite{Lu1, Lu10}  satisfy the conditions of this theorem.
If $N=1$, $\dim\Omega=2$ and $F$ is smooth enough, we may also prove
under \textsf{Hypothesis} $\mathfrak{F}_{2,1,m,2}$ that
Theorem~\ref{th:MP.2} is applicable for the functional $\mathfrak{F}$  on $W^{m,2}_0(\Omega)$.
In general, under \textsf{Hypothesis} $\mathfrak{F}_{2,N,m,n}$,
for a critical point $\vec{u}$ of the functional $\mathfrak{F}_H$ on $H:=W^{m,2}_0(\Omega, \mathbb{R}^N)$
defined by the right side of (\ref{e:1.3}),  if there exist a real $p\ge 2$ and
  an integer $k>m+ \frac{n}{p}$  such that $\vec{u}\in C^k(\overline{\Omega}, \mathbb{R}^N)$,  and
  $F$ and $\partial\Omega$ are of classes $C^{k-m+2}$ and $C^{k-1,1}$, respectively,
   then Theorem~\ref{th:compare.1} (or Theorem~\ref{th:compare.1.4}) shows that
{\bf (b)} of Theorem~\ref{th:MP.2} can be satisfied for $\mathfrak{F}_H$ near $\vec{u}$.

Marino--Prodi's result has many important applications in the critical point theory,
see \cite{Ch, CiVa, Gh, LazS} and literature therein. With Theorem~\ref{th:MP.2} they may be given in our
framework. Moreover, it is very possible to give a corresponding result with Theorem~\ref{th:MP.2}
in the setting of \cite{Lu1,Lu2}.

   Marino--Prodi's perturbation theorem in \cite{MP}
was also generalized to the equivariant case under the finite (resp. compact Lie) group
action by Wasserman \cite{Was} (resp. Viterbo \cite{Vit1}), see the proof of Theorem~7.8 in
\cite[Chapter~I]{Ch} for full details.
Similarly, we can present an equivariant version of Theorem~\ref{th:MP.2} for compact
Lie group action, but it is omitted here.\\

\noindent{\bf Proof of Theorem~\ref{th:MP.2}}.\quad
 Without loss of generality we may assume $\theta\in V$ and $u_0=\theta$.
By the assumption (b) we have a $C^2$ reduction functional $\mathcal{L}^\circ: B_H(\theta,\delta)\cap H^0\to
\mathbb{R}$ such that $\theta$ is the unique critical point of it.
In this case, from (\ref{e:S.1.2}) and (\ref{e:S.5.12.2}) with $\bar{\lambda}=0$ and $\psi(0,\cdot)=\varphi$
it follows that $d^2\mathcal{L}^\circ(\theta)=0$ and hence $\mathcal{L}^\circ(z)=o(\|z\|^2)$.
Clearly, we can shrink $\delta>0$ so that $\delta<\min\{r,1\}$ (hence $\bar{B}_H(\theta,\delta)\subset V$)
and $\omega$ in Lemma~\ref{lem:S.2.2} satisfies
\begin{eqnarray}\label{e:MP.0}
\omega(z+\varphi(z))<\frac{1}{2}\min\{a_0,a_1\},\quad\forall z\in B_H(\theta,\delta)\cap H^0.
\end{eqnarray}
By the uniqueness of solutions we can also require that if
$v\in {B}_{H}(\theta,\delta)$ satisfies $(I-P^0)\nabla{\mathcal{L}}(v)=0$ then
$v=z+\varphi(z)$ for some $z\in B_H(\theta,\delta)\cap H^0$.

Take a smooth function $\rho:[0,\infty)\to\mathbb{R}$ satisfying: $0\le\rho\le 1$,
$\rho(t)=1$ for $t\le\delta/2$, $\rho(t)=0$ for $t\ge\delta$, and $|\rho'(t)|<4/\delta$.
For $b\in H^0$ we set $\mathcal{L}_b^\circ(z)=\mathcal{L}^\circ(z)+ \rho(\|z\|)(b,z)_H$. Then
\begin{eqnarray}\label{e:MP.1}
D\mathcal{L}_b^\circ(z)[\xi]&=&D\mathcal{L}(z+\varphi(z))[\xi+\varphi'(z)\xi]+ \rho(\|z\|)(b,\xi)_H\nonumber\\
&&+\rho'(\|z\|)(b, z)_H(z/\|z\|,\xi)_H,\quad\forall\xi\in H^0.
\end{eqnarray}
Note that $\nu:=\inf\{\|D\mathcal{L}^\circ(z)\|\,|\,z\in
\bar{B}_{H^0}(\theta,\delta)\setminus {B}_{H^0}(\theta, {\delta/2})\}>0$.
Suppose $\|b\|<\nu/5$. Then
\begin{eqnarray}\label{e:MP.2}
\|D\mathcal{L}_b^\circ(z)\|&=&\big\|D\mathcal{L}^\circ(z)+ \rho(\|z\|)b+ (b,z)_H\rho'(\|z\|)z/\|z\|\big\|
\ge  \nu- 5\|b\|>0,
\end{eqnarray}
and therefore $\mathcal{L}_b^\circ$ has no critical point in
$\bar{B}_{H^0}(\theta,\delta)\setminus {B}_{H^0}(\theta,{\delta/2})$.
By Sard's theorem we may take arbitrary small $b\ne 0$ such that the critical points of
$\mathcal{L}_b^\circ$, if any, are nondegenerate.
Choose a $C^2$ function $\beta:H\to\mathbb{R}$ such that $\beta(u)=0$ for $u\in H\setminus B_H(\theta,r)$,
and $\beta(u)=1$ for $u\in B_H(\theta,\delta)$. Clearly,
we can require $\sup\{\|\beta(u)\|, \|\beta'(u)\|, \|\beta''(u)\|\,|\,u\in H\}\le M$ for some $M>0$. Define
\begin{equation}\label{e:MP.3}
\tilde{\mathcal{L}}_b(u)={\mathcal{L}}(u)+ \beta(u)\rho(\|P^0u\|)(b,P^0u)_H
\end{equation}
We shall prove that $\tilde{\mathcal{L}}_b$ satisfies the expected requirements  for sufficiently small $b\ne 0$
produced by Sard's theorem above.

{\bf Step 1}.\quad {\it Prove that $\tilde{\mathcal{L}}_b$ satisfies (iv) if $b$ is small enough}.
Since $\mathcal{L}$ satisfies the (PS) condition, $c:=\inf\{
\|D\mathcal{L}(u)\|\,|\,u\in {B}_{H}(\theta, r)\setminus {B}_{H}(\theta,\delta)\}>0$.
Hence all critical points of $\tilde{\mathcal{L}}_b$ belong to ${B}_{H}(\theta,\delta)$
as long as $b$ is small enough.

Let us prove that each critical point $v$ of $\tilde{\mathcal{L}}_b$ in
${B}_{H}(\theta,\delta)$ is  nondegenerate. Obverse that
\begin{eqnarray}\label{e:MP.4}
0=\tilde{\mathcal{L}}'_b(v)[\xi]&=&(\nabla{\mathcal{L}}(v),\xi)_H+ \rho(\|P^0v\|)(b, P^0\xi)_H\nonumber\\
&&+\rho'(\|P^0v\|)(b, P^0v)_H(P^0v, P^0\xi)_H/\|P^0v\|,\quad\forall\xi\in H.
\end{eqnarray}
Since $\rho(\|P^0v\|)=1$ for $\|P^0v\|\le\|v\|<\delta$,
 this implies $(\nabla{\mathcal{L}}(v),\xi)_H=0$ for any $\xi\in H^+\oplus H^-$,
i.e., $(I-P^0)\nabla{\mathcal{L}}(v)=0$. It follows that
$v=z+\varphi(z)$ for some $z\in B_H(\theta,\delta)\cap H^0$. [This $z$ is nonzero. Otherwise,
$v=\theta$. But $\theta$ is not a critical point of $\tilde{\mathcal{L}}_b$ if $b\ne\theta$].
Note that  $(\nabla{\mathcal{L}}(z+\varphi(z)),\varphi'(z)\xi)_H=0\;\forall\xi\in H^0$
because $\varphi'(z)\xi\in H^+\oplus H^-$. (\ref{e:MP.4}) leads to
\begin{eqnarray*}
0&=&(\nabla{\mathcal{L}}(z+\varphi(z)),\xi)_H+ \rho(\|z\|)(b,\xi)_H
+\rho'(\|z\|)(b, z)_H(z, \xi)_H/\|z\|\\
&=&(\nabla{\mathcal{L}}(z+\varphi(z)),\xi)_H+ (\nabla{\mathcal{L}}(z+\varphi(z)),\varphi'(z)\xi)_H\\
&&+\rho(\|z\|)(b,\xi)_H +\rho'(\|z\|)(b, z)_H(z, \xi)_H/\|z\|\quad\forall \xi\in H^0,
\end{eqnarray*}
and therefore $D\mathcal{L}_b^\circ(z)=0$ by (\ref{e:MP.1}). That is, $z$ is a critical point of $\mathcal{L}_b^\circ$,
and so $z\in B_{H^0}(\theta,{\delta/2})$ by  (\ref{e:MP.2}).
 It follows from (\ref{e:MP.3}) that
\begin{eqnarray}\label{e:MP.5}
\tilde{\mathcal{L}}''_b(v)[\xi,\eta]=({\mathcal{L}}''(v)\xi,\eta)_H,\quad\forall\xi,\eta\in H.
\end{eqnarray}
Let $\xi\in{\rm Ker}(\tilde{\mathcal{L}}''_b(v))$.
By (\ref{e:MP.5}), we have
\begin{eqnarray}\label{e:MP.5.1}
{\mathcal{L}}''(v)[\xi,\eta]=({\mathcal{L}}''(z+\varphi(z))[\xi],\eta)_H=0,\quad\forall\eta\in H.
\end{eqnarray}
Decompose $\xi$ into $\xi^0+\xi^\bot$, where $\xi^0\in H^0$ and $\xi^\bot\in H^+\oplus H^-$. A direct computation yields
\begin{eqnarray}\label{e:MP.6}
({\mathcal{L}}''(z+\varphi(z))[\xi^0], \eta+\varphi'(z)[\eta])_H+
({\mathcal{L}}''(z+\varphi(z))[\xi^\bot], \eta+\varphi'(z)[\eta])_H
=0,\;\forall\eta\in H^0.
\end{eqnarray}
Note that $(I-P^0)\nabla{\mathcal{L}}(w+\varphi(w))=0\;\forall w\in B_{H^0}(\theta,\delta)$
by (\ref{e:S.1.2}).
Hence $(\nabla{\mathcal{L}}(w+\varphi(w)), \zeta)_H=0\;\forall \zeta\in H^+\oplus H^-$. Differentiating this equality with respect to $w$ yields
$$
(\mathcal{L}''(w+\varphi(w))[\tau+\varphi'(w)[\tau]], \zeta)_H=0,\quad \forall\tau\in H^0,\;
\forall w\in B_{H^0}(\theta,\delta),\;\forall\zeta\in H^+\oplus H^-.
$$
In particular, we have $({\mathcal{L}}''(z+\varphi(z))[\xi^\bot], \eta+\varphi'(z)[\eta])_H
=0\;\forall\eta\in H^0$. This and (\ref{e:MP.6}) yield
\begin{eqnarray}\label{e:MP.7}
d^2\mathcal{L}^\circ(z)[\xi^0,\eta]=({\mathcal{L}}''(z+\varphi(z))[\xi^0], \eta+\varphi'(z)[\eta])_H=0,\quad\forall\eta\in H^0.
\end{eqnarray}
Moreover, $d^2\mathcal{L}_b^\circ(z')=d^2\mathcal{L}^\circ(z')\;\forall z'\in B_{H^0}(\theta,\delta/2)$
by the construction of $\mathcal{L}_b^\circ$. We obtain that
$d^2\mathcal{L}_b^\circ(z)[\xi^0,\eta]=0\;\forall\eta\in H^0$.  Since $z$ is a nondegenerate critical point of $\mathcal{L}_b^\circ$ by the choice of $b$,  $\xi^0=\theta$ and thus $\xi=\xi^\bot$. By (\ref{e:MP.5}), (\ref{e:MP.5.1}) and $(\tilde{\mathcal{L}}''_b(v)[\xi],\eta)_H=0\;\forall\eta\in H$, we get
\begin{eqnarray}\label{e:MP.8}
({\mathcal{L}}''(z+\varphi(z))[\xi^\bot],\eta)_H= ({\mathcal{L}}''(z+\varphi(z))[\xi],\eta)_H=0,\quad\forall\eta\in H.
\end{eqnarray}
Hence ${\mathcal{L}}''(z+\varphi(z))[\xi^\bot]=0$.
Decompose $\xi^\bot$ into $\xi^++\xi^-$, where $\xi^+\in H^+$ and $\xi^-\in H^-$.
Then ${\mathcal{L}}''(z+\varphi(z))[\xi^+]=-{\mathcal{L}}''(z+\varphi(z))[\xi^-]$.
By Lemma~\ref{lem:S.2.2} and (\ref{e:MP.0}) we derive
\begin{eqnarray*}
&&a_1\|\xi^+\|^2\le ({\mathcal{L}}''(z+\varphi(z))[\xi^+], \xi^+)_H= -({\mathcal{L}}''(z+\varphi(z))[\xi^-],\xi^+)_H\le
\frac{a_1}{2}\|\xi^+\|\cdot\|\xi^-\|,\\
&&-a_0\|\xi^-\|^2\ge ({\mathcal{L}}''(z+\varphi(z))[\xi^-],\xi^-)_H=-({\mathcal{L}}''(z+\varphi(z))[\xi^+],\xi^-)_H\ge-
\frac{a_0}{2}\|\xi^-\|\cdot\|\xi^+\|.
\end{eqnarray*}
These imply that $\xi^+=\xi^-=\theta$ and so $\xi=\theta$.
Hence $v$ is a nondegenerate critical point of $\tilde{\mathcal{L}}_b$.

Note that Lemma~\ref{lem:S.2.2} and (\ref{e:MP.5}) give rise to
\begin{eqnarray*}
&&\tilde{\mathcal{L}}''_b(v)(\xi,\xi)=({\mathcal{L}}''(v)[\xi],\xi)_H\ge a_1\|\xi\|^2,\quad\forall\xi\in H^+,\\
&&\tilde{\mathcal{L}}''_b(v)(\xi,\xi)=({\mathcal{L}}''(v)[\xi],\xi)_H\le -a_0\|\xi\|^2,\quad\forall\xi\in H^-.
\end{eqnarray*}
But $H=H^+\oplus H^0\oplus H^-$, $\dim H^-=m^-$ and $\dim H^0=n^0$. These show that
the Morse index of $\tilde{\mathcal{L}}''_b(v)$ must sit in $[m^-, m^-+n^0]$. (iv) is proved.

{\bf Step 2}.\quad
{\it Prove that $\tilde{\mathcal{L}}_b$ satisfies Hypothesis~\ref{hyp:MP.1}
on $V$ if $b\ne 0$ is small enough}. By (\ref{e:MP.3}) we have for all $\xi,\eta\in H$,
\begin{eqnarray}\label{e:MP.9}
\tilde{\mathcal{L}}'_b(u)[\xi]&=&{\mathcal{L}}'(u)[\xi]+ (\beta'(u)[\xi])\rho(\|P^0u\|)(b,P^0u)_H+
\beta(u)\rho(\|P^0u\|)(b, P^0\xi)_H\nonumber\\
&&+\beta(u)\rho'(\|P^0u\|)(b, P^0u)_H(P^0u, P^0\xi)_H\cdot\frac{1}{\|P^0u\|}
\end{eqnarray}
and
\begin{eqnarray*}
&&(\tilde{\mathcal{L}}''_b(u)[\eta],\xi)_H=({\mathcal{L}}''(u)[\eta],\xi)_H+ (\beta''(u)[\eta],\xi)_H\rho(\|P^0u\|)(b,P^0u)_H\\
&&+(\beta'(u)[\xi])\rho(\|P^0u\|)(b,P^0\eta)_H+ (\beta'(u)[\xi])(b,P^0u)_H\rho'(\|P^0u\|)(P^0u, P^0\eta)_H\cdot\frac{1}{\|P^0u\|}\\
&&+(\beta'(u)[\eta])\rho(\|P^0u\|)(b, P^0\xi)_H+ \beta(u)(b, P^0\xi)_H\rho'(\|P^0u\|)(P^0u, P^0\eta)_H\cdot\frac{1}{\|P^0u\|}\\
&&+(\beta'(u)[\eta])\rho'(\|P^0u\|)(b, P^0u)_H(P^0u, P^0\xi)_H\cdot\frac{1}{\|P^0u\|}\\
&&+\beta(u)\bigg(\rho''(\|P^0u\|)(P^0u, P^0\eta)_H\cdot\frac{1}{\|P^0u\|}\bigg)(b, P^0u)_H(P^0u, P^0\xi)_H\cdot\frac{1}{\|P^0u\|}\\
&&+\beta(u)\rho'(\|P^0u\|)(b, P^0\eta)_H(P^0u, P^0\xi)_H\cdot\frac{1}{\|P^0u\|}\\
&&+\beta(u)\rho'(\|P^0u\|)(b, P^0u)_H(P^0\eta, P^0\xi)_H\cdot\frac{1}{\|P^0u\|} \\
&&-\beta(u)\rho'(\|P^0u\|)(b, P^0u)_H (P^0u, P^0\xi)_H(P^0u, P^0\eta)_H\cdot\frac{1}{\|P^0u\|^3}\\
&&=({\mathcal{L}}''(u)[\eta],\xi)_H+ \Upsilon(u,b,\xi,\eta).
\end{eqnarray*}
By the constructions of $\beta$ and $\rho$, after the tedious estimate we get a constant $M_2>0$ such that
\begin{eqnarray}\label{e:MP.10}
|\Upsilon(u,b,\xi,\eta)|\le M_2\|b\|\cdot\|\xi\|\cdot\|\eta\|,\quad
\forall u\in V,\;\forall\xi,\eta\in H.
\end{eqnarray}
Since we may require that  the support of $\beta$ can be contained
a neighborhood of $\theta$ on which (iii) of Hypothesis~\ref{hyp:MP.1} holds, for sufficiently small $b\ne 0$
the positive definite part $\tilde P$ of $\tilde{\mathcal{L}}''_b$ given by
 $(\tilde P(u)\xi,\eta)_H=(P(u)\xi,\eta)_H+ \Upsilon(u,b,\xi,\eta)$,
   is also uniformly positive definite on this neighborhood.
Hence  $\tilde{\mathcal{L}}_b$ satisfies Hypothesis~\ref{hyp:MP.1}.

{\bf Step 3}.\quad {\it Prove that (ii) and (iii) can be satisfied if $b\ne 0$ is small}.
Indeed, (\ref{e:MP.10}) implies that $\|\tilde{\mathcal{L}}''_b(u)-{\mathcal{L}}''(u)\|\le M_2\|b\|$
for all $u\in V$. Moreover,  by (\ref{e:MP.3}) and (\ref{e:MP.9}) we have positive numbers $M_i$, $i=0,1$, such that $|\tilde{\mathcal{L}}_b(u)-{\mathcal{L}}(u)|\le M_0\|b\|$ and
$\|\tilde{\mathcal{L}}'_b(u)-{\mathcal{L}}'(u)\|\le M_1\|b\|$ for
all $u\in V$. Hence it suffices to require that $\|b\|<\epsilon/M_i$ for $i=0,1,2$.

{\bf Step 4}.\quad {\it Prove that $\tilde{\mathcal{L}}_b$ satisfies  the (PS) condition for small $b$}.
By (ii) and (iii) in Hypothesis~\ref{hyp:MP.1}, there exists  $\varepsilon\in (0,\delta/2)$ such that
for all $u\in B_H(\theta,\varepsilon)$ and $\xi\in H$,
\begin{eqnarray}\label{e:MP.11}
(P(u)\xi,\xi)_H\ge C_0\|\xi\|^2\quad\hbox{and}\quad \|Q(u)-Q(\theta)\|<C_0/2.
\end{eqnarray}
Recall that $\mathcal{L}$ is bounded in $\bar{B}_H(u_0, r)$ and that $\theta$ is a unique critical point
 of $\mathcal{L}$ in $V$. Since  $\mathcal{L}$ satisfies  the (PS) condition,  we have $\nu_0>0$ such that $\|\mathcal{L}'(u)\|\ge\nu_0$ for
 all $u\in \bar{B}_H(u_0, r)\setminus B_H(\theta,\varepsilon)$.
Choose $b$ so small that
\begin{eqnarray}\label{e:MP.12}
\|\tilde{\mathcal{L}}'_b(u)\|\ge \nu_0/2,\quad\forall u\in \bar{B}_H(u_0, r)\setminus B_H(\theta,\varepsilon).
\end{eqnarray}
Let  $(u_n)\subset V$ satisfy $\tilde{\mathcal{L}}'_b(u_n)\to 0$ and $\sup_n|\tilde{\mathcal{L}}_b(u_n)|<\infty$.
Assume that $(u_n)$ has a subsequence $(u_{n_k})$ sitting in $V\setminus \bar{B}_H(u_0, r)$.
Since $\mathcal{L}$ satisfies  the (PS) condition, by (iii) we deduce that
$(u_{n_k})$ has a converging subsequence. Thus after removing finitely many terms  we may
assume that $(u_n)\subset  \bar{B}_H(u_0, r)$, and by (\ref{e:MP.12})
we may further assume that $(u_n)\subset B_H(\theta,\varepsilon)$. It follows from
(\ref{e:MP.9}) that $\nabla\tilde{\mathcal{L}}_b(u_n)=\nabla{\mathcal{L}}(u_n)+ P^0b$ for all $n$.
 For any two natural numbers $n$ and $m$,
 using the mean value theorem we have $\tau\in (0,1)$ such that
 \begin{eqnarray*}
 &&(\nabla\mathcal{L}(u_n)-\nabla\mathcal{L}(u_m), u_n-u_m)_H=(B(\tau u_n+ (1-\tau)u_m)(u_n-u_m), u_n-u_m)_H\\
&&=(P(\tau u_n+ (1-\tau)u_m)(u_n-u_m), u_n-u_m)_H+ (Q(\theta)(u_n-u_m), u_n-u_m)_H\\
&&+([Q(\tau u_n+ (1-\tau)u_m)-Q(\theta)](u_n-u_m), u_n-u_m)_H\\
&&\ge C_0\|u_n-u_m\|^2-\frac{C_0}{2}\|u_n-u_m\|^2+(Q(\theta)(u_n-u_m), u_n-u_m)_H,
\end{eqnarray*}
where the last inequality comes from (\ref{e:MP.11}).
Passing to a subsequence we may assume $u_n\rightharpoonup u_0$. Since $Q(\theta)$ is compact,
$Q(\theta)u_n\to Q(\theta)u_0$ and so $(Q(\theta)(u_n-u_m), u_n-u_m)_H\to 0$ as $n,m\to\infty$.
Note that $\nabla\mathcal{L}(u_n)-\nabla\mathcal{L}(u_m)=
(\nabla\mathcal{L}(u_n)+ P^0b)-(\nabla\mathcal{L}(u_m)+ P^0b)\to 0$ as $n,m\to\infty$.  From
the above inequality we conclude that $\|u_n-u_m\|\to 0$ as $n, m\to\infty$. This implies
$u_n\to u_0$.  Theorem~\ref{th:MP.2} is proved.
\hfill$\Box$\vspace{2mm}

\section{Applications to quasi-linear elliptic systems of higher order}\label{sec:Funct}
\setcounter{equation}{0}

\subsection{Fundamental analytic properties for functionals  $\mathfrak{F}$}\label{sec:Funct.1}

For $p\in [2,\infty)$ and integers $m\ge 1$, $n\ge 2$,
 a bounded domain $\Omega$ in $\R^n$ is said to be a {\it Sobolev domain} for $(p,m,n)$
 if  the Sobolev embeddings theorems for the spaces $W^{m, p}(\Omega)$ hold.
  The following two theorems summarize fundamental analytic properties of the functional  $\mathfrak{F}$.

\begin{theorem}\label{th:4.1}
 Given $p\in [2,\infty)$ and integers $m, N\ge 1$, $n\ge 2$, let $\Omega\subset\R^n$ be a Sobolev domain
 for $(p,m,n)$, and let $V_0$ be a closed subspace of $W^{m,p}(\Omega, \mathbb{R}^N)$ and $V=\vec{w}+V_0$
 for some $\vec{w}\in W^{m,p}(\Omega, \mathbb{R}^N)$.
Suppose that (i)-(ii) in \textsf{Hypothesis} $\mathfrak{F}_{p,N,m,n}$ hold.
 Then we have\\
     {\bf A)}. The restriction of  the functional $\mathfrak{F}$
in (\ref{e:1.3}) to $V$, $\mathfrak{F}_V$, is bounded on any bounded subset, of class $C^1$, and the derivative
$\mathfrak{F}'_V(\vec{u})$ of it at $\vec{u}$ is given by
\begin{equation}\label{e:4.1}
\langle \mathfrak{F}'_V(\vec{u}), \vec{v}\rangle=\sum^N_{i=1}\sum_{|\alpha|\le m}\int_\Omega F^i_\alpha(x,
\vec{u}(x),\cdots, D^m \vec{u}(x))D^\alpha v^i dx,\quad\forall \vec{v}\in V_0.
\end{equation}
Moreover, the map $V\ni\vec{u}\to \mathfrak{F}'_V(\vec{u})\in V_0^\ast$ also maps bounded subset into bounded ones.\\
{\bf B)}. The map $\mathfrak{F}'_V$  is of class $C^1$ on $V$ if $p>2$,  G\^ateaux differentiable on $V$ if $p=2$,
 and for each $\vec{u}\in V$ the derivative
$D\mathfrak{F}'_V(\vec{u})\in \mathscr{L}(V_0, V^\ast_0)$ is given by
  \begin{equation}\label{e:4.2}
   \langle D\mathfrak{F}'_V(\vec{u})[\vec{v}],\vec{\varphi}\rangle=\sum^N_{i,j=1}
   \sum_{\scriptsize{\begin{array}{ll}
   &|\alpha|\le m,\\
   &|\beta|\le m
   \end{array}}}\int_\Omega
  F^{ij}_{\alpha\beta}(x, \vec{u}(x),\cdots, D^m \vec{u}(x))D^\beta v^j\cdot D^\alpha\varphi^i dx.
    \end{equation}
(In the case $p=2$,  equivalently,   the gradient map of $\mathfrak{F}_V$,
$V\ni \vec{u}\mapsto\nabla \mathfrak{F}_V(\vec{u})\in V_0$,  given by
\begin{equation}\label{e:4.3}
(\nabla\mathfrak{F}_V(\vec{u}), \vec{v})_{m,2}=\langle \mathfrak{F}'_V(\vec{u}), \vec{v}\rangle\quad
 \forall \vec{v}\in V_0,
\end{equation}
has a G\^ateaux derivative $D(\nabla \mathfrak{F}_V)(\vec{u})\in\mathscr{L}_s(V_0)$ at every $\vec{u}\in V$.)
Moreover,   $D\mathfrak{F}'_V$ also satisfies the following properties:
\begin{description}
\item[(i)] For every given $R>0$, $\{D\mathfrak{F}'_V(\vec{u})\,|\, \|\vec{u}\|_{m,p}\le R\}$
is bounded in $\mathscr{L}_s(V_0)$.
Consequently, when $p=2$, $\mathfrak{F}_V$ is  of class $C^{2-0}$.
\item[(ii)] For any $\vec{v}\in V_0$, $\vec{u}_k\to
\vec{u}_0$ implies
$D\mathfrak{F}'_V(\vec{u}_k)[\vec{v}]\to D\mathfrak{F}'_V(\vec{u}_0)[\vec{v}]$ in $V^\ast_0$.
\item[(iii)] If $p=2$ and $F(x,\xi)$ is independent of all variables $\xi^k_\alpha$, $|\alpha|=m$,
$k=1,\cdots,N$, then $V\ni \vec{u}\mapsto D\mathfrak{F}'_V(\vec{u})\in\mathscr{L}(V_0, V^\ast_0)$ is  continuous,
(i.e., $\mathfrak{F}_V$ is of class $C^2$),  and  $D(\nabla\mathfrak{F}_V)(\vec{u}): V_0\to V_0$ is completely continuous for linear operator each $\vec{u}\in V$.
\end{description}
\end{theorem}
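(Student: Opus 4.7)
The plan is to treat $\mathfrak{F}_V$ as a composition of a continuous linear ``derivative-taking'' map into a product of Lebesgue spaces followed by an integral Nemytskii-type functional, and then exploit the precise balance of the exponents $p_\gamma$ and $p_{\alpha\beta}$ built into \textsf{Hypothesis} $\mathfrak{F}_{p,N,m,n}$. First, the Sobolev embedding theorem (valid since $\Omega$ is a Sobolev domain) gives a bounded linear operator
\[
W^{m,p}(\Omega,\mathbb{R}^N)\to \prod_{i=1}^N\prod_{|\alpha|\le m}L^{p_\alpha}(\Omega),\quad
\vec u\mapsto (D^\alpha u^i),
\]
where $L^{p_\alpha}$ is replaced by $L^\infty$ for $|\alpha|<m-n/p$. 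Thus everything reduces to estimating the integrand in terms of the $\xi^i_\alpha$-variables with the exponent $p_\alpha$ attached to each.

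For Part A, I would write $F(x,\xi)-F(x,0)=\int_0^1 \sum F^i_\alpha(x,t\xi)\xi^i_\alpha\,dt$ and integrate (\ref{e:1.1}) once in the $\xi^i_\alpha$-direction to obtain growth bounds on the $F^i_\alpha$ that interpolate between $L^1$ and $L^{q_\alpha}$ as $|\alpha|$ varies. Combined with the above embedding this shows $\mathfrak{F}_V$ is well defined and bounded on bounded sets. G\^ateaux differentiability with derivative given by (\ref{e:4.1}) then follows from the Lebesgue dominated convergence theorem applied to the difference quotient, since the growth estimates furnish a common dominating function on any bounded set in $V$. Continuity of $\vec u\mapsto \mathfrak{F}'_V(\vec u)$ follows from the Krasnosel'skii continuity of the Nemytskii operators $\vec u\mapsto F^i_\alpha(x,\vec u,\ldots,D^m\vec u)$ from $W^{m,p}(\Omega,\mathbb{R}^N)$ into $L^{q_\alpha}(\Omega)$, since Hypothesis~(ii) forces exactly the growth needed for $F^i_\alpha$ to map products of $L^{p_\gamma}$ into $L^{q_\alpha}$ continuously; pairing with $D^\alpha\varphi^i\in L^{p_\alpha}$ gives continuity of $\mathfrak{F}'_V$ into $V_0^\ast$ and upgrades G\^ateaux to Fr\'echet.

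For Part B, the candidate second derivative (\ref{e:4.2}) is handled by an analogous difference-quotient argument using (\ref{e:1.1}): H\"older's inequality with the triple exponents $p_\alpha, p_\beta$ and $1/p_{\alpha\beta}$, together with the definitional identities for $p_{\alpha\beta}$ in \textsf{Hypothesis} $\mathfrak{F}_{p,N,m,n}$, shows that the integral in (\ref{e:4.2}) is absolutely convergent and that $D\mathfrak{F}'_V(\vec u)\in\mathscr L(V_0,V_0^\ast)$ is bounded on bounded sets, yielding (i); in the Hilbert case $p=2$ this gives the $C^{2-0}$ property via the mean-value theorem. Property (ii) again follows from Nemytskii continuity: for fixed $\vec v\in V_0$, one factor $D^\beta v^j$ is fixed and the other coefficients depend on $\vec u$ through $F^{ij}_{\alpha\beta}$, so continuous dependence on $\vec u_k\to\vec u_0$ in $L^{1/p_{\alpha\beta}}$ is enough.

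The delicate point—and the main obstacle—is the dichotomy between $p>2$ and $p=2$ for full continuity of $\vec u\mapsto D\mathfrak{F}'_V(\vec u)$ in the operator-norm topology. When $|\alpha|=|\beta|=m$ the exponents satisfy $p_{\alpha\beta}=1-1/p_\alpha-1/p_\beta$ with equality, so H\"older gives an $L^1$ estimate but the associated Nemytskii operator $\vec u\mapsto F^{ij}_{\alpha\beta}(x,\vec u,\ldots,D^m\vec u)$ from $W^{m,p}$ into $L^{1/p_{\alpha\beta}}$ is only continuous, not uniformly continuous on bounded sets, precisely when $p>2$; for $p=2$ this is a critical exponent situation and only G\^ateaux differentiability of $\mathfrak{F}'_V$ can be expected, consistent with Skrypnik's obstruction stated in the introduction. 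Property (iii) removes this problem entirely: when $F$ is independent of the $m$-th order variables, all exponents $p_{\alpha\beta}$ in the Hessian satisfy the strict inequality in the last line of \textsf{Hypothesis} $\mathfrak{F}_{p,N,m,n}$, so $D(\nabla\mathfrak{F}_V)(\vec u)$ factors through a compact embedding (Rellich) and becomes completely continuous and norm-continuous in $\vec u$. Throughout, no new idea is required beyond the careful bookkeeping of exponents, which is why the argument can be carried out by an essentially cosmetic modification of the proof of \cite[Theorem~3.1]{Lu7}.
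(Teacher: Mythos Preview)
Your proposal is correct and follows essentially the same route as the paper: Sobolev embeddings to place each $D^\alpha u^i$ in the appropriate $L^{p_\alpha}$, growth estimates on $F$ and $F^i_\alpha$ obtained by integrating (\ref{e:1.1}) (this is exactly the content of Proposition~\ref{prop:4.3}), Krasnosel'skii continuity of the resulting Nemytskii operators, and H\"older's inequality with the exponents $p_\alpha,p_\beta,1/p_{\alpha\beta}$ built into \textsf{Hypothesis}~$\mathfrak{F}_{p,N,m,n}$. The paper itself does not give the argument in detail but refers to \cite[Theorem~3.1]{Lu7}, precisely as you do, and your identification of the critical case $|\alpha|=|\beta|=m$, $p=2$ (where $p_{\alpha\beta}=0$ and the target of the Nemytskii map is $L^\infty$, destroying norm-continuity) as the source of the $C^1$/G\^ateaux dichotomy is the right diagnosis.
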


\begin{theorem}\label{th:4.2}
Under assumptions of Theorem~\ref{th:4.1},  suppose that (iii) in
\textsf{Hypothesis} $\mathfrak{F}_{p,N,m,n}$ is also satisfied. Then\\
{\bf C)}.  $\mathfrak{F}': W^{m,p}(\Omega, \mathbb{R}^N)\to (W^{m,p}(\Omega, \mathbb{R}^N))^\ast$ is of class  $(S)_+$.\\
{\bf D)}. Suppose  $p=2$.   For $u\in V$, let $D(\nabla\mathfrak{F}_V)(\vec{u})$,
$P(\vec{u})$ and $Q(\vec{u})$ be
  operators in $\mathscr{L}(V_0)$ defined by
  \begin{eqnarray*}
  (D(\nabla\mathfrak{F}_V)(\vec{u})[\vec{v}],\vec{\varphi})_{m,2}&=&\sum^N_{i,j=1}
  \sum_{\scriptsize\begin{array}{ll}
  &|\alpha|\le m,\\
  &|\beta|\le m
  \end{array}}\int_\Omega
  F^{ij}_{\alpha\beta}(x, \vec{u}(x),\cdots, D^m \vec{u}(x))D^\beta v^j\cdot D^\alpha\varphi^i dx,\\
   (P(\vec{u})\vec{v}, \vec{\varphi})_{m,2}&=&\sum^N_{i,j=1}\sum_{|\alpha|=|\beta|=m}\int_\Omega
  F^{ij}_{\alpha\beta}(x, \vec{u}(x),\cdots, D^m \vec{u}(x))D^\beta v^j\cdot D^\alpha\varphi^i dx\\
  &&+ \sum^N_{i=1}\sum_{|\alpha|\le m-1}\int_\Omega  D^\alpha v^i\cdot D^\alpha\varphi^i dx,\\
   (Q(\vec{u})\vec{v},\vec{\varphi})_{m,2}&=&\sum^N_{i,j=1}
   \sum_{\scriptsize\begin{array}{ll}
   &|\alpha|\le m, |\beta|\le m,\\
   &|\alpha|+|\beta|<2m
   \end{array}}\int_\Omega
  F^{ij}_{\alpha\beta}(x, \vec{u}(x),\cdots, D^m \vec{u}(x))D^\beta v^j\cdot D^\alpha\varphi^i dx\\
  &&-\sum^N_{i=1}\sum_{|\alpha|\le m-1}\int_\Omega  D^\alpha v^i\cdot D^\alpha\varphi^i dx,
    \end{eqnarray*}
  respectively. (If $V\subset W^{m,p}_0(\Omega, \mathbb{R}^N)$,
  the final terms in the definitions of $P$ and $Q$ may be deleted.)
    Then $D(\nabla\mathfrak{F}_V)=P+ Q$,  and
 \begin{description}
\item[(i)]  for any $\vec{v}\in V_0$, the map $V\ni \vec{u}\mapsto P(\vec{u})\vec{v}\in V_0$ is continuous;
\item[(ii)] for every given $R>0$ there exist positive constants $C(R, n, m, \Omega)$ such that
$$
(P(\vec{u})\vec{v},\vec{v})_{m,2}\ge C\|\vec{v}\|^2_{m,2},\quad\forall \vec{v}\in V_0,\;
\forall\vec{u}\in V\;\hbox{with}\;\|\vec{u}\|_{m,2}\le R;
$$
\item[(iii)] $V\ni \vec{u}\mapsto Q(\vec{u})\in\mathscr{L}(V_0)$ is continuous,
and  $Q(\vec{u})$ is completely continuous
for each $\vec{u}$;
\item[(iv)] for every given $R>0$ there exist positive constants $C_j(R, n, m, \Omega), j=1,2$ such that
$$
(D(\nabla\mathfrak{F}_V)(\vec{u})[\vec{v}],\vec{v})_{m,2}\ge C_1\|\vec{v}\|^2_{m,2}-C_2\|\vec{v}\|^2_{m-1,2},\quad\forall \vec{v}\in V_0,\;\forall\vec{u}\in V\;\hbox{with}\;\|\vec{u}\|_{m,2}\le R.
$$
\end{description}
\end{theorem}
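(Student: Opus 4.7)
The plan is to verify Part C) via the standard $(S)_+$ scheme for Skrypnik-type operators, and to obtain Part D) by direct calculation from the decomposition $P+Q$; both rely on the ellipticity (iii) of Hypothesis $\mathfrak{F}_{p,N,m,n}$ together with Sobolev embedding.

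For \textbf{C)}, assume $\vec{u}_k\rightharpoonup\vec{u}$ in $W^{m,p}(\Omega,\R^N)$ with $\limsup_{k\to\infty}\langle\mathfrak{F}'(\vec{u}_k),\vec{u}_k-\vec{u}\rangle\le 0$. I would first use (\ref{e:4.1}) to split
$$
\langle\mathfrak{F}'(\vec{u}_k)-\mathfrak{F}'(\vec{u}),\vec{u}_k-\vec{u}\rangle=I_k^{(m)}+I_k^{(\mathrm{low})},
$$
where $I_k^{(m)}$ gathers the integrals with $|\alpha|=m$ and $I_k^{(\mathrm{low})}$ the rest. Compactness of $W^{m,p}\hookrightarrow W^{m-1,p}$ together with the Sobolev embeddings into $L^{p_\gamma}$ for $|\gamma|<m$, and the growth bound (\ref{e:1.1}), force $I_k^{(\mathrm{low})}\to 0$, so $\limsup I_k^{(m)}\le 0$. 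Writing the increment $F^i_\alpha(x,\vec{u}_k,D^m\vec{u}_k)-F^i_\alpha(x,\vec{u}_k,D^m\vec{u})=\int_0^1\sum_{j,|\beta|=m}F^{ij}_{\alpha\beta}(\cdots)D^\beta(u_k^j-u^j)\,ds$ and applying (\ref{e:1.2}) pointwise yields a lower bound of $I_k^{(m)}$ by $c\int\mathfrak{g}_2(\cdot)\bigl(1+\sum|D^m\vec{u}|+|D^m\vec{u}_k|\bigr)^{p-2}\sum_{|\alpha|=m}|D^\alpha(\vec{u}_k-\vec{u})|^2\,dx$, modulo an error controlled by the strongly convergent lower-order variables. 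For $p\ge 2$ this integrand dominates $c_p\sum_{|\alpha|=m}|D^\alpha(\vec{u}_k-\vec{u})|^p$ pointwise (the elementary $p$-Laplacian-type inequality), so $\limsup I_k^{(m)}\le 0$ forces $D^m\vec{u}_k\to D^m\vec{u}$ in $L^p$, i.e.\ $\vec{u}_k\to\vec{u}$ in $W^{m,p}$.

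For \textbf{D)} (now $p=2$) the exponent table collapses: for $|\alpha|=|\beta|=m$ one has $p_{\alpha\beta}=0$, so (\ref{e:1.1}) bounds $|F^{ij}_{\alpha\beta}(x,\vec{u},\ldots,D^m\vec{u})|$ a.e.\ by $\mathfrak{g}_1(\sum|\xi_\circ^k|)$, which depends only on derivatives $D^\alpha u^k$ of order $|\alpha|<m-n/2$; the Sobolev embedding $W^{m,2}\hookrightarrow C^k(\overline{\Omega})$ for $k<m-n/2$ then bounds this in $L^\infty$ on bounded sets of $W^{m,2}$. The decomposition $D(\nabla\mathfrak{F}_V)=P+Q$ is immediate from the definitions. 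Claim \emph{(i)}: for fixed $\vec{v}\in V_0$, the continuity of $\vec{u}\mapsto P(\vec{u})\vec{v}$ follows from the continuity of $\vec{u}\mapsto F^{ij}_{\alpha\beta}(x,\vec{u},\ldots,D^m\vec{u})$ from $W^{m,2}$ into $L^\infty(\Omega)$ (by $C^2$-smoothness of $F$ in $\xi$ and dominated convergence) plus continuity of the added $W^{m-1,2}$ correction term. Claim \emph{(ii)}: on $\|\vec{u}\|_{m,2}\le R$ one has $\mathfrak{g}_2(\sum|\xi_\circ^k|)\ge c(R)>0$, so (\ref{e:1.2}) at $p=2$ gives $(P(\vec{u})\vec{v},\vec{v})_{m,2}\ge c(R)\sum_{|\alpha|=m}\|D^\alpha\vec{v}\|_{L^2}^2+\|\vec{v}\|_{m-1,2}^2\ge c'(R)\|\vec{v}\|_{m,2}^2$. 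Claim \emph{(iii)}: every summand in $Q(\vec{u})$ has $|\alpha|+|\beta|<2m$, so at least one of $D^\alpha\varphi^i$, $D^\beta v^j$ lives in a compactly embedded Sobolev range; combining (\ref{e:1.1}), H\"older, and Rellich--Kondrachov compactness of $W^{m,2}\hookrightarrow W^{m-1,2}$ shows $Q(\vec{u})$ is completely continuous on $V_0$ and continuous in $\vec{u}$ in $\mathscr{L}(V_0)$. Claim \emph{(iv)} then follows from \emph{(ii)}, the interpolation-type estimate $|(Q(\vec{u})\vec{v},\vec{v})_{m,2}|\le C(R)\|\vec{v}\|_{m,2}\|\vec{v}\|_{m-1,2}$ from the same off-diagonal structure, and Young's inequality.

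The main technical obstacle is the top-order step in Part C) for $p>2$: (\ref{e:1.2}) only provides a lower bound quadratic in $D^\alpha(\vec{u}_k-\vec{u})$ weighted by $(1+|D^m\vec{u}|)^{p-2}$, and extracting genuine $L^p$-convergence of $D^m\vec{u}_k$ requires the standard $p$-Laplacian pointwise inequality together with an equi-integrability argument driven by (\ref{e:1.1}). The $p=2$ case is essentially a G\aa rding inequality, and within Part D) the only real delicacy is the operator-norm continuity of $Q$, which works because each summand has $|\alpha|+|\beta|<2m$ so that all derivative factors live in strictly subcritical Sobolev ranges and a Nemytskii-type continuity argument for the coefficients $F^{ij}_{\alpha\beta}$ closes the loop.
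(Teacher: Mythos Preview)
Your plan is essentially the one the paper intends: it does not give a proof in the text, but explicitly says the arguments are ``not difficult, but cumbersome'' and that ``the main tools are the Sobolev embedding theorems and Krasnoselski theorem concerning the continuity of the Nemytski operator,'' referring to Skrypnik's work and to \cite{Lu7} for details. Your outline (split top/lower order, use Rellich--Kondrachov for the lower part, use the ellipticity (\ref{e:1.2}) plus the elementary inequality $(1+|A|+|B|)^{p-2}|A-B|^2\ge |A-B|^p$ for the top part in C); exploit $p_{\alpha\beta}=0$ for $|\alpha|=|\beta|=m$ when $p=2$, G\aa rding-type lower bound, off-diagonal compactness in D)) matches this exactly.

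One correction in D)(i): you assert continuity of $\vec{u}\mapsto F^{ij}_{\alpha\beta}(x,\vec{u},\ldots,D^m\vec{u})$ from $W^{m,2}$ into $L^\infty(\Omega)$. That is false in general, since $F^{ij}_{\alpha\beta}$ may depend on the top-order derivatives $D^m\vec{u}$ (only the \emph{bound} $\mathfrak{g}_1(\sum|\xi^k_\circ|)$ is independent of them). What you actually have is: $\vec{u}_k\to\vec{u}$ in $W^{m,2}$ gives a.e.\ convergence of the coefficients along a subsequence, together with the uniform $L^\infty$ bound; dominated convergence applied to $[F^{ij}_{\alpha\beta}(\vec{u}_k)-F^{ij}_{\alpha\beta}(\vec{u})]D^\beta v^j$ (fixed $\vec{v}$) then gives $L^2$-convergence, and testing against $\vec{\varphi}=P(\vec{u}_k)\vec{v}-P(\vec{u})\vec{v}$ yields the strong convergence you need. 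This is precisely the Krasnoselskii--Nemytskii mechanism the paper invokes, so the fix is immediate; just do not phrase it as $L^\infty$-continuity.
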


The proofs of these two theorems are not difficult, but cumbersome.
The main tools are the Sobolev embedding theorems and Krasnoselski theorem
concerning the continuity of the Nemytski operator (cf. \cite[Theorem I.2.1]{Kra} and
\cite[Proposition 1.1, page 3]{Skr3}).
When $N=1$ and $V=V_0=W^{m,p}_0(\Omega)$, most of them were stated (or roughly proved), see
 the auxiliary theorem 16  in \cite[Chap.3, Sec.3.4]{Skr1} or Lemma~3.2 on the page 112 of \cite{Skr3}.
 When $N=1$ and $V=V_0=W^{m,p}(\Omega)$, a full proof was given in  \cite{Lu7};
 it is obvious that this implies general case.
For the case $N>1$,  proofs of Theorems~\ref{th:4.1},~\ref{th:4.2} can be
completed by non-essentially changing that of \cite[Theorem~3.1]{Lu7}, i.e.,
 only using the following proposition (easily verified as in the proof of \cite[Prop.4.3]{Lu7})
  adding or estimating more terms in each step. We omit them.

\begin{proposition}\label{prop:4.3}
For the function $\mathfrak{g}_1$ in \textsf{Hypothesis}
$\mathfrak{F}_{p,N,m,n}$,
let continuous positive nondecreasing functions $\mathfrak{g}_k:[0,\infty)\to\mathbb{R}$, $k=3,4,5$, be given by
\begin{eqnarray*}
&&\mathfrak{g}_3(t):=1+ \mathfrak{g}_1(t)[t^2 M(m)N+ t(M(m)N+1)^2]+\mathfrak{g}_1(t)t(M(m)N+1)\\
&&\hspace{40mm}+\mathfrak{g}_1(t)(M(m)N+1)^2,\\
&&\mathfrak{g}_4(t):=\mathfrak{g}_1(t)t+\mathfrak{g}_1(t)\qquad\hbox{and}\qquad
\mathfrak{g}_5(t):=(M(m)N+1)\mathfrak{g}_1(t)(t+1).
\end{eqnarray*}
Then (ii) in \textsf{Hypothesis} $\mathfrak{F}_{p,N,m,n}$ implies that for all $(x,\xi)$,
\begin{eqnarray}\label{e:4.4}
|F(x,\xi)|&\le& |F(x,0)|+
(\sum^N_{k=1}|\xi^k_\circ|)\sum^N_{i=1}\sum_{|\alpha|<m-n/p}|F^i_{\alpha}(x, 0)|+
\sum^N_{i=1}\sum_{m-n/p\le|\alpha|\le m}|F^i_{\alpha}(x, 0)|^{q_\alpha}
\nonumber\\
&&+ \mathfrak{g}_3(\sum^N_{k=1}|\xi^k_\circ|)\Bigg(1+\sum^N_{k=1}\sum_{m-n/p\le|\alpha|\le m}|\xi^k_\alpha|^{p_\alpha}\Bigg),
\end{eqnarray}
\begin{eqnarray}\label{e:4.5}
|F^k_\alpha(x,\xi)|&\le&|F^k_\alpha(x,0)|
+ \mathfrak{g}_4(\sum^N_{i=1}|\xi^i_\circ|)\sum_{|\beta|<m-n/p}\Bigg(1+
\sum^N_{i=1}\sum_{m-n/p\le |\gamma|\le
m}|\xi^i_\gamma|^{p_\gamma }\Bigg)^{p_{\alpha\beta}}\nonumber\\
&+&\mathfrak{g}_4(\sum^N_{i=1}|\xi^i_\circ|)\sum_{m-n/p\le |\beta|\le m} \Bigg(1+
\sum^N_{i=1}\sum_{m-n/p\le |\gamma|\le
m}|\xi^i_\gamma|^{p_\gamma }\Bigg)^{p_{\alpha\beta}}\sum^N_{j=1}|\xi^j_\beta|;
\end{eqnarray}
for the latter we further have
\begin{eqnarray}\label{e:4.6}
|F^k_\alpha(x,\xi)|&\le&|F^k_\alpha(x,0)|
+ \mathfrak{g}_5(\sum^N_{i=1}|\xi^i_\circ|)\Bigg(1+
\sum^N_{i=1}\sum_{m-n/p\le |\gamma|\le m}|\xi^i_\gamma|^{p_\gamma }\Bigg),
\end{eqnarray}
if $|\alpha|<m-n/p$,  and
\begin{eqnarray}\label{e:4.7}
|F^k_\alpha(x,\xi)|&\le&|F^k_\alpha(x,0)|+\mathfrak{g}_5(\sum^N_{i=1}|\xi^i_\circ|)
+ \mathfrak{g}_5(\sum^N_{i=1}|\xi^i_\circ|)\Bigg(\sum^N_{i=1}\sum_{m-n/p\le |\gamma|\le
m}|\xi^i_\gamma|^{p_\gamma }\Bigg)^{1/q_{\alpha}}
\end{eqnarray}
if $m-n/p\le|\alpha|\le m$.
\end{proposition}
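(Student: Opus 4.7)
The plan is to derive all four inequalities from the pointwise bound \eqref{e:1.1} on $F^{ij}_{\alpha\beta}$ by successive applications of the fundamental theorem of calculus in the $\xi$ variables. The natural order is to establish \eqref{e:4.5} first, specialize it to \eqref{e:4.6} and \eqref{e:4.7}, and finally integrate once more to obtain \eqref{e:4.4}.

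For \eqref{e:4.5}, I would fix $(x,\xi)$ and write
\[
F^k_\alpha(x,\xi) = F^k_\alpha(x, 0) + \int_0^1 \sum_{j,\beta} F^{kj}_{\alpha\beta}(x, t\xi)\, \xi^j_\beta\, dt,
\]
split the inner sum at $|\beta|=m-n/p$, insert \eqref{e:1.1}, and use $\sum_{|\beta|<m-n/p}|\xi^j_\beta|\le\sum_k|\xi^k_\circ|$ to produce \eqref{e:4.5} after absorbing the extra factor into $\mathfrak{g}_4(t)=\mathfrak{g}_1(t)t+\mathfrak{g}_1(t)$. Monotonicity of $\mathfrak{g}_1$ lets me replace $\mathfrak{g}_1(t\sum|\xi^k_\circ|)$ by $\mathfrak{g}_1(\sum|\xi^k_\circ|)$ uniformly in $t\in[0,1]$.

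To pass from \eqref{e:4.5} to \eqref{e:4.6} and \eqref{e:4.7}, the key elementary estimate is $|\xi^j_\beta|\le 1+|\xi^j_\beta|^{p_\beta}\le 1+\sum_{k}\sum_{|\gamma|\ge m-n/p}|\xi^k_\gamma|^{p_\gamma}$ valid when $|\beta|\ge m-n/p$, which upgrades $(1+\sum|\xi^k_\gamma|^{p_\gamma})^{p_{\alpha\beta}}|\xi^j_\beta|$ to $(1+\sum|\xi^k_\gamma|^{p_\gamma})^{p_{\alpha\beta}+1/p_\beta}$. When $|\alpha|<m-n/p$, the tabulation of $p_{\alpha\beta}$ in Hypothesis $\mathfrak{F}_{p,N,m,n}$ gives $p_{\alpha\beta}+1/p_\beta\le 1$ in every sub-regime, so the exponent collapses to $1$ and \eqref{e:4.6} results via $\mathfrak{g}_5(t)=(M(m)N+1)\mathfrak{g}_1(t)(t+1)$. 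When $m-n/p\le|\alpha|\le m$, the identity $1/q_\alpha=1-1/p_\alpha$ together with $p_{\alpha\beta}\le 1-1/p_\alpha-1/p_\beta$ for high $\beta$ and $p_{\alpha\beta}=1-1/p_\alpha$ for low $\beta$ forces $p_{\alpha\beta}+1/p_\beta\le 1/q_\alpha$, which yields \eqref{e:4.7}.

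For \eqref{e:4.4} I would integrate once more and substitute \eqref{e:4.6} or \eqref{e:4.7} according to whether $|\alpha|<m-n/p$ or $|\alpha|\ge m-n/p$. The term $|F^i_\alpha(x,0)||\xi^i_\alpha|$ is bounded by $|F^i_\alpha(x,0)|\sum_k|\xi^k_\circ|$ in the low-$|\alpha|$ case, giving the second sum in \eqref{e:4.4}; Young's inequality $|F^i_\alpha(x,0)||\xi^i_\alpha|\le\frac{1}{q_\alpha}|F^i_\alpha(x,0)|^{q_\alpha}+\frac{1}{p_\alpha}|\xi^i_\alpha|^{p_\alpha}$ handles the remaining case, the first term contributing the third sum of \eqref{e:4.4} and the second being absorbed into $1+\sum|\xi^k_\gamma|^{p_\gamma}$. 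The $\mathfrak{g}_5$ pieces of \eqref{e:4.6}--\eqref{e:4.7} multiplied by $|\xi^i_\alpha|$ consolidate into $\mathfrak{g}_3(\sum|\xi^k_\circ|)(1+\sum|\xi^k_\gamma|^{p_\gamma})$ after one further application of $|\xi^i_\alpha|\le 1+|\xi^i_\alpha|^{p_\alpha}$, with the combinatorial factors $M(m)$, $M(m)N$ and $(M(m)N+1)^2$ entering exactly as prescribed. The main obstacle is not analytic but combinatorial: one must track the constants carefully and verify $p_{\alpha\beta}+1/p_\beta\le 1/q_\alpha$ in each of the four regimes of Hypothesis $\mathfrak{F}_{p,N,m,n}$, including the borderline case $|\gamma|=m-n/p$ where the modification $p_\gamma\in(2,\infty)$ is essential; since $\mathfrak{g}_3,\mathfrak{g}_4,\mathfrak{g}_5$ are tailor-made to absorb precisely these constants, matching them exactly is the only real checkpoint.
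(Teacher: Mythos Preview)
Your plan is correct and is precisely the intended argument: the paper does not spell out a proof here but defers to \cite[Prop.~4.3]{Lu7}, where the computation proceeds exactly as you describe---integrate the second-derivative bound \eqref{e:1.1} once along the segment $t\mapsto t\xi$ to reach \eqref{e:4.5}, reduce to \eqref{e:4.6}--\eqref{e:4.7} via the exponent inequalities $p_{\alpha\beta}+1/p_\beta\le 1$ (resp.\ $\le 1/q_\alpha$), and integrate once more with Young's inequality to obtain \eqref{e:4.4}. Your identification of the combinatorial bookkeeping as the only real checkpoint is accurate.
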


As a direct consequence of Theorems~\ref{th:4.1},~\ref{th:4.2} we have:

\begin{corollary}\label{cor:4.4}
Let  $N, m\ge 1$, $n\ge 2$ be integers and let $\Omega\subset\mathbb{R}^n$  a bounded Sobolev domain.
Under Hypothesis~$\mathfrak{F}_{2,N,m,n}$, the restriction of
 the functional  $\mathfrak{F}$ in (\ref{e:1.3}) with $V=W^{m,2}(\Omega, \mathbb{R}^N)$
 to any  closed subspace $H$ of $W^{m,2}(\Omega, \mathbb{R}^N)$
 satisfies  Hypothesis~\ref{hyp:1.1}  with $X=H$.
\end{corollary}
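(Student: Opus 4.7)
The plan is to treat Corollary~\ref{cor:4.4} as a direct packaging of Theorems~\ref{th:4.1} and~\ref{th:4.2} in the framework of Hypothesis~\ref{hyp:1.1}, specialized to $p=2$ and $V_0=H$ for a closed subspace $H\subset W^{m,2}(\Omega,\mathbb{R}^N)$. After translating so that the distinguished critical point of $\mathfrak{F}|_H$ under consideration becomes the origin $\theta\in H$ (ensuring the requirement $\mathcal{L}'(\theta)=0$ of Hypothesis~\ref{hyp:1.1}), I would take $V$ to be an open $H$-neighborhood of $\theta$ on which the translated integrand still satisfies Hypothesis~$\mathfrak{F}_{2,N,m,n}$, and set $\mathcal{L}:=\mathfrak{F}|_V$.

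Theorem~\ref{th:4.1}(A)--(B) then yields $\mathcal{L}\in C^1(V,\mathbb{R})$ and shows that the gradient $\nabla\mathcal{L}$ has a G\^ateaux derivative $B(\vec{u})\in\mathscr{L}_s(H)$ at every $\vec{u}\in V$. Theorem~\ref{th:4.2}(D) supplies the decomposition $B(\vec{u})=P(\vec{u})+Q(\vec{u})$, where $P(\vec{u})$ collects the top-order symmetric form (plus an auxiliary lower-order term) and $Q(\vec{u})$ collects the remaining contributions, together with symmetry and positive definiteness of $P$ (and symmetry of $Q=B-P$ by subtraction). The four clauses of Hypothesis~\ref{hyp:1.1} now match up one-to-one with items (i)--(iv) of Theorem~\ref{th:4.2}(D): clause (D1) is automatic since $X=H$; clause (D2) is item~(i) evaluated at $\vec{u}=\theta$; clause (D3) is item~(iii), with compactness of $Q(\vec{u})$ following from its complete continuity in the Hilbert setting; and clause (D4), rephrased via Lemma~\ref{lem:D*} in the equivalent uniform form (D4*), is item~(ii) applied with any $R$ exceeding the radius of a ball around $\theta$ contained in $V$.

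Since all the analytic heavy lifting---Sobolev embeddings, Krasnoselski's continuity theorem for Nemytski operators, and the pointwise growth estimates of Proposition~\ref{prop:4.3}---has already been absorbed into Theorems~\ref{th:4.1} and~\ref{th:4.2}, there is essentially no genuine obstacle at the level of Corollary~\ref{cor:4.4}. The only point worth flagging is the stability of the entire structure under restriction to the closed subspace $H\subset W^{m,2}(\Omega,\mathbb{R}^N)$: positivity, symmetry, norm-continuity, strong-continuity, and uniform definiteness are all preserved when $P$, $Q$, and $B$ are compressed by the orthogonal projection onto $H$, which is exactly what Theorem~\ref{th:4.2}(D) has already incorporated by stating every clause relative to $V_0$. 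The proof accordingly reduces to a transcription of items (i)--(iv) of Theorem~\ref{th:4.2}(D) into the language of (D1)--(D4).
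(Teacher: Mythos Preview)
Your proposal is correct and matches the paper's own treatment: the paper states Corollary~\ref{cor:4.4} as ``a direct consequence of Theorems~\ref{th:4.1},~\ref{th:4.2}'' without further proof, and your transcription of Theorem~\ref{th:4.2}(D)(i)--(iii) into (D2)--(D4) (the last via the (D4*) reformulation of Lemma~\ref{lem:D*}), together with the triviality of (D1) when $X=H$, is exactly the intended reading. One small remark: rather than speaking of a ``translated integrand'' satisfying Hypothesis~$\mathfrak{F}_{2,N,m,n}$, it is cleaner to invoke Theorems~\ref{th:4.1},~\ref{th:4.2} directly in their affine form $V=\vec{w}+V_0$ with $\vec{w}$ the critical point and $V_0=H$, since the hypothesis is on the original $F$ and the theorems already absorb the shift.
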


\begin{remark}\label{rem:4.4}
{\rm Theorems~\ref{th:4.1},\ref{th:4.2} have also more general versions in the setting of
\cite{PaSm, Sma, Pa2}.  Let $M$ be a $n$-dimensional compact $C^\infty$ manifold with a strictly positive smooth
measure $\mu$, and possibly with boundary, and $\pi:E\to M$ a real finite dimensional $C^\infty$ vector space
bundle over $M$ of rank $N$. A $m^{\rm th}$ order Lagrangian $L$ on $E$ is said to satisfy \textsf{Hypothesis} $\mathfrak{F}_{p,N,m,n}$
if it has a representation satisfying \textsf{Hypothesis} $\mathfrak{F}_{p,N,m,n}$ under any local
trivialization of $E$. (As usual, $W^{m,p}(M, E)$ is identified with  $W^{m,p}(M, \mathbb{R}^N)$
if $E$ is a trivial bundle $M\times\mathbb{R}^N\to M$.)
The integral functional of such a  Lagrangian  on $W^{m,p}(M, E)$ possess corresponding
conclusions as in Theorems~\ref{th:4.1},\ref{th:4.2}; the full detail will be given at other places. In particular,
if $M$ is the torus $\mathbb{T}^n=\mathbb{R}^n/\mathbb{Z}^n$,
 a $m^{\rm th}$ order Lagrangian  on $M\times\mathbb{R}^N$ fulfilling  \textsf{Hypothesis} $\mathfrak{F}_{p,N,m,n}$
is understood as a function $F:\mathbb{R}^n\times\prod^m_{k=0}\mathbb{R}^{N\times M_0(k)}\to\mathbb{R}$,
which is not only $1$-periodic in each variable $x_i$, $i=1,\cdots,n$, but also
satisfies \textsf{Hypothesis} $\mathfrak{F}_{p,N,m,n}$  with $\overline\Omega=[0,1]^n$.
Then Theorems~\ref{th:4.1},\ref{th:4.2} also hold if $W^{m,p}(\Omega, \mathbb{R}^N)$ is replaced
by $W^{m,p}(\mathbb{T}^n, \mathbb{R}^N)$.}
\end{remark}

\subsection{(PS)- and (C)-conditions}\label{sec:PS}

A $C^1$ functional $\varphi$ on a Banach $X$  is said to satisfy
{$(PS)_c$-condition} (resp. {\it $(C)_c$-condition}) at the level $c\in\mathbb{R}$
if every sequence $(x_j)\subset X$ such that
$\varphi(x_j)\to c\in\mathbb{R}$ and $\varphi'(x_j)\to 0$ (resp. $(1+\|x_j\|)\varphi'(x_j)\to 0$) in $X^\ast$
has a convergent subsequence in $X$. When $\varphi$ satisfies the $(PS)_c$-condition
(resp. $(C)_c$-condition) at every level $c\in\mathbb{R}$ we say that it satisfies the
$(PS)$-{\it condition} (resp. $(C)$-{\it condition}).
For a $C^1$ functional $\varphi$ on a Banach space $X$, which is bounded below,
it was further proved in \cite[Proposition~5.23]{MoMoPa}
 that $\varphi$ satisfies the $(PS)$-condition if and only if it does the $(C)$-condition.
If $\varphi\in C^1(X,\mathbb{R})$ is bounded below and satisfies the $(PS)$-condition, then it
 is coercive \cite{CaLiWi}. Conversely, Proposition~3 in \cite[Chap.4, \S5]{AuEk} claimed that
 any G\^ateaux differentiable, convex, lower semicontinuous coercive functional $\varphi$ on a
 reflexive Banach space X satisfies condition (weak C), that is,
for any sequence $(x_n)\subset X$ such that $\sup|\varphi(x_n)|<\infty$ and $(\varphi'(x_n))\subset X^\ast\setminus\{0\}$
and $\varphi'(x_n)\to 0$ in $X^\ast$, where $X^\ast$ is the dual space of $E$, there is some point $\bar{x}\in X$
such that $\varphi'(\bar{x})=0$ and $\lim\inf\varphi(x_n)\le\varphi(\bar{x})\le\lim\sup\varphi(x_n)$.
 For $\mathfrak{F}_V$ we have a similar result.

\begin{theorem}\label{th:4.5}
Let $\Omega\subset\R^n$, $N\in\mathbb{N}$, $p\in [2,\infty)$ and $V\subset W^{m,p}(\Omega,\mathbb{R}^N)$ be as in Theorem~\ref{th:4.1}.
  Suppose that  \textsf{Hypothesis} $\mathfrak{F}_{p,N,m,n}$ hold and that
   $\mathfrak{F}_V$ is coercive.  Then $\mathfrak{F}_V$ satisfies the {\rm (PS)}- and {\rm (C)}-conditions on $V$.
In particular, the {\rm controllable growth conditions} (see Appendix A)  imply that
$\mathfrak{F}$ is coercive on any closed affine subspace of  $W^{1,2}(\Omega,\mathbb{R}^N)$.
\end{theorem}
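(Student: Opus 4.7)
The plan is to derive both the (PS) and (C) conditions from coercivity combined with the $(S)_+$ property of $\mathfrak{F}'$ established in Theorem~\ref{th:4.2}(C), and to prove the coercivity assertion under controllable growth by a Taylor expansion argument that uses the ellipticity from Hypothesis~$\mathfrak{F}_{2,N,1,n}$(iii) to dominate the subcritical lower-order terms.

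For the first assertion, let $(\vec{u}_j)\subset V$ satisfy either $\mathfrak{F}_V(\vec{u}_j)\to c$ with $\mathfrak{F}'_V(\vec{u}_j)\to 0$ in $V_0^\ast$, or the same with $(1+\|\vec{u}_j\|_{m,p})\|\mathfrak{F}'_V(\vec{u}_j)\|_{V_0^\ast}\to 0$. Coercivity forces $\sup_j\|\vec{u}_j\|_{m,p}<\infty$ in both situations, and in the (C) case the boundedness further yields $\|\mathfrak{F}'_V(\vec{u}_j)\|_{V_0^\ast}\to 0$. Since $W^{m,p}(\Omega,\mathbb{R}^N)$ is reflexive ($p\ge 2$) and $V=\vec{w}+V_0$ is weakly closed as the affine translate of the weakly closed subspace $V_0$, I extract a subsequence with $\vec{u}_j\rightharpoonup \vec{u}_0\in V$. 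The key observation is that $\vec{u}_j-\vec{u}_0\in V_0$, so formula (\ref{e:4.1}) gives
\[
\langle\mathfrak{F}'(\vec{u}_j),\vec{u}_j-\vec{u}_0\rangle
= \langle\mathfrak{F}'_V(\vec{u}_j),\vec{u}_j-\vec{u}_0\rangle
\le \|\mathfrak{F}'_V(\vec{u}_j)\|_{V_0^\ast}\,\|\vec{u}_j-\vec{u}_0\|_{m,p}\to 0.
\]
Applying the $(S)_+$ property of $\mathfrak{F}'$ on $W^{m,p}(\Omega,\mathbb{R}^N)$ (Theorem~\ref{th:4.2}(C)) to the weakly convergent sequence $\vec{u}_j\rightharpoonup\vec{u}_0$ with $\limsup\langle\mathfrak{F}'(\vec{u}_j),\vec{u}_j-\vec{u}_0\rangle\le 0$ yields $\vec{u}_j\to\vec{u}_0$ strongly, proving both conditions simultaneously.

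For the coercivity statement under the controllable growth conditions (which by Proposition~\ref{prop:A.1} imply Hypothesis~$\mathfrak{F}_{2,N,1,n}$ with $\mathfrak{g}_1,\mathfrak{g}_2$ being positive constants), I would apply Taylor's theorem twice in the $\xi$-variable at $\xi=0$ and use the uniform ellipticity
\[
\sum_{i,j}\sum_{|\alpha|=|\beta|=1}F^{ij}_{\alpha\beta}(x,u,\xi)\eta^i_\alpha\eta^j_\beta\ge c_0\sum_{i,\alpha}(\eta^i_\alpha)^2
\]
to deduce the pointwise lower bound
\[
F(x,u,\xi)\ge F(x,u,0)+ \sum_{i,\alpha}F^i_\alpha(x,u,0)\xi^i_\alpha+ \tfrac{c_0}{2}|\xi|^2.
\]
The controllable growth bounds $|F(x,u,0)|\le C(1+|u|^{2^\ast})$ and $|F_\xi(x,u,0)|\le C(1+|u|^{2^\ast/2})$, combined with Young's inequality and the Sobolev embedding $W^{1,2}(\Omega)\hookrightarrow L^{2^\ast}(\Omega)$ at \emph{subcritical} exponents (which is precisely what distinguishes controllable growth from merely critical growth), absorb the lower-order contributions into the leading quadratic term. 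On an affine subspace $\vec{w}+V_0$, a Poincar\'e-type inequality applied to $\vec{u}-\vec{w}\in V_0$ converts the $L^2$-gradient bound into control of the full $W^{1,2}$-norm, yielding $\mathfrak{F}(\vec{u})\ge c_1\|\vec{u}\|_{1,2}^2-c_2$.

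The main obstacle lies in the second part: the exponent $2^\ast$ appearing in the controllable growth bound exceeds $2$, so a naive estimate produces a non-coercive term $-C\|\vec{u}\|_{1,2}^{2^\ast}$. The crucial point is to exploit the \emph{product} structure of the cross term $F_\xi(x,u,0)\cdot\xi$, where one factor grows like $|u|^{2^\ast/2}$ and the other like $|\xi|$; Young's inequality with a small parameter $\varepsilon$ splits this into $\varepsilon|\xi|^2+C_\varepsilon|u|^{2^\ast}$, but the $|u|^{2^\ast}$ piece must then be dominated by combining it with the lower bound of $F(x,u,0)$ arising from the controllable growth assumption itself (which provides a matching lower bound in the Ladyzhenskaya--Ural'tseva formulation). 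By contrast, the first part is essentially routine once the $(S)_+$ machinery from Theorem~\ref{th:4.2}(C) is in place, the only subtlety being the interplay between $\mathfrak{F}'$ on the full space and $\mathfrak{F}'_V$ on the affine slice.
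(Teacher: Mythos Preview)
Your argument for the (PS)- and (C)-conditions is correct and is essentially the paper's proof: coercivity gives boundedness, reflexivity gives a weakly convergent subsequence, and the $(S)_+$ property of $\mathfrak{F}'$ from Theorem~\ref{th:4.2}(C) upgrades this to strong convergence via the identity $\langle\mathfrak{F}'(\vec{u}_j),\vec{u}_j-\vec{u}_0\rangle=\langle\mathfrak{F}'_V(\vec{u}_j),\vec{u}_j-\vec{u}_0\rangle$. The only cosmetic difference is that the paper invokes \cite[Proposition~5.23]{MoMoPa} (coercive $\Rightarrow$ bounded below $\Rightarrow$ (PS) and (C) are equivalent) to reduce to (PS) alone, whereas you check both directly; either is fine.

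For the coercivity claim under the controllable growth conditions, however, you are working much too hard, and your stated bounds are not those of CGC. Look at the definition in Appendix~A: the very first line of CGC is the two-sided pointwise bound
\[
\nu\bigl(1+|z|^2+|p|^2\bigr)-\lambda\le F(x,z,p)\le \mu\bigl(1+|z|^2+|p|^2\bigr).
\]
Integrating the lower bound over $\Omega$ gives immediately
\[
\mathfrak{F}(\vec{u})\ge \nu\int_\Omega\bigl(|\vec{u}|^2+|D\vec{u}|^2\bigr)\,dx+(\nu-\lambda)|\Omega|\ge c\,\|\vec{u}\|_{1,2}^2-C,
\]
which is coercivity on all of $W^{1,2}(\Omega,\mathbb{R}^N)$, hence a fortiori on every closed affine subspace. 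This is why the paper disposes of the claim in one sentence (``the final claim is obvious''). Your growth bounds $|F(x,u,0)|\le C(1+|u|^{2^\ast})$ and $|F_\xi(x,u,0)|\le C(1+|u|^{2^\ast/2})$ are not the CGC bounds at all---they are what one would extract from the weaker Hypothesis~$\mathfrak{F}_{2,N,1,n}$ via Proposition~\ref{prop:4.3}, and under that hypothesis alone coercivity is \emph{not} asserted. The ``main obstacle'' you identify, and the Taylor/Young/subcritical-Sobolev machinery you propose to overcome it, simply do not arise once you use the correct (quadratic) lower bound that is built into CGC.
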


\begin{proof}
 Since the coercivity of $\mathfrak{F}$ implies that it is bounded below,
 by \cite[Proposition~5.23]{MoMoPa} it suffices to prove that
$\mathfrak{F}$ satisfies the (PS)-condition.
Let $(\vec{u}_j)\subset V$ satisfy
$\mathfrak{F}_V(\vec{u}_j)\to c\in\mathbb{R}$ and $\mathfrak{F}'_V(\vec{u}_j)\to 0$.
Since $\mathfrak{F}$ is coercive,  $(\vec{u}_j)$ is bounded.
Note that $\vec{u}_j=\vec{w}+\vec{v}_j$, $\vec{v}_j\in V_0$ and that $V_0$ is a Hilbert subspace.
 After passing to a subsequence we may assume $\vec{v}_j\rightharpoonup \vec{v}$ in $V_0$. Moreover, $\mathfrak{F}'_V(\vec{u}_j)\to 0$ implies
 $$
 \overline{\lim}_{j\to\infty}\langle\mathfrak{F}'(\vec{u}_j), \vec{u}_j-\vec{u}\rangle=
 \overline{\lim}_{j\to\infty}\langle\mathfrak{F}'(\vec{u}_j), \vec{v}_j-\vec{v}\rangle=
 \overline{\lim}_{j\to\infty}\langle\mathfrak{F}'_V(\vec{u}_j), \vec{v}_j-\vec{v}\rangle=0.
 $$
By {\bf C)} of Theorem~\ref{th:4.2},  $\mathfrak{F}'$ is of class $(S)_+$, and hence $\vec{u}_j\to \vec{u}$ in $V$.
The final claim is obvious.
\end{proof}

There exist some explicit conditions on $F$ under which  $\mathfrak{F}$ is coercive on $W^{m,p}_0(\Omega,\mathbb{R}^N)$,
for example,  there exist  some two positive constants $c_0, c_1$ such that
$F(x,\xi)\ge c_0\sum^N_{i=1}\sum_{|\alpha|=m}|\xi^i_\alpha|^p-c_1$ for all $(x,\xi)$.
  The coercivity requirement is too strong. In fact, the proof of Theorem~\ref{th:4.5}
shows that under  \textsf{Hypothesis} $\mathfrak{F}_{p,N,m,n}$ we only need to add some conditions
so that
$$
\sup_j|\mathfrak{F}(\vec{u}_j)|<\infty\quad\hbox{and}\quad \mathfrak{F}'(\vec{u}_j)\to 0\;\Longrightarrow\;
\sup_j\|\vec{u}_j\|_{m,p}<\infty.
$$
For example, the following two results are easily verified, see \cite{Lu7} for full proofs.

\begin{theorem}\label{th:4.6}
 Let $N\in\mathbb{N}$, $p\in [2,\infty)$ and $\Omega\subset\R^n$ be a Sobolev domain
 for $(p,m,n)$.  Then $\mathfrak{F}$ satisfies the {\rm (PS)}- and {\rm (C)}-conditions
 on $W^{m,p}_0(\Omega,\mathbb{R}^N)$  provided that \textsf{Hypothesis} $\mathfrak{F}_{p,N,m,n}$
 is satisfied and that there exist $\kappa\in\mathbb{R}$  and $\Upsilon\in L^1(\Omega)$ such that
  $$
  F(x,\xi)-\kappa\sum^N_{i=1}\sum_{|\alpha|\le m}F^i_\alpha(x,\xi)\xi^i_\alpha\ge c_0\sum^N_{i=1}\sum_{|\alpha|=m}|\xi^i_\alpha|^p-c_1\sum^N_{i=1}|\xi^i_{\bf 0}|^p
  -\Upsilon(x)  \quad\forall (x,\xi),
  $$
 where  $c_0>0$ and $c_0-c_1S_{m,p}>0$ for the best constant $S_{m,p}>0$ with
 $$
 \int_\Omega |{u}|^p dx\le S_{m,p}\int_\Omega|D^m{u}|^p dx=S_{m,p}\sum_{|\alpha|=m}
 \int_\Omega|D^\alpha{u}|^p\quad\forall {u}\in W^{m,p}_0(\Omega,\mathbb{R}^N).
 $$
 \end{theorem}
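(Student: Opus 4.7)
\medskip
\noindent\textbf{Proof proposal for Theorem~\ref{th:4.6}.}\quad
The plan is to reduce everything to a boundedness estimate on a (PS)- or (C)-sequence. By part~C) of Theorem~\ref{th:4.2} the map $\mathfrak{F}':W^{m,p}(\Omega,\mathbb{R}^N)\to [W^{m,p}(\Omega,\mathbb{R}^N)]^\ast$ is of class $(S)_+$, and its restriction to the closed subspace $W^{m,p}_0(\Omega,\mathbb{R}^N)$ clearly inherits this property. Consequently, if a (PS)- or (C)-sequence $(\vec{u}_j)\subset W^{m,p}_0(\Omega,\mathbb{R}^N)$ can be shown to be bounded, the reflexivity of $W^{m,p}_0$ lets us extract a weakly convergent subsequence $\vec{u}_j\rightharpoonup \vec{u}$, and the standard identity
\[
\langle\mathfrak{F}'(\vec{u}_j),\vec{u}_j-\vec{u}\rangle=\langle\mathfrak{F}'(\vec{u}_j),\vec{u}_j\rangle-\langle\mathfrak{F}'(\vec{u}_j),\vec{u}\rangle
\]
together with $\mathfrak{F}'(\vec{u}_j)\to 0$ (or the weaker assumption for (C)) yields $\overline{\lim}\langle\mathfrak{F}'(\vec{u}_j),\vec{u}_j-\vec{u}\rangle\le 0$; the $(S)_+$ property then gives strong convergence in $W^{m,p}_0$. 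Thus all the substance lies in establishing the boundedness of any such sequence.

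The central step is to integrate the pointwise inequality assumed in the statement over $\Omega$ with $\xi=(\vec{u}_j,\dots,D^m\vec{u}_j)$. By Theorem~\ref{th:4.1}~A), all integrands are finite and $\mathfrak{F}(\vec{u}_j)$ as well as $\langle\mathfrak{F}'(\vec{u}_j),\vec{u}_j\rangle=\sum_{i,\alpha}\int_\Omega F^i_\alpha(x,\vec{u}_j,\dots,D^m\vec{u}_j)D^\alpha u^i_j\,dx$ are well defined, so integration gives
\[
\mathfrak{F}(\vec{u}_j)-\kappa\langle\mathfrak{F}'(\vec{u}_j),\vec{u}_j\rangle
\;\ge\; c_0\sum_{i=1}^N\sum_{|\alpha|=m}\int_\Omega|D^\alpha u^i_j|^p\,dx
-c_1\sum_{i=1}^N\int_\Omega|u^i_j|^p\,dx-\|\Upsilon\|_{L^1(\Omega)}.
\]
Since $\vec{u}_j\in W^{m,p}_0(\Omega,\mathbb{R}^N)$, the assumed Poincar\'e-type inequality controls the zeroth-order terms:
\[
c_1\sum_{i=1}^N\int_\Omega|u^i_j|^p\,dx\;\le\; c_1 S_{m,p}\sum_{i=1}^N\sum_{|\alpha|=m}\int_\Omega|D^\alpha u^i_j|^p\,dx,
\]
and, by iterating Poincar\'e over the lower-order multi-indexes, the quantity $X_j:=\sum_{i,|\alpha|=m}\int_\Omega|D^\alpha u^i_j|^p\,dx$ is equivalent to $\|\vec{u}_j\|_{m,p}^p$ on $W^{m,p}_0(\Omega,\mathbb{R}^N)$, say $C_\Omega^{-1}\|\vec{u}_j\|_{m,p}^p\le X_j\le \|\vec{u}_j\|_{m,p}^p$. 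Combining these estimates,
\[
(c_0-c_1 S_{m,p})\,X_j\;\le\;|\mathfrak{F}(\vec{u}_j)|+|\kappa|\,\|\mathfrak{F}'(\vec{u}_j)\|_\ast\|\vec{u}_j\|_{m,p}+\|\Upsilon\|_{L^1(\Omega)},
\]
where $A:=c_0-c_1 S_{m,p}>0$ by hypothesis.

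From this inequality the two cases finish differently. In the (C)-case, $(1+\|\vec{u}_j\|_{m,p})\|\mathfrak{F}'(\vec{u}_j)\|_\ast\to 0$ makes the middle term on the right-hand side tend to $0$, and since $|\mathfrak{F}(\vec{u}_j)|$ is bounded we get $A X_j\le\text{const}$, i.e.\ $(\vec{u}_j)$ is bounded in $W^{m,p}_0$. In the (PS)-case, we only know $\epsilon_j:=\|\mathfrak{F}'(\vec{u}_j)\|_\ast\to 0$, but $\|\vec{u}_j\|_{m,p}\le C_\Omega^{1/p}X_j^{1/p}$, so
\[
AX_j\;\le\; C+|\kappa|C_\Omega^{1/p}\epsilon_jX_j^{1/p};
\]
since $p\ge 2>1$, if $X_j\to\infty$ along a subsequence then dividing by $X_j^{1/p}$ gives $AX_j^{1-1/p}\le CX_j^{-1/p}+|\kappa|C_\Omega^{1/p}\epsilon_j\to 0$, a contradiction. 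Hence $(\vec{u}_j)$ is bounded, and the argument sketched in the first paragraph concludes the proof. I do not expect any genuine obstacle: the only point that requires care is the use of $p\ge 2$ (rather than merely $p>1$) in the concluding estimate, but even $p>1$ would suffice there; the essential quantitative input is the strict inequality $c_0-c_1S_{m,p}>0$, which is precisely what compensates the non-coercive zeroth-order contribution so that the $(S)_+$ machinery of Theorem~\ref{th:4.2} can take over.
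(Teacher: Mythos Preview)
Your argument is correct and follows exactly the scheme the paper itself indicates: the discussion preceding Theorem~\ref{th:4.6} explicitly says that, thanks to the $(S)_+$ property from Theorem~\ref{th:4.2}~C), one only needs to show that any (PS)- or (C)-sequence is bounded, and your integration of the assumed pointwise inequality combined with the Poincar\'e constant $S_{m,p}$ is precisely the intended estimate (the paper delegates the details to \cite{Lu7}). The separate treatment of the (PS)- and (C)-cases via the growth comparison $X_j$ versus $X_j^{1/p}$ is clean and needs only $p>1$, as you note.
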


 \begin{theorem}\label{th:4.7}
 Let $\Omega\subset\R^n$ be  a  Sobolev domain for $(2,m,n)$.
 Suppose  that  \textsf{Hypothesis} $\mathfrak{F}_{2,N,m,n}$ is satisfied with the constant function $\mathfrak{g}_2$, and that
   $$
  F(x,\hat\xi, {\bf 0})\le \varphi(x)+ C\sum^N_{i=1}\sum_{|\alpha|\le m-1}|\xi^i_\alpha|^{r},
  \quad\forall (x, \hat\xi)\in\overline\Omega\times\prod^{m-1}_{k=0}\mathbb{R}^{N\times M_0(k)},
  $$
 where  $\varphi\in L^1(\Omega)$ and $1\le r<2$. Then $\mathfrak{F}$ satisfies the {\rm (PS)}- and {\rm (C)}-conditions  on $W^{m,2}_0(\Omega,\mathbb{R}^N)$.
\end{theorem}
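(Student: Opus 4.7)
The plan is to show every (PS)- or (C)-sequence $(\vec{u}_j)\subset W^{m,2}_0(\Omega,\mathbb{R}^N)$ stays bounded, and then to invoke the $(S)_+$ property of $\mathfrak{F}'$ from part C) of Theorem~\ref{th:4.2} to extract a strongly convergent subsequence. The cornerstone of the boundedness argument is a pointwise inequality obtained by applying the fundamental theorem of calculus to the one-parameter family
\[ g(t):=F\bigl(x,\hat{\vec{u}}(x),\,t\,D^m\vec{u}(x)\bigr),\qquad t\in[0,1], \]
where $\hat{\vec{u}}$ denotes the tuple of derivatives of $\vec{u}$ of order less than $m$. Combining the identity $g'(1)-(g(1)-g(0))=\int_0^1 t\,g''(t)\,dt$ with the ellipticity (\ref{e:1.2}), which is \emph{uniform} because $p=2$ and $\mathfrak{g}_2$ is constant (so $(1+\sum|\xi^k_\gamma|)^{p-2}\equiv 1$), I obtain pointwise
\[ \sum_{i,|\alpha|=m}F^i_\alpha(x,\vec{u},\ldots,D^m\vec{u})\,D^\alpha u^i\;-\;F(x,\vec{u},\ldots,D^m\vec{u})\;+\;F(x,\hat{\vec{u}},0)\;\ge\;\tfrac{c_0}{2}\sum_{i,|\alpha|=m}|D^\alpha u^i|^2, \]
with $c_0>0$ independent of $\vec{u}$. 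Integrating over $\Omega$ and using the Poincar\'e inequality on $W^{m,2}_0(\Omega,\mathbb{R}^N)$ gives
\[ c\,\|\vec{u}\|_{m,2}^2\;\le\;\int_\Omega\sum_{i,|\alpha|=m}F^i_\alpha\,D^\alpha u^i\,dx\;-\;\mathfrak{F}(\vec{u})\;+\;\int_\Omega F(x,\hat{\vec{u}},0)\,dx. \]

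I next bound each term on the right for $\vec{u}=\vec{u}_j$. Rewrite the top-order integral as $\langle\mathfrak{F}'(\vec{u}_j),\vec{u}_j\rangle-\int_\Omega\sum_{|\alpha|<m}F^i_\alpha\,D^\alpha u^i_j\,dx$; by (PS) the pairing is $o(\|\vec{u}_j\|_{m,2})$ and by (C) it is $o(1)$. The value $\mathfrak{F}(\vec{u}_j)$ is uniformly bounded by assumption. The growth hypothesis $F(x,\hat\xi,0)\le\varphi(x)+C\sum|\xi^i_\alpha|^r$ combined with the continuous Sobolev embedding $W^{m,2}_0(\Omega,\mathbb{R}^N)\hookrightarrow W^{m-1,r}(\Omega,\mathbb{R}^N)$ (valid for every $r<2$, in fact for all $r\le 2n/(n-2)$) yields
\[ \int_\Omega F(x,\hat{\vec{u}}_j,0)\,dx\;\le\;\|\varphi\|_{L^1}+C\sum_{i,|\alpha|\le m-1}\|D^\alpha u^i_j\|_{L^r}^r\;\le\;C_1+C_2\,\|\vec{u}_j\|_{m,2}^r, \]
whose exponent $r<2$ is strictly subquadratic. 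Thus, modulo the single remaining term, I already get an inequality of the form $\|\vec{u}_j\|_{m,2}^2\le C+o(\|\vec{u}_j\|_{m,2})+C'\|\vec{u}_j\|_{m,2}^r+$(lower-order integral).

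The main obstacle, and the most delicate part of the plan, is to show
\[ \Bigl|\int_\Omega\sum_{|\alpha|<m}F^i_\alpha(x,\vec{u}_j,\ldots,D^m\vec{u}_j)\,D^\alpha u^i_j\,dx\Bigr|\;\le\;C\bigl(1+\|\vec{u}_j\|_{m,2}^s\bigr)\quad\text{for some }s<2. \]
For $m-n/2\le|\alpha|<m$ I apply H\"older's inequality in the conjugate Sobolev pair $(q_\alpha,p_\alpha)$, bound $\|F^i_\alpha\|_{L^{q_\alpha}}$ via the growth estimate (\ref{e:4.7}) from Proposition~\ref{prop:4.3}, and control $\|D^\alpha u^i_j\|_{L^{p_\alpha}}$ via the Sobolev embedding $W^{m,2}_0\hookrightarrow W^{|\alpha|,p_\alpha}$; for $|\alpha|<m-n/2$ I use (\ref{e:4.6}) together with the $L^\infty$ bound on $D^\alpha u^i_j$. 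The decisive technical ingredient is the \emph{strict} inequality $p_{\alpha\beta}<1-\tfrac{1}{p_\alpha}-\tfrac{1}{p_\beta}$ for $|\alpha|+|\beta|<2m$ built into Hypothesis~$\mathfrak{F}_{2,N,m,n}$; combined with a small interpolation between the $L^{p_\alpha}$ spaces to spend the spare integrability, it keeps the resulting exponent on $\|\vec{u}_j\|_{m,2}$ strictly below $2$. Inserting this into the previous inequality gives $\|\vec{u}_j\|_{m,2}^2\le C+o(\|\vec{u}_j\|_{m,2})+C'\|\vec{u}_j\|_{m,2}^{\max(r,s)}$ with $\max(r,s)<2$, whence $(\vec{u}_j)$ is bounded; part C) of Theorem~\ref{th:4.2} then supplies the convergent subsequence. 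The book-keeping in this final estimate is closely patterned on the arguments already used to prove Theorem~\ref{th:4.1}; the borderline indices $|\alpha|\approx m-n/2$, where Sobolev exponents coalesce, require the most care.
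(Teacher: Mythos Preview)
The paper itself gives no proof of Theorem~\ref{th:4.7}; it only says the result is ``easily verified, see \cite{Lu7} for full proofs'' and, in the paragraph preceding Theorems~\ref{th:4.6} and \ref{th:4.7}, reduces the matter to showing that every (PS)-sequence is bounded (after which part C) of Theorem~\ref{th:4.2} finishes the job exactly as in the proof of Theorem~\ref{th:4.5}). So your overall strategy is the intended one, and your pointwise identity $g'(1)-(g(1)-g(0))=\int_0^1 t\,g''(t)\,dt$ applied to $g(t)=F(x,\hat\xi,t\xi^m)$ is correct and is the natural way to exploit the \emph{uniform} ellipticity given by constant $\mathfrak{g}_2$ and $p=2$. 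Everything through Step~4 is fine.

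The difficulty is Step~5. Your claim that
\[
\Bigl|\int_\Omega\sum_{|\alpha|<m}F^i_\alpha(x,\vec{u}_j,\ldots,D^m\vec{u}_j)\,D^\alpha u^i_j\,dx\Bigr|\;\le\;C\bigl(1+\|\vec{u}_j\|_{m,2}^{\,s}\bigr)\quad\text{for some }s<2
\]
does not follow from the growth conditions by the H\"older/Sobolev/interpolation scheme you sketch. Consider the simplest case $m=1$, $n=3$, $N=1$, so $p_0=6$, and Hypothesis~$\mathfrak{F}_{2,1,1,3}$ permits $|F_{zz}(x,z,p)|\le C(1+|z|^6+|p|^2)^{p_{00}}$ with any $0<p_{00}<\tfrac{2}{3}$. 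Writing $T_j=\|\nabla u_j\|_{L^2}$, the contribution $\int(1+\Sigma)^{p_{00}}|u_j|^2\,dx$ (which appears in $\int|F_z(x,u_j,\nabla u_j)u_j|\,dx$) is of order $T_j^{\,6p_{00}+2}$ along rays $u_j=\lambda v$, $\lambda\to\infty$, for generic $v$: the critical Sobolev bound $\|u_j\|_{L^6}\le CT_j$ is sharp, so $\int|u_j|^6\sim T_j^6$, and no rearrangement of H\"older exponents brings the total power below $2$. The strict inequality $p_{\alpha\beta}<1-\tfrac{1}{p_\alpha}-\tfrac{1}{p_\beta}$ guarantees that such integrals are \emph{finite} (this is exactly what is used in the proof of Theorem~\ref{th:4.1}), but it does not make them subquadratic in $T_j$. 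Consequently your final inequality degenerates to $cT_j^2\le CT_j^{\,2+\delta}+\ldots$ and yields nothing.

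Since the paper outsources the argument to \cite{Lu7}, a line-by-line comparison is not possible here; but your Step~5 as written is a genuine gap. A repair---if one stays with this identity---would have to obtain a \emph{one-sided} bound $B_j\ge -\epsilon T_j^2 - C_\epsilon$ rather than control of $|B_j|$, or else avoid the split $\langle\mathfrak{F}'(\vec{u}_j),\vec{u}_j\rangle=\text{(top order)}+B_j$ altogether.
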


When $m=1$ and $F$ does not depend on $x, \hat\xi$,
more characterizations of coercivity for $\mathfrak{F}$ can be found in
\cite{ChKr} and references therein.

\subsection{Morse inequalities and corollaries}\label{sec:Morse}

Firstly, we show that Corollary~\ref{cor:4.4} and Theorems~\ref{th:S.6.0},~\ref{th:S.6.1}
(taking $\vec{\lambda}={\bf 0}$) imply:

\begin{theorem}\label{th:Morse.1}
Let $\Omega\subset\R^n$ be  a  Sobolev domain for $(2,m,n)$, $N\in\mathbb{N}$, and $H$  a closed subspace of $W^{m,2}(\Omega,\mathbb{R}^N)$, $\mathcal{H}=\vec{\omega}+ H$ for some $\vec{\omega}\in W^{m,2}(\Omega,\mathbb{R}^N)$. Let $G$ be a compact Lie group which acts on $\mathcal{H}$ in a $C^3$-smooth
isometric way.  Suppose that \textsf{Hypothesis} $\mathfrak{F}_{2,N,m,n}$ is satisfied and that
the restriction functional $\mathfrak{F}_{\mathcal{H}}:=\mathfrak{F}|_{\mathcal{H}}$  is $G$-invariant, where $\mathfrak{F}$ is given by (\ref{e:1.3}).
Let $\mathcal{O}$ be an isolated critical orbit of $\mathfrak{F}_{\mathcal{H}}$ and also a compact $C^3$ submanifold. Its normal bundle $N\mathcal{O}$ has fiber at $\vec{u}\in\mathcal{O}$,
$N\mathcal{O}_{\vec{u}}=\{\vec{v}\in H\,|\, (\vec{v}, \vec{w})_{m,2}=0\;\forall\vec{w}\in T_{\vec{u}}\mathcal{O}\subset H\}$.
Let $N^+\mathcal{O}_{\vec{u}},  N^0\mathcal{O}_{\vec{u}}$ and $N^-\mathcal{O}_{\vec{u}}$ be the positive definite, null and negative definite spaces of the bounded linear self-adjoint operator associated with the bilinear form
   $$
  N\mathcal{O}_{\vec{u}}\times N\mathcal{O}_{\vec{u}}\ni (\vec{v}, \vec{w})\mapsto \sum^N_{i=1}
  \sum_{\scriptsize\begin{array}{ll}
  &|\alpha|\le m,\\
  &|\beta|\le m
  \end{array}}\int_\Omega
  F^{ij}_{\alpha\beta}(x, \vec{u}(x),\cdots, D^m \vec{u}(x))D^\beta v^j\cdot D^\alpha w^i dx.
  $$
Then $\dim N^0\mathcal{O}_{\vec{u}}$ and $\dim N^-\mathcal{O}_{\vec{u}}$ are finite and independent of choice
of $\vec{u}\in\mathcal{O}$. They are called {\rm nullity} and {\rm Morse index} of $\mathcal{O}$, denoted by $\nu_{\mathcal{O}}$
and $\mu_{\mathcal{O}}$, respectively. Moreover, the following holds.
\begin{description}
\item[(i)] If $\nu_{\mathcal{O}}=0$ (i.e., the critical orbit $\mathcal{O}$ is nondegenerate),
there exist  $\epsilon>0$
and a  $G$-equivariant homeomorphism  onto an open neighborhood of
the zero section preserving fibers
$\Phi:  N^+\mathcal{O}(\epsilon)\oplus N^-\mathcal{ O}(\epsilon)\to N\mathcal{ O}$
such that for any $\vec{u}\in\mathcal{O}$ and  $(\vec{v}_+, \vec{v}_-)\in
N^+\mathcal{ O}(\epsilon)_{\vec{u}}\times N^-\mathcal{O}(\epsilon)_{\vec{u}}$,
\begin{eqnarray}\label{e:4.9}
\mathfrak{F}_{\mathcal{H}}\circ E\circ\Phi(\vec{u},  \vec{v}_++
\vec{v}_-)=\|\vec{v}_+\|^2_{m,2}-\|\vec{v}_-\|^2_{m,2}+ \mathfrak{F}|_{\mathcal{O}},
\end{eqnarray}
where $E:N\mathcal{ O}\to \mathcal{H}$ is given by $E(\vec{u},\vec{v})=\vec{u}+\vec{v}$.
\item[(ii)] If $\nu_{\mathcal{O}}\ne 0$ there exist $\epsilon>0$,
a  $G$-equivariant topological  bundle
morphism that preserves the zero section,
 $\mathfrak{h}:N^0\mathcal{O}(3\epsilon)\to N^+\mathcal{O}\oplus N^-\mathcal{O}\subset \mathcal{H}\times H,\;(\vec{u},\vec{v})\mapsto \mathfrak{h}_{\vec{u}}(\vec{v})$,
and a  $G$-equivariant homeomorphism onto an open neighborhood of
the zero section preserving fibers,
$\Phi: N^0\mathcal{ O}(\epsilon)\oplus N^+\mathcal{
O}(\epsilon)\oplus N^-\mathcal{ O}(\epsilon)\to N\mathcal{O}$,
such that  the following properties hold:\\
\noindent{\bf (ii.1)} for any $\vec{u}\in\mathcal{O}$ and  $(\vec{v}_0, \vec{v}_+, \vec{v}_-)\in N^0\mathcal{O}(\epsilon)_{\vec{u}}\times N^+\mathcal{ O}(\epsilon)_{\vec{u}}\times N^-\mathcal{
O}(\epsilon)_{\vec{u}}$,
\begin{eqnarray*}
\mathfrak{F}_{\mathcal{H}}\circ E\circ\Phi(\vec{u}, \vec{v}_0, \vec{v}_++
\vec{v}_-)=\|\vec{v}_+\|^2_{m,2}-\|\vec{v}_-\|^2_{m,2}+ \mathfrak{F}(\vec{u}+\vec{v}_0+
\mathfrak{h}_{\vec{u}}(\vec{v}_0));
\end{eqnarray*}
\noindent{\bf (ii.2)} for each $\vec{u}\in\mathcal{O}$ the function
$N^0\mathcal{O}(\epsilon)_{\vec{u}}\to\R,\;\vec{v}\mapsto \mathfrak{F}_{\vec{u}}^\circ(\vec{v}):=
\mathfrak{F}(\vec{u}+\vec{v}+ \mathfrak{h}_{\vec{u}}(\vec{v}))$
is $G_{\vec{u}}$-invariant, of class $C^{1}$,  and satisfies:
$D\mathfrak{F}_{\vec{u}}^\circ(\vec{v})[\vec{w}]=
D\mathfrak{F}(\vec{u}+\vec{v}+ \mathfrak{h}_{\vec{u}}(\vec{v}))[\vec{w}],\;\forall \vec{w}\in N^0\mathcal{O}_{\vec{u}}$.
\end{description}
\end{theorem}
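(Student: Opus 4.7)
The plan is to realize Theorem~\ref{th:Morse.1} as a direct specialization of the parameterized splitting theorems around critical orbits (Theorems~\ref{th:S.6.0} and \ref{th:S.6.1}) to the concrete functional $\mathfrak{F}_{\mathcal{H}}$, taking the parameter $\vec{\lambda}=\mathbf{0}$ and no auxiliary perturbations $\mathcal{G}_j$. First, I would observe that $\mathcal{H}=\vec{\omega}+H$ is an affine Hilbert space, hence a $C^\infty$ Hilbert–Riemannian manifold modeled on $H$ with constant metric $(\cdot,\cdot)_{m,2}$; the exponential map is simply translation $\exp_{\vec{u}}(\vec{v})=\vec{u}+\vec{v}$, and the isometric $C^3$ action of $G$ together with $G$-invariance of $\mathfrak{F}_{\mathcal{H}}$ puts us in the framework of Hypothesis~\ref{hyp:S.6.2}(i). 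By Theorem~\ref{th:4.1}, $\mathfrak{F}_{\mathcal{H}}$ is of class $C^1$ with G\^ateaux differentiable gradient; by Theorem~\ref{th:4.2}\,D) and Corollary~\ref{cor:4.4}, at each $\vec{u}\in\mathcal{H}$ the operator $B(\vec{u}):=D(\nabla\mathfrak{F}_{\mathcal{H}})(\vec{u})\in\mathscr{L}_s(H)$ admits a decomposition $B(\vec{u})=P(\vec{u})+Q(\vec{u})$ satisfying the properties (D1)--(D4) in Hypothesis~\ref{hyp:1.1} (with $X=H$) after translating each critical point of $\mathfrak{F}_{\mathcal{H}}$ to the origin.

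Next I would analyze the spectrum of $B(\vec{u})$ on the critical orbit $\mathcal{O}$. Since $B(\vec{u})$ is a bounded self-adjoint operator of form ``positive definite $+$ compact'', Proposition~B.2 of \cite{Lu2} (invoked below~(\ref{e:S.1.1})) shows that its non-positive spectrum consists of finitely many eigenvalues of finite multiplicity, so the spaces $N^+\mathcal{O}_{\vec{u}}$, $N^0\mathcal{O}_{\vec{u}}$, $N^-\mathcal{O}_{\vec{u}}$ cut out by the spectral projections $\chi_\ast(B(\vec{u}))$ for $\ast\in\{+,0,-\}$ are well defined in the normal fibre, with $\dim N^0\mathcal{O}_{\vec{u}}, \dim N^-\mathcal{O}_{\vec{u}}<\infty$. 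The $G$-invariance of $\mathfrak{F}_{\mathcal{H}}$ yields $\nabla\mathfrak{F}_{\mathcal{H}}(g\cdot\vec{u})=g\cdot\nabla\mathfrak{F}_{\mathcal{H}}(\vec{u})$ and, differentiating in the G\^ateaux sense, the conjugation identity $B(g\cdot\vec{u})\circ g=g\circ B(\vec{u})$ analogous to (\ref{e:S.6.3}). Since $G$ acts by orthogonal isometries of $H$, this intertwining relation shows that $g$ carries the spectral subspaces of $B(\vec{u})$ onto those of $B(g\cdot\vec{u})$, so the integers $\nu_{\mathcal{O}}:=\dim N^0\mathcal{O}_{\vec{u}}$ and $\mu_{\mathcal{O}}:=\dim N^-\mathcal{O}_{\vec{u}}$ are constant along the orbit; transitivity of $G$ on $\mathcal{O}$ is what makes this work.

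Then I would verify the remaining conditions of Hypothesis~\ref{hyp:S.6.2}. The isolation of $\mathcal{O}$ is part of the hypotheses of Theorem~\ref{th:Morse.1}; the G\^ateaux differentiability of $\nabla\mathfrak{F}_{\mathcal{H}}$ was noted above; the constancy of $\dim\ker B(\vec{u})=\nu_{\mathcal{O}}$ on $\mathcal{O}$ was just established; and the gap condition $\sigma(B(\vec{u}))\cap([-2a_0,2a_0]\setminus\{0\})=\emptyset$ on $\mathcal{O}$ for some $a_0>0$ follows at a base point from Proposition~B.2 of \cite{Lu2} and is propagated along $\mathcal{O}$ by the same $G$-equivariance $B(g\cdot\vec{u})=g\circ B(\vec{u})\circ g^{-1}$, which preserves the spectrum. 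Continuity of $\vec{u}\mapsto B(\vec{u})$ along $\mathcal{O}$ in the operator norm similarly reduces, via the $G$-equivariance, to continuity of the $G$-action on $\mathscr{L}_s(H)$ at the identity, which follows from the orthogonality and $C^3$-smoothness of the action. With Hypothesis~\ref{hyp:S.6.2} fully verified, I would invoke Theorem~\ref{th:S.6.0} in the nondegenerate case $\nu_{\mathcal{O}}=0$ (taking no perturbations, i.e.\ $n=0$ and $\vec{\lambda}=\mathbf{0}$) to obtain (\ref{e:4.9}), and Theorem~\ref{th:S.6.1} in the degenerate case to obtain (ii.1) and (ii.2); the bundle morphism $\mathfrak{h}$ and the fibre-preserving homeomorphism $\Phi$ of Theorem~\ref{th:Morse.1}(ii) are the $\vec{\lambda}=\mathbf{0}$ restrictions of those produced by Theorem~\ref{th:S.6.1}, and the formula $D\mathfrak{F}_{\vec{u}}^\circ(\vec{v})=D\mathfrak{F}(\vec{u}+\vec{v}+\mathfrak{h}_{\vec{u}}(\vec{v}))|_{N^0\mathcal{O}_{\vec{u}}}$ is precisely item (III) of Theorem~\ref{th:S.6.1}.

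The main potential obstacle is checking the hypothesis of Theorem~\ref{th:S.6.0}/\ref{th:S.6.1} that at a base point $\vec{u}_0\in\mathcal{O}$ the pair $(\mathfrak{F}_{\mathcal{H}}\circ\exp_{\vec{u}_0},B_{T_{\vec{u}_0}\mathcal{H}}(\theta,\varepsilon))$ satisfies Hypothesis~\ref{hyp:1.1} with $X=H$ \emph{restricted to the normal disc} $N\mathcal{O}(\varepsilon)_{\vec{u}_0}$, rather than on all of $H$; this is exactly where Lemma~\ref{lem:S.2.4} (hereditariness on closed subspaces) is essential, combined with Corollary~\ref{cor:4.4} applied to the closed subspace $N\mathcal{O}_{\vec{u}_0}\subset H$. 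Once that identification is made, everything else is bookkeeping.
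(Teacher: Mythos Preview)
Your proposal is correct and follows essentially the same route as the paper: identify the exponential map on the affine space $\mathcal{H}$ with translation $E(\vec{u},\vec{v})=\vec{u}+\vec{v}$, use Corollary~\ref{cor:4.4} (together with Lemma~\ref{lem:S.2.4}) to verify Hypothesis~\ref{hyp:1.1} with $X=H$ for $\mathfrak{F}_{\mathcal{H}}\circ\exp_{\vec{u}_0}$ at a base point, and then invoke Theorems~\ref{th:S.6.0} and~\ref{th:S.6.1} with $\vec{\lambda}=\mathbf{0}$. The paper's proof is simply a terse version of yours; your additional verification of the constancy of $\nu_{\mathcal{O}},\mu_{\mathcal{O}}$ via the conjugation identity $B(g\cdot\vec{u})=g\,B(\vec{u})\,g^{-1}$ is implicit in the paper's discussion around (\ref{e:S.6.3}) and is exactly what drives Lemma~\ref{lem:S.6.2}.
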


\begin{proof}
Since $T\mathcal{H}=\mathcal{H}\times H$, the exponential map
$\exp:T\mathcal{H}\to \mathcal{H}$ (with respect to the Riemannian-Hilbert
structure on $\mathcal{H}$ induced by the inner product $(\cdot,\cdot)_{m,2}$) is
given by $\exp(\vec{u},\vec{v})=\vec{u}+\vec{v}$ for $(\vec{u},\vec{v})\in \mathcal{H}\times H$.
Let $\mathfrak{F}_{N\mathcal{O}_{\vec{u}}}$ be the restriction of
$(\mathfrak{F}_{\mathcal{H}})\circ\exp$ to the fiber of $N\mathcal{O}$ at $\vec{u}\in\mathcal{O}$.
Then $\mathfrak{F}_{N\mathcal{O}_{\vec{u}}}(\vec{v})=\mathfrak{F}(\vec{u}+\vec{v})$ for
$\vec{v}\in N\mathcal{O}_{\vec{u}}$. It follows from Corollary~\ref{cor:4.4}
that $\mathfrak{F}_{N\mathcal{O}_{\vec{u}}}$ satisfies Hypothesis~\ref{hyp:1.1} with $X=N\mathcal{O}_{\vec{u}}$
around the origin of $N\mathcal{O}_{\vec{u}}$.
 Theorems~\ref{th:S.6.0},~\ref{th:S.6.1} lead to the desired conclusions immediately.
\end{proof}

If $n=2$, $\partial\Omega$ is smooth, and either $F$ is analytic, or $m=1$ and $F$ is
suitable smooth, then the Morse indexes of critical points of $\mathfrak{F}$ on  $W^{m,2}_0(\Omega,\mathbb{R}^N)$ can be computed by  Uhlenbeck's generalizations \cite[Theorem~3.5]{Uhl73}
for Smale's Morse index theorem \cite{Sma1}.
The case of Neumann type boundary conditions may still be considered by  Dalbono and Portaluri \cite{DaPo}.

Write $\mathfrak{F}_{\mathcal{H},d}=\{x\in\mathcal{H}\,|\,\mathfrak{F}(x)\le d\}$ for $d\in\mathbb{R}$.
Using Corollary~\ref{cor:S.6.5}, the standard arguments (see \cite[Chapter I, Theorem~7.6]{Ch},
 \cite[Chapter 10]{MaWi} and \cite[Corollary~6.5.10]{Ber}) yield

\begin{theorem}\label{th:Morse.2}
Under the assumptions of Theorem~\ref{th:Morse.1},
Let  $a<b$ be two regular values of $\mathfrak{F}_{\mathcal{H}}$ and
  $\mathfrak{F}_{\mathcal{H}}^{-1}([a,b])$ contains only nondegenerate  critical orbits $\mathcal{O}_j$
  with Morse indexes $\mu_j$, $j=1,\cdots,k$.
 Suppose that  $\mathfrak{F}_{\mathcal{H}}$ satisfies the $(PS)_c$ condition
for each $c\in [a,b)$. (This is true if  either  $\mathfrak{F}_{\mathcal{H}}$ is coercive or  one of Theorems~\ref{th:4.5},~\ref{th:4.6}  holds in case ${\mathcal{H}}=W^{m,2}_0(\Omega,\mathbb{R}^N)$.)
Then
\begin{eqnarray}\label{e:Morse.7}
\sum^k_{j=1}\dim H_{q-\mu_{{\cal O}_j}}(\mathcal{O}_j;\mathbb{Z}_2)
=\dim H_q(\mathfrak{F}_{\mathcal{H},b}, \mathfrak{F}_{\mathcal{H},a};\mathbb{Z}_2),\quad \forall q\in\mathbb{N}_0,
\end{eqnarray}
and there exists a polynomial with nonnegative
integral coefficients $Q(t)$ such that
\begin{eqnarray}\label{e:Morse.8}
\sum^\infty_{i=0}\sum^k_{j=1}{\rm rank}H^i_G(\mathcal{O}_j, \theta^-_j\otimes{\bf K})t^{\mu_j+i}
=\sum^\infty_{i=0}{\rm rank}H^i_G(\mathfrak{F}_{\mathcal{H},b}, \mathfrak{F}_{\mathcal{H},a};{\bf K})t^i+ (1+t)Q(t),
\end{eqnarray}
where $\theta^-_j$ is the orientation bundle of $N^-\mathcal{O}_j$, $j=1,\cdots,k$.
In particular, if  $G$ is a trivial group and each $\mathcal{O}_j$ becomes a nondegenerate
 critical point $\vec{u}_j$, we have the Morse inequalities:
\begin{eqnarray}\label{e:Morse.9}
\sum^l_{j=0}(-1)^{l-j}N_j(a,b)\ge\sum^l_{j=0}(-1)^{l-j}\beta_j(a,b),\quad\forall l\in\mathbb{N}_0,
\end{eqnarray}
where for each $q\in\mathbb{N}_0$, $N_q(a,b)=\sharp\{1\le i\le k\,|\,\mu_i=q\}$ (the number of points in $\{\vec{u}_j\}^k_{j=1}$ with Morse index $q$)  and
$\beta_q(a,b)=\sum^k_{i=1}{\rm rank} H_q(\mathfrak{F}_{\mathcal{H},b}, \mathfrak{F}_{\mathcal{H},a}; {\bf K})$.
Furthermore, if  $\mathfrak{F}_{\mathcal{H}}$ is coercive,  has only nondegenerate critical points,  and
  for each $q\in\mathbb{N}_0$ there exist only  finitely many critical points with Morse index $q$,
 then the following relations hold:
\begin{eqnarray}\label{e:Morse.10}
 \sum^q_{i=0}(-1)^{q-i}N_i\ge (-1)^q,\;\forall q\in\mathbb{N}_0,\quad\hbox{and}\quad
  \sum^\infty_{i=0}(-1)^iN_i=1,
 \end{eqnarray}
 where $N_i$ is the number of critical points of $\mathfrak{F}_{\mathcal{H}}$ with Morse index $i$.
\end{theorem}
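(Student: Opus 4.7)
The plan is to combine the local information around each critical orbit with the global (PS)-deformation machinery for $C^1$-functionals. First, since the orbits $\mathcal{O}_j$ are nondegenerate and $\mathfrak{F}_{\mathcal{H}}$ is $G$-invariant, Theorem~\ref{th:Morse.1}(i) together with Corollary~\ref{cor:S.6.5} (applied to the local representation $\mathfrak{F}_{\mathcal{H}}\circ E\circ\Phi$ around each $\mathcal{O}_j$) gives the critical group identifications
\begin{equation*}
C_\ast(\mathfrak{F}_{\mathcal{H}},\mathcal{O}_j;\mathbb{Z}_2)\cong H_{\ast-\mu_j}(\mathcal{O}_j;\mathbb{Z}_2),\qquad C^\ast_G(\mathfrak{F}_{\mathcal{H}},\mathcal{O}_j;{\bf K})\cong H^{\ast-\mu_j}_G(\mathcal{O}_j;\theta^-_j\otimes{\bf K}).
\end{equation*}
These are the local building blocks; everything that follows is a matter of assembling them globally.

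Next, I would set up the $C^1$-deformation machinery on the strip $\mathfrak{F}^{-1}_{\mathcal{H}}([a,b])$. Since $\mathfrak{F}_{\mathcal{H}}$ is $C^1$ on a Hilbert manifold and satisfies $(PS)_c$ for every $c\in[a,b)$, the standard Palais pseudo-gradient flow is available, and with $a,b$ regular and $(\mathcal{O}_j)_{j=1}^k$ the only critical orbits in the level strip, the usual noncritical interval / handle attachment argument (as in \cite[Ch.~I, \S3--4]{Ch} or \cite[Ch.~8]{MaWi}) produces a filtration of the pair $(\mathfrak{F}_{\mathcal{H},b},\mathfrak{F}_{\mathcal{H},a})$ whose successive quotients are (up to homotopy) disjoint unions of Gromoll--Meyer pairs associated to the $\mathcal{O}_j$. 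Passing to the associated long exact sequences (or more efficiently to the Morse spectral sequence) and using the local identifications above gives the equivariant polynomial identity (\ref{e:Morse.8}); taking ranks and using subadditivity over nondegenerate orbits yields (\ref{e:Morse.7}). In the case where $G$ is trivial and every orbit reduces to a nondegenerate critical point, the same argument with ${\bf K}=\mathbb{Z}_2$ collapses $H_j(\{u_i\};\mathbb{Z}_2)$ to $\delta_{j0}\mathbb{Z}_2$, so that (\ref{e:Morse.8}) specialises to the classical Morse polynomial identity and hence to the inequalities (\ref{e:Morse.9}).

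For the final global assertion, coercivity of $\mathfrak{F}_{\mathcal{H}}$ (together with Theorem~\ref{th:4.5}) gives the $(PS)$ condition on all of $\mathcal{H}$ and boundedness from below. Choosing $a$ below $\inf\mathfrak{F}_{\mathcal{H}}$ makes $\mathfrak{F}_{\mathcal{H},a}$ empty, and choosing $b$ arbitrarily large and pushing $b\to\infty$ the sublevel set $\mathfrak{F}_{\mathcal{H},b}$ deformation retracts onto $\mathcal{H}$. Since $\mathcal{H}$ is a closed affine subspace of $W^{m,2}(\Omega,\mathbb{R}^N)$ it is contractible, so $H_q(\mathfrak{F}_{\mathcal{H},b},\emptyset;{\bf K})=\delta_{q0}{\bf K}$, i.e.\ $\beta_0=1$ and $\beta_q=0$ for $q\ge 1$. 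Substituting this into (\ref{e:Morse.9}) and sending $l\to\infty$ gives exactly (\ref{e:Morse.10}); the finiteness hypothesis on the number of critical points with each Morse index ensures that the tail sums make sense.

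The main obstacle will be justifying the handle-attachment / deformation step with only $C^1$-smoothness: classical Morse theory is written for $C^2$ functionals on Hilbert manifolds, where one can invoke the implicit function theorem and the Palais--Smale Morse lemma. Here $\mathfrak{F}_{\mathcal{H}}$ is only $C^1$ (with G\^ateaux-differentiable gradient), so I would rely on the $C^1$-Gromoll--Meyer pair construction of Chang--Ghoussoub already used in the proof of Theorem~\ref{th:S.5.4}, combined with Palais pseudo-gradient flows, both of which require only $C^1$-smoothness plus $(PS)$. Once the Gromoll--Meyer pairs exist and the (PS) flow is available, the homological bookkeeping is purely algebraic-topological and proceeds exactly as in the $C^2$ setting; no further analytic subtlety arises.
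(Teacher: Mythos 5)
Your proposal is correct and follows essentially the same route as the paper: the paper's proof likewise combines the local identifications from Corollary~\ref{cor:S.6.5} (via Theorem~\ref{th:Morse.1}) with the standard pseudo-gradient deformation and Morse-inequality bookkeeping for $(PS)$ functionals, citing \cite[Chapter I, Theorem~7.6]{Ch}, \cite[Chapter 10]{MaWi} and \cite[Corollary~6.5.10]{Ber}, and your attention to the $C^1$-only smoothness (Chang--Ghoussoub Gromoll--Meyer pairs, Palais pseudo-gradient flows) is exactly the point that makes those standard arguments applicable here. The coercivity argument for (\ref{e:Morse.10}) (taking $a<\inf\mathfrak{F}_{\mathcal{H}}$, letting $b\to\infty$, and using contractibility of the affine space $\mathcal{H}$) is also the intended one.
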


\begin{remark}\label{rem:Morse.10}
{\rm {\bf (i)} When $N=1$,  ${\mathcal{H}}=W^{m,2}_0(\Omega)$ and $\mathfrak{F}_{\mathcal{H}}$ is
coercive, (\ref{e:Morse.9}) was first obtained by Skrypnik in \cite[\S5.2]{Skr1} and \cite[Theorem~4.7, Chap.1]{Skr2}.
Instead of using Morse-Palais lemma, his ideas are similar to Smale's \cite{Sma}, but some new techniques
are employed, which motivated our current work. \\
{\bf (ii)} If $m=N=1$, $\partial\Omega$ is of class $C^{2+\alpha}$
 for some $\alpha\in (0,1)$, and $p>n=\dim\Omega$ such that $W^{p,2}\subset C^1$, under some conditions
 on $F$ Strohmer \cite{Str} proved a handle body theorem for $\mathfrak{F}$ on $Z_\varphi=\{u\in W^{2,p}(\Omega)\,|\, u|_{\partial\Omega}=\varphi|_{\partial\Omega}\}$  for $\varphi\in C^{2+\alpha}$.
His conditions and those of  Theorem~\ref{th:Morse.2}  cannot be contained each other.\\
{\bf (iii)} As in Remark~\ref{rem:4.4}, we may give the corresponding versions of Theorems~\ref{th:Morse.1},\ref{th:Morse.2}
 in the setting of \cite{PaSm, Sma, Pa2}.  In particular, replace $\Omega\subset\mathbb{R}^n$ by $\mathbb{T}^n=\mathbb{R}^n/\mathbb{Z}^n$ and assume that a $C^2$ function
 $F:\mathbb{T}^n\times\prod^m_{k=0}\mathbb{R}^{N\times M_0(k)}\to\mathbb{R}$
 satisfies \textsf{Hypothesis} $\mathfrak{F}_{2,N,m,n}$ when restricted to
$[0,1]^n\times\prod^m_{k=0}\mathbb{R}^{N\times M_0(k)}$, then for the action of
$G=\mathbb{T}^n$ or $\mathbb{T}^1$ on $W^{m,2}(\mathbb{T}^n,\mathbb{R}^N)$ given by the
isometric linear representation
\begin{eqnarray*}
&&([t_1,\cdots,t_n]\cdot\vec{u})(x_1,\cdots,x_n)=\vec{u}(x_1+t_1,\cdots, x_n+t_n),\quad
 [t_1,\cdots,t_n]\in \mathbb{T}^n,\\
 \hbox{or}&& ([t]\cdot\vec{u})(x_1,\cdots,x_n)=\vec{u}(x_1+t,\cdots, x_n+t),\quad
 [t]\in \mathbb{T}^1,
\end{eqnarray*}
Theorems~\ref{th:Morse.1},\ref{th:Morse.2}  hold true.
These provide necessary tools for generalizing works in \cite{Lu1, Van}.
If the function $F$ is defined on $\mathbb{T}^n\times\mathbb{T}^N\times\prod^m_{k=1}\mathbb{R}^{N\times M_0(k)}$
the corresponding variational problem on $W^{m,2}(\mathbb{T}^n,\mathbb{T}^N)$ is related to
\cite{Mos} and may be also considered with our theory.}
\end{remark}

\begin{corollary}\label{cor:Morse.11}
 Given  integers $m, N\ge 1$, $n\ge 2$, let $\Omega\subset\R^n$ be a Sobolev domain
 for $(2,m,n)$, and let $V_0$ be a closed subspace of $W^{m,2}(\Omega, \mathbb{R}^N)$ and $V=\vec{w}+V_0$
 for some $\vec{w}\in W^{m,2}(\Omega, \mathbb{R}^N)$.
Suppose that \textsf{Hypothesis} $\mathfrak{F}_{2,N,m,n}$ holds. Then
each critical point of $\mathfrak{F}_V$  has finite Morse index $\mu$ and nullity $\nu$;
moreover, if  $\vec{u}\in V$ is an isolated critical point of $\mathfrak{F}_V$,
 for any Able group ${\bf K}$, ${\rm rank}C_j(\mathfrak{F}_V, \vec{u};{\bf K})<\infty\;\forall j\in\mathbb{N}_0$,  and $C_j(\mathfrak{F}_V, \vec{u};{\bf K})=0$ for $j<\mu$ or $j>\mu+\nu$.
\end{corollary}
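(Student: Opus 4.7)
\medskip

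\noindent\textbf{Proof plan.} The strategy is to translate the problem to the origin of $V_0$ and then invoke the Shifting Theorem of Section~\ref{sec:S}. Fix a critical point $\vec{u}\in V$ of $\mathfrak{F}_V$ and define
\[
\mathcal{L}:V_0\to\mathbb{R},\qquad \mathcal{L}(\vec{v}):=\mathfrak{F}(\vec{u}+\vec{v}).
\]
Then $\theta\in V_0$ is a critical point of $\mathcal{L}$, and by excision the critical groups satisfy $C_j(\mathfrak{F}_V,\vec{u};\mathbf{K})\cong C_j(\mathcal{L},\theta;\mathbf{K})$ for all $j\in\mathbb{N}_0$, so it suffices to analyze $\mathcal{L}$ near $\theta$.

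The first step is to verify that $(V_0,\mathcal{L})$ fits the framework of Hypothesis~\ref{hyp:1.1} with $X=H=V_0$. The translated integrand $\tilde{F}(x,\xi):=F(x,\xi+\Xi_{\vec{u}}(x))$ (where $\Xi_{\vec{u}}(x)$ encodes the values of $\vec{u}$ and its derivatives at $x$) still satisfies \textsf{Hypothesis} $\mathfrak{F}_{2,N,m,n}$: the growth exponents $p_\gamma,q_\gamma,p_{\alpha\beta}$ are unaffected by translation, the bound \eqref{e:1.1} is preserved with an adjusted continuous $\mathfrak{g}_1$, the ellipticity \eqref{e:1.2} is preserved with an adjusted $\mathfrak{g}_2$, and the integrability of $\tilde{F}(\cdot,0)$ and $\tilde{F}^i_\alpha(\cdot,0)$ follows from Proposition~\ref{prop:4.3}. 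Hence Corollary~\ref{cor:4.4} applies to $\mathcal{L}$ on $V_0$ near $\theta$; the splitting $B(\vec{v})=P(\vec{v})+Q(\vec{v})$ comes from part D) of Theorem~\ref{th:4.2}, and (D1)--(D4) are provided there. By the discussion immediately following Hypothesis~\ref{hyp:1.1} (and Proposition~B.2 of \cite{Lu2} cited there), both $H^0=\ker B(\theta)$ and $H^-$ are finite-dimensional, so $\nu:=\dim H^0$ and $\mu:=\dim H^-$ are finite; this establishes the first assertion.

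Now suppose $\vec{u}$ is isolated, so $\theta$ is an isolated critical point of $\mathcal{L}$. If $\nu=0$, Theorem~\ref{th:S.1.1} directly gives $C_j(\mathcal{L},\theta;\mathbf{K})=\delta_{j\mu}\mathbf{K}$ and there is nothing more to prove. If $\nu>0$, the Shifting Theorem (Theorem~\ref{th:S.1.3}) yields
\[
C_j(\mathcal{L},\theta;\mathbf{K})\;\cong\;C_{j-\mu}(\mathcal{L}^\circ,\theta;\mathbf{K})\qquad\text{for all }j\in\mathbb{N}_0,
\]
where $\mathcal{L}^\circ$ is a $C^1$ functional on the finite-dimensional space $H^0$ of dimension $\nu$. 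Because $\mathcal{L}^\circ$ is defined on a finite-dimensional space, each critical group $C_k(\mathcal{L}^\circ,\theta;\mathbf{K})$ is computed from relative singular homology of a pair of subsets of $H^0$; these groups have finite rank, and vanish for $k<0$ or $k>\nu$. Combining with the isomorphism above gives $\mathrm{rank}\,C_j(\mathfrak{F}_V,\vec{u};\mathbf{K})<\infty$ for every $j\in\mathbb{N}_0$ and $C_j(\mathfrak{F}_V,\vec{u};\mathbf{K})=0$ whenever $j<\mu$ or $j>\mu+\nu$.

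The only non-routine point in this plan is verifying that the translated integrand $\tilde F$ still obeys \textsf{Hypothesis} $\mathfrak{F}_{2,N,m,n}$, so that Corollary~\ref{cor:4.4} genuinely applies around the (possibly non-zero) critical point $\vec{u}$. This is the only step where one needs to invoke the pointwise estimates of Proposition~\ref{prop:4.3} in order to absorb the contribution of $\Xi_{\vec{u}}(x)$ into new nondecreasing functions $\mathfrak{g}_1,\mathfrak{g}_2$; once that verification is in hand, every remaining step is a direct citation of results proved earlier in the paper.
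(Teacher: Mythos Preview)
Your overall strategy — translate so that the critical point sits at $\theta\in V_0$, verify Hypothesis~\ref{hyp:1.1} there, and invoke Theorem~\ref{th:S.1.3} — is exactly the paper's route, and once Hypothesis~\ref{hyp:1.1} is in hand the rest of your argument is correct.

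The one genuine gap is the step you single out as ``the only non-routine point'': the assertion that the translated integrand $\tilde F(x,\xi)=F(x,\xi+\Xi_{\vec u}(x))$ still satisfies \textsf{Hypothesis}~$\mathfrak{F}_{2,N,m,n}$. This fails in general. The bound~\eqref{e:1.1} has the form $\mathfrak{g}_1(|\xi_\circ|)\,(1+\sum|\xi^k_\gamma|^{p_\gamma})^{p_{\alpha\beta}}$ with $\mathfrak g_1$ depending only on the low-order variables $\xi_\circ$. After translation the right-hand side picks up the shifts $\Xi^k_\gamma(x)=D^\gamma u^k(x)$ for $m-n/2\le|\gamma|\le m$, and these are merely in $L^{p_\gamma}(\Omega)$ (in particular only in $L^2$ for $|\gamma|=m$), not in $L^\infty$. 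Evaluating at $\xi=0$ already shows $|\tilde F^{ij}_{\alpha\beta}(x,0)|$ need not be bounded by any constant $\tilde{\mathfrak g}_1(0)$, so~\eqref{e:1.1} cannot be recovered with a new $\tilde{\mathfrak g}_1$ of the required form. Proposition~\ref{prop:4.3} only yields integrability of $\tilde F(\cdot,0)$ and $\tilde F^i_\alpha(\cdot,0)$; it does not rescue the pointwise structure of~\eqref{e:1.1}.

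This detour is also unnecessary. Theorems~\ref{th:4.1} and~\ref{th:4.2} are stated from the outset for the \emph{affine} space $V=\vec w+V_0$ with the \emph{original} integrand $F$. In particular, part~D) of Theorem~\ref{th:4.2} furnishes the decomposition $D(\nabla\mathfrak{F}_V)(\vec u)=P(\vec u)+Q(\vec u)$ together with (i)--(iv) for every $\vec u\in V$, not just $\vec u=\theta$. Setting $\mathcal L(\vec v)=\mathfrak{F}_V(\vec u^\ast+\vec v)$ and $B(\vec v):=D(\nabla\mathfrak{F}_V)(\vec u^\ast+\vec v)$, $P_{\mathcal L}(\vec v):=P(\vec u^\ast+\vec v)$, $Q_{\mathcal L}(\vec v):=Q(\vec u^\ast+\vec v)$, properties (i)--(iii) there give (D2)--(D4) directly, (D1) is vacuous since $X=H=V_0$, and $P_{\mathcal L}(\vec v),\,Q_{\mathcal L}(\vec v)$ are positive definite resp.\ compact by the same theorem. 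Thus Hypothesis~\ref{hyp:1.1} holds for $\mathcal L$ without ever considering $\tilde F$. With this correction your proof coincides with the paper's.
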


This is a direct consequence of Theorem~\ref{th:S.1.3} and Corollary~\ref{cor:4.4}.
From Corollary~\ref{cor:4.4}, Theorems~\ref{th:S.1.1},~\ref{th:4.5} and \cite[Proposition~6.93]{MoMoPa}
we immediately deduce

\begin{corollary}\label{cor:Morse.12}
Let $V\subset W^{m,2}(\Omega, \mathbb{R}^N)$ be as in Corollary~\ref{cor:Morse.11}. If
$\mathfrak{F}_V$ is bounded below, satisfies the (PS)-condition, and  has  a nondegenerate
critical point which is not a global minimizer, then it has at least three critical points.
\end{corollary}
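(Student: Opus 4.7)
The plan is to reduce the problem to the Morse--Euler identity applied to the contractible affine Hilbert subspace $V=\vec{w}+V_0$, relying on the critical group computation from Theorem~\ref{th:S.1.1} combined with the analytic framework of Corollary~\ref{cor:4.4}. First, because $\mathfrak{F}_V$ is bounded below and satisfies the (PS)-condition, Ekeland's variational principle produces a global minimizer $\vec{u}_0\in V$, which is automatically a critical point. If the critical set of $\mathfrak{F}_V$ is infinite we are already done, so we may assume it is finite; in particular $\vec{u}_0$ is isolated and $C_q(\mathfrak{F}_V,\vec{u}_0;{\bf K})=\delta_{q0}{\bf K}$ for all $q\in\mathbb{N}_0$. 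Denote by $\vec{u}_1$ the nondegenerate critical point given in the hypothesis. Since $\mathfrak{F}_V(\vec{u}_1)>\inf_V\mathfrak{F}_V$ we have $\vec{u}_1\neq\vec{u}_0$, and by Corollary~\ref{cor:4.4} the functional $\mathfrak{F}_V$ satisfies Hypothesis~\ref{hyp:1.1} with $X=H$ at $\vec{u}_1$. Consequently, Theorem~\ref{th:S.1.1} yields
$$
C_q(\mathfrak{F}_V,\vec{u}_1;{\bf K})=\delta_{q\mu}{\bf K}\qquad\forall q\in\mathbb{N}_0,
$$
where $\mu=\mu(\vec{u}_1)\in\mathbb{N}_0$ is the Morse index of $\vec{u}_1$.

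Next I would argue by contradiction. Suppose $\{\vec{u}_0,\vec{u}_1\}$ is the entire critical set. Since $V$ is a closed affine subspace of a Hilbert space it is contractible, so its Poincar\'e polynomial equals $P_V(t)=1$. The standard Morse relations, applicable here because $\mathfrak{F}_V$ is bounded below, satisfies (PS), and has only finitely many isolated critical points whose critical groups have finite rank by Corollary~\ref{cor:Morse.11}, then read
$$
1+t^{\mu}=P_V(t)+(1+t)Q(t)=1+(1+t)Q(t)
$$
for some polynomial $Q(t)$ with non-negative integer coefficients. Evaluating both sides at $t=-1$ gives $1+(-1)^{\mu}=1$, which is impossible for every $\mu\in\mathbb{N}_0$. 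Hence $\mathfrak{F}_V$ must possess a critical point distinct from $\vec{u}_0$ and $\vec{u}_1$, completing the proof.

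The main obstacle to anticipate is verifying that the classical Morse identity is available in our $C^1$ (not $C^2$) setting; the required pieces are, however, already in place. Part C) of Theorem~\ref{th:4.2} shows that $\nabla\mathfrak{F}_V$ is of class $(S)_+$, which combined with the (PS)-condition supplies the deformation lemma used to construct the Morse filtration on $V$; Corollary~\ref{cor:Morse.11} delivers the finite-dimensional critical groups together with the correct vanishing pattern; and bounded-below plus (PS) ensures that $\mathfrak{F}_{V,c}=\emptyset$ for $c<\inf_V\mathfrak{F}_V$, so the relative homology on the left of the Morse identity reduces to $H_\ast(V)={\bf K}$ concentrated in degree zero. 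As a safety net, in the sub-case $\mu=0$ the point $\vec{u}_1$ is a strict local minimum distinct from the global minimum $\vec{u}_0$, and a direct application of the mountain pass theorem produces a saddle-type third critical point without invoking the full Morse identity.
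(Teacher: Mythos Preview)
Your proof is correct and follows essentially the same approach as the paper, which simply cites Corollary~\ref{cor:4.4}, Theorem~\ref{th:S.1.1}, and \cite[Proposition~6.93]{MoMoPa}; you have written out in detail precisely the Morse--Euler identity argument that Proposition~6.93 encapsulates. One minor remark: the appeal to the $(S)_+$ property from Theorem~\ref{th:4.2} is unnecessary, since the (PS)-condition assumed in the statement already suffices for the deformation lemma and the Morse relations on the contractible space $V$.
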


The last two corollaries also hold if $\Omega$ is replaced by $\mathbb{T}^n$.
For a $C^2$ Lagrangian satisfying  the controllable growth conditions the
corresponding integral functional is bounded below and coercive.
From the above results we immediately get

\begin{theorem}\label{th:Morse.14}
Let $\Omega\subset\R^n$ be  a  Sobolev domain for $(2,1,n)$, $N\in\mathbb{N}$, and $H$  a closed subspace of $W^{1,2}(\Omega,\mathbb{R}^N)$, $\mathcal{H}=\vec{\omega}+ H$ for some $\vec{\omega}\in W^{1,2}(\Omega,\mathbb{R}^N)$.
 Assume that
$\overline\Omega\times\mathbb{R}^N\times\mathbb{R}^{N\times n}\ni (x,
z,p)\mapsto F(x, z,p)\in\R$ is a $C^2$ function fulfilling   the controllable growth conditions
 (see Appendix A). Let $G$ be a compact Lie group which acts on $\mathcal{H}$ in a $C^3$-smooth
isometric way. Suppose that the restriction functional $\mathfrak{F}_{\mathcal{H}}:=\mathfrak{F}|_{\mathcal{H}}$  is $G$-invariant.
Then\\
 {\bf (i)}  If  $a<b$ are two regular values of $\mathfrak{F}_{\mathcal{H}}$ and
  $\mathfrak{F}_{\mathcal{H}}^{-1}([a,b])$  contains only nondegenerate  critical orbits $\mathcal{O}_j$
  with Morse indexes $\mu_j$, $j=1,\cdots,k$, then (\ref{e:Morse.7}) and (\ref{e:Morse.8}) hold;
  in particular, if $G$ is trivial and  $\mathfrak{F}_{\mathcal{H}}^{-1}([a,b])$
    contains only nondegenerate  critical points,   then (\ref{e:Morse.9}) holds.\\
{\bf (ii)}  If  $\mathfrak{F}_{\mathcal{H}}$   has only nondegenerate critical points,  and
  for each $q\in\mathbb{N}_0$ there exist only  finitely many critical points with Morse index $q$,
  then (\ref{e:Morse.10}) holds true.\\
{\bf (iii)} If  $\vec{u}$ is a critical point of $\mathfrak{F}$ on $V:=\vec{w}+ W^{1,2}_0(\Omega,\mathbb{R}^N)\subset W^{1,2}(\Omega,\mathbb{R}^N)$ which is not a global minimizer,
then it has at least three critical points on $V$
provided that the bilinear form
   $$
  W^{1,2}_0(\Omega,\mathbb{R}^N)\times W^{1,2}_0(\Omega,\mathbb{R}^N)\ni (\vec{v}, \vec{w})\mapsto \sum^N_{i=1}\sum_{|\alpha|,|\beta|\le 1}\int_\Omega
  F^{ij}_{\alpha\beta}(x, \vec{u}(x), D\vec{u}(x))D^\beta v^j\cdot D^\alpha w^i dx
  $$
is nondegenerate.\\
{\bf (iv)} If $\Omega$ is replaced by $\mathbb{T}^n$ in (i) and (ii) the corresponding conclusions
also holds.
\end{theorem}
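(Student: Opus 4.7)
The plan is to assemble the theorem from the machinery already in place: verify the background hypotheses, convert the controllable growth conditions into \textsf{Hypothesis} $\mathfrak{F}_{2,N,1,n}$, and then invoke the Morse-theoretic packages developed in Sections~\ref{sec:S}--\ref{sec:Funct}.

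First I would appeal to Proposition~\ref{prop:A.1} (or Note~(c) after \textsf{Hypothesis} $\mathfrak{F}_{p,N,m,n}$) to conclude that the controllable growth conditions on $F$ are stronger than \textsf{Hypothesis} $\mathfrak{F}_{2,N,1,n}$. Hence all the analytic preparation of Section~\ref{sec:Funct.1} applies: Theorems~\ref{th:4.1}--\ref{th:4.2} give that $\mathfrak{F}_{\mathcal{H}}$ is of class $C^1$ with G\^ateaux differentiable gradient of type $(S)_+$, and Corollary~\ref{cor:4.4} shows that the restriction of $\mathfrak{F}$ to any closed affine subspace of $W^{1,2}(\Omega,\mathbb{R}^N)$ satisfies Hypothesis~\ref{hyp:1.1} with $X=H$ around any point. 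Next, the final assertion of Theorem~\ref{th:4.5} yields the coercivity of $\mathfrak{F}_{\mathcal{H}}$ under the controllable growth conditions; since $\mathfrak{F}_{\mathcal{H}}$ is then bounded below, Theorem~\ref{th:4.5} (combined with \cite[Proposition~5.23]{MoMoPa}) furnishes the (PS)- and (C)-conditions at every level.

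With these ingredients in hand, items (i) and (ii) follow by direct application of Theorem~\ref{th:Morse.2}. Indeed, the $G$-invariance of $\mathfrak{F}_{\mathcal{H}}$ together with Corollary~\ref{cor:4.4} places us exactly in the setting of Theorem~\ref{th:Morse.1}, whose outputs (the splitting near critical orbits and the computation of critical groups via Corollary~\ref{cor:S.6.5}) are precisely what Theorem~\ref{th:Morse.2} converts into the Morse relations (\ref{e:Morse.7})--(\ref{e:Morse.8}), and into (\ref{e:Morse.9})--(\ref{e:Morse.10}) when $G$ is trivial. The hypothesis that there are only finitely many critical points of Morse index $q$ for each $q\in\mathbb{N}_0$ is needed so that both sides of (\ref{e:Morse.10}) make sense; no further verification is required since coercivity plus the (PS)-condition yields the needed compactness of each sublevel set.

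For (iii), take $V=\vec{w}+W^{1,2}_0(\Omega,\mathbb{R}^N)$ and let $\vec{u}\in V$ be the given critical point. The nondegeneracy of the bilinear form in the statement is exactly the condition that the self-adjoint operator representing the G\^ateaux Hessian $D(\nabla\mathfrak{F}_V)(\vec{u})$ (cf.\ the formula for $D\mathfrak{F}'_V(\vec{u})$ in Theorem~\ref{th:4.1}) has trivial kernel in $W^{1,2}_0(\Omega,\mathbb{R}^N)$, i.e.\ $\vec{u}$ is a nondegenerate critical point of $\mathfrak{F}_V$ in the sense of Section~\ref{sec:S.1}. Combined with the coercivity and (PS)-property from Theorem~\ref{th:4.5}, Corollary~\ref{cor:Morse.12} then applies and produces at least three critical points of $\mathfrak{F}_V$. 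Finally, (iv) requires only replacing $\Omega$ by $\mathbb{T}^n$: as indicated in Remark~\ref{rem:4.4} and Remark~\ref{rem:Morse.10}(iii), the same chain of results carries over verbatim on $W^{1,2}(\mathbb{T}^n,\mathbb{R}^N)$, since \textsf{Hypothesis} $\mathfrak{F}_{2,N,1,n}$ is understood on $[0,1]^n$ and the Sobolev embeddings, coercivity, and (PS)-compactness arguments are local plus compactness.

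The main obstacle is not really an obstacle but a bookkeeping step: one must verify that the controllable growth conditions actually force coercivity of $\mathfrak{F}_{\mathcal{H}}$ on an \emph{arbitrary} closed affine subspace $\mathcal{H}=\vec{\omega}+H$ (not just on $\vec{w}+W^{1,2}_0(\Omega,\mathbb{R}^N)$), which is asserted in the last sentence of Theorem~\ref{th:4.5}. Once this is accepted, everything else is a direct citation of the previously established splitting, shifting, and Morse-inequality theorems.
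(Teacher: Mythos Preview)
Your proposal is correct and matches the paper's own treatment: the paper states just before Theorem~\ref{th:Morse.14} that under the controllable growth conditions the functional is bounded below and coercive, and that the theorem follows ``immediately'' from the preceding results (Proposition~\ref{prop:A.1}, Theorem~\ref{th:4.5}, Theorem~\ref{th:Morse.2}, Corollary~\ref{cor:Morse.12}, and Remarks~\ref{rem:4.4} and~\ref{rem:Morse.10}(iii)). You have simply spelled out this chain of citations explicitly.
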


\subsection{Applicability of related previous work}\label{sec:compare}

In this section we study under what conditions on $F$ splitting theorems in \cite{Lu1, Lu2} and
\cite{BoBu, JM} are applicable. As consequences,  under \textsf{Hypothesis} $\mathfrak{F}_{2,N,m,n}$,
if an isolated critical point $\vec{u}$ of the functional $\mathfrak{F}_H$ on $H:=W^{m,2}_0(\Omega, \mathbb{R}^N)$
defined by the right side of (\ref{e:1.3}) is smooth enough then the critical groups of
$\mathfrak{F}_H$ at $\vec{u}$ are equal to those of the restriction of $\mathfrak{F}_H$ to a smaller appropriate  containing it.
For these, the following special case of \cite[Theorem 6.4.8]{Mo} is very key.

\begin{proposition}\label{prop:BifE.9.1}
  For a real $p\ge 2$ and an integer $k\ge m+\frac{n}{p}$,  let $\Omega\subset\R^n$ be a bounded
   domain with boundary of class $C^{k-1,1}$, $N\in\mathbb{N}$, and let bounded and measurable functions on $\overline{\Omega}$,
    $A^{ij}_{\alpha\beta}$, $i,j=1,\cdots,N$, $|\alpha|,|\beta|\le m$,  fulfill the following conditions:\\
{\bf (i)}  $A^{ij}_{\alpha\beta}\in C^{k+|\alpha|-2m-1,1}(\overline{\Omega})$ if $2m-k<|\alpha|\le m$,\\
{\bf (ii)} there exists $c_0>0$ such that
$$
\sum^N_{i,j=1}\sum_{|\alpha|=|\beta|=m}\int_\Omega
A^{ij}_{\alpha\beta}\eta^i_\alpha\eta^j_\beta\ge c_0\sum^N_{i=1}\sum_{|\alpha|=m}|\eta^i_\alpha|^2,\quad\forall \eta\in\mathbb{R}^{N\times M_0(m)}.
$$
Suppose that $\vec{u}=(u^1,\cdots,u^N)\in W^{m,2}_0(\Omega,\mathbb{R}^N)$ and $\lambda\in (-\infty, 0]$ satisfy
$$
\sum^N_{i,j=1}\sum_{|\alpha|,|\beta|\le m}\int_\Omega
(A^{ij}_{\alpha\beta}-\lambda\delta_{ij}\delta_{\alpha\beta})D^\beta u^i\cdot D^\alpha v^j dx=0\quad\forall v\in W^{m,2}_0(\Omega,\mathbb{R}^N).
$$
Then $\vec{u}\in W^{k,p}(\Omega,\mathbb{R}^N)$. Moreover, for $f_j=\sum_{|\alpha|\le m}(-1)^{|\alpha|}D^\alpha f^j_\alpha$,
where $f^j_\alpha\in W^{k-2m+|\alpha|,p}(\Omega)$ if $|\alpha|>2m-k$, and
$f^j_\alpha\in L^p(\Omega)$ if $|\alpha|\le 2m-k$, suppose that
$\vec{u}\in W^{m,2}_0(\Omega,\mathbb{R}^N)$ satisfy
$$
\int_\Omega\sum^N_{j=1}\sum_{|\alpha|\le m}\left[\sum^N_{i=1}\sum_{|\beta|\le m}A^{ij}_{\alpha\beta}D^\beta u^i-f^j_\alpha      \right]D^\alpha v^jdx=0,\quad\forall v\in W^{m,2}_0(\Omega,\mathbb{R}^N),
$$
we have also $\vec{u}\in W^{k,p}(\Omega,\mathbb{R}^N)$.
\end{proposition}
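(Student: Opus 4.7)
My plan is to treat this proposition as what the author already indicates it is, namely a verification that the hypotheses of Morrey's regularity theorem (Theorem~6.4.8 in \cite{Mo}) are met in both situations. I would not reprove the theorem, only unwind the conditions so that the citation applies verbatim.

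First I would fix notation: let $L$ denote the $2m$-th order linear operator whose weak form is
$$
\langle L\vec{u},\vec{v}\rangle=\sum^N_{i,j=1}\sum_{|\alpha|,|\beta|\le m}\int_\Omega A^{ij}_{\alpha\beta}D^\beta u^i\cdot D^\alpha v^j\,dx.
$$
Condition (ii) is precisely the Legendre--Hadamard condition giving uniform strong ellipticity of the principal part. Condition (i) matches, multi-index by multi-index, the coefficient regularity hypothesis needed in Morrey's theorem to reach $W^{k,p}$: the top-order coefficients ($|\alpha|=m$) must lie in $C^{k-m-1,1}(\overline{\Omega})$, each lower-order coefficient with $2m-k<|\alpha|<m$ must lie in $C^{k+|\alpha|-2m-1,1}(\overline{\Omega})$, and the coefficients with $|\alpha|\le 2m-k$ need only be bounded and measurable (they are hit only by $W^{k-2m+|\alpha|,p}$-derivatives of $\vec{u}$, which, by the choice $k>m+n/p$, embed into $L^p$). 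The boundary hypothesis $\partial\Omega\in C^{k-1,1}$ matches Morrey's requirement exactly, and the condition $k>m+n/p$ guarantees the relevant Sobolev-to-continuous embeddings used in his proof.

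For the second (inhomogeneous) statement this is the whole argument: the data $f^j=\sum_{|\alpha|\le m}(-1)^{|\alpha|}D^\alpha f^j_\alpha$ with $f^j_\alpha\in W^{k-2m+|\alpha|,p}$ for $|\alpha|>2m-k$ and $f^j_\alpha\in L^p$ for $|\alpha|\le 2m-k$ is exactly the class of right-hand sides to which Theorem~6.4.8 applies, so I would just invoke it to conclude $\vec{u}\in W^{k,p}(\Omega,\mathbb{R}^N)$. For the first (``eigenvalue'') statement I would reduce it to the second by moving $\lambda\vec{u}$ to the right-hand side: writing $f^j_{\mathbf 0}=\lambda u^j$ and $f^j_\alpha=0$ for $|\alpha|\ge 1$, the equation becomes $L\vec{u}=\vec{f}$ of the second type. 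A straightforward bootstrap then raises $\vec{u}$'s regularity step by step: starting from $\vec{u}\in W^{m,2}_0\hookrightarrow L^{q_0}$ one gains Sobolev regularity via one application of Theorem~6.4.8, reinserts the improved $\vec{u}$ into the right-hand side, and iterates until the target $W^{k,p}$ is reached. The hypothesis $\lambda\le 0$ plays no role in the regularity step itself; it merely ensures coercivity/solvability of the intermediate problems.

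The main obstacle, and the only real point requiring care, is the bookkeeping in this bootstrap: at each iteration one must check that the regularity of $\lambda u^j$ matches an admissible $f^j_{\mathbf 0}$-class for Theorem~6.4.8, and that the coefficient smoothness granted by~(i) is compatible with the regularity level attained. This is arithmetic with Sobolev indices (using $k>m+n/p$ to avoid losses at the endpoint), but since (i) is calibrated to the full target $W^{k,p}$ from the outset, no intermediate step demands coefficient regularity beyond what is given, and the iteration closes in finitely many steps.
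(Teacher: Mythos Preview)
Your approach is correct and matches the paper's, which gives no proof at all: the proposition is introduced in the text as ``the following special case of \cite[Theorem~6.4.8]{Mo},'' and is simply cited. Your plan to verify that the hypotheses line up with Morrey's theorem is exactly the right justification for that citation.

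One simplification is worth noting for the first (eigenvalue) statement. You propose moving $\lambda\vec{u}$ to the right-hand side and bootstrapping, but no bootstrap is needed: the modified coefficients $A^{ij}_{\alpha\beta}-\lambda\delta_{ij}\delta_{\alpha\beta}$ differ from $A^{ij}_{\alpha\beta}$ by constants, so they inherit the same regularity as in~(i), and since $\lambda\le 0$ the extra diagonal term only \emph{improves} the ellipticity constant in~(ii). Hence Morrey's theorem applies directly to the operator $L-\lambda I$ with zero right-hand side, giving $\vec{u}\in W^{k,p}$ in one step. This also clarifies the role of $\lambda\le 0$: it is needed precisely to preserve the Legendre--Hadamard condition for the perturbed principal part, not merely for ``solvability of intermediate problems.'' (Minor note: the hypothesis is $k\ge m+n/p$, not strict inequality as you wrote.)
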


Without special statements,  the Hilbert space $H=W^{m,2}_0(\Omega,\mathbb{R}^N)$
with the usual inner product
\begin{equation}\label{e:compare.0}
(\vec{u},\vec{v})_H=\sum^N_{i=1}\sum_{|\alpha|=m}\int_\Omega D^\alpha u^i D^\alpha v^i dx.
\end{equation}

\begin{theorem}\label{th:compare.1}
 Under \textsf{Hypothesis} $\mathfrak{F}_{2,N,m,n}$,
  let $\mathfrak{F}_H$ denote the functional on $H$ defined by the right side of
 (\ref{e:1.3}), and  let $\vec{u}^\ast\in H$  be a
 critical point of $\mathfrak{F}_H$. Suppose that there exist a real $p>1$ and
  an integer $k>m+ \frac{n}{p}$  such that
 $\vec{u}^\ast$ is contained in the Banach subspace
 $X_{k,p}:=W^{k,p}(\Omega,\mathbb{R}^N)\cap W^{m,2}_0(\Omega,\mathbb{R}^N)$ of
 $W^{k,p}(\Omega,\mathbb{R}^N)$. Suppose also that $F$ is of class $C^{k-m+2}$.
 Then near $\vec{u}^\ast\in H$  the triple $(\mathfrak{F}, X, H)$ satisfies the conditions of
 \cite[Theorem~2.1]{Lu2} except  (C2) in  \cite[page 2944]{Lu2}.
 Moreover,  if $\vec{u}^\ast\in C^k(\overline{\Omega}, \mathbb{R}^N)$, $p\ge 2$ and $\partial\Omega$ is of class $C^{k-1,1}$, then (C2) in  \cite[page 2944]{Lu2} is also fulfilled, and the negative definite space of
 $D(\nabla\mathfrak{F}_H)(\vec{u}^\ast)\in\mathscr{L}_s(H)$ is contained in $X_{k,p}$.

  \end{theorem}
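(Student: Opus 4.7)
The plan is to proceed in two distinct stages, mirroring the two claims of the theorem. The first stage verifies all the structural conditions of \cite[Theorem~2.1]{Lu2} apart from (C2) by directly invoking Theorems~\ref{th:4.1} and \ref{th:4.2}, and the second stage uses elliptic regularity (Proposition~\ref{prop:BifE.9.1}) to supply both (C2) and the $X_{k,p}$-regularity of the negative definite space. The key preliminary is Sobolev embedding: since $k>m+n/p$, we have $W^{k,p}(\Omega)\hookrightarrow C^m(\overline{\Omega})$, so $X_{k,p}$ is continuously embedded in $H$ and dense in it, and the map $\vec{u}\mapsto D^m\vec{u}$ carries $X_{k,p}$ continuously into $C(\overline{\Omega},\mathbb{R}^{N\times M_0(m)})$.

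For the first stage, the $C^1$-smoothness of $\mathfrak{F}_H$ on $H$ is Theorem~\ref{th:4.1}A, while the existence of the G\^ateaux derivative $B(\vec{u})\in\mathscr{L}_s(H)$ of $\nabla\mathfrak{F}_H$ at every $\vec{u}\in H$ is Theorem~\ref{th:4.1}B. The required decomposition $B=P+Q$ and all of the continuity, compactness, and uniform positivity properties of $P$ and $Q$ come verbatim from Theorem~\ref{th:4.2}D(i)--(iv), which I would simply transcribe into the (S), (F), (C1), (D) formalism of \cite{Lu2}. To see that $\mathfrak{F}|_{X_{k,p}}$ has the higher smoothness ($C^{k-m+2}$, in particular $C^2$) that \cite[Theorem~2.1]{Lu2} demands on $X$, one observes that on $X_{k,p}$ each derivative $D^\alpha u^i$ with $|\alpha|\le m$ lies in $C(\overline{\Omega})$, so Nemytskii composition with the $C^{k-m+2}$ integrand $F$ is Fr\'echet smooth in the usual sense.

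For the second stage, assume $\vec{u}^\ast\in C^k(\overline{\Omega},\mathbb{R}^N)$, $p\ge 2$, and $\partial\Omega\in C^{k-1,1}$. Then the coefficients
$$
A^{ij}_{\alpha\beta}(x):=F^{ij}_{\alpha\beta}\bigl(x,\vec{u}^\ast(x),\ldots,D^m\vec{u}^\ast(x)\bigr)
$$
are of class $C^{k-m}(\overline{\Omega})$; since $k-m\ge k+|\alpha|-2m$ for all $|\alpha|\le m$, they satisfy the regularity hypothesis~(i) of Proposition~\ref{prop:BifE.9.1}, while condition (iii) of Hypothesis~$\mathfrak{F}_{2,N,m,n}$ at $\vec{u}^\ast$ supplies the Legendre--Hadamard bound~(ii). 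If $\vec{v}\in H$ is an eigenfunction of $B(\vec{u}^\ast)$ with eigenvalue $\lambda\le 0$, then unwinding the identity $(B(\vec{u}^\ast)\vec{v},\vec{\varphi})_H=\lambda(\vec{v},\vec{\varphi})_H$ via (\ref{e:compare.0}) produces a weak elliptic system with coefficients $A^{ij}_{\alpha\beta}-\lambda\delta_{ij}\delta_{\alpha\beta}\chi_{\{|\alpha|=|\beta|=m\}}$; because $\lambda\le 0$ this perturbation strengthens (rather than weakens) the Legendre bound, so Proposition~\ref{prop:BifE.9.1} applies and yields $\vec{v}\in W^{k,p}(\Omega,\mathbb{R}^N)\cap H=X_{k,p}$. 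Condition (C2) is then established in the same spirit: its content is an $X_{k,p}$-solvability/continuity statement for the linearized problem determined by $B(\vec{u}^\ast)$, whose data-to-solution map inherits an $X_{k,p}$-bound from the same elliptic bootstrap.

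The principal obstacle is the second stage, and within it the passage from the abstract eigenvalue equation on $H$ to the concrete variational problem accepted by Proposition~\ref{prop:BifE.9.1}. Because $(\cdot,\cdot)_H$ in (\ref{e:compare.0}) retains only the top-order derivatives, the eigenvalue $\lambda$ enters as a perturbation of the top symbol rather than as the full zero-order term of Proposition~\ref{prop:BifE.9.1}; checking that the resulting coefficients still satisfy~(i)--(ii) there, and that the right-hand side admits the required representation $\sum_{|\alpha|\le m}(-1)^{|\alpha|}D^\alpha f^j_\alpha$ with $f^j_\alpha$ in the correct $W^{k-2m+|\alpha|,p}$-class, is the calculation that demands care. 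Once that translation is in place the bootstrap to $W^{k,p}$-regularity is routine.
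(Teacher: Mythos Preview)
Your first stage misses the central structural requirement of \cite[Theorem~2.1]{Lu2}: one needs that the $H$-gradient $\mathbb{A}:=\nabla\mathfrak{F}_H|_{X_{k,p}}$ is a $C^1$ map from $X_{k,p}$ \emph{into $X_{k,p}$}, with $d\mathbb{A}(\vec u)=\mathbb{B}(\vec u)$. Showing that $\mathfrak{F}|_{X_{k,p}}$ is $C^{2}$ only gives a $C^1$ map $X_{k,p}\to X_{k,p}^\ast$; it does not explain why the Riesz representative in $H$ lies in the much smaller space $X_{k,p}$. The paper handles this by inverting the inner product (\ref{e:compare.0}) explicitly: since $\triangle^m$ is an isomorphism $W^{k,p}\cap W^{m,2}_0\to W^{k-2m,p}$, one has
\[
(\mathbb{A}(\vec u))^i=\triangle^{-m}\!\!\sum_{|\alpha|\le m}(-1)^{m+|\alpha|}D^\alpha\bigl(F^i_\alpha(\cdot,\vec u,\ldots,D^m\vec u)\bigr),
\]
and then the $\omega$-lemma applied to the Nemytskii maps $\vec u\mapsto F^i_\alpha(\cdot,\vec u,\ldots,D^m\vec u)$ in $W^{k-m,p}$ (using $(k-m)p>n$ and $F\in C^{k-m+2}$) yields $\mathbb{A}\in C^1(X_{k,p},X_{k,p})$. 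Without this representation you cannot even state where $\mathbb{A}(\vec u)$ lives, let alone decompose $\mathbb{B}=\mathbb{P}+\mathbb{Q}$ as operators whose ranges stay in $X_{k,p}$.

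Your second stage is in the right spirit but conflates two distinct conclusions. The claim that the negative definite space of $\mathbb{B}(\vec u^\ast)$ lies in $X_{k,p}$ is indeed the eigenfunction regularity you describe, and follows from the \emph{first} (homogeneous) part of Proposition~\ref{prop:BifE.9.1}. Condition (C2), however, is the separate statement that $\mathbb{B}(\vec u^\ast)\vec u\in X_{k,p}$ forces $\vec u\in X_{k,p}$; the paper proves this via the \emph{second} (inhomogeneous) part of Proposition~\ref{prop:BifE.9.1}, by observing that $\vec w:=\mathbb{B}(\vec u^\ast)\vec u\in X_{k,p}$ turns the equation (\ref{e:compare.2}) into a system with right-hand side $f^j_\alpha=(-1)^{|\alpha|}D^\alpha w^j\in W^{k-m,p}$ for $|\alpha|=m$ and $f^j_\alpha=0$ otherwise. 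Your ``data-to-solution map inherits an $X_{k,p}$-bound'' is not a proof: you must write the right-hand side in the form required by Proposition~\ref{prop:BifE.9.1} and check the index conditions $|\alpha|>2m-k$, which the paper does explicitly.
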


\begin{proof}
Denote by $\nabla\mathfrak{F}_H$  the gradient of $\mathfrak{F}_H$.
For $s<0$ let $W^{s,p}(\Omega)=[W_0^{-s,p'}(\Omega)]^\ast$ as usual, where $p'=p/(p-1)$.
Note that the $m$th power of the Laplace operator, $\triangle^m$, is an isomorphism from a Banach subspace
$W^{k,p}(\Omega)\cap W^{m,2}_0(\Omega)$ of $W^{k,p}(\Omega)$
to $W^{k-2m,p}(\Omega)$, and thus that its inverse, denoted by $\triangle^{-m}$, is
from $W^{k-2m,p}(\Omega)$ to $W^{k,p}(\Omega)\cap W^{m,2}_0(\Omega)$.
By (\ref{e:4.1}) and (\ref{e:compare.0}), it is easily computed that for $i=1,\cdots,N$,
\begin{equation}\label{e:compare.1}
(\nabla\mathfrak{F}_H(\vec{u}))^i=\triangle^{-m}\sum_{|\alpha|\le m}(-1)^{m+|\alpha|}D^\alpha(F^i_\alpha(\cdot,
\vec{u}(\cdot),\cdots, D^m \vec{u}(\cdot))),\quad\forall \vec{u}\in H.
\end{equation}
 As in Theorem~\ref{th:4.1},
$\nabla\mathfrak{F}_H$  has the G\^ateaux derivative
$D(\nabla\mathfrak{F}_H)(\vec{u})\in\mathscr{L}_s(H)$ at $\vec{u}\in H$
such that for any $\vec{v}, \vec{\varphi}\in H$,
$(D(\nabla\mathfrak{F}_H)(\vec{u})[\vec{v}],\vec{\varphi})_H$ is given by the right side
of (\ref{e:4.2}). Denote by  $\mathbb{B}$  the restriction of
 $D(\nabla\mathfrak{F}_H)$ to $X_{k,p}$.
For $\vec{u}\in X_{k,p}$ and $\vec{v}\in H$ let $\mathbb{B}(\vec{u})\vec{v}=((\mathbb{B}(\vec{u})\vec{v})^1,\cdots,
(\mathbb{B}(\vec{u})\vec{v})^N)$. It is easily verified that
\begin{equation}\label{e:compare.2}
(\mathbb{B}(\vec{u})\vec{v})^i= \triangle^{-m}\sum^N_{j=1}\sum_{\scriptsize\begin{array}{ll}
   &|\alpha|\le m,\\
   &|\beta|\le m
   \end{array}}(-1)^{m+|\alpha|}
D^\alpha(F^{ij}_{\alpha\beta}(\cdot, \vec{u}(\cdot),\cdots, D^m \vec{u}(\cdot))D^\beta v^j).
\end{equation}
Let $\mathbb{A}$ denote the restriction of $\nabla\mathfrak{F}_H$ to $X_{k,p}$. We have

\begin{claim}\label{cl:compare.1.2}
 $\mathbb{A}$ is a $C^1$ map from $X_{k,p}$ to itself,  and satisfies
 $d\mathbb{A}(\vec{u})[\vec{v}]=\mathbb{B}(\vec{u})\vec{v}$ for all $\vec{u}, \vec{v}\in X_{k,p}$.
   \end{claim}

We first admit this claim and postpone its proof.

For each $\vec{u}\in X_{k,p}$, we may write $\mathbb{B}(\vec{u})=\mathbb{P}(\vec{u})+ \mathbb{Q}(\vec{u})$, where
for $i=1,\cdots,N$,
\begin{eqnarray}\label{e:compare.3}
&&(\mathbb{P}(\vec{u})\vec{v})^i= \triangle^{-m}\sum^N_{j=1}\sum_{|\alpha|=|\beta|=m}(-1)^{m+|\alpha|}
D^\alpha(F^{ij}_{\alpha\beta}(\cdot, \vec{u}(\cdot),\cdots, D^m \vec{u}(\cdot))D^\beta v^j),\\
&&(\mathbb{Q}(\vec{u})\vec{v})^i=\triangle^{-m}\sum^N_{j=1}\sum_{\scriptsize\begin{array}{ll}
   &|\alpha|\le m, |\beta|\le m,\\
   &|\alpha|+|\beta|<2m
   \end{array}}D^\alpha(F^{ij}_{\alpha\beta}(\cdot, \vec{u}(\cdot),\cdots, D^m \vec{u}(\cdot))D^\beta v^j).\label{e:compare.4}
\end{eqnarray}
As in the proofs of Theorem~\ref{th:4.1},~\ref{th:4.2} (cf. \cite{Lu7}) we can derive that
$\mathbb{P}(\vec{u})$ and $\mathbb{Q}(\vec{u})$ are positive definite and
 completely continuous, respectively, and they also satisfy the condition (D) in
 \cite[page 2944]{Lu2}. By \cite[Proposition~B.2]{Lu2} we also see that
 (C1) in  \cite[page 2944]{Lu2} holds for the operator $\mathbb{B}(\vec{u}^\ast)$.

 Next, we prove the second claim.  Let $A^{ij}_{\alpha\beta}:=F^{ij}_{\alpha\beta}(\cdot, \vec{u}^\ast(\cdot),\cdots, D^m \vec{u}^\ast(\cdot))$. They sit in $C^{k-m}(\overline{\Omega},\mathbb{R}^N)$ because
  $\vec{u}^\ast\in C^k(\overline{\Omega}, \mathbb{R}^N)$.
 Let $\vec{u}\in H$ be such that $\vec{w}:=\mathbb{B}(\vec{u}^\ast)\vec{u}$ sits in $X_{k,p}$.
Then (\ref{e:compare.2}) implies that
$$
\int_\Omega\sum^N_{j=1}\sum_{|\alpha|\le m}\left[\sum^N_{i=1}\sum_{|\beta|\le m}A^{ij}_{\alpha\beta}D^\beta u^i-f^j_\alpha      \right]D^\alpha v^jdx=0,\quad\forall v\in W^{m,2}_0(\Omega,\mathbb{R}^N),
$$
where $f_\alpha^j=0$ for $|\alpha|<m$, and $f_\alpha^j=(-1)^{|\alpha|}D^\alpha w^j$ for $|\alpha|=m$.
 Note that $w^j\in W^{k,p}(\Omega)$ and $k>m$ lead to $f_\alpha^j\in W^{k-m,p}(\Omega)=W^{k-2m+|\alpha|,p}(\Omega)$
for $|\alpha|=m>2m-k$.
Since $p\ge 2$,  $\partial\Omega$ is of classes $C^{k-1,1}$ and
 (\ref{e:1.2}) implies that Proposition~\ref{prop:BifE.9.1}(ii) is satisfied,
we may use the second claim of Proposition~\ref{prop:BifE.9.1} to deduce that
$\vec{u}\in W^{k,p}(\Omega,\mathbb{R}^N)$. That is, (C2) in  \cite[page 2944]{Lu2} is fulfilled.

The final conclusion may be derived from the first claim of Proposition~\ref{prop:BifE.9.1} as above.
\end{proof}

When $n=2$,  we see from  \cite[Chapter 7, Th.4.4]{Skr3} that
 the conditions of this theorem can be satisfied if $F$ is smooth enough.

\noindent{\bf Proof of Claim~\ref{cl:compare.1.2}}.\quad
 Let $r=\dim(\Omega\times\prod^m_{k=0}\mathbb{R}^{N\times M_0(k)})=n+ (m+1)N+
  \sum^m_{k=0}M_0(k)$. For $\vec{u}\in X_{k,p}$ and $x\in\overline{\Omega}$ put ${\bf u}(x)=(x,
  \vec{u}(x),\cdots, D^m \vec{u}(x))$. Then $\Upsilon(\vec{u})={\bf u}$ defines
   an affine (and thus smooth) map $\Upsilon$ from $X_{k,p}$ to $W^{k-m, p}(\Omega,\mathbb{R}^r)$.
   Since $(k-m)p>n$ and $F^i_\alpha$ is of class $C^{k-m+1}$,  by \cite[Lemma~2.96]{Wend} (with
  ${\bf X}=W^{k-m,p}$) the map
  $$
  \Phi_{F^i_\alpha}: W^{k-m, p}(\Omega,\mathbb{R}^r)\to W^{k-m,p}(\Omega),\;{\bf u}\mapsto F^i_\alpha\circ{\bf u}
  $$
  is of class $C^1$, and $d\Phi_{F^i_\alpha}({\bf u}){\bf v}=(dF^i_\alpha\circ{\bf u}){\bf v}$
  for any ${\bf u},{\bf v}\in W^{k-m, p}(\Omega,\mathbb{R}^r)$.
  Hence
   $$
   \mathbb{A}_{i,\alpha}=\Phi_{F^i_\alpha}\circ\Upsilon:X_{k,p}\to W^{k-m,p}(\Omega),\;\vec{u}\mapsto
  F^i_\alpha(\cdot, \vec{u}(\cdot),\cdots, D^m \vec{u}(\cdot))
  $$
   is of class $C^1$ and
  $$
  d\mathbb{A}_{i,\alpha}(\vec{u})[\vec{v}]=d(\Phi_{F^i_\alpha}\circ\Upsilon)(\vec{u})[\vec{v}]=
  d(\Phi_{F^i_\alpha})(\Upsilon(\vec{u}))[d\Upsilon(\vec{u})[\vec{v}]]=(dF^i_\alpha\circ{\bf u})({\bf v}-\psi),
  $$
 where $\psi:\Omega\to \mathbb{R}^r$ is given by $\psi(x)=(x,0,\cdots,0)$ for $x\in\Omega$.
Clearly, (\ref{e:compare.1}) implies
  $$
(\mathbb{A}(\vec{u}))^i=\triangle^{-m}\sum_{|\alpha|\le m}(-1)^{m+|\alpha|}D^\alpha(\mathbb{A}_{i,\alpha}(\vec{u})).
$$
 Since $\triangle^{-m}:W^{k-2m,p}(\Omega)\to W^{k,p}(\Omega)\cap W^{m,2}_0(\Omega)$
is a Banach space isomorphism and
$D^\alpha: W^{k-m,p}(\Omega)\to W^{k-m-|\alpha|,p}(\Omega)$ is a continuous linear operator,
we deduce that $\mathbb{A}$ is a $C^1$ map from $X_{k,p}$ to itself.
The second conclusion easily follows from the above arguments.
We may also obtain it as follows.
 Denote by the inclusion $\imath:X_{k,p}\hookrightarrow H$.
 Since  $\imath\circ\mathbb{A}=\nabla\mathfrak{F}_H|_{X_{k,p}}$
  and  $(D(\nabla\mathfrak{F}_H)(\vec{u})[\vec{v}],\vec{\varphi})_H$ is equal to the right side
of (\ref{e:4.2}), it follows that $d\mathbb{A}(\vec{u})[\vec{v}]=\mathbb{B}(\vec{u})\vec{v}$.
  \hfill$\Box$\vspace{2mm}

  Let $Y$ be the Banach subspace $C^m(\overline{\Omega}, \mathbb{R}^N)\cap W^{m,2}_0({\Omega}, \mathbb{R}^N)$ of
  $C^m(\overline{\Omega}, \mathbb{R}^N)$. (Since $k\ge m+1$ and $\partial\Omega$ is of class $C^{k-1,1}$,
   we have $Y=\{\vec{u}\in C^m(\overline{\Omega}, \mathbb{R}^N)\,|\, D^s\vec{u}|_{\partial\Omega}=0,\;
   s=0,\cdots,m-1,\;D^{m-1}\vec{u}\in W^{1,2}_0\}$ by \cite[Theorem~9.17]{Bre}.)
   Let $\mathfrak{F}_Y$ denote the restriction of $\mathfrak{F}_H$ to $Y$.
  Since $F$ is of class $C^{k-m+2}$, it follows from $\omega$-lemma (cf. \cite[Lemma~2.96]{Wend}) that $\mathfrak{F}_Y$ is of class  $C^{k-m+2}$.  Define ${\bf B}:Y\to \mathscr{L}_s(H)$ by ${\bf B}(\vec{u})=D(\nabla\mathfrak{F}_H)(\vec{u})$.
  Then $d^2(\mathfrak{F}_Y)(\vec{u})[\vec{v},\vec{w}]=({\bf B}(\vec{u})\vec{v},\vec{w})_H$
  for any $\vec{u},\vec{v},\vec{w}\in Y$.

   For $\vec{u}, \vec{v}\in X_{k,p}$ (resp. $\vec{u}, \vec{v}\in Y$) and $\vec{\varphi}, \vec{\psi}\in H$, (\ref{e:4.2}) yields
  \begin{eqnarray*}
   &&((\mathbb{B}(\vec{u})-\mathbb{B}(\vec{v}))\vec{\psi},\vec{\varphi})_H\quad\hbox{(resp.
   $(({\bf B}(\vec{u})-{\bf B}(\vec{v}))\vec{\psi},\vec{\varphi})_H$)}\\
   &&=\sum^N_{i,j=1}\!\!\!\!\!\!
   \sum_{\scriptsize{\begin{array}{ll}
   &|\alpha|\le m,\\
   &|\beta|\le m
   \end{array}}}\!\!\!\int_\Omega\bigl[
  F^{ij}_{\alpha\beta}(x, \vec{u}(x),\cdots, D^m \vec{u}(x))-
  F^{ij}_{\alpha\beta}(x, \vec{v}(x),\cdots, D^m \vec{v}(x))
  \bigr]D^\beta {\psi}^j\cdot D^\alpha\varphi^i dx.
    \end{eqnarray*}
 Since $X_{k,p}$ may be continuously embedded into the space $Y$,
 the above equality implies

  \begin{claim}\label{cl:compare.1.3}
The map $\mathbb{B}:X_{k,p}\to\mathscr{L}_s(H)$ (resp. ${\bf B}:Y\to\mathscr{L}_s(H)$)
is uniformly continuous on any bounded subset
of $X_{k,p}$ (resp. $Y$).
   \end{claim}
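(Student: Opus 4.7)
The plan is to estimate the operator norm of $\mathbb{B}(\vec{u})-\mathbb{B}(\vec{v})$ (and of ${\bf B}(\vec{u})-{\bf B}(\vec{v})$) directly from the displayed integral representation, reducing everything to a sup-norm estimate on the coefficient functions $F^{ij}_{\alpha\beta}(\cdot,\vec{u}(\cdot),\ldots,D^m\vec{u}(\cdot))$. Since $k>m+n/p$, the Sobolev embedding $W^{k,p}(\Omega,\mathbb{R}^N)\hookrightarrow C^m(\overline{\Omega},\mathbb{R}^N)$ is continuous, hence $X_{k,p}\hookrightarrow Y$ is continuous, so it suffices to treat the case of ${\bf B}$ on bounded subsets of $Y$; the claim for $\mathbb{B}$ on bounded subsets of $X_{k,p}$ will follow by composition with this embedding.

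First I would fix a bounded set $\mathcal{U}\subset Y$ and put $R:=\sup_{\vec{u}\in\mathcal{U}}\|\vec{u}\|_{C^m}<\infty$. Then for every $\vec{u}\in\mathcal{U}$ and $x\in\overline{\Omega}$ the $r$-tuple ${\bf u}(x)=(x,\vec{u}(x),\ldots,D^m\vec{u}(x))$ lies in the compact set $K:=\overline{\Omega}\times\prod^{m}_{k=0}\overline{B_{R}(0)}\subset\overline{\Omega}\times\prod^m_{k=0}\mathbb{R}^{N\times M_0(k)}$. Since each $F^{ij}_{\alpha\beta}$ is continuous on $K$ by \textsf{Hypothesis}~$\mathfrak{F}_{2,N,m,n}$, it is uniformly continuous there: given any $\eta>0$ there exists $\delta>0$ such that $|F^{ij}_{\alpha\beta}(x,\xi)-F^{ij}_{\alpha\beta}(x,\xi')|<\eta$ whenever $(x,\xi),(x,\xi')\in K$ satisfy $|\xi-\xi'|<\delta$. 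Consequently, if $\vec{u},\vec{v}\in\mathcal{U}$ satisfy $\|\vec{u}-\vec{v}\|_{C^m}<\delta$, then
\begin{equation*}
\sup_{x\in\overline{\Omega}}\bigl|F^{ij}_{\alpha\beta}(x,\vec{u}(x),\ldots,D^m\vec{u}(x))-
F^{ij}_{\alpha\beta}(x,\vec{v}(x),\ldots,D^m\vec{v}(x))\bigr|<\eta
\end{equation*}
for all indices $i,j\in\{1,\ldots,N\}$ and all $|\alpha|,|\beta|\le m$.

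Next I would plug this estimate into the integral formula displayed just above the claim. Using $|D^\alpha\varphi^i|,|D^\beta\psi^j|\in L^2(\Omega)$ and the Cauchy--Schwarz inequality, I obtain
\begin{equation*}
\bigl|\bigl(({\bf B}(\vec{u})-{\bf B}(\vec{v}))\vec{\psi},\vec{\varphi}\bigr)_H\bigr|
\le \eta\sum^N_{i,j=1}\!\!\sum_{|\alpha|,|\beta|\le m}\!\!\|D^\beta\psi^j\|_{L^2}\|D^\alpha\varphi^i\|_{L^2}
\le C\eta\|\vec{\psi}\|_{m,2}\|\vec{\varphi}\|_{m,2},
\end{equation*}
for a constant $C=C(m,n,N,\Omega)$ independent of $\vec{u},\vec{v}\in\mathcal{U}$. (Recall that the $H$-norm is the full $W^{m,2}_0$-norm, which is equivalent to $\sum_{|\alpha|=m}\|D^\alpha\cdot\|_{L^2}$ by Poincar\'e.) Taking the supremum over $\vec{\psi},\vec{\varphi}$ in the unit ball of $H$ yields $\|{\bf B}(\vec{u})-{\bf B}(\vec{v})\|_{\mathscr{L}(H)}\le C\eta$, which is the desired uniform continuity on $\mathcal{U}$. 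The corresponding assertion for $\mathbb{B}$ follows because the inclusion $X_{k,p}\hookrightarrow Y$ maps bounded sets to bounded sets.

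The only potential technical obstacle would be keeping track of the Sobolev ``ceiling'' $(k-m)p>n$ that underlies the embedding $X_{k,p}\hookrightarrow Y$, but this is exactly the hypothesis $k>m+n/p$ already in force; after that the argument is a routine uniform-continuity-plus-Cauchy--Schwarz estimate, so there is no real hard step.
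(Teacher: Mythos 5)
Your proof is correct and follows essentially the same route as the paper's (implicit) argument: both reduce the claim to the displayed integral identity for $(({\bf B}(\vec{u})-{\bf B}(\vec{v}))\vec{\psi},\vec{\varphi})_H$, a sup-norm estimate on the differences of the coefficients $F^{ij}_{\alpha\beta}(\cdot,\vec{u}(\cdot),\cdots,D^m\vec{u}(\cdot))$ obtained from uniform continuity on a compact set determined by the $C^m$-bound, the Cauchy--Schwarz inequality, and the continuous embedding $X_{k,p}\hookrightarrow Y$. The only small correction is attribution: the joint continuity of $F^{ij}_{\alpha\beta}$ in $(x,\xi)$ (needed for uniform continuity on the compact set $K$) comes from the assumption in Theorem~\ref{th:compare.1} (resp.\ Theorem~\ref{th:compare.1.4}) that $F$ is of class $C^{k-m+2}$, not from \textsf{Hypothesis} $\mathfrak{F}_{2,N,m,n}$ alone, which only guarantees continuity in $\xi$ for almost every $x$.
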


  \begin{theorem}\label{th:compare.1.4}
  For a real $p\ge 2$ and an integer $k\ge m+\frac{n}{p}$,  let $\Omega\subset\R^n$ be a bounded
   domain with boundary of class $C^{k-1,1}$, $N\in\mathbb{N}$, and let
$\overline\Omega\times\prod^m_{k=0}\mathbb{R}^{N\times M_0(k)}\ni (x,
\xi)\mapsto F(x,\xi)\in\R$ be of class $C^{k-m+2}$.
 Suppose that a critical point $\vec{u}^\ast$ of $\mathfrak{F}_{X_{k,p}}$ on $X_{k,p}=W^{k,p}(\Omega,\mathbb{R}^N)\cap W^{m,2}_0(\Omega,\mathbb{R}^N)$ belongs to $C^k(\overline{\Omega}, \mathbb{R}^N)$ and that
 there exists $c>0$ such that
for any $\eta=(\eta^{i}_{\alpha})\in\R^{N\times M_0(m)}$ and for any $x\in\overline{\Omega}$,
\begin{eqnarray}\label{e:compare.6}
\sum^N_{i,j=1}\sum_{|\alpha|=|\beta|=m}F^{ij}_{\alpha\beta}
(x,\vec{u}^\ast(x),D\vec{u}^\ast(x),\cdots,D^m\vec{u}^\ast(x))\eta^i_\alpha\eta^j_\beta\ge
c\sum^N_{i=1}\sum_{|\alpha|= m}(\eta^i_\alpha)^2.
\end{eqnarray}
For $\vec{u}\in X_{k,p}$ (resp. $\vec{u}\in Y$) and $\vec{v}\in H$, let $(\mathbb{A}(\vec{u}))^i$ and
$({\bf B}(\vec{u})\vec{v})^i$ be still defined by the right side of (\ref{e:compare.1}) and
(\ref{e:compare.2}), respectively. Then maps $\mathbb{A}:X_{k,p}\to X_{k,p}$ and ${\bf B}:Y\to\mathscr{L}(H)$
 satisfy Claims~\ref{cl:compare.1.2},~\ref{cl:compare.1.3}, and so
$(\mathfrak{F}_{X_{k,p}}, \mathbb{A}, {\bf B},  X_{k,p}, Y)$ fulfills the conditions of  \cite[Theorem~2.5]{JM}  near $\vec{u}^\ast\in X_{k,p}$. Moreover the negative definite space of
 ${\bf B}(\vec{u}^\ast)\in\mathscr{L}_s(H)$ is contained in $X_{k,p}$.
    \end{theorem}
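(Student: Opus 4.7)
The plan is to verify, in order, the three data items required by \cite[Theorem~2.5]{JM}: (a) Claim~\ref{cl:compare.1.2} for the restricted gradient $\mathbb{A}:X_{k,p}\to X_{k,p}$, which gives the $C^1$-identity $d\mathbb{A}(\vec{u})=\mathbb{B}(\vec{u})$; (b) Claim~\ref{cl:compare.1.3} for ${\bf B}:Y\to\mathscr{L}(H)$; and (c) that the negative definite subspace of the self-adjoint operator ${\bf B}(\vec{u}^\ast)\in\mathscr{L}_s(H)$ lies in $X_{k,p}$. The $C^{k-m+2}$-smoothness of $\mathfrak{F}_{X_{k,p}}$ will drop out as a by-product of (a).

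For (a) I would run the same scheme as in the proof of Claim~\ref{cl:compare.1.2}. Because $k\ge m+n/p$ and $\partial\Omega\in C^{k-1,1}$, the space $W^{k-m,p}(\Omega,\R^r)$ embeds into $C^0(\overline{\Omega},\R^r)$ and is a Banach algebra, while $F\in C^{k-m+2}$ makes every $F^i_\alpha$ of class $C^{k-m+1}$; so \cite[Lemma~2.96]{Wend} yields $C^1$-smoothness of each Nemytskii map $\Phi_{F^i_\alpha}:W^{k-m,p}(\Omega,\R^r)\to W^{k-m,p}(\Omega)$. Composing with the affine continuous jet map $\Upsilon(\vec{u})=(\cdot,\vec{u},\ldots,D^m\vec{u})$ and the isomorphism $\triangle^{-m}:W^{k-2m,p}(\Omega)\to X_{k,p}$ produces $\mathbb{A}\in C^1(X_{k,p},X_{k,p})$ with the desired differential. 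For (b), given $\vec{u},\vec{v}\in Y$ the bilinear form displayed just above Claim~\ref{cl:compare.1.3} combined with Cauchy-Schwarz gives
\[
\|{\bf B}(\vec{u})-{\bf B}(\vec{v})\|_{\mathscr{L}(H)}\le C(N,m)\max_{i,j,\alpha,\beta}\|F^{ij}_{\alpha\beta}\circ\Upsilon(\vec{u})-F^{ij}_{\alpha\beta}\circ\Upsilon(\vec{v})\|_{L^\infty(\Omega)},
\]
and since the jet map $Y\to C^0(\overline{\Omega},\R^r)$ is affine and continuous while each $F^{ij}_{\alpha\beta}$ (being $C^1$) is uniformly continuous on compact subsets of its jet domain, the right-hand side is uniformly small on $Y$-bounded sets.

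For (c), set $A^{ij}_{\alpha\beta}:=F^{ij}_{\alpha\beta}(\cdot,\vec{u}^\ast,\ldots,D^m\vec{u}^\ast)$; these lie in $C^{k-m}(\overline{\Omega})$ (chain rule, using $\vec{u}^\ast\in C^k$ and $F\in C^{k-m+2}$) and in particular satisfy the Hölder-regularity hypothesis~(i) of Proposition~\ref{prop:BifE.9.1} for $2m-k<|\alpha|\le m$, while hypothesis~(\ref{e:compare.6}) is exactly the Legendre-Hadamard ellipticity required by (ii). For an eigenfunction $\vec{v}\in H$ of ${\bf B}(\vec{u}^\ast)$ with eigenvalue $\lambda\le 0$, writing $({\bf B}(\vec{u}^\ast)\vec{v}-\lambda\vec{v},\vec{\varphi})_H=0$ out with (\ref{e:compare.0}) and absorbing $-\lambda$ into the top-order diagonal coefficients gives a weak elliptic system $\sum_{i,j,\alpha,\beta}\int\tilde{A}^{ij}_{\alpha\beta}D^\beta v^i D^\alpha\varphi^j\,dx=0$ of the form in Proposition~\ref{prop:BifE.9.1}, where $\tilde{A}^{ij}_{\alpha\beta}$ still satisfies (i) (a constant was subtracted) and where the new top-order ellipticity constant is $c-\lambda\ge c>0$. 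The first claim of Proposition~\ref{prop:BifE.9.1} (applied with parameter $0$) then gives $\vec{v}\in W^{k,p}(\Omega,\R^N)\cap W^{m,2}_0(\Omega,\R^N)=X_{k,p}$. Because \cite[Proposition~B.2]{Lu2} forces the non-positive spectrum of ${\bf B}(\vec{u}^\ast)$ to consist of finitely many eigenvalues of finite multiplicity, the whole negative definite space is spanned by such eigenfunctions and so lies in $X_{k,p}$.

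The main technical obstacle is the careful index bookkeeping in (c): one must verify the required Hölder regularity of $A^{ij}_{\alpha\beta}$ at each multi-index level $2m-k<|\alpha|\le m$ through the composition $F^{ij}_{\alpha\beta}\circ\Upsilon(\vec{u}^\ast)$, and confirm that the coefficients produced after absorbing $-\lambda$ into the top-order diagonal still fit the format of Proposition~\ref{prop:BifE.9.1}. Both are chain-rule computations in which each $D^s$ applied to the composite costs one derivative of $F^{ij}_{\alpha\beta}$ plus derivatives of $\vec{u}^\ast$ of order at most $s+m$, staying well within the announced smoothness thresholds $F\in C^{k-m+2}$ and $\vec{u}^\ast\in C^k$. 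Once (a)-(c) are in hand, the conclusion that $(\mathfrak{F}_{X_{k,p}},\mathbb{A},{\bf B},X_{k,p},Y)$ fits \cite[Theorem~2.5]{JM} is essentially a bookkeeping matter already carried out for Theorem~\ref{th:compare.1} and Claims~\ref{cl:compare.1.2}--\ref{cl:compare.1.3}.
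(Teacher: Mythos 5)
Your proposal is correct and follows essentially the same route as the paper's own (largely implicit) justification: Claim~\ref{cl:compare.1.2} via the $\omega$-lemma/Nemytskii argument composed with $\triangle^{-m}$, Claim~\ref{cl:compare.1.3} via the displayed coefficient-difference estimate together with uniform continuity of the $F^{ij}_{\alpha\beta}$ on the compact jet range of $Y$-bounded sets, and the negative-space claim by rewriting the eigenvalue equation for ${\bf B}(\vec{u}^\ast)$ (absorbing $-\lambda$ into the top-order diagonal coefficients) and applying Proposition~\ref{prop:BifE.9.1} plus the $P+Q$ spectral structure, exactly as the paper does at the end of the proof of Theorem~\ref{th:compare.1}. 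The only caveat, inherited from the paper's statement rather than introduced by you, is that the embedding/Banach-algebra step really requires $(k-m)p>n$ (as used in the proof of Claim~\ref{cl:compare.1.2}), so the borderline case $k=m+n/p$ would need the strict inequality assumed in Theorem~\ref{th:compare.1}.
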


 \begin{corollary}\label{cor:compare.1.4}
  For a real $p\ge 2$ and an integer $k\ge m+\frac{n}{p}$,  let $\Omega\subset\R^n$ be a bounded
   domain with boundary of class $C^{k-1,1}$, $N\in\mathbb{N}$, and let
$\overline\Omega\times\prod^m_{k=0}\mathbb{R}^{N\times M_0(k)}\ni (x,
\xi)\mapsto F(x,\xi)\in\R$ be of class $C^{k-m+2}$. Then\\
{\bf (i)} if a critical point $\vec{u}^\ast$ of $\mathfrak{F}_{X_{k,p}}$ on $X_{k,p}=W^{k,p}(\Omega,\mathbb{R}^N)\cap W^{m,2}_0(\Omega,\mathbb{R}^N)$ belongs to $C^k(\overline{\Omega}, \mathbb{R}^N)$ and
(\ref{e:compare.6}) also holds for some $c>0$ and for all $\eta=(\eta^{i}_{\alpha})\in\R^{N\times M_0(m)}$ and $x\in\overline{\Omega}$, we have
   $C_\ast(\mathfrak{F}_{X_{k,p}},\vec{u}^\ast;{\bf K})=C_\ast(\mathfrak{F}_{Y},\vec{u}^\ast;{\bf K})$
provided that $\vec{u}^\ast$ is  an isolated critical point for $\mathfrak{F}_Y$ (and so for $\mathfrak{F}_{X_{k,p}}$);\\
   {\bf (ii)} if \textsf{Hypothesis} $\mathfrak{F}_{2,N,m,n}$ is also satisfied
     and a critical point $\vec{u}^\ast$ of $\mathfrak{F}_H$ (as in Theorem~\ref{th:compare.1})
     belongs to $C^k(\overline{\Omega}, \mathbb{R}^N)$,
 we have $C_\ast(\mathfrak{F}_H,\vec{u}^\ast;{\bf K})=
C_\ast(\mathfrak{F}_{X_{k,p}},\vec{u}^\ast;{\bf K})$
provided that $\vec{u}^\ast$ is  an isolated critical
 point for $\mathfrak{F}_H$ (and so for  $\mathfrak{F}_{X_{k,p}}$).
   \end{corollary}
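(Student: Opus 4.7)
The plan is to prove both parts by reducing to the same finite-dimensional variational problem via appropriate splitting lemmas and then invoking the shifting theorem in each setting; the key point is uniqueness of the reduction map $\varphi$, which forces the reduced functionals on the kernel $H^0$ to coincide across the three function spaces $H$, $X_{k,p}$, $Y$.

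For part~(ii), I would apply \cite[Theorem~2.1]{Lu2} to the triple $(\mathfrak{F}, X_{k,p}, H)$ near $\vec{u}^\ast$, whose hypotheses are verified by Theorem~\ref{th:compare.1}. Since $\vec{u}^\ast\in C^k(\overline{\Omega},\mathbb{R}^N)\subset X_{k,p}$ and condition~(C2) holds, one obtains an open neighborhood $U_H$ of $\vec{u}^\ast$ in $H$, together with the finite-dimensional kernel $H^0\subset X_{k,p}$ of $D(\nabla\mathfrak{F}_H)(\vec{u}^\ast)$ and a unique continuous reduction map $\varphi_H:B_{H^0}(0,\epsilon)\to (H^0)^\perp$ characterized by $(I-P^0)\nabla\mathfrak{F}_H(\vec{u}^\ast+z+\varphi_H(z))=0$, yielding a reduced $C^1$ functional $\mathfrak{F}_H^\circ(z)=\mathfrak{F}_H(\vec{u}^\ast+z+\varphi_H(z))$. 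In parallel, Theorem~\ref{th:compare.1.4} verifies the hypotheses of \cite[Theorem~2.5]{JM} for $(\mathfrak{F}_{X_{k,p}},\mathbb{A},{\bf B},X_{k,p},Y)$, producing a splitting of $\mathfrak{F}_{X_{k,p}}$ near $\vec{u}^\ast$ with the same kernel $H^0$ and a reduction map $\varphi_{X_{k,p}}$ into $Y$. I would then observe that both $\varphi_H$ and $\varphi_{X_{k,p}}$ satisfy the identical characterizing equation $(I-P^0)\nabla\mathfrak{F}(\vec{u}^\ast+z+\varphi(z))=\theta$ on a common $H^0$-ball, so the uniqueness clause in Theorem~\ref{th:S.1.2} (applied in $H$) forces $\varphi_H(z)=\varphi_{X_{k,p}}(z)$, hence $\mathfrak{F}_H^\circ=\mathfrak{F}_{X_{k,p}}^\circ$ as $C^1$ functions on a neighborhood of $0\in H^0$. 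Applying the shifting theorem (Theorem~\ref{th:S.1.3}) to $\mathfrak{F}_H$ and the analogous shifting result from \cite{JM} to $\mathfrak{F}_{X_{k,p}}$—both with the same Morse index $\mu$, because Theorem~\ref{th:compare.1} places the negative definite space of $D(\nabla\mathfrak{F}_H)(\vec{u}^\ast)$ inside $X_{k,p}$—yields $C_q(\mathfrak{F}_H,\vec{u}^\ast;{\bf K})\cong C_{q-\mu}(\mathfrak{F}_H^\circ,0;{\bf K})=C_{q-\mu}(\mathfrak{F}_{X_{k,p}}^\circ,0;{\bf K})\cong C_q(\mathfrak{F}_{X_{k,p}},\vec{u}^\ast;{\bf K})$.

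For part~(i), I would run the same scheme between $X_{k,p}$ and $Y$. Since $X_{k,p}\hookrightarrow Y$ continuously, the functional $\mathfrak{F}_Y$ restricts from a functional on a larger Banach space, and Theorem~\ref{th:compare.1.4} provides exactly the data $(\mathfrak{F}_{X_{k,p}},\mathbb{A},{\bf B},X_{k,p},Y)$ needed for \cite[Theorem~2.5]{JM}; applying it gives a splitting of $\mathfrak{F}_{X_{k,p}}$ about $\vec{u}^\ast$ with reduction $\varphi_{X_{k,p}}:B_{H^0}(0,\epsilon)\to Y$. On the $Y$ side, since $\mathfrak{F}_Y$ is genuinely $C^{k-m+2}$ by the $\omega$-lemma and the second derivative ${\bf B}(\vec{u})=D(\nabla\mathfrak{F}_H)(\vec{u})$ acts continuously as in Claim~\ref{cl:compare.1.3}, the classical Gromoll–Meyer splitting is directly available on $Y$, with reduction $\varphi_Y$ into the appropriate complement, again uniquely characterized by $(I-P^0)\nabla\mathfrak{F}(\vec{u}^\ast+z+\varphi(z))=0$ in $H$. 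Uniqueness of $\varphi$ again forces $\varphi_{X_{k,p}}=\varphi_Y$ on $B_{H^0}(0,\epsilon)$ and hence $\mathfrak{F}_{X_{k,p}}^\circ=\mathfrak{F}_Y^\circ$. The shifting theorems then yield equal critical groups because the Morse index $\mu$ is again unambiguous: $H^-$ lies in $X_{k,p}\subset Y$ and the negative-definite bilinear form is identical, being induced by the same $H$-inner product bilinear form.

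The main obstacle I expect is the rigorous identification $\varphi_H=\varphi_{X_{k,p}}=\varphi_Y$. The uniqueness of $\varphi_H$ is only guaranteed among continuous maps into $(H^0)^\perp\subset H$ of small norm in $H$, while $\varphi_{X_{k,p}}$ is a priori only small in $X_{k,p}$ and $\varphi_Y$ only small in $Y$; the compatibility hinges on the continuous embeddings $X_{k,p}\hookrightarrow Y\hookrightarrow H$ and on the fact that, after possibly shrinking $\epsilon$, the values $\varphi_{X_{k,p}}(z),\varphi_Y(z)$ fall into an $H$-ball where the uniqueness clause of Theorem~\ref{th:S.1.2} (applied in $H$ to $\mathfrak{F}_H$) takes over. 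A secondary technical point is to confirm that the Morse indices computed by the three splitting theorems coincide—this follows from Theorem~\ref{th:compare.1}'s assertion that $H^-\subset X_{k,p}$ (and hence $\subset Y$) together with the fact that the bilinear form $d^2\mathfrak{F}_H(\vec{u}^\ast)$ restricts to $d^2\mathfrak{F}_{X_{k,p}}(\vec{u}^\ast)$ and to $d^2\mathfrak{F}_Y(\vec{u}^\ast)$ via the inclusion maps, so that their negative cones agree.
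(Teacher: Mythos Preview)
Your approach is essentially the mechanism underlying the cited results, but the paper's proof is far shorter: it simply invokes packaged corollaries. For (i), Theorem~\ref{th:compare.1.4} verifies the hypotheses of \cite[Theorem~2.5]{JM}, and then \cite[Corollary~2.8]{JM} directly yields $C_\ast(\mathfrak{F}_{X_{k,p}},\vec{u}^\ast;{\bf K})=C_\ast(\mathfrak{F}_{Y},\vec{u}^\ast;{\bf K})$. For (ii), Theorem~\ref{th:compare.1} verifies the hypotheses of \cite[Theorem~2.1]{Lu2}, and then \cite[Remark~2.2(i) and Corollary~2.6]{Lu2} give $C_\ast(\mathfrak{F}_H,\vec{u}^\ast;{\bf K})=C_\ast(\mathfrak{F}_{X_{k,p}},\vec{u}^\ast;{\bf K})$. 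These cited corollaries already encapsulate the uniqueness-of-reduction and shifting arguments you spell out, so you are effectively re-deriving their content rather than taking a different route.

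One genuine concern in your version of part~(i): you appeal to ``the classical Gromoll--Meyer splitting directly available on $Y$'', but $Y=C^m(\overline{\Omega},\mathbb{R}^N)\cap W^{m,2}_0(\Omega,\mathbb{R}^N)$ with the $C^m$-norm is a Banach space, not a Hilbert space, so the classical splitting lemma does not apply without additional structure. This is precisely the obstacle that the framework of \cite{JM} is built to handle, and why \cite[Corollary~2.8]{JM} is the right citation: it delivers the critical-group equality between $X_{k,p}$ and $Y$ from the splitting on $X_{k,p}$ alone, without requiring a separate splitting theorem on $Y$. Your uniqueness-matching argument between $\varphi_{X_{k,p}}$ and $\varphi_Y$ therefore lacks a solid anchor on the $Y$ side; the cleaner path is to rely on the JM machinery as the paper does.
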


By Theorem~\ref{th:compare.1.4}, (i) follows from \cite[Corollary~2.8]{JM}.
Using Theorem~\ref{th:compare.1} we may obtain (ii) from Remark~2.2(i) and Corollary~2.6 in
\cite{Lu2}.

Finally, we state the following more general version of \cite[Theorems~2.1,2.2]{BoBu}.

\begin{theorem}\label{th:compare.1.6}
  For a real $p\ge 2$ and an integer $k> m+\frac{n}{p}$,  let $\Omega\subset\R^n$ be a bounded
   domain with boundary of class $C^{k-1,1}$, $N\in\mathbb{N}$, and let
   $\overline\Omega\times\prod^m_{k=0}\mathbb{R}^{N\times M_0(k)}\ni (x,
\xi)\mapsto F(x,\xi)\in\R$ be of class $C^{k-m+3}$.
 As above, we have the functional $\mathfrak{F}_{X_{k,p}}$ on
 $X_{k,p}=W^{k,p}(\Omega,\mathbb{R}^N)\cap W^{m,2}_0(\Omega,\mathbb{R}^N)$,
 which is of class $C^{k-m+3}$  by \cite[Lemma~2.96]{Wend} (with
  ${\bf X}=C^0$) as in the proof of Claim~\ref{cl:compare.1.2}).
Suppose that a critical point of $\mathfrak{F}_{X_{k,p}}$, $\vec{u}^\ast$, belongs to
$C^k(\overline{\Omega}, \mathbb{R}^N)$ and that there exists $c>0$ such that
(\ref{e:compare.6}) holds for any $\eta=(\eta^{i}_{\alpha})\in\R^{N\times M_0(m)}$ and for any $x\in\overline{\Omega}$.
Let $\mathbb{B}(\vec{u}^\ast)$ be given by (\ref{e:compare.2}). Then
it has finite dimensional kernel and negative definite space, $H^0$ and $H^-$, which are contained
in $X_{k,p}$. Denote by $X^+_{k,p}$ the intersection of $X_{k,p}$ with the positive definite space
of $\mathbb{B}(\vec{u}^\ast)$, and by $P^0, P^-, P^+$ the projections onto $H^0, H^-, X^+_{k,p}$
yielded by the Banach space direct sum decomposition $X_{k,p}=H^0\oplus H^-\oplus X^+_{k,p}$.
Then we have:
\begin{description}
\item[(i)] if $H^0=\{\theta\}$,  there exists a $C^1$ diffeomorphism $\varphi:U\to X_{k,p}$
in some neighborhood $U\subset X_{k,p}$ of zero  such that $\varphi(\theta)=\theta$ and
$$
\mathfrak{F}_{X_{k,p}}(\varphi(\vec{u})+ \vec{u}^\ast)=\|P^+\vec{u}\|_H^2-\|P^-\vec{u}\|_H^2+
\mathfrak{F}_{X_{k,p}}(\vec{u}^\ast)\quad\forall\vec{u}\in U;
$$
\item[(ii)] if $H^0\ne\{\theta\}$, there exist $\epsilon>0$, a (unique) $C^1$ map
$\mathfrak{h}: B_{H^0}(\theta,\epsilon)\to X_{k,p}^+\oplus H^-$
satisfying $\mathfrak{h}(\theta)=\theta$ and
$(P^++P^-)\nabla\mathfrak{F}_{X_{k,p}}(\vec{u}^\ast+ z+ \mathfrak{h}(z))=0\;\forall z\in B_{H^0}(\theta,\epsilon)$,
and a $C^1$ diffeomorphism $\varphi:U\to X_{k,p}$
in some neighborhood $U\subset X_{k,p}$ of zero  such that $\varphi(\theta)=\theta$ and
\begin{eqnarray*}
\mathfrak{F}_{X_{k,p}}(\varphi(\vec{u})+ \vec{u}^\ast)=\|P^+\vec{u}\|_H^2-\|P^-\vec{u}\|_H^2+
\mathfrak{F}_{X_{k,p}}^\circ(P^0\vec{u})\quad\forall\vec{u}\in U,
\end{eqnarray*}
where $\nabla\mathfrak{F}_{X_{k,p}}$ is the gradient of $\mathfrak{F}_{X_{k,p}}$ with respect to
the inner product in (\ref{e:compare.0}), and
$\mathfrak{F}_{X_{k,p}}^\circ$ is a $C^2$ map on $B_{H^0}(\theta,\epsilon)$ defined by
$\mathfrak{F}_{X_{k,p}}^\circ(z)=\mathfrak{F}_{X_{k,p}}(z+ \mathfrak{h}(z)+\vec{u}^\ast)$,
which has zero as a critical point.
\end{description}
 \end{theorem}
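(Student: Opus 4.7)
The plan is to reduce Theorem~\ref{th:compare.1.6} to the classical Banach-space Morse and splitting lemmas of Borisovich--Bugrii~\cite{BoBu}, applied to the $C^{k-m+3}$ functional $\mathfrak{F}_{X_{k,p}}$ at the isolated critical point $\vec{u}^\ast$. The three structural ingredients to verify are: (A) the finite-dimensionality of $H^0$ and $H^-$ together with their inclusion in $X_{k,p}$, (B) the topological direct sum $X_{k,p}=H^0\oplus H^-\oplus X_{k,p}^+$, and (C) the fact that the restricted Hessian induces a bounded linear isomorphism of $X_{k,p}^+\oplus H^-$ onto itself whose associated bilinear form is positive definite on $X_{k,p}^+$ and negative definite on $H^-$.

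For (A), finite-dimensionality follows from standard spectral theory once one observes that $\mathbb{B}(\vec{u}^\ast)\in\mathscr{L}_s(H)$ is a compact perturbation of a positive definite operator (the top-order part of (\ref{e:4.2}) is coercive by the ellipticity assumption (\ref{e:compare.6})); this is exactly the situation of the paragraph following (\ref{e:S.1.1}). The inclusion $H^0\oplus H^-\subset X_{k,p}$ will be obtained from Proposition~\ref{prop:BifE.9.1}: any $\vec{u}$ in $H^0$ or $H^-$ weakly satisfies, for some $\lambda\le 0$, a system of the form treated in that proposition with bounded measurable coefficients $A^{ij}_{\alpha\beta}=F^{ij}_{\alpha\beta}(\cdot,\vec{u}^\ast(\cdot),\ldots,D^m\vec{u}^\ast(\cdot))$; since $\vec{u}^\ast\in C^k(\overline\Omega,\mathbb{R}^N)$, the top-order coefficients belong to $C^{k-m}(\overline\Omega)$ and satisfy the ellipticity hypothesis of Proposition~\ref{prop:BifE.9.1}, so $\vec{u}\in W^{k,p}(\Omega,\mathbb{R}^N)$. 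Step (B) is then immediate: $H^0\oplus H^-$ is closed in $X_{k,p}$ by finite-dimensionality, $X_{k,p}^+$ is closed by definition, and for $\vec{u}\in X_{k,p}$ the decomposition $\vec{u}=P^0\vec{u}+P^-\vec{u}+(\vec{u}-P^0\vec{u}-P^-\vec{u})$ places the last summand in $X_{k,p}^+$ by (A).

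The crux of the argument is (C), which I expect to be the main obstacle because it requires carefully bridging the Hilbert-space spectral estimates with the Banach-space elliptic regularity. Self-adjointness of $\mathbb{B}(\vec{u}^\ast)$ on $H$ makes both $H^0\oplus H^-$ and its $H$-orthogonal complement invariant; since Claim~\ref{cl:compare.1.2} already gives $\mathbb{B}(\vec{u}^\ast):X_{k,p}\to X_{k,p}$, it follows that $\mathbb{B}(\vec{u}^\ast)$ preserves $X_{k,p}^+=X_{k,p}\cap(H^0\oplus H^-)^{\perp}$. By (\ref{e:S.1.1}), $\mathbb{B}(\vec{u}^\ast)$ is uniformly positive on $(H^0\oplus H^-)^{\perp}$ inside $H$ and hence an $H$-isomorphism there; given $\vec{f}\in X_{k,p}^+$, the unique $H$-preimage $\vec{u}$ then lies in $X_{k,p}$ by a second application of Proposition~\ref{prop:BifE.9.1}, and $H$-orthogonality places it in $X_{k,p}^+$. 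Combined with the identification on $H^-$, this shows $(P^++P^-)\mathbb{B}(\vec{u}^\ast):X_{k,p}^+\oplus H^-\to X_{k,p}^+\oplus H^-$ is a bounded linear isomorphism with the required sign structure.

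Given (A)--(C), the remainder is routine. The $C^{k-m+2}$ map $(z,w)\mapsto(P^++P^-)\mathbb{A}(\vec{u}^\ast+z+w)$ from $B_{H^0}(\theta,\epsilon)\times(X_{k,p}^+\oplus H^-)$ to $X_{k,p}^+\oplus H^-$ vanishes at $(\theta,\theta)$ and has invertible partial derivative in $w$ by (C); the Banach-space implicit function theorem produces the unique map $\mathfrak{h}$ of statement (ii), which inherits $C^1$-regularity from the equation, and the reduced functional $\mathfrak{F}_{X_{k,p}}^\circ(z)=\mathfrak{F}_{X_{k,p}}(\vec{u}^\ast+z+\mathfrak{h}(z))$ is correspondingly of class $C^2$. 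Applying the classical $C^2$ Banach-space Morse lemma \cite{BoBu} to $v\mapsto \mathfrak{F}_{X_{k,p}}(\vec{u}^\ast+z+\mathfrak{h}(z)+v)-\mathfrak{F}_{X_{k,p}}^\circ(z)$ on $X_{k,p}^+\oplus H^-$, with parameter $z\in H^0$ (its $v$-Hessian at $v=\theta$ being precisely the isomorphism produced in (C)), yields the diffeomorphism $\varphi$ and the asserted normal form. Statement (i) is the special case $H^0=\{\theta\}$, for which the $z$-variable drops out, $\mathfrak{h}$ disappears, and one applies the Morse lemma directly to $\mathfrak{F}_{X_{k,p}}$ on $X_{k,p}=H^-\oplus X_{k,p}^+$.
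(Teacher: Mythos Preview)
Your proposal is essentially correct and reaches the same conclusion, but the route differs from the paper's in an interesting way. The paper does not carry out steps (A)--(C) and the Lyapunov--Schmidt reduction by hand; instead it verifies the \emph{abstract} hypotheses of Theorems~1.1 and~1.2 in \cite{BoBu} and then simply cites those results. Concretely, the paper (i) upgrades Claim~\ref{cl:compare.1.2} to show $\mathbb{A}:X_{k,p}\to X_{k,p}$ is $C^2$ (using that $F$ is $C^{k-m+3}$), hence $\mathbb{B}:X_{k,p}\to\mathscr{L}(X_{k,p})$ is $C^1$; (ii) combines this with Claim~\ref{cl:compare.1.3} to deduce that $\mathfrak{F}_{X_{k,p}}$ is $(B(\vec{u}^\ast,r),H)$-\emph{regular} in the sense of \cite{BoBu}; and (iii) uses Proposition~\ref{prop:BifE.9.1} to show that the spectrum of the complexification $\mathbb{B}(\vec{u}^\ast)^{\mathbb C}\in\mathscr{L}(X_{k,p})$ (or that spectrum minus $\{0\}$) is bounded away from the imaginary axis. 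These are precisely the inputs Bobylev--Burman require, and the conclusions (i)--(ii) of Theorem~\ref{th:compare.1.6} then drop out of \cite[Theorems~1.1,~1.2]{BoBu}.

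Your approach instead unwinds what \cite{BoBu} does internally: you establish the $X_{k,p}$-isomorphism property of $\mathbb{B}(\vec{u}^\ast)$ on $X_{k,p}^+\oplus H^-$ directly via elliptic regularity (Proposition~\ref{prop:BifE.9.1}), then run the implicit function theorem and a parametrized Morse lemma by hand. This is self-contained and makes the role of Proposition~\ref{prop:BifE.9.1} very transparent; the paper's route is shorter but depends on the reader knowing the $(B,H)$-regularity machinery and the spectral hypothesis of \cite{BoBu}. Note that your step (C) and the paper's spectral condition (iii) are really two faces of the same coin: both ultimately rest on the same regularity bootstrap (this is exactly the ``(C2)'' verification in the proof of Theorem~\ref{th:compare.1}), and your surjectivity argument for $\mathbb{B}(\vec{u}^\ast)|_{X_{k,p}^+}$ is the concrete content behind the paper's assertion that the spectrum avoids the imaginary axis.
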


In the present case, for $\vec{u}\in X_{k,p}$  and $\vec{v}\in H$, we still assume that $(\mathbb{A}(\vec{u}))^i$ and $(\mathbb{B}(\vec{u})\vec{v})^i$ are defined by the right side of (\ref{e:compare.1}) and (\ref{e:compare.2}), respectively.
Then $d\mathfrak{F}_{X_{k,p}}(\vec{u})[\vec{w}]=(\mathbb{A}(\vec{u}),\vec{w})_H\;\forall\vec{w}\in X_{k,p}$,
 and Claims~\ref{cl:compare.1.2},~\ref{cl:compare.1.3} also hold
 for maps $\mathbb{A}:X_{k,p}\to X_{k,p}$ and $\mathbb{B}:X_{k,p}\to\mathscr{L}(H)$, respectively.
  Claim~\ref{cl:compare.1.2} implies that $\mathbb{B}(\vec{u})$ restricts to an element in $\mathscr{L}(X_{k,p})$,
  still denoted by $\mathbb{B}(\vec{u})$, and that $\mathbb{B}:X_{k,p}\to\mathscr{L}(X_{k,p})$ is  $C^0$.
 Moreover, if $F$ is of class $C^{k-m+3}$, the map
  $\Phi_{F^i_\alpha}: W^{k-m, p}(\Omega,\mathbb{R}^r)\to W^{k-m,p}(\Omega)$
  in the proof of Claim~\ref{cl:compare.1.2} will be of class $C^2$, and so is
 $\mathbb{A}$. This implies that $\mathbb{B}:X_{k,p}\to\mathscr{L}_s(X_{k,p})$ is $C^1$.
For a $C^2$ map $A$ from Banach spaces $X$ to $Y$ and any fixed $x_0\in X$
it easily follows from the Hahn-Banach theorem and the mean value theorem that
there exists a ball $B(x_0, r)\subset X$ centred at $x_0$ such that
$A$ is  uniformly continuously differentiable on $B(x_0, r)$.
These  and Claim~\ref{cl:compare.1.3} show that
$\mathfrak{F}_{X_{k,p}}$ is $(B(\vec{u}^\ast, r), H)$-regular for some ball
$B(\vec{u}^\ast, r)\subset X_{k,p}$. Using Proposition~\ref{prop:BifE.9.1}
we can also prove that for the  spectrum $\sigma(\mathbb{B}(\vec{u})^C)$ of
the complexification of $\mathbb{B}(\vec{u})\in\mathscr{L}_s(X_{k,p})$
either $\sigma(\mathbb{B}(\vec{u})^C)$ or $\sigma(\mathbb{B}(\vec{u})^C)\setminus\{0\}$
is bounded away from the imaginary axis, see \cite[Theorem~7.17]{Lu8} for some related proof details.
Hence Theorems~1.1,1.2 in \cite{BoBu} lead to Theorem~\ref{th:compare.1.6}.

In applications, we may use the regularity results for solutions of the Euler-Lagrangian
equations or systems to modify $F$ suitably so that useful information can be obtained
by combing the theories developed in this paper with results in this subsection.
We expect that they can be used in studies of geometric variational problems such as
minimal surfaces and harmonic maps.

\appendix
\section{Appendix:\quad
Comparing  {\textsf{Hypothesis} $\mathfrak{F}_{2,N,1,n}$} with controllable growth conditions}\label{app:A}\setcounter{equation}{0}

It is easily checked that {\textsf{Hypothesis} $\mathfrak{F}_{2,N,1,n}$} for $n\ge 2$ may be equivalently formulated  as

\noindent{\textsf{Hypothesis} $\mathfrak{F}_{2,N,1,n}$}.\quad
 Let  $z=(z_1,\cdots,z_N)\in \mathbb{R}^{N}$,  $p=\left(p^i_\alpha\right)\in \mathbb{R}^{N\times n}$, where $1\le i\le N$ and
 $\alpha\in \mathbb{N}_0^n$ with $|\alpha|=1$. Let
$\overline\Omega\times\mathbb{R}^N\times\mathbb{R}^{N\times n}\ni (x, z,p)\mapsto F(x, z,p)\in\R$
be twice continuously differentiable in $(z,p)$ for almost all $x$,
measurable in $x$ for all values of $(z,p)$, and $F(\cdot,z,p)\in L^1(\Omega)$ for $(z,p)=0$.
Let $\kappa_n=2n/(n-2)$ for $n>2$, and $\kappa_n\in (2,\infty)$ for $n=2$.
The derivatives of $F$ fulfill  the following properties:\\
{\bf (i)} $F_{z_i}(\cdot, 0)\in L^{\kappa_n/(\kappa_n-1)}$ and $F_{p^i_\alpha}(\cdot, 0)\in L^{2}$ for $i=1,\cdots,N$ and $|\alpha|=1$.\\
{\bf (ii)} There exist positive constants  $\mathfrak{g}_1$,  $\mathfrak{g}_2$ and
$s\in (0, \frac{\kappa_n-2}{\kappa_n})$, $r_\alpha\in (0,\frac{\kappa_n-2}{2\kappa_n})$ for each $\alpha\in \mathbb{N}_0^n$ with $|\alpha|=1$,  such that for $i,j=1,\cdots,N$, $|\alpha|=|\beta|=1$,
\begin{eqnarray*}
&|F_{p^i_\alpha p^j_\beta}(x,z,p)|\le \mathfrak{g}_1\quad\hbox{and}\\
&|F_{p^i_\alpha z_j}(x,z,p)|\le \mathfrak{g}_1\left(1+\sum^N_{l=1}|z_l|^{\kappa_n}+
\sum^N_{k=1}|p^k_\alpha|^2\right)^{r_\alpha},\\
&|F_{z_iz_j}(x,z,p)|\le \mathfrak{g}_1\left(1+\sum^N_{l=1}|z_l|^{\kappa_n}+
\sum^N_{k=1}|p^k_\alpha|^2\right)^{s},\\
&\sum^N_{i,j=1}\sum_{|\alpha|=|\beta|=1}F_{p^i_\alpha p^j_\beta}(x,z,p)\eta^i_\alpha\eta^j_\beta\ge
\mathfrak{g}_2
\sum^N_{i=1}\sum_{|\alpha|= 1}(\eta^i_\alpha)^2,\quad\forall\eta=(\eta^{i}_{\alpha})\in\R^{N\times n}.
\end{eqnarray*}

The {\it controllable growth conditions} (abbreviated to CGC below) \cite[page 40]{Gi}
(that is, the so-called `common condition of Morrey' or `the natural assumptions of Ladyzhenskaya and Ural'tseva'
\cite[page 38,(I)]{Gi}) may be, in our notation, expressed as:\\
{\bf CGC}: $\overline\Omega\times\mathbb{R}^N\times\mathbb{R}^{N\times n}\ni (x,
z,p)\mapsto F(x, z,p)\in\R$ is of class $C^2$, and
there exist positive constants $\nu, \mu, \lambda, M_1, M_2$,  such that with
$|z|^2:=\sum^N_{l=1}|z_l|^2$ and $|p|^2:=\sum_{|\alpha|=1}\sum^N_{k=1}|p^k_\alpha|^2$,
\begin{eqnarray*}
&\nu\left(1+|z|^2+|p|^2\right)-\lambda\le F(x, z,p)
\le\mu\left(1+|z|^2+|p|^2\right),\\
&|F_{p^i_\alpha}(x,z,p)|, |F_{p^i_\alpha x_l}(x,z,p)|, |F_{z_j}(x,z,p)|, |F_{z_jx_l}(x,z,p)|\le \mu\left(1+|z|^2+|p|^2\right)^{1/2},\\
&|F_{p^i_\alpha z_j}(x,z,p)|,\quad |F_{z_iz_j}(x,z,p)|\le \mu,\\
&M_1\sum^N_{i=1}\sum_{|\alpha|= 1}(\eta^i_\alpha)^2\le\sum^N_{i,j=1}\sum_{|\alpha|=|\beta|=1}F_{p^i_\alpha p^j_\beta}(x,z,p)\eta^i_\alpha\eta^j_\beta\le
M_2\sum^N_{i=1}\sum_{|\alpha|= 1}(\eta^i_\alpha)^2\\
&\forall\eta=(\eta^{i}_{\alpha})\in\R^{N\times n}.
\end{eqnarray*}
Moreover, if $F=F(x,p)$ does not depend explicitly on $z$, the first three lines are replaced by
\begin{eqnarray*}
&\nu\left(1+|p|^2\right)-\lambda\le F(x,p)
\le\mu\left(1+|p|^2\right)\quad\hbox{and}\\
&|F_{p^i_\alpha}(x,p)|,\quad |F_{p^i_\alpha x_l}(x,p)|\le \mu\left(1+|p|^2\right)^{1/2}.
\end{eqnarray*}

From these it is not hard to see

\begin{proposition}\label{prop:A.1}
 {\bf CGC}  implies {\textsf{Hypothesis} $\mathfrak{F}_{2,N,1,n}$}.
\end{proposition}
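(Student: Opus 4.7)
The plan is to verify each clause of \textsf{Hypothesis} $\mathfrak{F}_{2,N,1,n}$ (in the equivalent reformulation stated just before the proposition) by picking off the corresponding pointwise bound in the CGC list and either evaluating it at $(z,p)=0$ or observing that a bounded function on a bounded domain lies in every $L^q(\Omega)$. The $C^2$-smoothness and measurability hypotheses on $F$ are immediate from the $C^2$-assumption in CGC, and $F(\cdot,0)\in L^1(\Omega)$ follows from the bound $|F(x,0,0)|\le \mu+\lambda$ and the finiteness of $|\Omega|$.

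First I would verify clause (i). Evaluating the CGC inequalities $|F_{z_j}(x,z,p)|\le\mu(1+|z|^2+|p|^2)^{1/2}$ and $|F_{p^i_\alpha}(x,z,p)|\le\mu(1+|z|^2+|p|^2)^{1/2}$ at $(z,p)=0$ gives $|F_{z_j}(x,0,0)|\le\mu$ and $|F_{p^i_\alpha}(x,0,0)|\le\mu$, so these functions lie in $L^\infty(\Omega)$; since $\Omega$ is bounded, $L^\infty(\Omega)\hookrightarrow L^q(\Omega)$ for every $q\in[1,\infty]$, so in particular $F_{z_j}(\cdot,0)\in L^{\kappa_n/(\kappa_n-1)}(\Omega)$ and $F_{p^i_\alpha}(\cdot,0)\in L^2(\Omega)$.

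Next I would establish clause (ii). Fix any $s\in(0,(\kappa_n-2)/\kappa_n)$ and any $r_\alpha\in(0,(\kappa_n-2)/(2\kappa_n))$; these intervals are non-empty because $\kappa_n>2$. Since $(1+\sum_l|z_l|^{\kappa_n}+\sum_k|p^k_\alpha|^2)\ge 1$, raising it to a non-negative exponent only enlarges it, so the CGC bounds
\begin{equation*}
|F_{p^i_\alpha p^j_\beta}(x,z,p)|\le\mu,\qquad |F_{p^i_\alpha z_j}(x,z,p)|\le\mu,\qquad |F_{z_iz_j}(x,z,p)|\le\mu
\end{equation*}
imply the three upper bounds required in \textsf{Hypothesis} $\mathfrak{F}_{2,N,1,n}$(ii) with the constant $\mathfrak{g}_1:=\mu$ and these chosen exponents. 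Finally, the coercivity bound of CGC is literally the ellipticity condition required, with $\mathfrak{g}_2:=M_1>0$ (recalling that, since $p=2$ here, the factor $(1+\sum|\xi^k_\gamma|)^{p-2}$ in the original formulation (\ref{e:1.2}) is identically $1$).

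There is no serious obstacle; the only mildly delicate point is bookkeeping the values of $p_\alpha$ and $q_\alpha$ from \textsf{Hypothesis} $\mathfrak{F}_{p,N,m,n}$ in the case $p=2$, $m=1$: for $n\ge 3$ one has $m-n/p<0$ so every multi-index $\alpha$ with $|\alpha|\le 1$ falls into the range $m-n/p\le|\alpha|\le m$, giving $p_0=\kappa_n$, $p_\alpha=2$ for $|\alpha|=1$, and consequently $q_0=\kappa_n/(\kappa_n-1)$, $q_\alpha=2$, matching the exponents in the equivalent reformulation; for $n=2$ the case $|\gamma|=m-n/p=0$ falls under the choice $p_\gamma\in(2,\infty)$, which is exactly why $\kappa_n$ is allowed to be any number in $(2,\infty)$ in the reformulation. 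Once these identifications are made, the three steps above complete the proof.
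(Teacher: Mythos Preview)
Your proof is correct and follows the only natural approach---direct verification of each clause---which is exactly what the paper means by ``From these it is not hard to see''. One small slip: the inequality $|F_{p^i_\alpha p^j_\beta}(x,z,p)|\le\mu$ that you list among ``the CGC bounds'' is not actually one of them; CGC only controls the second $p$-derivatives through the two-sided quadratic-form estimate $M_1|\eta|^2\le\sum F_{p^i_\alpha p^j_\beta}\eta^i_\alpha\eta^j_\beta\le M_2|\eta|^2$, from which the entrywise bound $|F_{p^i_\alpha p^j_\beta}|\le M_2$ follows by symmetry of the Hessian. Taking $\mathfrak{g}_1:=\max(\mu,M_2)$ fixes this with no change to the rest of the argument.
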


\medskip
\quad
\medskip

\begin{thebibliography}{99}


\bibitem{AuEk}
\newblock  J. P. Aubin and I. Ekeland,
\newblock ``Applied Nonlinear Analysis,"
\newblock Wiley, New York, 1984.


\bibitem{BaSzWi}
\newblock T. Bartsch, A. Szulkin and M. Willem,
\newblock \emph{Morse theory and nonlinear differential equations},
\newblock in ``Handbook of Global Analysis" Elsevier Science Ltd, (2008), 41--73.

\bibitem{Ber}
\newblock M. Berger,
\newblock ``Nonlinearity and Functional Analysis,"
\newblock Acad. Press, New York-London, 1977. 

\bibitem{BoBu}
\newblock N. A. Bobylev and Yu. M. Burman,
\newblock \emph{Morse lemmas for multi-dimensional variational problems},
\newblock Nonlinear Analysis, \textbf{18} (1992), 595-604.


\bibitem{Bot}
\newblock R. Bott,
\newblock \emph{Nondegenerate critical manifold},
\newblock Ann. of Math.  \textbf{60} (1954), 248--261.

\bibitem{Bre}
\newblock H. Brezis,
\newblock ``Functional analysis, Sobolev spaces and partial differential equations,"
\newblock Universitext. Springer, New York, 2011.


\bibitem{Bro0}
\newblock F. E. Browder,
\newblock \emph{Nonlinear elliptic boundary value problems and the generalized topological degree}
\newblock Bull. Amer. Math. Soc.  \textbf{76} (1970), 999--1005.

\bibitem{Bro}
\newblock F. E. Browder,
\newblock \emph{Fixed point theory and nonlinear problem},
\newblock Bull. Amer. Math. Soc. (N.S), \textbf{9} (1983), 1--39.


\bibitem{CaLiWi}
\newblock L. Caklovic, S.J. Li, M. Willem,
\newblock \emph{A note on Palais--Smale condition and coercivity},
\newblock Differential Integral Equations, \textsf{3}(1990), 799--800.

\bibitem{CaCiMaVa13}
\newblock J. Carmona, S. Cingolani, P.-J. Mart\'inez-Aparicio, G. Vannella,
\newblock \emph{Regularity and Morse index of the solutions to critical quasilinear elliptic systems},
\newblock Comm. Partial Differential Equations,  \textbf{38}(2013),  no. 10, 1675-1711.

\bibitem{Ch83}
\newblock  K.C. Chang,
\newblock \emph{Morse theory on Banach space and its applications to partial differential equations},
\newblock  Chinese Ann. Math. Ser. B, \textbf{4}(1983), 381-399.

\bibitem{Ch}
\newblock K. C. Chang,
\newblock ``Infinite Dimensional Morse Theory and Multiple Solution Problem,"
\newblock Birkh\"{a}user, 1993. (MR1196690)

\bibitem{Ch1}
\newblock K. C. Chang,
\newblock ``Methods in Nonlinear Analysis,"
\newblock Springer Monogaphs in Mathematics, Springer 2005. (MR2170995)

\bibitem{ChGh}
\newblock K. C. Chang, H.  Ghoussoub,
\newblock \emph{The Conley index and the critical groups via an extension of Gromoll-Meyer theory},
\newblock Topol. Methods in Nonlinear Analysis, {\bf 7}(1996), 77-93.

\bibitem{ChKr}
\newblock C. Y. Chen,  J. Kristensen,
\newblock \emph{On coercive variational integrals},
\newblock Nonlinear Analysis: Theory, Methods \& Applications, {\bf 153}(2017), 213-229.



\bibitem{CiDe}
\newblock S. Cingolani and M. Degiovanni,
\newblock \emph{On the Poincar\'e-Hopf theorem for functionals defined on Banach spaces},
\newblock Adv. Nonlinear Stud., \textbf{9} (2009), 679--699.  (MR2560125)

\bibitem{CiDeVa15}
\newblock S. Cingolani, M. Degiovanni, G. Vannella,
\newblock \emph{Critical group estimates for nonregular critical points of functionals associated with quasilinear elliptic equations},
\newblock J. Elliptic Parabol. Equ.,  \textbf{1}(2015), 75-87.


\bibitem{CiDeVa18}
\newblock S. Cingolani, M. Degiovanni, G. Vannella,
\newblock \emph{Amann-Zehnder type results for $p$-Laplace problems},
\newblock Ann. Mat. Pura Appl. (4), \textbf{197}(2018),  no. 2, 605-640.

\bibitem{CiVa}
\newblock S. Cingolani and G. Vannella,
\newblock \emph{Marino--Prodi perturbation type results and Morse indices of minimax critical points
            for a class of functionals in Banach spaces},
\newblock Annali di Matematica, \textbf{186} (2007), 155--183.




\bibitem{DaPo}
\newblock F. Dalbono, A. Portaluri,
\newblock \emph{Morse-Smale index theorems for elliptic boundary deformation problems},
\newblock Journal of Differential Equations, {\bf 253}(2012), no.2., 463-480.

\bibitem{De10}
\newblock  M. Degiovanni,
\newblock \emph{On topological and metric critical point theory},
\newblock  J. Fixed Point Theory Appl.,  \textbf{7}(2010),  no. 1, 85-102.



\bibitem{DHK}
\newblock D. M. Duc, T. V. Hung and N. T. Khai,
\newblock \emph{Morse-Palais lemma for nonsmooth functionals on normed spaces},
\newblock Proc. Amer. Math. Soc., \textbf{135} (2007), 921--927.

\bibitem{DHK1}
\newblock D. M. Duc, T. V. Hung and N. T. Khai,
\newblock \emph{Critical points of non-$C^2$ functionals},
\newblock Topological Methods in Nonlinear Analysis, \textbf{29} (2007), 35--68.



\bibitem{Ek}
\newblock I. Ekeland,
\newblock \emph{An inverse function theorem in Fr¨¦chet spaces},
\newblock Ann. Inst. H. Poincar\'e Anal. Non Lin\'eaire, \textsf{28}(2011),  no. 1, 91--105.


\bibitem{Fe}
\newblock M. Feckan,
\newblock \emph{An inverse function theorem for continuous mappings},
\newblock J. Math. Anal. Appl.  \textbf{185}(1994),  no. 1, 118¨C128.

\bibitem{Gh}
\newblock N. Ghoussoub,
\newblock ``Duality and perturbation methods in critical point theory,"
\newblock Cambridge University Press, 2008.


\bibitem{Gi}
\newblock M. Giaquinta,
\newblock ``Multiple integrals in the calculus of variations and nonlinear elliptic systems,"
\newblock Annals of Math. Studies, Princeton Univ. Press (1983).


\bibitem{GrM}
\newblock D. Gromoll and W. Meyer,
\newblock \emph{On differentiable functions with isolated critical points},
\newblock Topology, \textbf{8} (1969), 361--369. (MR0246329)


\bibitem{JM}
\newblock M. Jiang,
\newblock \emph{A generalization of Morse lemma and its applications},
\newblock Nonlinear Analysis, \textbf{36} (1999), 943--960.


\bibitem{Kra}
\newblock M. A. Krasnosel'skii,
\newblock ``Topological Methods in the Theory of Nonlinear Integral Equations,"
\newblock  McMillan, New York, 1964.

\bibitem{LazS}
\newblock A. Lazer, S. Solimini,
\newblock \emph{Nontrivial solutions of operator equations and Morse indices of
critical points of min-max type},
\newblock Nonlin. Anal. TMA, \textsf{12}(1988), 761-775.


\bibitem{Lu1}
\newblock G. Lu,
\newblock \emph{Corrigendum to ``The Conley conjecture for Hamiltonian systems on the cotangent bundle and its analogue for Lagrangian systems" [J. Funct. Anal. 256(9)(2009)2967-3034]},
\newblock J. Funct. Anal., \textbf{261} (2011), 542--589.

\bibitem{Lu2}
\newblock G. Lu,
\newblock \emph{The splitting lemmas for nonsmooth functionals on Hilbert spaces I},
\newblock  Discrete Contin. Dyn. Syst.  \textsf{33}(2013),  no. 7, 2939-2990.

\bibitem{Lu3}
\newblock G. Lu,
\newblock \emph{The splitting lemmas for nonsmooth functionals on Hilbert spaces II},
\newblock  Topol. Meth. Nonlinear Anal.  \textbf{44}(2014),  277-335.

\bibitem{Lu4}
\newblock G. Lu,
\newblock \emph{The splitting lemmas for nonsmooth functionals on Hilbert spaces},
\newblock  arxiv:1102.2062.



\bibitem{Lu5}
\newblock G. Lu,
\newblock \emph{Splitting lemmas for the Finsler energy functional on the space of $H^1$-curves},
\newblock Proc. London Math. Soc. \textsf{113}(2016), no.3,24-76.



\bibitem{Lu6}
\newblock G. Lu,
\newblock \emph{Nonsmooth generalization of some critical point theorems
for $C^2$ functionals (in Chinese)},
\newblock Sci Sin Math, \textsf{46}(2016), 615-638, doi:10.1360/N012015-00375.


\bibitem{Lu7}
\newblock G. Lu,
\newblock \emph{Morse theory methods for quasi-linear elliptic systems of higher order},
\newblock  arXiv:1702.06667.

\bibitem{Lu8}
\newblock G. Lu,
\newblock \emph{Parameterized splitting theorems and bifurcations for potential operators},
\newblock  arXiv:1712.03479.


\bibitem{Lu10}
\newblock G. Lu,
\newblock \emph{Variational methods for Lagrangian systems of higher order},
\newblock  A book in progress.  

\bibitem{MP}
\newblock A. Marino and G. Prodi,
\newblock \emph{Metodi perturbativi nella teoria di Morse},
\newblock Boll. Un. Mat. Ital., {\bf 11}(1975), 1--32.


\bibitem{MaWi}
\newblock J. Mawhin and M. Willem,
\newblock ``Critical Point Theory and Hamiltonian Systems,"
\newblock Applied Mathematical Sciences \textbf{74}, Springer-Verlag, New York, 1989. 

\bibitem{Mir}
\newblock J. Milnor,
\newblock ``Morse theory,"
\newblock Ann. Math. Studies 51, 1934.

\bibitem{Mo} C.B.Jr. Morrey,  ``Multiple integrals in the calculus of variations," Reprint of the 1966 Classics in Mathematics. Springer-Verlag, Berlin, 2008.


\bibitem{Mor}
\newblock M. Morse,
\newblock ``The calculus of variations in the large,"
\newblock American Math. Soc. Colloquium Publications 18, Ann Arbor, Mich., 1934.



\bibitem{Mos}
\newblock J. Moser,
\newblock \emph{Minimal solutions of variational problems on a torus},
\newblock Ann. Inst. Henri Poincar\'e, {\bf 3}(1986), 229-272.



\bibitem{MoMoPa}
\newblock D. Motreanu,  V. Motreanu, N. Papageorgiou,
\newblock ``Topological and variational methods with applications to nonlinear boundary value problems,"
\newblock Springer, New York, 2014.



\bibitem{Pal}
\newblock R. Palais,
\newblock \emph{Morse theory on Hilbert manifolds},
\newblock Topology, {\bf 2}(1963), 299-340.


\bibitem{Pa2}
\newblock R. Palais,
\newblock ``Foundations of global non-linear analysis,"
\newblock W. A. Benjamin,, 1968, 44.



\bibitem{PaSm}
\newblock R. S. Palais and S. Smale,
\newblock \emph{A generalized Morse theory},
\newblock Bull. Amer. Math. Soc., {\bf 70}(1964), 165--172.


\bibitem{PerAO}
\newblock K. Perera, R. P. Agarwal and Donal O'Regan,
\newblock ``Morse Theoretic Aspects of $p$-Laplacian Type Operators,"
\newblock Mathematical Surveys and Monographs \textbf{161}, American Mathematical Society, Providence Rhode Island 2010.







\bibitem{Skr1}
\newblock I. V. Skrypnik,
\newblock ``Nonlinear Elliptic Equations of a Higher Order,"
\newblock [in Russian], Naukova Dumka, Kiev 1973. 


\bibitem{Skr2}
\newblock I. V. Skrypnik,
\newblock \emph{Solvability and properties of solutions of nonlinear elliptic equations},
\newblock  J.Soviet Math. 12(1979), 555-629.

\bibitem{Skr3}
\newblock I. V. Skrypnik,
\newblock ``Methods for Analysis of Nonlinear Elliptic Boundary Value Problems,"
 in:Translations of Mathematical Monographs,vol.139,
\newblock Providence, Rhode Island, 1994.


\bibitem{Sma}
\newblock S. Smale,
\newblock \emph{Morse theory and a non-linear generalization of the Dirichlet problem},
\newblock Ann. Math., {\bf 80}(1964), 382-396.


\bibitem{Sma1}
\newblock S. Smale,
\newblock \emph{On the Morse index theorem},
\newblock  J. Math. Mech. {\bf 14}(1965), 1049-1056.


\bibitem{Str}
\newblock G. Str\"ohmer,
\newblock \emph{About the Morse theory for Certain Vartional Problems},
\newblock Math. Ann., {\bf 270}(1985), 275-284.

\bibitem{Tr77}
\newblock  A.J. Tromba,
\newblock \emph{A general approach to Morse theory},
\newblock  J. Differential Geometry, \textbf{12}(1977), 47-85.

\bibitem{Uh72}
\newblock  K. Uhlenbeck,
\newblock \emph{Morse theory on Banach manifolds},
\newblock J. Funct. Anal., \textbf{10}(1972), 430-445.

\bibitem{Uhl73}
\newblock K. Uhlenbeck,
\newblock \emph{The Morse index theorem in Hilbert space},
\newblock  J. Diferential Geometry, {\bf 8}(1973), 555-564.


\bibitem{Va1}
\newblock S. A. Vakhrameev,
\newblock \emph{Critical point theory for smooth functions on Hilbert manifolds with singularities and its application to some optimal control problems},
\newblock J. Sov. Math., \textbf{67} (1993), 2713--2811.



\bibitem{Van}
\newblock G. Vannella,
\newblock \emph{Morse theory applied to a $T^2$-equivriant problem},
\newblock Topological Methods in Nonlinear Analysis, \textbf{17} (2001), 41--53.


\bibitem{Vit1}
\newblock C. Viterbo,
\newblock \emph{Indice de Morse des points critiques obtenus par minimax},
\newblock Ann.Inst. Henri Poincar\'e, \textbf{5} (1988), 221-225.



\bibitem{Wa}
\newblock Z. Q. Wang,
\newblock \emph{Equivariant Morse theory for isolated critical orbits and its applications to nonlinear problems},
\newblock  Lect. Notes in Math. No. 1306, Springer, (1988) 202-221.



\bibitem{Was}
\newblock G. Wasserman,
\newblock \emph{Equivariant differential topology},
\newblock Topology, \textbf{8} (1969), 127--150.

\bibitem{Wend}
\newblock C. Wendl,
\newblock ``Lectures on Holomorphic Curves in Symplectic and Contact Geometry,"
\newblock  math.SG, arXiv:1011.1690.

\bibitem{ZouSc}
\newblock W. M. Zou,  M. Schechter,
\newblock ``Critical point theory and its applications,"
\newblock Springer, New York, 2006.




\end{thebibliography}
\end{document}